\documentclass{article}
\usepackage{graphicx} 
\usepackage{tikz}
\usetikzlibrary{patterns}
\usepackage{tikz}
\usetikzlibrary{calc} 
\usepackage[dvipsnames, svgnames, x11names]{xcolor}
\usepackage[authoryear,round]{natbib}
\usepackage{color}
\usepackage{xcolor} 

\usepackage[unicode=true,
 bookmarks=false,
 breaklinks=false,pdfborder={0 0 1},colorlinks=false]
 {hyperref}
\hypersetup{
colorlinks,citecolor=blue,filecolor=blue,linkcolor=blue,urlcolor=blue}
\usepackage{geometry}
\usepackage{booktabs}
\usepackage{footmisc}
\usepackage{amsmath}
\usepackage{amssymb}
\usepackage{bbm}
\usepackage{amsthm}
\usepackage{graphicx}
\usepackage{algorithmic}
\usepackage{courier}
\usepackage{mathtools}
\usepackage[ruled,vlined]{algorithm2e}
\usepackage{subcaption}

\SetCommentSty{algocmtstyle}
\usepackage{appendix}
\newtheorem{theorem}{Theorem}[section]
\newtheorem{lemma}{Lemma}[section]
\newtheorem{proposition}{Proposition}[section]
\newtheorem{fact}{Fact}[section]

\newtheorem{assumption}{Assumption}[section]
\newtheorem{definition}{Definition}[section]
\newtheorem{remark}{Remark}[section]
\newtheorem{claim}{Claim}[section]

\definecolor{yxc}{RGB}{255,0,0}

\usepackage{amsmath,amsfonts,bm,cleveref,comment}

\newcommand{\driftocp}{\textsc{DriftOCP}\xspace}

\newcommand{\driftocpfull}{\textsc{DriftOCP}\text{-}\textsc{full}\xspace}

\newcommand{\gap}{\mathsf{cvg}\text{-}\mathsf{gap}}

\def\rd{{\textnormal{d}}}

\DeclareMathAlphabet{\mathsfit}{\encodingdefault}{\sfdefault}{m}{sl}
\SetMathAlphabet{\mathsfit}{bold}{\encodingdefault}{\sfdefault}{bx}{n}

\def\gA{{\mathcal{A}}}
\def\gB{{\mathcal{B}}}
\def\gC{{\mathcal{C}}}
\def\gD{{\mathcal{D}}}
\def\gE{{\mathcal{E}}}
\def\gF{{\mathcal{F}}}
\def\gG{{\mathcal{G}}}
\def\gH{{\mathcal{H}}}
\def\gI{{\mathcal{I}}}

\def\gK{{\mathcal{K}}}
\def\gL{{\mathcal{L}}}
\def\gM{{\mathcal{M}}}
\def\gN{{\mathcal{N}}}

\def\gP{{\mathcal{P}}}

\def\gS{{\mathcal{S}}}
\def\gT{{\mathcal{T}}}
\def\gU{{\mathcal{U}}}

\def\gX{{\mathcal{X}}}

\def\gZ{{\mathcal{Z}}}

\def\0{{\bf 0}}
\def\1{{\bf 1}}

\def\EB{{\mathbb E}}

\def\NB{{\mathbb N}}

\def\PB{{\mathbb P}}

\def\RB{{\mathbb R}}

\def\ZB{{\mathbb Z}}

\def\argmin{\mathop{\rm argmin}}

\def\Q{\mathsf{Q}}

\newcommand{\R}{\mathbb{R}}
\newcommand{\E}{\mathbb{E}}

\newcommand{\ssum}[3]{\sum\limits_{{#1}={#2}}^{#3}}

\newcommand{\TV}{\mathsf{TV}}

\def\inner#1#2{\left\langle #1, #2 \right\rangle}
\newcommand{\norm}[1]{\left\|{#1}\right\|}
\newcommand{\abs}[1]{\left|{#1}\right|}
\allowdisplaybreaks

\usepackage[utf8]{inputenc} 
\usepackage[T1]{fontenc}    
\usepackage{hyperref}       
\usepackage{url}            
\usepackage{booktabs}       
\usepackage{amsfonts}       
\usepackage{nicefrac}       
\usepackage{microtype}      
\usepackage{xcolor}         

\title{Optimal training-conditional regret for online conformal prediction}

\author{%
Jiadong Liang\thanks{Department of Statistics and Data Science, the Wharton School, University of Pennsylvania; email: \texttt{\{jdl97,zren,yuxinc\}@wharton.upenn.edu}.} 
\and
Zhimei Ren\footnotemark[1] 
\and
 Yuxin Chen\footnotemark[1]
}

\begin{document}

\maketitle

\begin{abstract}

We study online conformal prediction for non-stationary data streams subject to unknown distribution drift. While most prior work studied this problem under adversarial settings and/or assessed performance in terms of gaps of time-averaged marginal coverage, we instead evaluate performance through training-conditional cumulative regret. We specifically focus on independently generated data with two types of distribution shift: abrupt change points and smooth drift.

When non-conformity score functions are pretrained on an independent dataset, we propose a split-conformal–style algorithm that leverages drift detection to adaptively update calibration sets, which provably achieves minimax-optimal regret. When non-conformity scores are instead trained online, we develop a full-conformal–style algorithm that again incorporates drift detection to handle non-stationarity; this approach relies on stability—rather than permutation symmetry—of the model-fitting algorithm, which is often better suited to online learning under evolving environments. We establish non-asymptotic regret guarantees for our online full conformal algorithm, which match the minimax lower bound under appropriate restrictions on the prediction sets. Numerical experiments corroborate our theoretical findings.


\end{abstract}

\noindent \textbf{Keywords:} online conformal prediction, training-conditional regret, distribution drift, minimax optimality

\setcounter{tocdepth}{2}
\tableofcontents

\section{Introduction}

Conformal prediction, also known as conformal inference,  has emerged as a versatile, distribution-free framework for quantifying uncertainty in modern data science   \citep{vovk1999machine,papadopoulos2002inductive,vovk2005algorithmic,angelopoulos2023conformal,angelopoulos2024theoretical}. 
What sets it apart is its ability to offer rigorous, finite-sample coverage guarantees under minimal distribution assumptions, allowing practitioners to treat complex machine learning models as black boxes while still producing reliable measures of uncertainty. 
In its classical formulation, we observe $n$ training data taking the form of $n$ feature-response pairs $\{(X_i,Y_i)\}_{1\leq i\leq n} \subset \mathcal{X}\times \mathbb{R}$, and are given a test point $X_{n+1}\in \mathcal{X}$ for which the corresponding response $Y_{n+1}$ is unknown.  The aim is to construct a prediction set $\widehat{\mathcal{C}} (X_{n+1})$ that is likely to cover $Y_{n+1}$. 
Conformal prediction achieves this objective in a distribution-free fashion, 
provided the data $\{(X_i,Y_i)\}_{1\leq i\leq n+1}$ are {\em exchangeable} \citep{angelopoulos2023conformal,angelopoulos2024theoretical}.

%
%
%

While the ability to accommodate exchangeable data applies to wide-ranging practical scenarios, there is no shortage of scenarios that naturally violate exchangeability. One notable example arises when the data distributions drift over time, 
as is often the case with sequential or online data \citep{zhou2025conformal,fannjiang2022conformal}. 
This motivates a flurry of recent studies exploring online conformal prediction, with the objective to extend the conformal prediction framework to accommodate sequentially arriving data streams (e.g., \citet{vovk2009line,weinstein2020online,gibbs2021adaptive,bastani2022practical,zaffran2022adaptive,bhatnagar2023improved,lin2022conformal,feldman2022achieving,auer2023conformal,sun2023copula,xu2023sequential,xu2023conformal,xu2024conformal,gibbs2024conformal,han2024distribution,angelopoulos2023conformal,angelopoulos2024online,angelopoulos2025gradient,bao2024cap,lee2024single,yang2024bellman,podkopaev2024adaptive,su2024adaptive,zhang2024benefit,ramalingam2024relationship,sale2025online,humbert2025online}).

\subsection{Online conformal prediction}

Setting the stage, consider a sequential data stream 
$\{(X_t, Y_t)\}_{1 \le t \le T}$ generated by a dynamic process, where $X_t \in \mathcal{X}$ denotes the feature (or covariate) at time $t$ and $Y_t\in \mathbb{R}$ the corresponding response. The data-generating distribution is allowed to drift over time; namely, the distribution of $(X_t, Y_t)$, denoted by $\gD_t$, 
may vary with $t$.  
At each time $t$, the task is to use the previously observed data 
$\{(X_s, Y_s)\}_{s < t}$, together with the newly observed feature $X_{t}$, 
to construct a prediction set $\mathcal{C}_t(X_{t})$ that is likely to contain the as-yet-unobserved response $Y_{t}$. More precisely,  for a prescribed miscoverage level $\alpha\in(0,1)$, a desirable prediction set ${\gC}_t(X_{t})$ would satisfy
\begin{align}\label{eq:obj}
\mathbb{P}\left\{\, Y_{t}\in \gC_t\big(X_{t}\big)\ \big|\ \{(X_s,Y_s)\}_{s<t} \right\}
\geq 1-\alpha .
\end{align}
%

Central to conformal prediction is the non-conformity score function $s_{t}(\cdot,\cdot)$, which is computed at time $t$ and may sometimes depend on past observations $\{(X_{\tau},Y_{\tau})\}_{\tau:\tau < t}$.
For the most part, the score $s_t(x,y)$ measures the extent to which a data point $(x,y) \in \mathcal{X}\times \mathbb{R}$ deviates from the prediction of a fitted model. A canonical example is the absolute residual score $s_t(x,y)=|y-\widehat{\mu}_t(x)|$, where $\widehat{\mu}_t(\cdot)$ denotes a predictive model trained by an arbitrary machine learning algorithm (for instance, a neural network, or a nonparametric estimator).  
A widely studied class of prediction intervals takes the form
\begin{align}
\label{eq:def:C_tx}
\mathcal{C}_t(x) \coloneqq  \left\{ y : s_t(x, y) \leq q_t \right\}
\end{align}
for some adaptively chosen threshold $q_t$, in which case prediction interval construction amounts to dynamically adjusting $\{q_t\}$ given the non-conformity scores.

 The online nature of the above problem has motivated a recent line of work to reframe \eqref{eq:obj} as an online decision-making
task and leverage techniques from online learning to address it. 
A prominent example is {\em Adaptive Conformal Inference (ACI)},    proposed by  \citet{gibbs2021adaptive}. 
In a nutshell, the ACI algorithm sequentially calibrates the quantile estimates via the  iterative update rule: 
\begin{align}
\label{eq:aci}
q_{t+1}
=
q_t + \eta_t \big(\mathbbm{1}\{s_t(X_t,Y_t) > q_t \} - \alpha\big),
\end{align}
which can be interpreted as an instance of the online subgradient method applied to optimize the quantile loss  (or pinball loss). 
%
%
%
%


\subsection{Prior coverage guarantees and their inadequacy}

To establish theoretical validity, a substantial body of prior work developed coverage guarantees for online conformal prediction methods. For instance,  \citet{gibbs2021adaptive} demonstrated that the ACI algorithm achieves some sort of {\em time-averaged coverage} without imposing any assumption on the data-generating mechanism;  
more formally, they proved that, with a suitable constant learning rate schedule, ACI satisfies
\begin{equation}\label{eq:long_term_cvg_def-candes}
\underset{\text{empirical long-term coverage frequency}}{\underbrace{\frac{1}{T}\sum_{t=1}^T 
 \mathbbm{1} \big(Y_t \in \mathcal{C}_t(X_t)\big)}}
\hspace{-1.7em}\rightarrow\, 1-\alpha
\end{equation}
as $T$ grows, which holds even when the data stream is generated adversarially. Building on this result, subsequent work has extended time-averaged coverage results to a broader family of algorithms (e.g., \citet{zaffran2022adaptive,angelopoulos2024online,bhatnagar2023improved,zhang2024discounted}).

 Note, however, that controlling the empirical long-term coverage frequency in (\ref{eq:long_term_cvg_def-candes}) does not, by itself, preclude vacuous solutions. As noted in prior studies (e.g., \citet{bastani2022practical,bhatnagar2023improved}) and further elaborated in \Cref{sec:why-regret}, one can easily construct prediction sets that fulfill property (\ref{eq:long_term_cvg_def-candes}) while failing to incorporate any information of the underlying data distributions. In other words, achieving convergence of empirical long-term coverage frequency does not ensure reliable coverage at any individual time, nor does it guarantee that the prediction sets are informative and efficient.

To remedy the above issue, a line of subsequent work  (e.g., \citet{bhatnagar2023improved,gibbs2024conformal,hajihashemi2024multi,ramalingam2024relationship,zhang2024discounted,zhang2024benefit}) shifted focus towards {\em regret-based analysis}, drawing heavily from the online learning literature  \citep{shalev2012online,hazan2016introduction}. 
While multiple notions of regret have been explored in this strand of work,  they are primarily formulated for adversarial online settings, where the underlying data-generating distributions are left completely unspecified. Consequently, these regret metrics often lack a direct correspondence with standard conformal validity targets, such as training-conditional coverage. Furthermore, several prior works (e.g., \citet{bhatnagar2023improved,hajihashemi2024multi,ramalingam2024relationship}) evaluated the cumulative performance gap relative to global quantile optimized in hindsight (i.e., the quantile computed based on all data), which is, however, not well-suited to non-stationary environments with drifting data distributions.

It is important to emphasize again that the adoption of empirical long-term coverage frequency and adversarial regret largely stems from the objective to dispense with distributional assumptions, thereby maximizing the ``distribution-free'' nature of online predictive inference. 
However, if one is willing to impose more structure on the data-generating mechanism, it may become possible to derive coverage guarantees that align more closely with classical validity notions. While several prior work \citep{gibbs2021adaptive,han2024distribution,zaffran2022adaptive,xu2023conformal,angelopoulos2024online,humbert2025online} had investigated more specialized settings—such as independent data with drifting distributions, hidden Markov models—the optimality of the resulting theoretical guarantees remain largely unexplored.

\subsection{This paper}

In this work, we make progress by focusing on the following non-adversarial scenario:  
\begin{itemize} 
    \item {\em A non-adversarial setting with independent data:} The data $\{(X_t,Y_t)\}_{1\leq t\leq T}$ are {\em independently} generated but otherwise distribution-free. The distribution of $(X_t,Y_t)$, denoted by $\mathcal{D}_t$, is allowed to drift over time, but the predictive inference algorithm has no prior knowledge of the distributional drift. 
\end{itemize}
The independence assumption enables us to move beyond performance metrics like time-averaged marginal coverage and adversarial regret, and instead adopt a regret metric that aligns more closely with classical statistical validity. Informally, we focus on the following {\em training-conditional} cumulative regret metric
\begin{equation}
\mathsf{regret}_T\coloneqq \ssum{t}{1}{T}\EB
\Big[\abs{\PB\big(Y_t \in {\gC}_t(X_t) ~\big|~ \text{past data, internal randomness}\big) - (1-\alpha)}\,\Big],
\end{equation}
whose precise definition is given in \Cref{sec:tc_regret_def}. This metric measures, at each time $t$, the deviation of the conditional coverage probability---conditioned on past observations and, in the case of a randomized procedure, any internal randomness used by the procedure---from the target level, and then aggregates these deviations over time. The emphasis on training-conditional (sample-conditional) validity is standard in conformal prediction
\citep[e.g.,][]{vovk2012conditional,bian2023training,amann2023assumption,liang2025algorithmic}. 

Within this framework, we pay particular attention to two forms of distribution drift: (i) {\em the change-point setting}, where the data distribution is piecewise stationary with several abrupt change points; (ii) {\em the smooth drift setting}, where the distributions evolve continuously and smoothly over time, subject to an upper bound on its aggregate variation.  Note that the predictive inference algorithm operates without prior knowledge of the drift structure. Our main contributions are summarized as follows. 

\paragraph{Online conformal prediction with pretrained scores.}
Consider first the scenario in which the non-conformity score functions are pretrained on a separate, independent dataset—a common setting in online conformal prediction where split-conformal-style methods are naturally applicable. We propose an online conformal prediction algorithm, dubbed \driftocp (see Algorithm~\ref{alg:OCID}), which leverages drift detection subroutines to adaptively update calibration sets---the set of data used for calibrating $q_t$---over time.  Our algorithm is computationally lightweight, horizon-independent,  and adapts efficiently to the distribution drift. We provide non-asymptotic theoretical guarantees by establishing regret upper bounds for \driftocp that match the minimax lower bounds (up to a logarithmic factor) in both the change-point and smooth drift settings. 
Numerical experiments across a range of distribution-shift scenarios further demonstrate the efficacy of \driftocp, showing that it adapts effectively to diverse data-generating mechanisms.

\paragraph{Online conformal prediction with adaptively trained scores.} Next, consider a more challenging scenario in which both the predictive models and the non-conformity score functions are trained online, potentially depending on past observations. To enhance data efficiency without data splitting, we adopt the full conformal paradigm, and put forward an online full conformal prediction algorithm called \driftocpfull (see Algorithm~\ref{alg:FOCID}), which integrates drift detection subroutines to tackle non-stationarity.  Rather than assuming permutation symmetry of the model fitting algorithm---which is often violated in online learning---we focus instead on stable learning algorithms, and establish non-asymptotic upper bounds on the training-conditional cumulative regret of \driftocpfull. We further demonstrate the optimality of our approach by deriving matching minimax lower bounds (up to a log factor) under appropriate restrictions on the prediction sets. Notably, our training-conditional lower bound applies universally to \emph{all} prediction methods regardless of their specific construction—a result that was previously out of reach.  
Empirically, we benchmark several conformal prediction methods and validate the plausibility of constructing prediction sets using sequentially fitted models.

\subsection{Notation}
We now gather a set of notations used throughout the paper. For any $a,b\in \mathbb{R}$, denote $a\vee b=\max\{a,b\}$, $a \land b = \min\{a,b\}$, $(a)_+ = a\vee 0$, and $(a)_- = a\wedge 0$. For any integer $n$, let $[n]\coloneqq \{1,\dots,n\}$.
For $x\in\RB$, we use $\lceil x\rceil$ to denote the smallest integer greater than or equal to $x$, and $\lfloor x\rfloor$ the largest integer less than or equal to $x$.
For two nonnegative functions \(f\) and \(g\), we write \(f\lesssim g\) (equivalently, \(f=O(g)\) and \(g=\Omega(f)\)) if there
exists a universal constant \(C>0\) such that \(f\le Cg\).  We write \(f\gtrsim g\) if \(g\lesssim f\), and
\(f\asymp g\) if both \(f\lesssim g\) and \(g\lesssim f\) hold.  The notation \(f=\widetilde{O}(g)\) and \(g=\widetilde{\Omega}(f)\) is defined
analogously, up to additional logarithmic factors. 
For the set $\RB$ of real numbers, we denote by $\gB(\RB)$ the Borel sets on it.
We denote by \(\norm{v}_2\) the Euclidean norm of a vector \(v\in\R^d\).
For a matrix \(A\in\R^{m\times d}\), we use \(\norm{A} \coloneqq \sup_{\norm{x}_2=1}\norm{Ax}_2\) for its spectral norm.
For two probability distributions \(P\) and \(Q\) defined on a measure space $(\Xi, \gF)$, we denote by \(\mathsf{TV}(P,Q)\) their total-variation (TV)
distance, i.e.,
\[
\mathsf{TV}(P,Q) \coloneqq \sup\limits_{A \in \gF}\big\{\abs{P(A) - Q(A)}\big\}.
\]
Suppose $P$ and $Q$ admit probability density functions $p$ and $q$ on $\Xi$, respectively.
We define the Kullback--Leibler (KL) divergence from $Q$ to $P$ by
\[
\mathsf{KL}(P\,\|\,Q)\coloneqq \int_{\Xi} p(x)\log\!\frac{p(x)}{q(x)}\,\rd x,
\]
whenever the integral is well-defined. 
%
Furthermore, if \(P\) and \(Q\) are two probability distributions defined on $\big(\RB, \gB(\RB)\big)$, 
we denote by \(\mathsf{KS}(P,Q)\) their Kolmogorov--Smirnov (KS) distance, i.e., 
\begin{align}
\label{eq:defn-KS-dist}
\mathsf{KS}(P,Q) \coloneqq \sup\limits_{x\in \RB}\Big\{\abs{P\big((-\infty, x]\big) - Q\big((-\infty, x]\big)}\Big\}.
\end{align}
For random objects $Z\sim P$ and $\widetilde{Z}\sim Q$, we overload the notation by letting
$\mathsf{TV}(Z,\widetilde{Z})$, $\mathsf{KL}(Z\,\|\,\widetilde{Z})$ and $\mathsf{KS}(Z,\widetilde{Z})$   denote
$\mathsf{TV}(P,Q)$, $\mathsf{KL}(P\,\|\,Q)$ and $\mathsf{KS}(P,Q)$, respectively. Also, for any sequence of objects $\{Z_i\}_{i\geq 1}$, we adopt the notation $Z_{k:m}=\{Z_k,\dots,Z_m\}$ for any $m\geq k\geq 1$. 

\section{Problem formulation and key metrics}\label{sec:prob_form}
\subsection{Settings}\label{sec:online_conf_prediction_pro}
%


%
Consider a sequence of $T$ 
{\em independent} data points arriving sequentially, denoted by $Z_t = (X_t, Y_t) \in \mathcal{X} \times \mathbb{R}$, $t=1,\dots,T$,  where the set $\mathcal{X}$ represents the feature domain. At each time $t$, the feature $X_t$ is revealed first, and the response $Y_t$ becomes available after a prediction set has been formed.  Throughout this paper, we use $Z_{1:t}=\{(X_s,Y_s)\}_{s\leq t}$ to denote the set of all data up to time $t$.

\paragraph{Procedure.}
An online conformal prediction  procedure $\pi$ operates as follows. 
Let $U$ denote---when $\pi$ is a randomized procedure---the internal randomness of $\pi$, generated via a random seed and assumed to be independent of the data. 
Given the samples $\{(X_s,Y_s)\}_{s<t}$ observed prior to time $t$, the newly arrived feature $X_t$, and the internal randomness $U$,  algorithm $\pi$ seeks to construct a prediction set  
\[
\gC_t \;= \; \gC_t^\pi\!\left(X_t;\{(X_s,Y_s)\}_{s<t}, U\right)\subseteq \R,
\]
designed to contain \(Y_t\) with probability exceeding---and ideally close to---the target level \(1-\alpha\). Here, we often write $\mathcal{C}_t$ for brevity if it is clear from the context.  The set $\mathcal{C}_t$ is typically built with the aid of a non-conformity score function $s_t(\cdot,\cdot)$ along with a fitted predictive model $\widehat{\mu}_t(\cdot)$. In this paper, we consider two practically important scenarios, distinguished by how the predictive models and non-conformity scores are trained. 
\begin{itemize}
    \item {\em Online conformal prediction with pretrained scores.} In this scenario, both the score functions and the predicted models are pretrained on a separate, independent dataset or data stream. As a result, the $s_t(\cdot,\cdot)$'s are independent of the online data stream on which the prediction sets are constructed, while still being allowed to evolve over time. 

    \item {\em Online conformal prediction with adaptively trained scores.} In this scenario, we allow both the score functions and the predictive models to be trained online, possibly depending on the past observations of the data stream. Therefore, the $s_t(\cdot,\cdot)$'s may be statistically dependent on $\{(X_s,Y_s)\}_{s<t}$. 
\end{itemize}
For both scenarios, an ideal online conformal prediction algorithm would adapt efficiently to the dynamic environment while allowing for tractable computation.



\paragraph{Distribution shift over time.}

Denote by $\mathcal{D}_t$ (resp.~$\mathcal{D}_{1:t}$) the distribution of $Z_t=(X_t, Y_t)$ (resp.~$Z_{1:t}$). We allow $\mathcal{D}_t$ to vary over time, which generally violates the exchangeability assumption. In this work, we pay particular attention to the following two distribution drift scenarios.


\bigskip
\noindent {\em (i) The change-point setting.}
This concerns the scenario where the data stream is, in some sense, piecewise stationary. 
Formally,  assume the existence of $N^{\mathsf{cp}}$ (\textit{a priori} unknown) change points, denote by 
\begin{equation}
1 = \tau_0 < \tau_1 < \cdots < \tau_{N^{\mathsf{cp}}} < \tau_{N^{\mathsf{cp}}+1} = T+1,
\end{equation}
such that for each $k=0,\ldots,N^{\mathsf{cp}}$, 
\begin{align}
\begin{cases}
s_t(X_t,Y_t) \sim \mathcal{D}_{k,\mathsf{seg}}^{\mathsf{score}},
\qquad &\tau_{k} \le t < \tau_{k+1}, \qquad \text{when scores are pretrained},\\
(X_t,Y_t) \sim \mathcal{D}_{k,\mathsf{seg}},
\qquad &\tau_{k} \le t < \tau_{k+1}, \qquad \text{when scores are trained online},
\end{cases}
\end{align}
where $\mathcal{D}_{k,\mathsf{seg}}^{\mathsf{score}}$ (resp.~$\mathcal{D}_{k,\mathsf{seg}}$) represents the score (resp.~data) distribution over the $k$-th time segment $[\tau_k,\tau_{k+1})$. 
In words, the distribution of interest remains fixed within each time segment, but may change abruptly at the change points $\tau_1,\ldots,\tau_{N^{\mathsf{cp}}}$.  
It is assumed that the number and locations of the change points, as well as the associated data distributions, are arbitrary and unknown to the online conformal prediction algorithm.

%
%


\bigskip
\noindent  {\em (ii) The smooth drift setting.}
In contrast to the above change-point setting that is well suited to modeling infrequent but potentially abrupt distributional jumps, the second setting targets the scenario in which $\mathcal{D}_t$ evolves continuously and smoothly over time. To quantify the overall extent of such distributional variation, we rely on the following two metrics.
\begin{itemize}
\item \emph{Cumulative data variation:} this metric measures the aggregate total-variation distance between consecutive data distributions: 
\begin{equation}\label{eq:data_cumu_var}
\mathsf{TV}_T
\;\coloneqq\;
\sum_{t=1}^{T-1}
\mathsf{TV}( \mathcal{D}_t,\,\mathcal{D}_{t+1} ).
\end{equation}

\item \emph{Cumulative score variation:} in contrast to $\mathsf{TV}_T$, which is defined based on data distributions, this metric is score-based and tracks the cumulative Kolmogorov-Smirnov distance
 of consecutive score distributions: 
\begin{equation}\label{eq:score_cumu_var}
\mathsf{KS}_T \coloneqq \ssum{t}{1}{T-1}\mathsf{KS}\big(\gD_t^{\mathsf{score}} , \gD_{t+1}^{\mathsf{score}}\big),
\end{equation}
where 
\(\gD_t^{\mathsf{score}}\) denotes the  distribution of $s_t(X_t,Y_t)$ under data distribution $(X_t,Y_t)\sim \gD_t$.

\end{itemize}

\noindent 
Notably, the score-based metric
$\mathsf{KS}_T$ can be viewed as a particular instance of the
more general cumulative data variation $\mathsf{TV}_T$.
In fact, similar quantities have been adopted in prior studies on online learning under data distribution shift~\citep{besbes2014stochastic,besbes2019optimal,cheung2019learning,zhao2020simple}. 
In this smooth drift setting, our aim is to design online conformal prediction algorithms whose performance can adapt gracefully to such cumulative variations.

%


%

\subsection{Key metrics: training-conditional coverage and cumulative regret}\label{sec:tc_regret_def}

To assess the performance of an online conformal prediction procedure $\pi$, a natural  metric is the 
\emph{training-conditional coverage rate}, defined as
\begin{align}\label{eq:def_Cvg}
\mathsf{cvg}_t = 
\mathsf{cvg}^\pi_t(Z_{1:t-1}, U)
    \coloneqq 
    \mathbb{P}\big( Y_t \in \mathcal{C}^\pi_t(X_t ; Z_{1:t-1}, U) \,\big|\, Z_{1:t-1}, U \big),
\end{align}
where we often write $\mathsf{cvg}_t$ for brevity. 
This metric quantifies the probability of successful coverage conditional on all past data (note that the prediction set $\mathcal{C}_t$ is often constructed based on past observations) as well as any internal randomness.
Compared with marginal coverage, training-conditional coverage is a stronger notion that ensures most of the test points are covered given the constructed prediction set $\mathcal{C}_t$.



Ideally, one would anticipate $\mathsf{cvg}_t$ to match the target level $1 - \alpha$. The deviation between the nominal and actual coverage at time $t$---which may be interpreted as the ``regret'' incurred at time $t$---is quantified by the \emph{training-conditional coverage gap} metric defined as
\begin{align}
\label{eq:def-gap-cvg}
\gap_t = 
\gap^\pi_t(Z_{1:t-1},U)
    \coloneqq
    \bigl|
        \mathsf{cvg}^{\pi}_t(Z_{1:t-1},U) - (1-\alpha)
    \bigr|.
\end{align}
The \emph{training-conditional cumulative regret}---hereafter often abbreviated as cumulative regret, or simply regret---of algorithm $\pi$ is then defined as
\begin{align}\label{eq:def_regret}
\mathsf{regret}_T=\mathsf{regret}_\pi\left(\gD_{1:T}, T\right)
    \coloneqq
    \sum_{t=1}^{T}
       \mathop{{\EB}}\limits_{Z_{1:t-1}\sim \gD_{1:t-1}, U}\big[ \gap^\pi_t(Z_{1:t-1}, U)\big],
\end{align}
which aggregates the training-conditional coverage gaps over time and captures the deviation from the target coverage rate. Here and throughout, we often suppress the explicit dependence on past data and distributions and write $\gap_t$ and $\mathsf{regret}_T$ when it is clear from the context. 
Importantly, this cumulative regret notion bridges predictive inference (through coverage guarantees) and online learning (through regret analysis).

Another metric is the long-term coverage rate  defined as
\begin{equation}\label{eq:long_term_cvg_def}
\begin{aligned}
\mathsf{lt\text{-}cvg}_T=
    \mathsf{lt\text{-}cvg}_\pi\big(
    \gD_{1:T}, 
    T\big) &\coloneqq 
\frac{1}{T}\sum_{t=1}^T 
\PB \big(Y_t \in \mathcal{C}_t^\pi(X_t; Z_{1:t-1},U)\big),
\end{aligned}
\end{equation}
which is often abbreviated by $\mathsf{lt\text{-}cvg}_T$ and can be viewed as an expected version of the empirical long-term coverage frequency in (\ref{eq:long_term_cvg_def-candes}). 
This metric reflects the {\em time-averaged} coverage probability of a procedure over a horizon \(T\). A large body of prior work (e.g., \citet{gibbs2021adaptive,bastani2022practical,angelopoulos2024online}) studied how far the long-term coverage of a procedure deviates from the target level by looking at the quantity $\mathsf{lt\text{-}cvg}_T-(1-\alpha)$. 
Note, however, that this quantity captures only the gap between the average coverage probability and the nominal level, rather than the average of the coverage gaps over time (i.e., {\em gap of average} versus {\em average of gaps}); as a result, it does not necessarily reflect variations across individual times.




\subsection{Why training-conditional cumulative regret?}
\label{sec:why-regret}

The training-conditional cumulative regret  defined above offers a meaningful criterion for evaluating online conformal 
prediction algorithms.  
Unlike long-term coverage metrics like \eqref{eq:long_term_cvg_def}, cumulative regret remains informative under distributional drift by aggregating coverage gaps over the entire horizon. The fact below summarizes some basic connections between long-term coverage and regret; the proof can be found in \Cref{sec:proof-tc_regret_def}. 
\begin{fact}\label{lem:long-term_notge_regret}
The following connections between long-term coverage rate and cumulative regret hold.  
\begin{itemize}
    \item[(i)] The long-term coverage rate of any online conformal prediction algorithm satisfies
\[
\big| \mathsf{lt\text{-}cvg}_T - (1-\alpha) \big|
\;\le\;
\frac{\mathsf{regret}_T%
}{T}.
\]

  \item[(ii)] Consider any $0<\alpha\le 1/2$. 
There exists an online conformal prediction algorithm such that: 
\begin{itemize}
\item For every $t = 1,\dots,T$, its prediction set $\mathcal{C}_t$
is either $\varnothing$ or $\mathbb{R}$, and satisfies
$
%
\mathsf{lt\text{-}cvg}_T
= 1 - \alpha; 
$

\item The regret is lower bounded by
$
\mathsf{regret}_T
\ge \alpha T.$
\end{itemize}
\end{itemize}

\end{fact}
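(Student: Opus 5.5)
For part (i), I will start from the tower property. Since $\mathsf{cvg}_t$ is the conditional coverage probability given $(Z_{1:t-1},U)$, taking expectations gives
\[
\PB\big(Y_t\in\gC_t^\pi(X_t;Z_{1:t-1},U)\big)=\EB[\mathsf{cvg}_t].
\]
Averaging over $t$ then yields
\[
\mathsf{lt\text{-}cvg}_T-(1-\alpha)=\frac1T\sum_{t=1}^T\EB\big[\mathsf{cvg}_t-(1-\alpha)\big].
\]
Applying the triangle inequality, first across the sum and then inside each expectation (Jensen for $\abs{\cdot}$), bounds the absolute value by $\frac1T\sum_t\EB[\gap_t]=\mathsf{regret}_T/T$. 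This step is routine.

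For part (ii), I will use a single shared Bernoulli coin. Let the internal randomness $U$ be uniform on $[0,1]$ and independent of the data. At every $t$, output $\gC_t=\R$ if $U\le 1-\alpha$, and $\gC_t=\varnothing$ otherwise. The choice ignores the data completely, so given $(Z_{1:t-1},U)$ the coverage is deterministic: $\mathsf{cvg}_t=\mathbbm 1\{U\le1-\alpha\}$, which is $1$ or $0$.

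The long-term coverage is then $\frac1T\sum_t\PB(U\le 1-\alpha)=1-\alpha$ exactly. For each $t$, the gap equals $\alpha$ with probability $1-\alpha$ and equals $1-\alpha$ with probability $\alpha$. Hence
\[
\EB[\gap_t]=2\alpha(1-\alpha)\ge\alpha,
\]
where the inequality uses $\alpha\le1/2$. Summing over $t$ gives $\mathsf{regret}_T\ge\alpha T$.

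An equivalent deterministic version also works if one prefers no randomness: output $\varnothing$ on a prescribed set of $\alpha T$ times and $\R$ elsewhere. That route needs $\alpha T$ to be an integer, or else a randomized rounding step. So I will present the randomized construction, which avoids integrality issues altogether; I do not expect a real obstacle. The only point to state carefully is that $U$ is part of the conditioning in $\mathsf{cvg}_t$, and this is what makes the conditional coverage degenerate at $0$ or $1$.
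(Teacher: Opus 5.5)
Your proposal is correct and follows essentially the same route as the paper. Part (i) uses the tower property plus the triangle/Jensen inequality, and part (ii) uses a data-independent randomized choice between $\varnothing$ and $\mathbb{R}$ with probabilities $\alpha$ and $1-\alpha$. The differences are cosmetic: you use a single shared coin $U$ and compute $\EB[\gap_t]=2\alpha(1-\alpha)$ exactly, whereas the paper uses the pointwise bound $\gap_t\ge\min\{\alpha,1-\alpha\}=\alpha$.
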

On the one hand, Fact~\ref{lem:long-term_notge_regret} asserts that sublinear regret (i.e., $\mathsf{regret}_T=o(T)$)  guarantees faithful calibration of the long-term coverage rate. On the other hand, Fact~\ref{lem:long-term_notge_regret} indicates that the converse fails to hold---as already observed previously (e.g., \citet{bastani2022practical,bhatnagar2023improved,gibbs2024conformal})---an algorithm can achieve perfectly calibrated long-term coverage while still incurring training-conditional regret that grows linearly with $T$. 
More specifically, long-term coverage does not distinguish an algorithm that consistently achieves coverage close to $1-\alpha$ from one whose average coverage only coincidentally approaches $1-\alpha$.
For this reason, $\mathsf{regret}_T$ serves as a more informative performance measure for 
online conformal prediction.

\section{Online conformal prediction with pretrained scores}\label{sec:split_conf}

In this section, we study online conformal prediction with pretrained score functions, and put forward an algorithm that achieves minimax-optimal regret (up to logarithmic factors) for both the change-point and the smooth drift settings. To be precise, we impose the following assumption throughout this section. 
\begin{assumption}[Pretrained scores]
\label{assump:split}
Assume that the nonconformity score functions $\{s_t(\cdot,\cdot)\}_{t=1}^T$ are trained on an separate dataset independent of $\{(X_t,Y_t)\}_{t=1}^T$. 
Conditionally on $\{s_t(\cdot,\cdot)\}_{t=1}^T$, the observations $\{(X_t,Y_t)\}_{t=1}^T$ are independent. 
Moreover, for each $t\in[T]$, the random variable $s_t(X_t,Y_t)$ admits a continuous cumulative distribution function.
\end{assumption}
In words, the data used to pretrain the scores---such as an offline dataset or a different data stream---are separate from, and independently generated of, the data stream for which we construct conformal prediction sets. Consequently, the procedures studied in this section have the flavor of split conformal methods \citep{vovk2005algorithmic}. It is also noteworthy that the score functions are allowed to be time-varying.

\subsection{Algorithm}
\label{sec:online-split-algorithm-theory}

Let us motivate our algorithmic ideas and describe the proposed procedure for handling distribution shifts over time. 
Intuitively, when the data distributions drift significantly while the online conformal prediction algorithm continues to rely on stale quantile estimates, 
 miscoverage can occur frequently, resulting in loss of regret optimality. To remedy this issue, a natural strategy is to continuously monitor the empirical coverage and promptly reset the quantile estimates once they become statistically unreliable. 
 This idea underlies our algorithm design.


\subsubsection{Motivating examples}

To formalize the above intuition, we begin by examining two simplified cases.  A metric that we shall pay particular attention to is the following block coverage error over the time interval $[s, t]$: 
\begin{align}
\mathsf{cvg}\text{-}\mathsf{err}^{\star}_q(s,t) 
\coloneqq 
\sum_{l=s}^{t} 
\Big(\mathbb{P}\big(s_l(X_l,Y_l) \le q\big)
-(1- \alpha)\Big)
\end{align}
with $q$ a given threshold; we elucidate how this metric allows us to detect distribution shift below.


\paragraph{A simple case with 1 change point.} 
%
Before time $t$, there is a unique change point $t_0 < t$:
\begin{itemize}
    \item  for every $1\leq l\leq t_0$, the score $s_l(X_l,Y_l)$ is independently drawn from the distribution $\mathcal{P}_1^{\mathsf{seg}}$; 

    \item for every $t_0< l\leq t$,  the score $s_l(X_l,Y_l)$ is independently drawn from the distribution $\mathcal{P}_2^{\mathsf{seg}}$. 
\end{itemize}
\noindent 
The threshold $q$ is taken to be the $(1-\alpha)$-quantile of $\mathcal{P}_1^{\mathsf{seg}}$. Below, we  write $s_l=s_l(X_l,Y_l)$ for brevity. 


Figure~\ref{fig:block-coverage} provides a schematic illustration of this simple
scenario.  
The left panel plots $\mathbb{P}(s_t\le q)-(1-\alpha)$ as $t$ varies.  
By construction,  
$\mathbb{P}(s_t\le q)-(1-\alpha)=0$ before the
change point $t_0$.  
At time $t_0$, the score distribution shifts from $\mathcal{P}_1^{\mathsf{seg}}$ to $\mathcal{P}_2^{\mathsf{seg}}$, causing $\mathbb{P}(s_t\le q)-(1-\alpha)$ to jump to a nonzero value. This jump reflects the miscalibration induced by applying the pre-change cutoff $q$ to the post-change distribution.
The right panel plots $\mathsf{cvg}\text{-}\mathsf{err}^{\star}_q(1,t) $ versus $t$,  illustrating the cumulative
effect of these pointwise deviations.  
We have $\mathsf{cvg}\text{-}\mathsf{err}^{\star}_q(1,t) =0$ prior to
$t_0$, after which the bias  accumulates over time and $\mathsf{cvg}\text{-}\mathsf{err}^{\star}_q(1,t) $ grows linearly.  
If we fix a threshold $\sigma>0$ and declare a distributional change once
$\mathsf{cvg}\text{-}\mathsf{err}^{\star}_q(1,t) >\sigma$, then for $\sigma$ sufficiently small the detection time $t_1$ will
occur shortly after $t_0$.  
This illustrates how a simple block-coverage statistic can enable timely detection of distributional drift.

\begin{figure}[t]
\centering
\begin{tikzpicture}[>=stealth, scale=1]

\begin{scope}
  \draw[->] (0,0) -- (5,0) node[below] {$t$};
  \draw[->] (0,0) -- (0,3) node[left] {$\mathbb{P}(s_t\le q)$};

  \draw[dashed] (0,0.2) -- (5,0.2);
  \node[left] at (0,0.2) {$1 - \alpha$};

  \draw[thick] (0,0.2) -- (2.5,0.2);

  \draw[densely dotted] (2.5,-0.1) -- (2.5,2.2);
  \node[below] at (2.5,-0.1) {$t_0$};

  \draw[thick] (2.5,1.7) -- (4.5,1.7);

  \draw[dashed] (4.0,-0.1) -- (4.0,2.2);
  \node[below] at (4.0,-0.1) {$t_1$};

\end{scope}

\begin{scope}[xshift=7cm]
  \draw[->] (0,0) -- (5,0) node[below] {$t$};
  \draw[->] (0,0) -- (0,3) node[left] {$\mathsf{cvg}\text{-}\mathsf{err}^{\star}_q(1,t)$};

  \draw[dashed] (0,2.0) -- (5,2.0);
  \node[right] at (5,2.0) {$\sigma$};

  \draw[dashed] (0,0.2) -- (5,0.2);
  \node[left] at (0,0.2) {$0$};

  \draw[thick] (0,0.2) -- (2.5,0.2);

  \draw[densely dotted] (2.5,-0.1) -- (2.5,2.6);
  \node[below] at (2.5,-0.1) {$t_0$};

  \draw[thick] (2.5,0.2) -- (4.0,2.0);

  \draw[densely dotted] (4.0,-0.1) -- (4.0,2.6);
  \node[below] at (4.0,-0.1) {$t_1$};

\end{scope}

\end{tikzpicture}
\caption{The case with a single change point at $t_0$.  
(Left) pointwise coverage $\mathbb{P}(s_t\le q)$ vs.~$t$;  
(right) block coverage error $\mathsf{cvg}\text{-}\mathsf{err}^{\star}_q(1,t)$ vs.~$t$ along with a detection
threshold $\sigma$.}
\label{fig:block-coverage}
\end{figure}
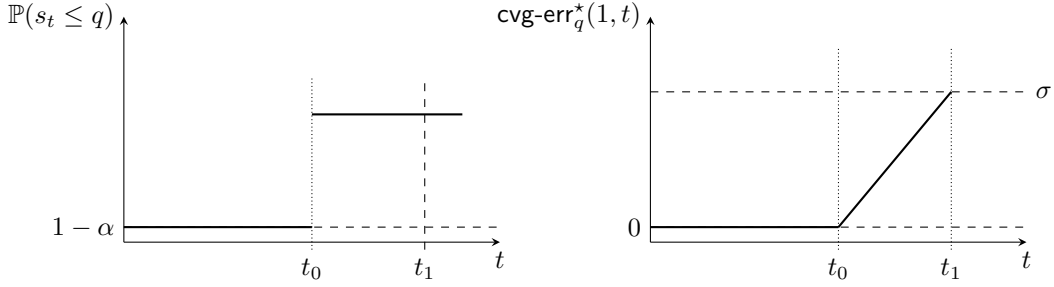

\paragraph{A case with smooth and oscillating distribution shifts.}
Consider another simple example, where the score distributions evolve smoothly over time and
the instantaneous deviations \(\PB(s_t\le q)-(1- \alpha)\) oscillate in sign.
The variation of \(\PB(s_t\le q)\)  vs.~$t$ is displayed in Figure~\ref{fig:block-coverage-general}(left), where   \(\PB(s_t\le q)\) crosses the
reference level \(1-\alpha\) multiple times, with the signed deviation being positive on some sub-intervals and negative on
others. 


Such oscillations cause a cancellation effect. 
As illustrated in Figure~\ref{fig:block-coverage-general}(right), the block coverage error \( \mathsf{cvg}\text{-}\mathsf{err}^{\star}_q(t_0,t)\) may initially increase but subsequently return to 0 as positive and negative contributions offset one another. Consequently, monitoring deviations from a
single starting point \(t_0\) can fail to detect distribution drift. 
Motivated by this, our proposed solution is to scan over different starting times within a time window
and track the maximum deviation.
 As shown in Figure~\ref{fig:block-coverage-general}(right), the block \([t_0,t_2]\) exhibits a large  deviation
\(\mathsf{cvg}\text{-}\mathsf{err}^{\star}_q(t_1,t_2)\), even though deviations measured from $t_0$ cancel out.  
This maximum-deviation statistic is therefore capable of detecting 
smooth and oscillating distribution shifts.



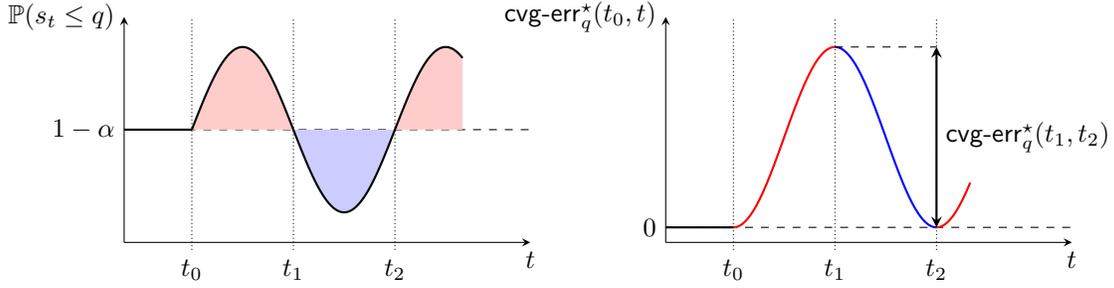
\begin{figure}[t]
\centering
\begin{tikzpicture}[>=stealth, xscale=0.9, yscale=1]

\def\tzero{1.0}      
\def\ttwo{4.0}       
\def\Tlen{3.0}       
\def\tone{2.5}       
\def\tthree{5.0}     
\def\tfour{4.5}
\def\Alpha{1.5}      
\def\Amp{1.1}        

\begin{scope}
  \draw[->] (0,0) -- (6,0) node[below] {$t$};
  \draw[->] (0,0) -- (0,3) node[left] {$\mathbb{P}(s_t\le q)$};

  \draw[dashed] (0,\Alpha) -- (6,\Alpha);
  \node[left] at (0,\Alpha) {$1 - \alpha$};

  \draw[densely dotted] (\tzero,-0.1) -- (\tzero,3);
  \node[below] at (\tzero,-0.1) {$t_0$};

  \draw[densely dotted] (\tone,-0.1) -- (\tone,3);
  \node[below] at (\tone,-0.1) {$t_1$};

  \draw[densely dotted] (\ttwo,-0.1) -- (\ttwo,3);
  \node[below] at (\ttwo,-0.1) {$t_2$};

  \draw[thick] (0,\Alpha) -- (\tzero,\Alpha);

  \begin{scope}
    \clip (\tzero,\Alpha) rectangle (\tone,3);
    \fill[red!20]
      (\tzero,\Alpha)
      -- plot[domain=\tzero:\tone, smooth, samples=200]
         ({\x},{\Alpha + \Amp*sin(deg(2*pi*(\x-\tzero)/\Tlen))})
      -- (\tone,\Alpha) -- cycle;
  \end{scope}

  \begin{scope}
    \clip (\tone,0) rectangle (\ttwo,\Alpha);
    \fill[blue!20]
      (\tone,\Alpha)
      -- plot[domain=\tone:\ttwo, smooth, samples=200]
         ({\x},{\Alpha + \Amp*sin(deg(2*pi*(\x-\tzero)/\Tlen))})
      -- (\ttwo,\Alpha) -- cycle;
  \end{scope}

  \begin{scope}
    \clip (\ttwo,\Alpha) rectangle (\tthree,3);
    \fill[red!20]
      (\ttwo,\Alpha)
      -- plot[domain=\ttwo:\tthree, smooth, samples=200]
         ({\x},{\Alpha + \Amp*sin(deg(2*pi*(\x-\tzero)/\Tlen))})
      -- (\tthree,\Alpha) -- cycle;
  \end{scope}

  \draw[thick,domain=\tzero:\tthree, smooth, samples=200]
    plot ({\x},{\Alpha + \Amp*sin(deg(2*pi*(\x-\tzero)/\Tlen))});

\end{scope}

\begin{scope}[xshift=8.0cm]  

  \draw[->] (0,0) -- (6,0) node[below] {$t$};
  \draw[->] (0,0) -- (0,3)
    node[left] {$\mathsf{cvg}\text{-}\mathsf{err}^{\star}_q(t_0,t)$};

  \draw[dashed] (0,0.2) -- (6,0.2);
  \node[left] at (0,0.2) {$0$};

  \draw[densely dotted] (\tzero,-0.1) -- (\tzero,3);
  \node[below] at (\tzero,-0.1) {$t_0$};

  \draw[densely dotted] (\tone,-0.1) -- (\tone,3);
  \node[below] at (\tone,-0.1) {$t_1$};

  \draw[densely dotted] (\ttwo,-0.1) -- (\ttwo,3);
  \node[below] at (\ttwo,-0.1) {$t_2$};

  \draw[thick] (0,0.2) -- (\tzero,0.2);

  \def\Base{0.2}
  \def\Scale{1.2} 

  \draw[thick,red,  domain=\tzero:\tone,  smooth, samples=200]
    plot ({\x},{\Base + \Scale*(1 - cos(deg(2*pi*(\x-\tzero)/\Tlen)))});
  \draw[thick,blue, domain=\tone:\ttwo,  smooth, samples=200]
    plot ({\x},{\Base + \Scale*(1 - cos(deg(2*pi*(\x-\tzero)/\Tlen)))});
  \draw[thick,red,  domain=\ttwo:\tfour, smooth, samples=200]
    plot ({\x},{\Base + \Scale*(1 - cos(deg(2*pi*(\x-\tzero)/\Tlen)))});

  \pgfmathsetmacro{\Fone}{\Base + \Scale*(1 - cos(deg(2*pi*(\tone-\tzero)/\Tlen)))}
  \pgfmathsetmacro{\Ftwo}{\Base + \Scale*(1 - cos(deg(2*pi*(\ttwo-\tzero)/\Tlen)))}

  \draw[dashed] (\tone,\Fone) -- (\ttwo,\Fone);

  \draw[<->,thick] (\ttwo,\Ftwo) -- (\ttwo,\Fone);
  \node[right] at (\ttwo,0.5*\Fone+0.5*\Ftwo)
    {$\mathsf{cvg}\text{-}\mathsf{err}^{\star}_q(t_1,t_2)$};

\end{scope}

\end{tikzpicture}
\caption{Schematic illustration of a case with smooth, oscillating distribution shifts. (Left) pointwise coverage $\mathbb{P}(s_t\le q)$ vs.~$t$;  
(right) block coverage error $\mathsf{cvg}\text{-}\mathsf{err}^{\star}_q(1,t)$ vs.~$t$.} 
\label{fig:block-coverage-general}
\end{figure}

\subsubsection{The proposed procedure: \driftocp}

We are now positioned to present the proposed online conformal prediction procedure in the presence of pretrained scores, beginning with a distribution drift detection subroutine. 

\paragraph{Subroutine: detection of distribution drift (\textsc{DriftDetect}).} 
Thus far, we have illustrated the potential utility of $\mathsf{cvg}\text{-}\mathsf{err}^{\star}_q(s,t)$ in the face of distribution shift. 
Given that this quantity is not accessible in practice, we propose to approximate it via the following empirical block coverage error:
\begin{equation}\label{eq:emp_blk_cvg_err}
\mathsf{cvg}\text{-}\mathsf{err}_q(s,t) 
\coloneqq 
\sum_{l=s}^{t} 
\big(\mathbbm{1}\{s_l(X_l,Y_l) \le q\}
-(1- \alpha)\big).
\end{equation}
Assuming statistical independence, the central limit theorem implies that 
$\mathsf{cvg}\text{-}\mathsf{err}_q(s,t)$ fluctuates around 
$\mathsf{cvg}\text{-}\mathsf{err}^{\star}_q(s,t)$ with uncertainty on the order of $(t-s+1)^{1/2}$. Moreover, under stationarity of the scores within $[s,t]$, one has $\mathsf{cvg}\text{-}\mathsf{err}^{\star}_q(s,t)=0$.  
Consequently, testing whether the normalized empirical fluctuation 
\(
{\bigl|\mathsf{cvg}\text{-}\mathsf{err}_q(s,t)\bigr|}\big/{\sqrt{t-s+1}}
\)
exceeds a suitably chosen threshold $\sigma$ provides a natural criterion for detecting distribution shifts within the time interval $[s,t]$. 
The intuition is formalized in the subroutine described in~\Cref{alg:drift-detection}, 
denoted by
\textsc{DriftDetect}$(q; t_0, t_1; \sigma)$, which scans the window $[t_0,t_1]$
for statistically significant departure from stationarity. 
The subroutine plays a pivotal role in our main
procedure.

\begin{algorithm}[t]
\DontPrintSemicolon
\caption{\textsc{DriftDetect}$(q; t_0,t_1; \sigma)$}\label{alg:drift-detection}
\textbf{input:} quantile $q$; time window $[t_0,t_1]\subseteq[T]$; detection threshold $\sigma$.\\
\For(\tcp*[f]{scan the entire time window.}){$j= t_0,\cdots,t_1$}{
    compute $Z_{j,t_1}\leftarrow \dfrac{\bigl|\mathsf{cvg}\text{-}\mathsf{err}_q(j,t_1)\bigr|}{\sqrt{t_1-j+1}}$ (cf.~\eqref{eq:emp_blk_cvg_err}).\tcp*{construct detection statistics.} 
    \If{$Z_{j,t_1}>\sigma$}{\Return \textsf{true}.\tcp*{declare detection of drift once this statistic exceeds the threshold.}}
}
\Return \textsf{false}
\tcp*{no distribution drift has been detected.}
\end{algorithm}

\paragraph{Full procedure.} 
We now describe several key components of our main procedure.  The complete procedure, called \driftocp (short for \textit{online conformal prediction with drift detection}), is summarized in \Cref{alg:OCID}. 
\begin{itemize}
    \item {\em Stage-wise decomposition.} The entire time horizon  is divided into a sequence of {\em stages} in a data-driven manner, where we use $n$ to index stages. 
  A new stage is initiated whenever the subroutine \textsc{DriftDetect} signals a substantial distribution drift. Within each stage, the score distributions are treated as
{\em approximately stationary}.  

  \item {\em Decomposition into rounds within each stage.} 
  Provided no distribution drift is detected, each stage is
further partitioned into a sequence of \emph{rounds}. Following the standard doubling trick (e.g., \citet[Chapter~2.3]{cesa-bianchi2006prediction}, \citet[Chapter~6]{lattimore2020bandit}), we let the round lengths grow
geometrically, which eliminates the need for prior knowledge of the horizon length. We use $r$ to index rounds. For round $r$ of stage $n$, all data from the preceding
round are used to update the quantile estimate $q_{n,r}$, which in turn determines the prediction set at any time $\tau$ within the current round: 
\[
    \mathcal{C}_{\tau} = \{ y: s_{\tau}(X_{\tau},y) \leq q_{n,r} \}.
\]

    \item {\em Drift detection within each round.} Let 
$\tau_{n,r}$ represent the time at which round $r$ of stage $n$
begins. During this round, incoming samples are monitored via
\(
\textsc{DriftDetect}\bigl(q_{n,r};\,\tau_{n,r},\,\tau_{n,r}+t;\,\sigma_{n,r}\bigr)
\),
so that each call to the subroutine $\textsc{DriftDetect}$  operates on a block beginning at the onset of the current round.

\end{itemize}

\begin{algorithm}[t]
\DontPrintSemicolon
\caption{\textsc{Online Conformal Prediction with Drift Detection (\driftocp)}}\label{alg:OCID}
\textbf{input:}  target coverage level \(1-\alpha\); detection thresholds \(\{\sigma_{n,r}\}_{n,r=1}^\infty\).\\
\textbf{initialize:} \(n\leftarrow 1\), \(r\leftarrow 1\), \(\tau\leftarrow 0\), \(\tau_{1,1} \leftarrow 1\), \(q_{1,1} \leftarrow 0\).

\While{\textsf{true}}{
  \For(\tcp*[f]{round $r$ contains at most $T_{r}=3^r$ time points.}){\(t= 1,\cdots,T_{r}\;(\coloneqq 3^r)\)}{
    \(\tau\leftarrow \tau+1\).\tcp*{update global time index.}
    observe feature $X_{\tau}$ and score function $s_{\tau}(\cdot,\cdot)$; set \(X_{n,r,t}\leftarrow X_{\tau}\), \(s_{n,r,t}(\cdot,\cdot)\leftarrow s_{\tau}(\cdot,\cdot)\).\;
    construct prediction set \(\gC_{\tau}
    \leftarrow \{y: s_{\tau}(X_{\tau},y)\le q_{\tau}\}\) with $q_{\tau}=q_{n,r}$.\;
    observe $Y_{\tau}$; set \(Y_{n,r,t}\leftarrow Y_{\tau}\). \tcp*{response is observed after the prediction set is formed.}
    $\textsf{drift} \leftarrow $   \textsc{DriftDetect}\((q_{n,r};\,\tau_{n,r},\,\tau_{n,r}+t-1;\,\sigma_{n,r}).\) \tcp*{call \Cref{alg:drift-detection}.}

    \If{\textsf{drift}\text{ is }\textsf{true}}{
    \(\widetilde{q} \leftarrow q_{n,r}\), 
      \(n\leftarrow n+1\), \(r\leftarrow 1\).\tcp*{update stage index and round index.}
      \(q_{n,1}\leftarrow \widetilde{q}\), \(\tau_{n,1}\leftarrow \tau+1\).\tcp*{initialization for new stage.}
      \textbf{break}.\tcp*{enter next stage.}
    }
  }
  \If{\textsf{drift} is \textsf{false}}{
  \(q_{n,r+1}\leftarrow \argmin_{q}\Bigl|\sum_{j=1}^{T_{r}}\bigl(\mathbbm{1}\{s_{n,r,j}(X_{n,r,j},Y_{n,r,j})>q\}-\alpha\bigr)\Bigr|\).\tcp*{update quantile estimate.}
  \(r\leftarrow r+1\), \(\tau_{n,r}\leftarrow \tau+1\).\tcp*{start time of next round.} 
  }
}
\end{algorithm}

We would also like to highlight several appealing features of 
Algorithm~\ref{alg:OCID}.
First, it is horizon-free, meaning that the procedure does not require any prior knowledge of the horizon length \(T\); as we shall see later, our algorithm achieves the desirable {\em anytime} regret---a terminology commonly adopted in the online learning literature \citep{lattimore2020bandit} to emphasize its horizon-free nature. Second, it is computationally lightweight. Each new observation triggers at most one drift detection subroutine over the current window and each round only updates the quantile estimate once.  In particular, the computational cost at each time $t$ scales linearly with the length of the current scanning window, instead of recalibrating over many candidate lookback windows as in some prior work (see \Cref{sec:related-split}). Moreover, the drift detection subroutine is inexpensive in practice, since the underlying detection statistics can be maintained incrementally. Finally, we emphasize that Algorithm~\ref{alg:OCID} operates without any prior knowledge of the underlying distributional drift—such as the number and locations of change points, or the degree of cumulative variation—highlighting its adaptation to unknown and evolving data-generating mechanisms.


\subsection{Theoretical guarantees}

Next, we establish non-asymptotic upper bounds on the training-conditional regret for the proposed Algorithm~\ref{alg:OCID}, encompassing both the change-point and smooth drift settings introduced in \Cref{sec:online_conf_prediction_pro}.
 
\begin{theorem}\label{thm:split_regret}
Suppose that Assumption~\ref{assump:split} holds. 
If we set the detection thresholds as
\(\sigma_{n,r} \coloneqq 24\sqrt{\log(4\tau_{n,r})}\) for every  stage-round index pair $(n,r)$, then 
Algorithm~\ref{alg:OCID} achieves
\begin{align}
    \mathsf{regret}_T \leq  
    \begin{cases}
        \widetilde{O}(\sqrt{(N^{\mathsf{cp}}+1)T}) 
        & \text{for the change-point setting}; \\
\widetilde{O}\left(\sqrt{T} + (\mathsf{KS}_T)^{\frac{1}{3}}T^{\frac{2}{3}}\right)
& \text{for the smooth drift setting}.
    \end{cases}
\end{align}
\end{theorem}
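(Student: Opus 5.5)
Write $F_\tau(q)\coloneqq\PB(s_\tau(X_\tau,Y_\tau)\le q)$ for the score CDF at time $\tau$. By Assumption~\ref{assump:split}, conditionally on the scores and the past, the coverage at time $\tau$ is $\mathsf{cvg}_\tau=F_\tau(q_\tau)$. Hence $\gap_\tau=|F_\tau(q_{n,r})-(1-\alpha)|$, and the regret is a sum of these deviations over rounds.

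\textbf{Step 1: a good event.} The plan is to work on a single event $\mathcal{E}$ on which every empirical block sum concentrates. Specifically, for every round $(n,r)$ and every sub-block $[j,t]$ inside it, we want
\[
\bigl|\mathsf{cvg}\text{-}\mathsf{err}_{q_{n,r}}(j,t)-\mathsf{cvg}\text{-}\mathsf{err}^\star_{q_{n,r}}(j,t)\bigr|\le c\sqrt{(t-j+1)\log(4\tau_{n,r})}.
\]
We also want the estimate $q_{n,r+1}$ computed from round $r$ to satisfy a DKW-type bound: $|\bar F_r(q_{n,r+1})-(1-\alpha)|\lesssim\sqrt{\log(\tau)/3^r}$, where $\bar F_r$ is the average CDF over round $r$. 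Since $q_{n,r}$ is fixed by the previous round, and the data in the current round are independent of it, Hoeffding's inequality applies conditionally. A union bound over the $O(\tau^2)$ pairs, with failure probability about $\tau_{n,r}^{-3}$, keeps the contribution of $\mathcal{E}^c$ to the regret $O(1)$. The constant $24$ in $\sigma_{n,r}$ is chosen so that $\sigma_{n,r}$ dominates $2c\sqrt{\log}$.

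\textbf{Step 2: consequences of $\mathcal{E}$.} Two facts follow. (a) \emph{No false alarms.} Any block lying within a stationary segment, where the true error is zero, never triggers detection. In the smooth setting, more generally, detection implies that the true block error exceeds $\tfrac12\sigma\sqrt{\text{len}}$. (b) \emph{Undetected rounds have small regret.} If round $(n,r)$ has run to time $t$ without detection, then every suffix block satisfies $|\mathsf{cvg}\text{-}\mathsf{err}^\star(j,t)|\le 2\sigma\sqrt{t-j+1}$. This controls signed sums but not absolute values. To pass to $\sum|F_\tau(q)-(1-\alpha)|$ I will use drift. Within a window of length $L$, the function $F_\tau(q)$ varies by at most the local KS variation $V$, or equals a constant in the change-point case. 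A signed block sum of size $\lesssim\sqrt{L\log}$ then forces each term to be within $\sqrt{\log/L}+V$ of its mean value. So the absolute regret over the window is $\lesssim\sqrt{L\log}+LV$. In addition, the previous-round estimate gives $|F_\tau(q_{n,r})-(1-\alpha)|\lesssim\sqrt{\log/3^{r-1}}+V_{\text{prev}}$, which handles the start-of-round bias.

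\textbf{Step 3: summing.} In the change-point setting, under $\mathcal{E}$, every stage contains at least one true change point, so there are at most $N^{\mathsf{cp}}+1$ stages. Within a stage, the round lengths are $3^r$. A round contained in one segment costs $\widetilde O(\sqrt{3^r})$. A round straddling a change point costs $\widetilde O(\sqrt{3^r})$ up to detection, because the linear drift of the error exceeds $\sigma\sqrt{\text{len}}$ after roughly $\sigma^2/\delta^2$ steps, and the regret accumulated before then is $\delta\cdot\sigma^2/\delta^2\lesssim\sigma\sqrt{\text{len}}$. Geometric sums give $\widetilde O(\sqrt{L_n})$ per stage of length $L_n$, and Cauchy--Schwarz over the stages yields $\widetilde O(\sqrt{(N^{\mathsf{cp}}+1)T})$. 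In the smooth setting, I will partition $[1,T]$ into the algorithm's stages. Using (a), a stage of length $L$ ends only if its true drift satisfies $V_n\gtrsim\sqrt{\log/L}$, i.e.\ $L_n\gtrsim\log/V_n^2$. Each stage costs $\widetilde O(\sqrt{L_n}+L_nV_n)$. Within each stage, however, I will instead bound the cost by $\widetilde O(\sqrt{L_n})$ plus the regret of the undetected part, capping $L_nV_n$ via the non-detection bound. Maximizing $\sum_n\sqrt{L_n}$ subject to $\sum_n L_n=T$ and $\sum_n V_n\le\mathsf{KS}_T$, with $V_n\gtrsim L_n^{-1/2}$, gives $M\lesssim(\mathsf{KS}_T)^{2/3}T^{1/3}$ stages. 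Cauchy--Schwarz then yields $\sqrt{MT}=(\mathsf{KS}_T)^{1/3}T^{2/3}$, plus $\sqrt T$.

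\textbf{Main obstacle.} I expect the hardest step to be Step 2(b) in the smooth setting: converting the non-detection condition, which is a bound on signed suffix sums, into a bound on absolute coverage gaps. The difficulty is that oscillation can cancel in the signed sums. My approach is to split each round into dyadic sub-blocks of length about $(\log/V^2)^{1/3}$ and apply the suffix-scan bound blockwise. Within each sub-block, monotone variation of at most the local KS drift makes the absolute and signed sums comparable, up to length times variation. This is where the $T^{2/3}$ trade-off arises.
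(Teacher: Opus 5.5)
Your overall architecture is the paper's: localized typical events, per-round bounds from non-detection, doubling within stages, a stage count in the change-point case, and ``detection forces drift'' plus H\"older in the smooth case. Two of your steps differ but are valid substitutes. Your equal-length-window argument replaces the paper's partition into sign-constant runs, and your H\"older bound $M\le(\sum_n V_n)^{2/3}(\sum_n L_n)^{1/3}$ on the number of stages replaces the paper's step $\sqrt{S_n}\le S_n^{2/3}|B_n|^{1/3}$.

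The genuine gap is Step~2(a), on which your change-point stage count rests. Inside a stationary segment the true block error is $\sum_l\bigl(F(q_{n,r})-(1-\alpha)\bigr)$, and this is not zero. The reason is that $q_{n,r}$ is an empirical quantile of the previous round, or, in round~1 of a stage, the stale quantile that just triggered a detection. If round $r-1$ straddles a change point, a block lying entirely in one segment can legitimately trigger detection, so ``stationary blocks never alarm'' is false.

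What you actually need is the paper's Lemma~\ref{lem:seg-separation}, which you can build from your own start-of-round calibration bound. On the typical event, detection in round $r_n$ is impossible when rounds $r_n-1$ and $r_n$ lie in one segment:
\begin{itemize}
\item the empirical-quantile property plus DKW give $|F(q_{n,r_n})-(1-\alpha)|\le 1/T_{r_n-1}+6\sqrt{\log\tau_{n+1}/T_{r_n-1}}\le 14\sqrt{\log\tau_{n+1}/T_{r_n}}$;
\item detection plus concentration force this quantity to be at least $18\sqrt{\log\tau_{n+1}/T_{r_n}}$, using $t_n-j_n+1\le T_{r_n}=3T_{r_n-1}$.
\end{itemize}
You must also observe that a block of length $L$ can exceed the threshold only if $L>576\log(4\tau)>3^6$. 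Hence detection never occurs in the first six rounds of a stage, so round $r_n-1$ lies in the same stage and uses a freshly estimated quantile. Only then does each non-final stage contain a change point in its last two rounds.

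Three further steps need repair.

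\begin{itemize}
\item \emph{Per-stage cost in the change-point setting.} The bound $\widetilde O(\sqrt{L_n})$ is false when a stage contains several undetected change points. Take segments of length $\ell$ alternating between $F(q)=1-\alpha\pm c\,\sigma_{n,r}/\sqrt{\ell}$. They never trigger detection, yet in a round with $K$ segments they cost order $\sigma_{n,r}\sqrt{KT_r}$. The correct per-round bound, which your own argument gives when applied segment by segment, is $\widetilde O\bigl(\sum_k\sqrt{|\gI_{n,r,k}|}\bigr)$. Either Cauchy--Schwarz over all (round, segment) pieces, or the paper's reduction to $\sum_j\sqrt{|\gI_{n,j}|}$ per stage (each $\gI_k$ meets at most two stages), then recovers $\widetilde O(\sqrt{(N^{\mathsf{cp}}+1)T})$.

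\item \emph{Per-stage cost in the smooth setting.} It must be $\widetilde O(\sqrt{L_n}+V_n^{1/3}L_n^{2/3})$, obtained from your windowed per-round bound and doubling, not $\sqrt{L_n}+L_nV_n$.

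\item \emph{The good event.} A single global event $\mathcal{E}$ cannot have its complement charged $O(1)$, because $\sum_\tau\tau^{-3}$ is a constant rather than $O(1/T)$. You must localize the events. You must also pay for the structural damage of a local failure: a spurious detection adds a stage and breaks your stage count. The paper charges the whole stage on $\mathcal{B}_n^{\mathrm c}$ and shows
\[
\EB\Bigl[\textstyle\sum_n(\tau_{n+1}-\tau_n)\mathbbm{1}\{\mathcal{B}_n^{\mathrm c}\}\Bigr]=O(1),
\]
using $\tau_{n+1}\le 16\tau_{n,r_n-1}$, the independence of post-$\tau_n$ data from $\tau_n$, and $\tau_n\ge n$.
\end{itemize}
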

The proof of this theorem is provided in \Cref{sec:proof:thm:split_regret}. 
For the change-point setting, \Cref{thm:split_regret} reveals that the regret scales proportionally to the square root of the number of change points $N^{\mathsf{cp}}$, in addition to the $\sqrt{T}$ dependence on the time horizon---a scaling in $T$ that arises commonly in online learning \citep{shalev2012online,lattimore2020bandit}. In contrast, for the smooth drift setting, our regret bound contains a term 
$(\mathsf{KS}_T)^{1/3}T^{2/3}$, whose dependence on $T$ is worse than the $\sqrt{T}$ scaling. 
This suggests that the dominant source of regret may stem from the temporal evolution of the underlying score distribution, underscoring the important role of real-time adaptation. 
We also emphasize that the coverage gap depends on the KS distance between the 
{\em score} distributions rather than those of the {\em raw data}. 
As discussed in~\citet{barber2023conformal}, this distinction can lead to much tighter guarantees, 
since the scores may be far closer in distribution than the underlying data.
Encouragingly, these regret bounds match the minimax lower bound (up to logarithmic factors), as we shall demonstrate next.


%

\subsection{Minimax lower bound}\label{sec:split_lower_bd}

To examine the optimality of Algorithm~\ref{alg:OCID}, this subsection develops minimax lower bounds on the cumulative regret, tailored to the class of online algorithms with pretrained scores. We begin by specifying the admissible algorithms and distribution classes of interest, which are necessary for the lower-bound analysis.

\begin{itemize}
    \item {\em Admissible algorithms.} 
    For any online conformal prediction algorithm $\pi$ with pretrained scores, let $\pi_t$ denote its rule for selecting the quantile threshold $q_t$ (cf.~\eqref{eq:def:C_tx}) at time $t$. 
Let \(U\sim \mathsf{Unif}(0,1)\) be an auxiliary random seed, independent of the data stream.  
We consider a family $\mathcal{Q}$ of non-anticipating algorithms $\pi=\{\pi_t\}_{t\ge 1}$, where
$\pi_1:[0,1]\rightarrow\mathbb{R}$ and 
$\pi_t:\mathbb{R}^{t-1}\times[0,1]\rightarrow\mathbb{R}$ ($t\ge2$) are measurable mappings.  
For each $t$, $\pi$ specifies the quantile threshold
\[
q_t=
\begin{cases}
\pi_1(U), & \text{if }t=1,\\[4pt]
\pi_t(s_{t-1},\ldots,s_1,U), & \text{if }t\ge 2,
\end{cases}
\]
where we remind the reader that $s_t=s_t(X_t,Y_t)$.
Each $\pi_t$ depends only on the past scores and the random seed $U$, hence algorithms in $\mathcal{Q}$ are score-based, non-anticipatory, and possibly randomized. 

\item {\em Distribution classes.}
We introduce two \emph{score-based} distribution classes, corresponding to the two settings in \Cref{sec:online_conf_prediction_pro}: for given budgets \(N^{\mathsf{cp}}\in\ZB^+\) and \(\mathsf{KS}_T>0\), define
%
\begin{subequations}
\begin{align}
\mathcal{L}_1(N^{\mathsf{cp}})
&\coloneqq 
\Bigl\{
        (\mathcal{D}_1,\ldots,\mathcal{D}_T):\;
        (\gD_1^{\mathsf{score}},\ldots,\gD_T^{\mathsf{score}})\text{ change at most $N^{\mathsf{cp}}$ times.}
\Bigr\};\label{eq:N_change_dist}\\
\mathcal{L}_2(\mathsf{KS}_T)
&\coloneqq 
\Bigl\{
        (\mathcal{D}_1,\ldots,\mathcal{D}_T):\;
        \sum_{t=1}^{T-1}
        \mathsf{KS}(\mathcal{D}_t^{\mathsf{score}} , \mathcal{D}_{t+1}^{\mathsf{score}})
        \le
        \mathsf{KS}_T
\Bigr\}.\label{eq:smooth_vary_dist}
\end{align}
\end{subequations}

\end{itemize}

For a distribution class $\mathcal{L}$, the worst-case regret of algorithm $\pi \in \mathcal{Q}$ is defined as
\begin{align}
\label{eq:def-worstcase-regret}
\mathsf{regret}_\pi(\gL,T)
&\coloneqq
\sup_{(\mathcal{D}_1,\ldots,\mathcal{D}_T)\in\mathcal{L}}
\mathsf{regret}_\pi(\mathcal{D}_{1:T},T)\notag\\
&= \sup_{(\mathcal{D}_1,\ldots,\mathcal{D}_T)\in\mathcal{L}}\sum_{t=1}^T\EB\Big[\bigl|\,\PB_{\gD_t^{\mathsf{score}}}(s_t \le q_t\mid q_t) - (1-\alpha)\,\bigr|\Big].
\end{align}
Armed with these definitions and notation, we are ready to present our minimax lower bounds.



\begin{theorem}
\label{thm:lower_bd_split_regret}
Consider any fixed $\alpha \in (0,1)$. 
Suppose that Assumption~\ref{assump:split} holds. 
For any admissible algorithm $\pi\in\mathcal{Q}$, its worst-case regret (cf.~(\ref{eq:def-worstcase-regret}))  satisfies
\begin{align*}
\mathsf{regret}_\pi\big(\gL_1(N^{\mathsf{cp}}),T\big)
&=
{\Omega}
\!\left(
    \sqrt{(N^{\mathsf{cp}}+1)T}
\right);\\
\mathsf{regret}_\pi\big(\gL_2(\mathsf{KS}_T),T\big)
&=
{\Omega}
\!\left(
    \sqrt{T}
    +
    (\mathsf{KS}_T)^{1/3}T^{2/3}
\right).
\end{align*}
\end{theorem}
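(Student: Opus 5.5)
We would prove both lower bounds with a two-point (Le Cam) argument, applied first on a single stationary block and then glued together over many blocks. Since the algorithms in $\mathcal{Q}$ see only past scores, it suffices to work directly with score distributions. We take a one-parameter family $P_\theta$ on $[0,1]$, for instance densities $1+\theta(\mathbbm{1}\{x\le x_\alpha\}-\mathbbm{1}\{x> x_\alpha\})$ suitably arranged, or simply $\mathsf{Unif}$ shifted slightly. These are chosen so that the $(1-\alpha)$-quantiles $q^\star_{\pm}$ of $P_{+\delta}$ and $P_{-\delta}$ are separated, the CDFs have slopes bounded above and below near these quantiles, $\mathsf{KS}(P_{+\delta},P_{-\delta})\asymp\delta$, and $\mathsf{KL}(P_{+\delta}^{\otimes m}\,\|\,P_{-\delta}^{\otimes m})\lesssim m\delta^2$.

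\textbf{Step 1: single block.} Consider a block of $m$ consecutive times with i.i.d.\ scores from $P_{v\delta}$, where $v\in\{\pm1\}$. Any threshold $q$ satisfies
\[
|F_{+\delta}(q)-(1-\alpha)|+|F_{-\delta}(q)-(1-\alpha)|\gtrsim \delta,
\]
since the two CDFs differ by $\asymp\delta$ uniformly near the quantile. Fix a time $t$ in the second half of the block. The threshold $q_t$ is a function of the past scores and $U$, so Le Cam's inequality combined with Pinsker gives
\[
\max_v \EB_v\,\gap_t \gtrsim \delta\bigl(1-\sqrt{m\delta^2/2}\bigr).
\]
The information available before the block cannot help, because it is drawn identically under both hypotheses. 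Choosing $\delta\asymp 1/\sqrt m$ therefore gives per-step regret $\gtrsim 1/\sqrt m$ over the block, and block regret $\gtrsim\sqrt m$. To handle the supremum over many blocks at once, we will use Assouad's lemma with an independent sign $v_k$ for each block, which yields an average over hypercube vertices and hence a lower bound on the worst case.

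\textbf{Step 2: change-point class.} We split $[T]$ into $N^{\mathsf{cp}}+1$ blocks of length $m=T/(N^{\mathsf{cp}}+1)$, each with its own sign $v_k$. The distributions in different blocks are independent, and earlier blocks carry no information about $v_k$. Assouad's lemma then gives regret $\gtrsim (N^{\mathsf{cp}}+1)\sqrt{m}=\sqrt{(N^{\mathsf{cp}}+1)T}$. Every configuration changes at most $N^{\mathsf{cp}}$ times, so it lies in $\mathcal{L}_1(N^{\mathsf{cp}})$. The case $N^{\mathsf{cp}}=0$ is the single-block bound with $m=T$.

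\textbf{Step 3: smooth-drift class.} The $\sqrt T$ term follows from the stationary case, which has $\mathsf{KS}_T=0$. For the second term we again use $K$ blocks of length $m=T/K$ with $\delta=c/\sqrt m$. The configuration's total variation is $\sum\mathsf{KS}\lesssim K\delta=K^{3/2}/\sqrt T$, and we require this to be at most $\mathsf{KS}_T$. Taking $K\asymp(\mathsf{KS}_T)^{2/3}T^{1/3}$ gives regret $\gtrsim K\sqrt m=\sqrt{KT}\asymp(\mathsf{KS}_T)^{1/3}T^{2/3}$. This choice needs $1\le K\le T$; when $\mathsf{KS}_T$ is tiny the $\sqrt T$ term dominates, and when it is huge the regret is capped at order $T$, which we handle separately by taking $K=T$ with constant $\delta$. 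Adding the two cases gives the stated rate.

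\textbf{Main obstacle.} The delicate part is the Assouad step. We must confirm that the per-block separation survives conditioning on the adaptively chosen $q_t$, since $q_t$ depends on data from earlier blocks. This works because the data from earlier blocks are independent of $v_k$, so conditionally on them the two-point argument within block $k$ is unaffected. We also need the total KL within a block, $m\delta^2$, to stay bounded by a constant, and the constructed score CDFs to be continuous, as Assumption~\ref{assump:split} requires.
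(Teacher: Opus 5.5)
Your proposal is correct and follows essentially the paper's route: a two-point Le Cam/Pinsker bound on each stationary block (the paper uses the pair $\mathrm{Exp}(1)$ vs.\ $\mathrm{Exp}(1+\varepsilon)$), independent signs across blocks so that earlier data carry no information about the current block, and the same balancing $\delta\asymp 1/\sqrt{m}$ with $m\asymp T/(N^{\mathsf{cp}}+1)$, respectively with $T/m\asymp(\mathsf{KS}_T)^{2/3}T^{1/3}$ blocks. One small caution: the ``shifted uniform'' example violates your own requirement $\mathsf{KL}\lesssim m\delta^2$. The mismatched supports make the KL infinite, so Le Cam only tolerates $\delta\lesssim 1/m$ and the per-block bound degrades to $O(1)$. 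Use a smooth perturbation instead, such as a properly normalized piecewise-constant density or the paper's exponential pair.
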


Evidently, the minimax regret lower bounds in \Cref{thm:lower_bd_split_regret} match the achievable regret of Algorithm~\ref{alg:OCID} in \Cref{thm:split_regret} (modulo some logarithmic factors), thereby confirming the regret optimality of our proposed procedure in a minimax sense. The proof is postponed to \Cref{sec:proof:thm:lower_bd_split_regret}.

\subsection{Comparisons with prior art}
\label{sec:related-split}

\citet{gibbs2021adaptive} introduced ACI with a time-invariant stepsize schedule,  and established guarantees in terms of the time-averaged long-run coverage frequency (cf.~\eqref{eq:long_term_cvg_def-candes}), which hold irrespective of the data generating mechanism but do not imply valid coverage at individual time points. 
Stronger (asymptotic) guarantees were also established under stationary hidden Markov models.  Building on this work, \citet{angelopoulos2024online} studied ACI with decaying stepsizes and proved asymptotically exact (pointwise) coverage under i.i.d.~data;  these results, however, are asymptotic in nature and do not readily extend to settings with distribution drift. The analysis for the empirical long-term coverage frequency has further motivated the studies of a new perspective in online learning called ``gradient equilibrium,'' which yields a useful framework for several other statistical applications \citep{angelopoulos2025gradient}.  
Relatedly,  \citet{bastani2022practical} generalized the notion of long-term coverage frequency by proposing an approach that
achieves \emph{multi-valid coverage} guarantees even in adversarial settings. Their guarantees, however, are stated in terms of empirical frequencies along the realized sequence and do not yield training-(and-calibration)-conditional coverage guarantees.

Several recent works \citet{pournaderi2024training,humbert2025online} began to investigate training-conditional guarantees for online conformal prediction. Nevertheless, these results either do not account for distribution shift or rely on fairly strong assumptions (e.g., a uniform upper bound on the pre-/post-drift density ratio). Assuming independently trained scores (as in Assumption~\ref{assump:split}), \citet{han2024distribution} derived training-conditional guarantees for online conformal prediction under distribution drift via adaptive lookback-window selection for quantile calibration. Their results focus on \emph{last-step} (terminal-time) validity, whereas we study training-conditional \emph{cumulative} regret. Our method is also computationally more efficient: at time \(t\), their procedure requires \(t\) quantile estimates and \(t^2\) empirical CDF evaluations, while Algorithm~\ref{alg:OCID} needs at most one quantile estimate and \(t\) empirical coverage computations. The same authors also studied model assessment and selection under distribution drift \citep{han2024model}.

When it comes to lower bound analysis,
\citet{areces2024two,duchi2025few} discussed minimax lower bounds for the training-conditional coverage error in the presence of independently trained score
functions. Compared to their results, \Cref{thm:lower_bd_split_regret} moves beyond coverage guarantees under worst-case covariate shift and 
explicitly accounts for the effect of distribution drift over time.

\section{Online conformal prediction with adaptively  trained scores}\label{sec:full_conf}

We now turn our attention to the scenario in which the non-conformity scores and the predictive models are allowed to be trained online based on past observations. More precisely, we make the following assumptions throughout this section.
\begin{assumption}[Online-trained scores]
\label{assump:full}
Suppose that the non-conformity scores are constructed online. At each time $t$, the score functions may depend on the past data $\{(X_t,Y_t)\}_{s<t}$, but not on any data observed at or after time $t$. The data $\{(X_t,Y_t)\}_{t=1}^T$ are independently generated. 
\end{assumption}
%

Given the flexibility to adaptively update the score functions, we adopt the 
full conformal paradigm~\citep{vovk2005algorithmic}, which leverages all available data for 
both score construction and quantile estimation, without resorting 
to data splitting.
While this full conformal approach enables more efficient use of the data, 
it also introduces intricate statistical dependence across time, 
making it challenging to detect distributional drift and 
to establish training-conditional coverage. We develop several technical 
innovations to address these challenges.



\subsection{Algorithm}\label{sec:full_conf_alg}
We first review the standard full conformal prediction method, and then describe how it can be adapted to streaming data with distribution drift.

\paragraph{Review: (batch) full conformal prediction.}

Imagine we are given two datasets,
\[
\mathcal{Z}^{\mathsf{train}} \coloneqq \big\{(X_i^{\mathsf{train}},Y_i^{\mathsf{train}})\big\}_{i=1}^n
\qquad \text{and}\qquad 
\mathcal{Z}^{\mathsf{cal}} \coloneqq \big\{(X_i^{\mathsf{cal}},Y_i^{\mathsf{cal}})\big\}_{i=1}^m,
\]
which may overlap. While it is common to take $\mathcal{Z}^{\mathsf{train}}=\mathcal{Z}^{\mathsf{cal}}$  to maximize data efficiency, we allow $\mathcal{Z}^{\mathsf{train}}$ and $\mathcal{Z}^{\mathsf{cal}}$ to differ, a flexibility that will be useful for subsequent algorithmic development. 
The test point contains the feature $X^{\mathsf{test}}$.

The training dataset $\mathcal{Z}^{\mathsf{train}}$ is used to train predictive models via a learning algorithm $\gA$, yielding\footnote{The learning algorithm $\mathcal{A}$ may additionally depend on an internal random seed $U$, taken to be independent of the data. Since all subsequent arguments are carried out conditional on $U$, we suppress this dependence throughout for notational convenience.}
\begin{align}
\widehat{\mu}^{(X^{\mathsf{test}},y)}(\cdot
) \coloneqq \gA\left(\mathcal{Z}^{\mathsf{train}}; 
(X^{\mathsf{test}}, y)\right)
\label{eq:mu-hat-Xy-algorithm}
\end{align}
for every candidate response $y\in \mathbb{R}$,  whereas the calibration dataset $\mathcal{Z}^{\mathsf{cal}}$ is used to construct the prediction set with the aid of the fitted models $\widehat{\mu}$. Importantly, the fitted model $\widehat{\mu}^{(X^{\mathsf{test}},y)}$ depends on the hypothesized response $y$ (cf.~(\ref{eq:mu-hat-Xy-algorithm})), and may need to be refitted for each $y$ under consideration.  For each $(X^{\mathsf{test}},y)$, we define a set of non-conformity scores (or residual scores) as:
\begin{subequations}
\label{eq:res_score}
\begin{align}
s_{i}^{(X^{\mathsf{test}},y)} & \coloneqq\big|Y_{i}^{\mathsf{cal}}-\widehat{\mu}^{(X^{\mathsf{test}},y)}(X_{i}^{\mathsf{cal}})\big|,\qquad i=1,\ldots,m,\\
s_{\mathsf{test}}^{(X^{\mathsf{test}},y)} & \coloneqq\big|y-\widehat{\mu}^{(X^{\mathsf{test}},y)}(X^{\mathsf{test}})\big|.
\end{align}
\end{subequations}
%
The full conformal prediction set is then taken to be:
\begin{equation}\label{eq:full_conformal_set}
\gC\big(X^{\mathsf{test}} 
\big) {\coloneqq} \Bigg\{y: s_{\mathsf{test}}^{(X^{\mathsf{test}},y )}  \le  
\mathsf{Quantile}_{1-\alpha}\Bigg(\frac{1}{m+1}\bigg[\delta\big(s_{\mathsf{test}}^{(X^{\mathsf{test}},y )}\big)+\ssum{i}{1}{m}\delta\big(s_{i}^{(X^{\mathsf{test}},y)}\big)\bigg]\Bigg)
\Bigg\},
\end{equation}
where $\delta(a)$ denotes a point mass (i.e., the Dirac measure) at $a$, and $\mathsf{Quantile}_{1-\alpha}(P)$ denotes the $(1-\alpha)$-quantile of distribution $P$. 
In words, this prediction set contains all candidate values whose residual scores do not exceed the $(1-\alpha)$-quantile of the empirical distribution formed by the $m$ calibration residuals together with the candidate's own residual.  
When $\mathcal{Z}^{\mathsf{train}} = \mathcal{Z}^{\mathsf{cal}}$,
under exchangeability of the data and permutation symmetry of the model fitting algorithm $\mathcal{A}$,  classical results (e.g., \citet{vovk2005algorithmic,lei2018distribution,barber2023conformal,liang2025algorithmic}) guarantee finite-sample validity of this full conformal procedure.


\paragraph{Our algorithm: online full conformal prediction with drift detection (\driftocpfull).}

When distributional drift occurs over time, the assumption of exchangeability breaks down, invalidating the coverage guarantees of the full conformal algorithm described above. Building on the key algorithmic ideas introduced in \Cref{sec:online-split-algorithm-theory}, we extend full conformal methods to online settings with temporal distribution drift.  

We refer to the proposed algorithm as \driftocpfull (short for \textit{online full conformal prediction with drift detection}), and present the full procedure in Algorithm~\ref{alg:FOCID}. We first isolate several key features of \driftocpfull that parallel those of \driftocp.
\begin{itemize}

\item {\em Drift detection subroutine \textsc{DriftDetect+}.}
    We continue to employ a drift detection subroutine to identify the occurrence of a distribution drift. We introduce a slightly extended version of Algorithm~\ref{alg:drift-detection}, formalized in 
Algorithm~\ref{alg:full_drift-detection} and referred to as \textsc{DriftDetect+}. In essence, \textsc{DriftDetect+} differs from \textsc{DriftDetect} only in that it replaces  
$\mathsf{cvg}\text{-}\mathsf{err}_{q}(s,t)$---defined in \eqref{eq:emp_blk_cvg_err} based on quantiles of the non-conformity score---with the more general definition of empirical block coverage error
\begin{align}
\mathsf{cvg}\text{-}\mathsf{err}_{\gC}(s,t) \coloneqq
\bigg|\ssum{l}{s}{t}\big(\mathbbm{1}\big\{Y_{l} \in \gC(X_{l}) \big\} - (1-\alpha)\big)\bigg|.
\label{eq:defn-cvg-err-general}
\end{align}
%

    \item {\em Decomposition into stages and rounds.}  Akin to Algorithm~\ref{alg:OCID}, we partition the entire time horizon into stages---with the aid of the subroutine \textsc{DriftDetect+} in Algorithm~\ref{alg:full_drift-detection} in a data-driven manner---and further decompose each stage into rounds. Within each stage, the data distributions are treated as approximately stationary.  As before, we use $  n  $ and $  r  $ to index stages and rounds, respectively. We will repeatedly use the following notation:
    \begin{itemize}
        \item $T_r=3^r$: the number of time points in round $r$ of each stage, chosen to grow geometrically in $r$ so as to avoid requiring prior knowledge of the horizon length $T$. 

        \item $r_n$: the last round of stage $n$. We adopt the convention that \((n,0)=(n-1,r_{n-1})\). 
    
        \item $\tau_{n,r}$: the time index—measured in the original horizon $\{1,\ldots,T\}$—corresponding to the first time point of round $r$ in stage $n$. We adopt the convention that \(\tau_{n,r_n+1} \coloneqq \tau_{n+1,1}\) and $\tau_{n,0}\coloneqq \tau_{n-1,r_{n-1}}$. 

        \item $X_{n,r,t}$ and $Y_{n,r,t}$: the feature and the response arriving at the $t$-th time point of round $r$ in stage $n$. 
    \end{itemize}
\end{itemize}
Next, we highlight several full conformal components of \driftocpfull that extend batch full conformal methods to online settings.
\begin{itemize}
\item {\em Training and calibration sets for round $r$ of stage $n$.} When constructing the prediction set at any time within round $r$ of stage $n$, we choose the training and  calibration sets as
\begin{align}
\mathcal{Z}^{\mathsf{train}}_{n,r} \coloneqq \hspace{-1.5em} \underset{\text{all data before current round}}{\underbrace{ \big\{(X_i,Y_i)\big\}_{i=1}^{\tau_{n,r}-1} }}
\quad \text{and}\qquad 
\mathcal{Z}^{\mathsf{cal}}_{n,r} \coloneqq \hspace{-0.8em}  \underset{\text{all data in preceding round}}{\underbrace{\big\{(X_i,Y_i)\big\}_{i=\tau_{n,r-1}}^{\tau_{n,r}-1}}}. 
\end{align}
%
In words, the training set $\mathcal{Z}^{\mathsf{train}}_{n,r}$ comprises all samples observed prior to the current round, while the calibration set $\mathcal{Z}^{\mathsf{cal}}_{n,r}$ consists of all samples collected during the immediately preceding round (i.e., round $r-1$ of stage $n$). Intuitively, the data in preceding round are treated as stationary in distribution and are therefore well suited for calibration, whereas all earlier data---regardless of whether distribution shifts have occurred---can be leveraged for model training.

\item {\em Fitted models, scores, and prediction sets.} The construction of the prediction set follows the standard full conformal method described in (\ref{eq:mu-hat-Xy-algorithm})-(\ref{eq:full_conformal_set}). 
Consider any time point within round $r$ of stage $n$. 
For a feature $X$ observed in this round and an imputed response $y$, we invoke a learning algorithm $\mathcal{A}$ to fit a predictive model
\begin{align}
\widehat{\mu}^{(X,y)}(\cdot
) \coloneqq \gA\left(\mathcal{Z}_{n,r}^{\mathsf{train}} ; 
(X, y)\right).
\label{eq:mu-hat-Xy-algorithm-online}
\end{align}
The non-conformity (or residual) scores are then computed for all $T_{r-1}$ points observed in the immediately preceding round (i.e., $\mathcal{Z}_{n,r}^{\mathsf{cal}}$) as well as the hypothesized test point $(X,y)$, yielding
\begin{subequations}
\label{eq:res_score-online}
\begin{align}
s_{i}^{(X,y)} & \coloneqq\big|Y_{n,r-1,i}^{\mathsf{cal}}-\widehat{\mu}^{(X,y)}(X_{n,r-1,i}^{\mathsf{cal}})\big|,\qquad i=1,\ldots,T_{r-1},\label{eq:res_score-online-cal}\\
s_{\mathsf{test}}^{(X,y)} & \coloneqq\big|y-\widehat{\mu}^{(X,y)}(X)\big|.
\label{eq:res_score-online-test}
\end{align}
\end{subequations}
Given these scores, we form the prediction set based on feature $X$ as
\begin{equation}\label{eq:full_conformal_inf}
\gC_{n,r}(X 
) \coloneqq \Bigg\{y: s_{\mathsf{test}}^{(X,y )}  \le  
\mathsf{Quantile}_{1-\alpha}\Bigg(\frac{1}{T_{r-1}+1}\bigg[\delta\big(s_{\mathsf{test}}^{(X,y )}\big)+\ssum{i}{1}{T_{r-1}}\delta\big(s_{i}^{(X,y)}\big)\bigg]\Bigg)
\Bigg\},
\end{equation}
which collects all candidate responses $y$ for which the test residual $s_{\mathsf{test}}^{(X,y)}$ does not exceed the target quantile of the combined calibration and test scores, in direct analogy to the standard full conformal framework.   
The prediction-set construction strategy $\mathcal{C}_{n,r}(\cdot)$ remains fixed throughout the current round, since the same training and calibration sets are used for all time points within this round.
\end{itemize}

\begin{algorithm}[t]
\DontPrintSemicolon
\caption{\textsc{DriftDetect+}$\big(\gC; t_0,t_1; \sigma\big)$}\label{alg:full_drift-detection}
\textbf{input:} set-valued function $\gC(\cdot)$; time window $[t_0,t_1]\subseteq[T]$; detection threshold $\sigma$.\\
\For(\tcp*[f]{scan the entire time window.}){$j= t_0,\cdots,t_1$}{
    compute $Z_{j,t_1}\leftarrow \dfrac{\bigl|\mathsf{cvg}\text{-}\mathsf{err}_{\gC}(j,t_1)\bigr|}{\sqrt{t_1-j+1}}$ (cf.~(\ref{eq:defn-cvg-err-general})).\tcp*{construct detection statistics.} 
    \If{$Z_{j,t_1}>\sigma$}{\Return \textsf{true}.\tcp*{declare detection of drift once this statistic exceeds the threshold.}}
}
\Return \textsf{false}
\tcp*{no distribution drift has been detected.}
\end{algorithm}

\begin{algorithm}[t]
\DontPrintSemicolon
\caption{\textsc{ Online Full Conformal Prediction with Drift Detection} (\driftocpfull)}\label{alg:FOCID}
\textbf{input:} target coverage level \(1-\alpha\); detection thresholds \(\{\sigma_{n,r}\}_{n,r=1}^\infty\). \\
\textbf{initialize:} \(n\leftarrow 1\), \(r\leftarrow 1\), \(\tau\leftarrow 0\), \(\tau_{1,1}\leftarrow 1\), \(\gC_{1,1}(x) \leftarrow \RB;~ \forall x\in \gX\).
\;

\While{\textsf{true}}{
construct set-valued function $ \gC_{n,r}(\cdot)$ by \eqref{eq:full_conformal_inf}. \tcp*{prediction-set forming strategy for this round.}
  \For(\tcp*[f]{round $r$ contains at most $T_{r}=3^r$ time points.}){\(t=1,\cdots,T_{r}\;(\coloneqq 3^r)\)}{
    \(\tau\leftarrow \tau+1\).\tcp*{update global time index.}
    observe feature $X_{\tau}$; set \(X_{n,r,t}\leftarrow X_{\tau}\). \\ 
    construct  prediction set $\gC_{\tau} \leftarrow \gC_{n,r}(X_{\tau})$; take $\gC_{n,r,t}\leftarrow \gC_{\tau}$. \tcp*{form prediction set} 
    observe \(Y_{\tau}\); set $Y_{n,r,t}\leftarrow Y_{\tau}$. \tcp*{response is observed after the prediction set is formed.}
    \textsf{drift} $\leftarrow$ \textsc{DriftDetect+}$(\gC_{n,r};\tau_{n,r}, \tau_{n,r}+t-1;\sigma_{n,r})$. \tcp*{call \Cref{alg:full_drift-detection}.}
    \If{\textsf{drift} is \textsf{true}}{
      \(n\leftarrow n+1\), \(r\leftarrow 1\). \tcp*{update stage index and round index.}
      \textbf{break}.\tcp*[f]{enter next stage.}
    }
  }
  \If{\textsf{drift} is \textsf{false}}{
  \(r\leftarrow r+1,~ \tau_{n,r} \leftarrow \tau + 1\). \tcp*{start time of next round.} }
}
\end{algorithm}


\subsection{Theoretical guarantees under stability assumptions}\label{sec:full_conf_bound}


We now turn to the regret performance of the proposed Algorithm~\ref{alg:FOCID}. A dominant fraction of prior full conformal theory relies on a permutation symmetry assumption of the
model fitting algorithm, namely, that the fitted predictor $\widehat{\mu}$ remains invariant under arbitrary reordering of the training samples. However, many online learning algorithms, such as online gradient descent with time-varying learning rates, do not produce predictors that are exactly permutation invariant. Enforcing permutation symmetry in these cases would oftentimes require, at each time step, retraining the model from scratch on all previously observed data, thereby incurring a substantial computational burden. To better accommodate online model fitting algorithms, we instead rely on two different assumptions---one concerning the Lipschitz continuity of the conditional response distribution, and the other pertaining to the stability of the learning algorithm---replacing the permutation symmetry requirement. 
%
%
\begin{assumption}[Lipschitz continuity of conditional response distribution]\label{ass:lip_cond_distr}
    There exists a quantity $L_1 > 0$ such that, for every time $t \geq 1$ and every $x_t\in \mathcal{X}$,   the  function $g_{x_t}(z)\coloneqq \PB(Y_t \le z\mid X_t=x_t)$  is $L_1$-Lipschitz continuous w.r.t.~$z$.  
\end{assumption}

\begin{assumption}[Stability of learning algorithm]\label{ass:fair_alg}
%
%
Let $\mathcal{Z}=\{z_1,\cdots,z_m\}$ be a training set of size $m$, and let $\widehat{\mu}(\cdot\,|\, \mathcal{Z})$ represent the predictive model returned by algorithm $\mathcal{A}$ when trained on $\mathcal{Z}$. We assume that $\widehat{\mu}(\cdot\,|\, \cdot)$ is a measurable function. 
For any $i\in[m]$ and any replacement sample $w$, define $\mathcal{Z}_{i,w}=\{z_1,\cdots,z_{i-1},w,z_{i+1},\cdots,z_m\}$, which differs from $\mathcal{Z}$ only in its $i$-th element. We assume that there exists a constant $L_2>0$ such that, for an arbitrary $m$, one has
        \begin{align}
        \abs{\widehat{\mu}\big(x\mid \gZ\big) - \widehat{\mu}\big(x\mid \gZ_{i,w}\big)} \le \frac{L_2}{m}\qquad \text{for all } x, w, \mathcal{Z},  \text{ and }i\in [m].
        \end{align}
%
\end{assumption}
Assumptions~\ref{ass:lip_cond_distr} and \ref{ass:fair_alg} are commonly used in full conformal prediction literature (e.g., \citet{barber2021predictive,ndiaye2022stable,steinberger2023conditional,liang2025algorithmic,leeleave}).
In fact, Assumption~\ref{ass:lip_cond_distr} is fairly standard in statistical modeling; a common example concerns the setting $Y_t=m_t(X_t)+\varepsilon_t$, where $\varepsilon_t$ is generated independently of $X_t$ and admits a density uniformly bounded above by $L_1$.
In addition, Assumption~\ref{ass:fair_alg} formalizes a sort of stability requirement of $\widehat{\mu}$: perturbing a single training
example alters the predictive output by at most $O(1/m)$ (assuming a constant $L_2$). To help illustrate the practical relevance of Assumption~\ref{ass:fair_alg}, we single out a few canonical parametric examples that can be readily analyzed within our framework: 
\begin{itemize}
    \item {\em constrained M-estimation}: see \Cref{sec:constrained-M-estimation};
    
    \item {\em linear stochastic approximation}: see \Cref{sec:linear-stoc-approx}; 
    
    \item {\em stochastic strongly convex optimization}: see \Cref{sec:stochastic_scvx_opt}.
\end{itemize}
The interested reader is referred to \Cref{sec:exp_for_full_conf} for detailed verification of Assumption~\ref{ass:fair_alg} in these examples.



Armed with the above assumptions, we establish regret upper bounds for the proposed 
online full conformal algorithm.
\begin{theorem}\label{thm:full_regret}
Suppose that Assumption~\ref{assump:full} holds, and that Assumptions~\ref{ass:lip_cond_distr} and \ref{ass:fair_alg} hold with quantities $L_1$ and $L_2$, respectively.  
Let $L=L_1L_2$. 
If we set the drift detection thresholds as $\sigma_{n,r} \coloneqq 10\log^3(40\tau_{n,r})$ for every stage-round index pair $(n,r)$,  then Algorithm~\ref{alg:FOCID} achieves
\begin{align}
\label{eq:regret-full-ub}
    \mathsf{regret}_T \leq  
    \begin{cases}
        \widetilde{O}\big(\sqrt{(N^{\mathsf{cp}}+L+1)T}\,\big) 
        & \text{for the change-point setting}; \\
\widetilde{O}\big(\sqrt{(L+1)T} + (\mathsf{TV}_T)^{\frac{1}{3}}T^{\frac{2}{3}}\big)
& \text{for the smooth drift setting}.
    \end{cases}
\end{align}
\end{theorem}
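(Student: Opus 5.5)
The argument follows the template of Theorem~\ref{thm:split_regret}. The new ingredients are the dependence of the scores on past data and the use of stability in place of permutation symmetry. We fix a stage--round pair $(n,r)$ and compare the realized prediction-set rule $\gC_{n,r}$ with an idealized ``oracle'' rule that treats the calibration points and the test point symmetrically.

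\textbf{Step 1: reduction to a quasi-split procedure via stability.} Let $\widehat\mu^{\circ}\coloneqq\gA(\mathcal{Z}^{\mathsf{train}}_{n,r-1};\cdot)$ be the model trained only on data before the calibration round; it is independent of both $\mathcal{Z}^{\mathsf{cal}}_{n,r}$ and the test point. The training set $\mathcal{Z}^{\mathsf{train}}_{n,r}\cup\{(X,y)\}$ differs from $\mathcal{Z}^{\mathsf{train}}_{n,r-1}$ in about $T_{r-1}+1$ entries. We handle this by a replace-one telescoping argument: adding points is treated as replacements in a padded set, so each replacement moves $\widehat\mu$ by at most $L_2/m$. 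This gives $\|\widehat\mu^{(X,y)}-\widehat\mu^{\circ}\|_\infty\lesssim L_2T_{r-1}/\tau_{n,r-1}$. This is only useful when $\tau_{n,r-1}\gg T_{r-1}$, so we will need a finer argument.

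The refinement is a leave-one-out swap in the style of \citet{liang2025algorithmic}. We define the oracle scores using the model $\gA(\mathcal{Z}^{\mathsf{train}}_{n,r}\setminus\{\text{one cal point}\}\cup\{\text{test}\})$, so that exchanging the test point with any calibration point changes predictions by at most $O(L_2/\tau_{n,r})$. Assumption~\ref{ass:lip_cond_distr} then converts a perturbation $\delta$ of the threshold into a coverage change of at most $L_1\delta$. Averaging this over the round should yield a per-time coverage bias of order $L_1L_2/\sqrt{T_r}$ or smaller. Summed over a round, this is $O(\sqrt{L T_r})$, and summed over rounds within stages it gives the $\sqrt{(L+1)T}$ term.

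\textbf{Step 2: concentration and drift detection.} Conditional on the past and on the rule $\gC_{n,r}$, the indicators $\mathbbm 1\{Y_l\in\gC_{n,r}(X_l)\}$ for $l$ in round $r$ are independent. This holds by Assumption~\ref{assump:full}, since $\gC_{n,r}$ is frozen during the round. We then apply a Hoeffding/Freedman bound together with a union bound over all windows $[j,t_1]$. With the threshold $\sigma_{n,r}=10\log^3(40\tau_{n,r})$, this gives two guarantees:
\begin{enumerate}
\item[(a)] \emph{No false alarms.} When the distribution is stationary within the stage, there are no false alarms, except with probability $O(\tau^{-2})$.
\item[(b)] \emph{Bounded undetected error.} Whenever no alarm fires, $\bigl|\sum_{l=j}^{t}(\mathsf{cvg}_l-(1-\alpha))\bigr|\lesssim\sigma\sqrt{t-j+1}$ for every window.
\end{enumerate}
The larger polylog threshold, compared with the split case, is meant to absorb the stability bias from Step~1 and the randomness of the conformal quantile. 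The quantile has to be handled through a DKW-type bound on the calibration scores computed with the frozen oracle model.

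\textbf{Step 3: regret accounting.} Within each round, the regret equals $\sum_l|\mathsf{cvg}_l-(1-\alpha)|$. Because $\gC_{n,r}$ is fixed during the round, $\mathsf{cvg}_l$ varies only through $\gD_l$.

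\emph{Change-point setting.} In a round without a change point, $\mathsf{cvg}_l$ is constant. The bound from (b) then gives a total of $O(\sigma\sqrt{T_r})$, plus the stability term. Rounds with a change point cost $O(\sigma\sqrt{T_r})$ until detection; the threshold crossing occurs after at most $O(\sigma^2/\text{gap}^2)$ steps, so the cost is at most $\sigma\sqrt{\cdot}$. Each change point starts at most one new stage, and a stage contains $O(\log T)$ rounds. Cauchy--Schwarz over the $N^{\mathsf{cp}}+1$ stages then gives $\widetilde O(\sqrt{(N^{\mathsf{cp}}+L+1)T})$.

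\emph{Smooth drift setting.} We partition time into blocks of length $\Delta$. Within a block, the oscillation of $\mathsf{cvg}_l$ is at most the TV variation of the block, since a fixed set's coverage is $1$-Lipschitz in TV. The absolute gap is therefore at most $\sigma/\sqrt{\Delta}$ plus the local variation. Summing gives $T\sigma/\sqrt\Delta+\Delta\,\mathsf{TV}_T$, and optimizing $\Delta$ yields $(\mathsf{TV}_T)^{1/3}T^{2/3}$. The number of stages is also controlled: a detection requires accumulated variation of order $\sigma/\sqrt{\text{length}}$.

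\textbf{Main obstacle.} The hardest step is Step~1: showing that the full-conformal set built from the non-symmetric algorithm $\gA$ has coverage within $O(\sqrt{L/T_r})$ of the coverage of an exchangeable oracle. The difficulty is that the calibration points enter both the training set and the quantile. I would first try the leave-one-out replacement coupling, bounding the induced change in the coverage event via Assumption~\ref{ass:lip_cond_distr}, and keeping every step conditional on the pre-round data so that independence is preserved.
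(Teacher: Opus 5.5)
Your within-stage accounting is sound and is essentially what the paper does. Because $\mathcal{C}_{n,r}$ is frozen during a round, the coverage indicators are conditionally independent. On the typical event, a round without an alarm therefore satisfies $\bigl|\sum_{l=i}^{j}(\mathsf{cvg}_l-(1-\alpha))\bigr|\lesssim\sigma_{n,r}\sqrt{j-i+1}$. This by itself gives the per-stage bounds $\widetilde O(\sum_j\sqrt{|\mathcal{I}_{n,j}|})$ and $\widetilde O(\sqrt{S_n}+(\mathsf{TV}^{\mathsf{stage}}_n)^{1/3}S_n^{2/3})$, with no bias term at all.

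\textbf{The gap: bounding the number of stages.} You need to control $\sum_n\sqrt{S_n}$, and you settle this with claim (a): no false alarms. From it you infer that each change point starts at most one stage, and that each detection in the smooth setting certifies genuine drift. This does not follow from your Step~1.
\begin{itemize}
\item A per-step bias of order $L/\sqrt{T_r}$ sits exactly at the detection sensitivity $\sigma_{n,r}/\sqrt{T_r}$ of a full-round window. So once $L\gtrsim\log^3(40\tau_{n,r})$, the stability error alone can trigger detections in stationary stretches. This can happen in any round, including the short first rounds of each new stage, and nothing in your argument stops stages from restarting over and over.
\item A threshold that does not depend on $L$ cannot absorb an $L$-dependent bias.
\item $T_r\cdot L/\sqrt{T_r}=L\sqrt{T_r}$, not $\sqrt{LT_r}$. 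Summing a bias inside rounds would give linear dependence on $L$, so this is not where the $\sqrt{(L+1)T}$ term comes from.
\end{itemize}

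\textbf{The missing idea: counting bad stages.} Proposition~\ref{prop:training_conditioned_cov} gives a training-conditional error of order $\sqrt{\log/T_{r-1}}+L\sqrt{T_{r-1}\log}/\tau_{n,r}$, plus an average TV term. This is your bound multiplied by $T_{r-1}/\tau_{n,r}$, the ratio of calibration size to training size. The paper exploits this factor as follows.
\begin{itemize}
\item Define $\mathcal{H}_n=\{T_{r_n-1}\sqrt{\log(40\tau_{n,r_n})}\le\tau_{n,r_n}/(256L)\}$. On $\mathcal{H}_n$ intersected with the typical event $\mathcal{B}_n$ (which contains the event $\mathcal{G}(\tau_{n,r_n-1},\tau_{n,r_n})$ from that proposition), Lemma~\ref{lem:tv_lower_bound_on_typical} shows that any detection forces $\sqrt{t_n}\,\mathsf{TV}^{\mathsf{tail}}_n\ge 3$.
\item The windows $(\tau_{n,r_n-1},\tau_{n,r_n}]$ are disjoint, so $\sum_n\mathbbm{1}\{\mathcal{H}_n^{\mathrm{c}}\}\le 256L\sqrt{\log(40T)}\sum_{i\le T}i^{-1}$.
\item Cauchy--Schwarz then gives $\sum_n\sqrt{S_n}\,\mathbbm{1}\{\mathcal{H}_n^{\mathrm{c}}\}=\widetilde O(\sqrt{LT})$.
\end{itemize}
This harmonic count of bad stages is the actual source of $L$ in the bound.

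\textbf{Step 1 is also left unproven.}
\begin{itemize}
\item Your frozen-oracle model incurs error of order $L\,T_{r-1}/\tau_{n,r-1}$, which does not vanish within a long stage.
\item Assumption~\ref{ass:fair_alg} controls only replace-one perturbations at a fixed sample size, so padding is not licensed. The paper instead uses a single dummy point $z_0$ to match sizes.
\item The crux is concentration of the empirical CDF of calibration scores whose fitted model depends on those same points. The paper handles this by conditioning on $X_{1:m}$ and passing to the surrogate $\widetilde\mu$. It also uses McDiarmid for the conditional test CDF and a TV swap between test and calibration points. Your proposal does not address any of this.
\end{itemize}
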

Despite the adaptive, online training of the non-conformity score functions, the training-conditional regret attained by our online full conformal prediction algorithm takes a form similar to that achieved with pretrained scores, provided that $L=O(1)$. A main difference is that the score-based Kolmogorov–Smirnov distance appearing in the pretrained-score scenario (see \Cref{thm:split_regret}) is replaced here by the total-variation distance w.r.t.~data distributions, since the scores are now trained based on the observed data.  Our result is fully non-asymptotic, which stands in stark contrast to several prior works (e.g., \citet{angelopoulos2024online}) that focused on asymptotic coverage guarantees (i.e., $T\rightarrow \infty$ with other parameters held fixed).



%

\paragraph{Byproduct:  training-conditional coverage for batch full conformal methods.}
En route to establishing the regret upper bound of \driftocpfull, 
we need to address the challenge of achieving training-conditional 
coverage when scores are trained in-sample using a possibly non-symmetric 
learning algorithm. Our analysis leads to new  
training-conditional coverage results for batch full conformal methods, 
which is stated below and may be of independent interest.
The proof is deferred to 
\Cref{sec:prf:prop:training_conditioned_cov}.

\begin{proposition}\label{prop:training_conditioned_cov}
Consider any integers \(n\ge m\). Let \(Z_{1:m}^{\mathsf{cal}}\) be a calibration dataset and \(Z_{1:n}^{\mathsf{train}}\) a dataset used for model fitting. We assume that the calibration dataset is a subset of the training dataset, and in particular, \(Z_{1:m}^{\mathsf{cal}}=Z_{1:m}^{\mathsf{train}}\).
The samples in $\{Z_{1:m}^{\mathsf{cal}}\}\cup \{Z_{m+1\,:\,n}^{\mathsf{train}}\}$ are independently generated.
Construct the full conformal prediction set \(\gC(\cdot) = \gC(\,\cdot\,\mid Z_{1:m}^{\mathsf{cal}};Z_{1:n}^{\mathsf{train}})\) as in Eqn.~\eqref{eq:full_conformal_set}.
Consider a target pair \(Z = (X,Y)\sim \gD\). Suppose the distribution \(\gD\) and the fitted model 
satisfy Assumptions~\ref{ass:lip_cond_distr} and~\ref{ass:fair_alg} with coefficients $L_1$ and $L_2$, respectively, and denote $L_1L_2$ as ${L}$. Then for any $\delta \in (0,1)$, conditional on any realization \(Z_{m+1:n}^{\mathsf{train}}=z_{m+1:n}^{\mathsf{train}}\), we have
\begin{equation}\label{eq:prop_fc_tc_cov_bound}
\begin{aligned}
\abs{\PB_{\gD}
\left(Y\in \gC\big(X\big) \,\big|\, Z_{1:m}^{\mathsf{cal}}\right) -(1 - \alpha)
} &\le \frac{52 {L}\sqrt{m\log (45n/\delta)}}{n} + 25\sqrt{\frac{\log (40/\delta)}{m}} + \frac{2}{m}\ssum{l}{1}{m}\mathsf{TV}(Z, Z_l^{\mathsf{cal}})
\end{aligned}
\end{equation}
with probability exceeding \(1-\delta\)  (with respect to the randomness only in
\(Z_{1:m}^{\mathsf{cal}}\)). 
\end{proposition}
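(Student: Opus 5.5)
The plan is to compare the full conformal set with an ``oracle'' split-conformal set whose scores come from a single fixed model that depends only on \(z^{\mathsf{train}}_{m+1:n}\) and on independent ghost copies. Once everything is conditioned on \(Z^{\mathsf{train}}_{m+1:n}=z^{\mathsf{train}}_{m+1:n}\), the calibration points \(Z^{\mathsf{cal}}_{1:m}\) are independent but not identically distributed, and they enter both the fitted model and the score quantile.

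\textbf{Step 1: a reference model.} Draw ghost samples \(\widetilde Z_{1:m}\) independent of everything else. Let \(\widetilde\mu=\gA(\widetilde Z_{1:m}\cup z^{\mathsf{train}}_{m+1:n};\widetilde Z_0)\), where \(\widetilde Z_0\) is a fixed or ghost test point. Assumption~\ref{ass:fair_alg} says that swapping one training point moves the prediction by at most \(L_2/n\); the training set has size \(n\) or \(n+1\) including the test slot. Since \(\widetilde\mu\) differs from \(\widehat\mu^{(X,y)}\) in \(m+1\) training points, the crude bound is \(\sup_x|\widehat\mu^{(X,y)}(x)-\widetilde\mu(x)|\le L_2(m+1)/n\). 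This would only give \(Lm/n\), not \(L\sqrt m/n\).

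To get the improvement, I would instead center at the conditional mean \(\bar\mu(x)=\EB[\widehat\mu(x)\mid z^{\mathsf{train}}_{m+1:n}]\), taken over the calibration points. McDiarmid's inequality, using the bounded differences \(L_2/n\), then gives \(|\widehat\mu(x)-\bar\mu(x)|\lesssim L_2\sqrt{m\log(1/\delta)}/n\). I would first prove this on a grid of \(x\)'s and score levels, and then extend it uniformly. I expect this extension to be the main obstacle: \(x\) ranges over \(\mathcal X\), and the dependence on the imputed \(y\) must be controlled as well. My first attempt would avoid a sup over \(x\) and apply McDiarmid only at the relevant evaluation points, namely the \(m\) calibration features and the random test feature. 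These points are themselves among the perturbed coordinates, so I would handle them by a leave-one-out replacement, which costs an extra \(L_2/n\) each, followed by a union bound over the \(n\)-scale grid. That union bound produces the \(\log(45n/\delta)\) factor.

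\textbf{Step 2: from model perturbation to coverage perturbation.} Suppose every residual moves by at most \(\epsilon=O(L_2\sqrt{m\log(n/\delta)}/n)\). Then \(\gC(X)\) is sandwiched between the split sets \(\{y:|y-\bar\mu(X)|\le \hat q\pm 2\epsilon\}\), where \(\hat q\) is the empirical \((1-\alpha)(1+1/m)\)-quantile of \(|Y^{\mathsf{cal}}_l-\bar\mu(X^{\mathsf{cal}}_l)|\). By Assumption~\ref{ass:lip_cond_distr}, widening the threshold by \(2\epsilon\) changes the conditional coverage \(\PB_{\gD}(\cdot\mid X)\) by at most \(4L_1\epsilon\). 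This is the term \(52L\sqrt{m\log(45n/\delta)}/n\).

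\textbf{Step 3: split-conformal training-conditional coverage without identical distribution.} The map \(\bar\mu\) is now fixed given \(z^{\mathsf{train}}_{m+1:n}\), so the calibration scores \(S_l\) are independent with distributions \(F_l\). Apply DKW, or Bernstein on a grid, to the average of the non-identical CDFs \(\bar F=\frac1m\sum_l F_l\). The empirical CDF is then within \(\sqrt{\log(1/\delta)/m}\) of \(\bar F\) uniformly, so \(\bar F(\hat q)\) is \(1-\alpha\) up to \(O(\sqrt{\log(40/\delta)/m}+1/m)\). The test score distribution \(F\) satisfies \(\sup_s|F(s)-\bar F(s)|\le\frac1m\sum_l\mathsf{TV}(Z,Z^{\mathsf{cal}}_l)\), since the scores are a common measurable map of the data. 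The factor 2 in that term accounts for the sandwich and for the fact that \(\bar\mu\) averages over the calibration distributions; the latter is a mild inconsistency that I would simply absorb. Combining the three bounds on the intersection of the high-probability events, with \(\delta\) split among them, yields \eqref{eq:prop_fc_tc_cov_bound}.
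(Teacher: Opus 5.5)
Your route is correct in outline, and it differs genuinely from the paper's. The paper never passes to a deterministic model; it works with score CDFs throughout, in three pieces:
\begin{itemize}
\item Lemma~\ref{lem:cond_prob_concentration} applies McDiarmid to $\sup_u$ of the deviation of the test-score CDF from its calibration average, controlling the mean by a martingale variance bound.
\item Lemma~\ref{lem:empr_prob_concentration} concentrates the calibration empirical CDF by conditioning on $X_{1:m}$, centering $\widehat\mu$ at its mean over $Y_{1:m}$ only, and then removing the conditioning with further McDiarmid/DKW steps.
\item The test and calibration score laws are compared by swapping $Z_i$ with $Z$ inside the data-dependent model, which is where the factor $2$ on $\mathsf{TV}$ comes from.
\end{itemize}
You instead reduce everything to the single fixed score $|y-\bar\mu(x)|$. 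Here $\bar\mu$ is the full mean over $Z_{1:m}$, and a fixed $z_0$ sits in the test slot; this removes the $y$-dependence uniformly at cost $L_2/n$. Your leave-one-out step is what makes this work. Since $(Z_{-l},Z_l')$ is independent of $X_l$ and equal in law to $Z_{1:m}$, McDiarmid conditional on $X_l$ centers exactly at $\bar\mu(X_l)$. Hence all calibration residuals lie within $O(L_2\sqrt{m\log(m/\delta)}/n)$ of genuinely independent scores $\bar s_l$, and plain DKW finishes. This is noticeably simpler than Lemma~\ref{lem:empr_prob_concentration}. It also gives the $\mathsf{TV}$ term with constant $1$, because $\bar\mu$ is deterministic given $z^{\mathsf{train}}_{m+1:n}$ and $z_0$. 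So there is no inconsistency to absorb; the $2$ in the statement is simply slack for you.

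The one mechanism you state incorrectly is the handling of the random test feature: neither a union bound nor a grid over $\mathcal X$ applies there. McDiarmid conditional on $X$ gives only a joint probability, so you need one of the following:
\begin{itemize}
\item Fubini plus Markov: run McDiarmid at level $\delta/(4n)$, so that with probability $1-\delta/4$ over $Z_{1:m}$ the $X$-mass on which your sandwich fails is at most $1/n\le 1/m$.
\item McDiarmid applied to $z_{1:m}\mapsto\EB_X|\widehat\mu_{z_{1:m}}(X)-\bar\mu(X)|$, together with a variance bound on its mean, as the paper does.
\end{itemize}
This, not a grid, is the natural source of the $\log(n/\delta)$ factor. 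With it in place, your constants fit comfortably inside the stated $52$ and $25$.
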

\begin{remark}
In particular, in the most common case where the training and calibration sets coincide (so that $m=n$ and $Z_{1:n}^{\mathsf{cal}}=Z_{1:n}^{\mathsf{train}}$), this result asserts that the standard full conformal method (cf.~\eqref{eq:full_conformal_set}) achieves  
\begin{equation}\label{eq:prop_fc_tc_cov_bound-special}
\begin{aligned}
\abs{\PB_{\gD}
\left(Y\in \gC\big(X\big) \,\big|\, Z_{1:n}^{\mathsf{train}}\right) -(1 - \alpha)
} &\lesssim \max\{L,1\}\sqrt{\frac{\log (n/\delta)}{n}} 
+ \frac{1}{n}\ssum{l}{1}{n}\mathsf{TV}(Z, Z_l^{\mathsf{cal}})
\end{aligned}
\end{equation}
with probability greater than $1-\delta$. 
\end{remark}

Proposition~\ref{prop:training_conditioned_cov} establishes a
training–conditional concentration bound for full conformal residuals that
holds for a fixed batch of data and a stable learner (no online structure is used).  This result captures the effect of using a
data-dependent predictor inside the full conformal construction, and will play a crucial role in establishing our training-conditional regret bound (when combined with the stage/round decomposition and drift–detection analysis).
In addition, Proposition~\ref{prop:training_conditioned_cov}
generalizes existing results on training-conditional 
coverage for full-conformal-type approach; more detailed comparisons with prior results are provided in~\Cref{sec:full-conf-lit}.


\subsection{Minimax lower bound}\label{sec:model_free_lower_bd}

We now complement Theorem~\ref{thm:full_regret} with a lower bound, which serves to better evaluate the optimality of our proposed procedure. Before proceeding, it is important to note that, while Theorem~\ref{thm:lower_bd_split_regret} already establishes a regret lower bound, that result hinges upon a specific way of constructing the prediction set—namely, one based on quantile estimation of pretrained non-conformity scores. 
In practice, however, a broader class of methods is available, including the online full conformal approach, which can induce substantially more complex and structurally different prediction sets. 
As a result, Theorem~\ref{thm:lower_bd_split_regret} does not provide an appropriate lower bound for the settings considered in this section. 
We develop a new lower bound for this broader class of algorithms below.

\paragraph{Lower bound.} 
%
We start by specifying the scope of the problem.
\begin{itemize}
\item {\em Admissible algorithms.}
Since the prediction-set construction considered in this section no longer relies on a given set of non-conformity score functions, the first step is to redefine the class of admissible algorithms accordingly.
Denote by $\mathsf{Map}(\gX,\gB(\RB))$ the set of mappings from $\gX$ to $\gB(\RB)$ (i.e., this forms the set of prediction-set construction functions).   
Let \(U\sim \mathrm{Unif}(0,1)\) be a random variable independent of the data stream. 
Let \(\pi_{1}:[0,1]\to \mathsf{Map}(\gX,\gB(\RB))\) and, for \(t\geq 2\), let
\(\pi_{t}:(\gX, \RB)^{t-1}\times[0,1]\to \mathsf{Map}(\gX,\gB(\RB))\).
Given a sequence of  data $\{Z_i\}_{i=1}^{t-1} = \{(X_i,Y_i)\}_{i=1}^{t-1}$ prior to time $t$, we define  \(\gC_t\in \mathsf{Map}(\gX,\gB(\RB))\) to be the set-valued mapping induced by algorithm $\pi_t$ at time \(t\), namely, 
\begin{equation}\label{eq:plcy_construct_C}
\gC_t(\cdot) =
\begin{cases}
\pi_1(U), & \text{if }t=1,\\[4pt]
\pi_t\!\left(Z_{t-1},\ldots,Z_{1},\,U\right), & \text{if } t\geq 2.
\end{cases}
\end{equation}
The collection of mappings $\pi=\{\pi_t\}_{t\geq 1}$ that generate such set-valued functions \(\{\gC_t(\cdot): t=1,2,\cdots\}\) constitutes a class of algorithms denoted by  \(\mathcal{P}\). 
Moreover, we restrict attention to a structured subclass of $\mathcal{P}$ in which each prediction set is expressible as a finite union of intervals.


\begin{definition}[$K$-interval procedure]\label{def:Pk}
For every integer \(K\ge 1\), define the $K$-interval algorithm class \(\gP_K\subseteq \gP\) as
\begin{align}
\label{eq:defn-PK-interval}
\gP_K \coloneqq \left\{\pi\in \gP\mid \text{for all } t\in [T] \text{ and } x\in \gX:~ \gC_t(x)\text{ is the union of at most \(K\) intervals}\right\}.
\end{align}
\end{definition}
We shall discuss the practical relevance of this algorithm subclass momentarily. 
%

\item {\em Distribution class.} Analogous to the score-based distribution class $\mathcal{L}_1(N^{\mathsf{cp}})$ (see~(\ref{eq:N_change_dist})) that pertains to the change-point setting, we introduce a closely related distribution class---defined directly in terms of the data distributions---that permits at most $N^{\mathsf{cp}}$ change points:
\begin{subequations}
\begin{align}
\mathcal{L}_3(N^{\mathsf{cp}})
&\coloneqq 
\Bigl\{
        (\mathcal{D}_1,\ldots,\mathcal{D}_T):\;
        (\gD_1,\ldots,\gD_T)\text{ change at most $N^{\mathsf{cp}}$ times.}
\Bigr\};\label{eq:N_change_dist_new}
\end{align}
Additionally, we define another TV-based distribution class concerning the smooth drift setting: for a given budget \(\mathsf{TV}_T>0\), define
\begin{align}
\gL_4(\mathsf{TV}_T)
\;\coloneqq\;
\left\{
(\gD_1,\ldots,\gD_T):\;
\sum_{t=1}^{T-1}\mathsf{TV}(\gD_t,\gD_{t+1}) \le \mathsf{TV}_T
\right\}.
\label{eq:TV_dist}
\end{align}

\end{subequations}
\end{itemize}

\noindent 

Moreover, for a distribution class $\mathcal{L}$, the worst-case regret of algorithm $\pi \in \mathcal{P}_K$ is defined as
\begin{align}
\label{eq:def-worstcase-regret-K}
\mathsf{regret}_\pi(\gL,T,K)
&\coloneqq
\sup_{(\mathcal{D}_1,\ldots,\mathcal{D}_T)\in\mathcal{L}}
\mathsf{regret}_\pi(\mathcal{D}_{1:T},T)\notag\\
&=
\sup_{(\mathcal{D}_1,\ldots,\mathcal{D}_T)\in\mathcal{L}}
\sum_{t=1}^T \EB\Big[\bigl|\, 
\PB_{\gD_t}\big(Y_t \in \gC_t(X_t)\mid \gC_t(\cdot)\big) - (1-\alpha)
\,\bigr|\Big],
\end{align}
where we make explicit the dependency on $K$. 
%
We can now present our minimax lower bound that accommodates online conformal prediction with adaptive training.

%



\begin{theorem}\label{thm:lower_bd_gnr_regret}
Consider any fixed constant $\alpha\in (0,1/2]$. Suppose that Assumption~\ref{assump:full} holds.  
    For any admissible algorithm $\pi\in\gP_K$, the worst-case regret under $\pi$ has the following lower bound:
    \begin{align*}
    \mathsf{regret}_{\pi}\big(\gL_3(N^{\mathsf{cp}}),T,K\big) &= \widetilde{\Omega}\left(\min\left\{\sqrt{(N^{\mathsf{cp}}+1)T}, {\frac{T}{\sqrt{K}}}\right\}\right);\\
    \mathsf{regret}_{\pi}\big(\gL_4(\mathsf{TV}_T),T,K\big) &= \widetilde{\Omega}\left(\min\left\{\sqrt{T} +  (\mathsf{TV}_T)^{\frac{1}{3}}T^{\frac{2}{3}}K^{-\frac{1}{6}}, {\frac{T}{\sqrt{K}}}\right\} \right).
    \end{align*}
\end{theorem}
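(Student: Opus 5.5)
The plan is to reduce to a stationary estimation problem on a single segment and then glue segments together, using the same two-scale recipe as in Theorem~\ref{thm:lower_bd_split_regret}. The new ingredient is a family of response distributions that defeats the union-of-$K$-intervals structure. The $K$-restriction is essential. Without it, a data-independent set made of many tiny intervals, each covering a $(1-\alpha)$-fraction of a fine mesh cell, already attains coverage $\approx 1-\alpha$ for every smooth density. The hard instances must therefore carry structure at resolution $\asymp 1/K$.

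\emph{Hard family.} Take $X$ independent of $Y$ (e.g.\ $X$ constant), so features carry no information and $\mathcal{C}_t(X_t)$ is a single union of at most $K$ intervals determined by the past. Partition $[0,1]$ into $M = cK$ equal cells for a constant $c \ge 8$. For a sign vector $v \in \{\pm1\}^{M}$ and a level $\varepsilon \in (0,1/2]$, let $Y$ have a density on $[0,1]$ that equals $1+\varepsilon v_j \psi_j$ on cell $j$. Here $\psi_j$ is a fixed zero-mean-free bump, so that cell $j$ receives mass $(1+\varepsilon v_j/2)/M$ up to a global renormalisation. This keeps Assumption~\ref{ass:lip_cond_distr} satisfied with a bounded density.

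\emph{Key deterministic step (the main obstacle).} I need to show that for any union $\mathcal{C}$ of at most $K$ intervals, averaged over uniformly random $v$ (or for most $v$ given any $\mathcal{C}$ chosen without knowledge of $v$), $|\mathbb{P}_v(Y \in \mathcal{C}) - (1-\alpha)| \gtrsim \varepsilon/\sqrt{K}$.
\begin{itemize}
\item The at most $2K$ endpoints of $\mathcal{C}$ meet at most $2K$ cells. The remaining $\ge M-2K \ge M/2$ cells lie fully inside or fully outside $\mathcal{C}$.
\item Write $\mathbb{P}_v(Y\in\mathcal{C})=A(\mathcal{C})+\frac{\varepsilon}{2M}\sum_{j\in J_{\rm in}} v_j + (\text{boundary-cell terms})$.
\item Conditionally on the signs of the boundary cells, the sum over the $\gtrsim M$ interior cells is a Rademacher sum of magnitude $\asymp \varepsilon\sqrt{M}/M=\varepsilon/\sqrt{M}$. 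Its anti-concentration (Paley--Zygmund or Littlewood--Offord) gives the bound uniformly in the other terms.
\item The delicate point is that $\mathcal{C}$ may depend on $v$ through the data. I will handle this with an Assouad-type argument. Pair $v$ with the vectors obtained by flipping a coordinate, lower-bound the per-step gap by $\varepsilon/\sqrt{M}$ times the probability that the learner has not resolved the signs, and control that probability via $\mathsf{KL}\lesssim n\varepsilon^2/M$ per coordinate for $n$ samples.
\end{itemize}
Consequently, within a stationary stretch the per-step expected gap stays $\gtrsim \varepsilon/\sqrt{K}$ for the first $n\lesssim K/\varepsilon^2$ steps.

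\emph{Segments and the change-point bound.} On a stationary segment of length $\Delta$, choose $\varepsilon=\min\{1/2,\sqrt{K/\Delta}\}$. The per-segment regret is then
\[
\gtrsim \Delta\varepsilon/\sqrt{K}=\min\{\Delta/\sqrt{K},\sqrt{\Delta}\}.
\]
For $\mathcal{L}_3(N^{\mathsf{cp}})$, split $[T]$ into $N^{\mathsf{cp}}+1$ segments of length $\Delta=T/(N^{\mathsf{cp}}+1)$ with independent sign vectors. Independence makes past segments uninformative, so the regrets add to
\[
\min\bigl\{T/\sqrt{K},\sqrt{(N^{\mathsf{cp}}+1)T}\bigr\}.
\]
The $\sqrt{T}$ term in the smooth case is the case $N^{\mathsf{cp}}=0$.

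\emph{Smooth drift bound.} For $\mathcal{L}_4(\mathsf{TV}_T)$, use the same piecewise construction, charging $\mathsf{TV}\lesssim\varepsilon$ per switch. One can also interpolate linearly across switches, which only lowers the cost. The budget constraint reads $(T/\Delta)\sqrt{K/\Delta}\asymp\mathsf{TV}_T$, so $\Delta\asymp (T\sqrt{K}/\mathsf{TV}_T)^{2/3}$. This gives regret
\[
T/\sqrt{\Delta}\asymp(\mathsf{TV}_T)^{1/3}T^{2/3}K^{-1/6},
\]
capped at $T/\sqrt{K}$ when $\Delta\le K$, where $\varepsilon$ saturates. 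Logarithmic losses from the anti-concentration and Assouad steps account for the $\widetilde{\Omega}$. I expect the anti-concentration step under data-dependent choice of $\mathcal{C}$ to be the main obstacle. Everything else mirrors the proof of Theorem~\ref{thm:lower_bd_split_regret}.
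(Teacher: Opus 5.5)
Your hard family and bookkeeping match the paper's proof. That covers the sign-perturbed, globally renormalised density on $\asymp K$ cells with uninformative features, and independent sign vectors per segment so that earlier data act as independent randomness. It also covers the tuning $\varepsilon\asymp\min\{1,\sqrt{K/\Delta}\}$, which yields $\min\{\sqrt{(N^{\mathsf{cp}}+1)T},T/\sqrt K\}$ and $(\mathsf{TV}_T)^{1/3}T^{2/3}K^{-1/6}$.

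The gap is the step you flag yourself, and the tool you propose for it does not work. Write $\tilde a_j=\mathsf{Leb}(\mathcal{C}\cap\text{cell }j)-\mathsf{Leb}(\mathcal{C})/M$. Then the per-step loss is $\bigl|\mathsf{Leb}(\mathcal{C})-(1-\alpha)+\tfrac{\varepsilon}{1+\varepsilon\bar v}\sum_j v_j\tilde a_j\bigr|$. This is the absolute value of a linear form, not a sum of per-coordinate losses. Flipping one sign moves the coverage by only $O(\varepsilon/M)$, so pairing $v$ with $v^{\oplus j}$ lower-bounds the pair-averaged loss only by $\varepsilon|\tilde a_j|\le\varepsilon/M$. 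These bounds cannot be added over $j$, because you are lower-bounding a single absolute value. An Assouad argument therefore gives at most $\varepsilon/M$ per step, i.e.\ $\sqrt{\Delta/K}$ instead of $\sqrt{\Delta}$ per segment, which loses exactly the $\sqrt K$ factor the theorem is about.

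The $\varepsilon/\sqrt M$ scale is a collective fluctuation of $\langle v,\tilde a\rangle$. To keep it after $\mathcal{C}$ has been fitted to the data, you need that, conditionally on the data, $v$ still behaves like a product of nearly fair signs. Per-coordinate near-uniformity (``sign $j$ is unresolved'') is not enough: a law with fair marginals can be supported on $\{\langle v,\tilde a\rangle=0\}$.

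The paper supplies this in Lemma~\ref{lem:coverage_gap_lower_bd}, with $k\asymp K/\alpha$ cells.
\begin{itemize}
\item \emph{Posterior factorisation.} Under the uniform prior the posterior is proportional to $\prod_j(1+\epsilon v_j)^{N_j}(1+\epsilon\bar v)^{-n}$, where $N_j$ counts samples in cell $j$. It fails to factorise only through the normaliser. Lemma~\ref{lem:factor_approx_p} replaces it by a product $\prod_j q_j(v_j\mid y_{1:n})$ after truncating to $|\bar v|\lesssim\sqrt{\log(nk/\alpha^2)/k}$, which is where the logarithm comes from.
\item \emph{Symmetrisation and Khintchine.} Since $\mathcal{C}$ is a function of $(Y_{1:n},U)$, one symmetrises with an independent posterior copy $V'$. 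Up to an $O(\epsilon^2/\sqrt k)$ error from the $\bar V$-dependent prefactor, this reduces the gap to $\tfrac{\epsilon}{2}\mathbb{E}|\langle V-V',\tilde a\rangle|$. Writing $V_j-V'_j=2\xi_j\delta_j$ with $\delta_j=\mathbbm{1}\{V_j\neq V'_j\}$ and independent Rademacher $\xi_j$, Khintchine gives $\mathbb{E}\sqrt{\sum_j\tilde a_j^2\delta_j}$. Here $\mathbb{E}[\delta_j\mid\text{data}]=\tfrac12\bigl(1-(2q_j-1)^2\bigr)$.
\item \emph{Aggregate information constraint.} Pinsker and the chain rule give $\mathbb{E}\sum_j(2q_j-1)^2\lesssim I(V;Y_{1:n})+(\text{corrections})\lesssim n\epsilon^2\log(nk/\alpha^2)$. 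This is negligible against $\sum_j\tilde a_j^2\ge\frac1k\bigl(\mathsf{Leb}(\mathcal{C})(1-\mathsf{Leb}(\mathcal{C}))-2K/k\bigr)_+$ once $n\epsilon^2\log\lesssim\alpha k$. The $K$-interval structure enters through this last inequality.
\item \emph{Finishing.} Paley--Zygmund passes from the mean to $\mathbb{E}\sqrt{\cdot}$. The case $|\mathsf{Leb}(\mathcal{C})-(1-\alpha)|>\alpha/8$ is handled directly.
\end{itemize}

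Your fixed-$\mathcal{C}$ anti-concentration is the easy special case of this. The posterior factorisation and the mutual-information control are the missing ideas.
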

The proof of Theorem~\ref{thm:lower_bd_gnr_regret} is deferred to \Cref{sec:prf:thm:lower_bd_gnr_regret}. Clearly, when $K$ is a finite constant,  the regret bound in (\ref{eq:regret-full-ub}) matches this minimax lower bound up to a logarithmic factor, provided  $L=O(1)$ (meaning that the learning algorithm is stable and the conditional response distribution is smooth).

\paragraph{Why restricted to \(\gP_K\)?}
We now elucidate the rationale for restricting attention to the algorithm subclass $\gP_K$. In brief, imposing structural constraints on the algorithm class is necessary to formulate a meaningful minimax problem. 
Without such restrictions, one can design ``irregular'' procedures
that achieve asymptotically perfect  marginal coverage while using essentially no information about
the data-generating process. 
To illustrate this point, suppose \(Y\in[0,1]\).  For each \(n\ge 1\), define 
\begin{equation}\label{eq:pathological_Cn}
\gC_n \coloneqq \bigcup_{i=0}^{n-1}\left[\frac{i}{n},\frac{i + (1-\alpha)}{n}\right],
\end{equation}
obtained by partitioning \([0,1]\) into \(n\) equal subintervals and retaining the same \((1-\alpha)\)-fraction of
each subinterval.  For sufficiently regular distributions,  \(\gC_n\) captures roughly a \((1-\alpha)\)-fraction of the total probability mass, largely independent of the actual shape of the density.
The following proposition formalizes this observation.

\begin{proposition}\label{prop:pathological_cover}
Let \(\{\gC_n\}_{n\ge 1}\) be defined by Eqn.~\eqref{eq:pathological_Cn}.  If the distribution \(\gD\) of \(Y\) on \([0,1]\) admits
a Riemann-integrable density, then
\[
\lim_{n\to\infty}\Bigl|\PB(Y\in \gC_n) - (1-\alpha)\Bigr| = 0.
\]
\end{proposition}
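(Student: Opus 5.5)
The plan is to write the coverage probability as a Riemann sum and compare it with the integral of the density. Let $f$ be the Riemann-integrable density of $Y$ on $[0,1]$. Then
\[
\PB(Y\in\gC_n)=\sum_{i=0}^{n-1}\int_{i/n}^{(i+1-\alpha)/n} f(y)\,\rd y ,
\]
since the endpoints carry no mass and the intervals overlap only at endpoints. Because $f$ is Riemann integrable, it is bounded by some $M$ on $[0,1]$. For each cell $I_i=[i/n,(i+1)/n]$, set
\[
m_i=\inf_{I_i} f, \qquad M_i=\sup_{I_i} f .
\]
The retained subinterval $J_i=[i/n,(i+1-\alpha)/n]$ has length $(1-\alpha)/n$, so
\[
\frac{(1-\alpha)m_i}{n}\le \int_{J_i} f \le \frac{(1-\alpha)M_i}{n}.
\]
Summing over $i$ gives
\[
(1-\alpha)L_n(f)\le \PB(Y\in\gC_n)\le (1-\alpha)U_n(f),
\]
where $L_n$ and $U_n$ are the lower and upper Darboux sums on the uniform partition with mesh $1/n$.

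Riemann integrability means both $L_n(f)$ and $U_n(f)$ converge to $\int_0^1 f = 1$ as $n\to\infty$. Uniform partitions with mesh tending to zero suffice for this, by the standard equivalence between Darboux and Riemann integrability. By squeezing, $\PB(Y\in\gC_n)\to 1-\alpha$, which is the claim.

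The only point requiring care is the convergence of the Darboux sums along the specific sequence of uniform partitions rather than along some refining sequence. I would handle this by citing the standard fact that for a Riemann-integrable $f$, $U(P)-L(P)\to 0$ whenever the mesh of $P$ tends to zero. If one prefers to avoid that fact, an alternative is a sampling argument: use that $\int_{J_i} f$ equals $(1-\alpha)/n$ times an intermediate value lying between $m_i$ and $M_i$, and bound the resulting error by $(1-\alpha)(U_n-L_n)$.

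The argument needs no further assumptions; in particular no continuity of $f$ beyond Riemann integrability is used.
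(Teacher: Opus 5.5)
Your Darboux-sum sandwich is correct and is essentially the paper's argument. The paper compares $\PB(Y\in\gC_n)$ with the left-endpoint Riemann sum $\frac{1-\alpha}{n}\sum_{i=0}^{n-1}p_{\gD}(i/n)$ on the same uniform partition, which implicitly needs the error bound $(1-\alpha)(U_n-L_n)\to 0$ that your squeeze makes explicit, so your version is the cleaner of the two. One quibble: your fallback ``sampling argument'' still relies on $U_n-L_n\to0$, so it does not avoid that fact, but your main argument does not need it.
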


This example shows that, in the absence of geometric constraints, marginal coverage alone does not preclude vacuous  
procedures.  Restricting attention to \(\gP_K\), where each prediction interview is a union of at most \(K\) intervals, excludes such uninformative construction and yields a more meaningful lower bound.

\paragraph{Implications beyond the online setting.} 
Although Theorem~\ref{thm:lower_bd_gnr_regret} is stated for the online setting, its proof proceeds by first establishing a
lower bound on the per-round contribution to the cumulative regret, and then constructing a distribution
sequence that allocates the available distribution drift budget in a way that realizes these per-round bottlenecks. 
As a byproduct, our arguments readily yield an \emph{offline} lower bound for training-conditional coverage error
over the algorithm class \(\gP_K\) (see Definition~\ref{def:Pk}).  We record this consequence below, which may be of independent interest.

\begin{proposition}\label{cor:offline_tc_lb}
Fix any $\alpha\in(0,1/2]$. Let $\gS$ be the collection of distributions on $\gX\times\R$ that admit a density. Let $\{(X_i,Y_i)\}_{i=1}^n$ be i.i.d.\ draws from some $\gD\in\gS$, and let $U\sim\mathrm{Unif}(0,1)$ be independent of the data.
Consider any algorithm $\pi$ that maps $\{(X_i,Y_i)\}_{i=1}^n$ and $U$ to a set-valued function
$\widehat{\gC}(\cdot):\gX\to\gB(\RB)$ such that, for each $x\in\gX$, the set $\widehat{\gC}(x)$ is a union of at most $K$ intervals. Then we have
\[
\sup_{\gD\in\gS}
\E\!\left[
\bigg|
\mathop{\PB}\limits_{(X,Y)\sim \gD}\!\left(
Y \in \widehat{\gC}(X)\,\middle|\,\{(X_i,Y_i)\}_{i=1}^n;U
\right) - (1-\alpha)
\bigg|
\right]
\;=\;
\widetilde{\Omega}\!\left(
\min\left\{\frac{1}{\sqrt{K}},\,\frac{1}{\sqrt{n}}\right\}
\right),
\]
where the outer expectation is taken over the training sample \(\{(X_i,Y_i)\}_{i=1}^n\) and the internal
randomization \(U\).
\end{proposition}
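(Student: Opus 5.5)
We argue by a two-point (or finite-family) construction combined with a Le Cam / Assouad style reduction. For each regime we build a small family of distributions $\{\gD_v\}_{v\in\mathcal{V}}\subset\gS$ such that (a) any set $\widehat{\gC}(x)$ made of at most $K$ intervals that has coverage within $\epsilon$ of $1-\alpha$ under one member has coverage off by at least $c\epsilon$ under some other member, and (b) the members are statistically indistinguishable from $n$ samples, i.e. $\mathsf{KL}(\gD_v^{\otimes n}\,\|\,\gD_{v'}^{\otimes n})\lesssim 1$. We can ignore the feature $X$ entirely by taking $X$ independent of $Y$, since then $\widehat{\gC}(X)$ averages over $X$ and the coverage only depends on the law of $Y$. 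Note that a union of at most $K$ intervals on a fixed $x$ still only has $K$ intervals, but after mixing over $x$ this structure is lost; to avoid this, we take $X$ degenerate (a point mass smoothed to give a density, e.g. uniform on a tiny set independent of $Y$) and, conditioning on each $x$, bound the coverage for each fixed $x$. Since the absolute deviation is convex, Jensen lets us reduce to fixed-$x$ sets.

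\emph{Regime $n\lesssim K$ (target $1/\sqrt{n}$).} We take two densities on $[0,1]$: $p_{\pm}(y)=1\pm\epsilon$ on $[0,1/2]$ and $1\mp\epsilon$ on $(1/2,1]$, with $\epsilon\asymp 1/\sqrt{n}$, so that $\mathsf{KL}$ of the $n$-fold products is $O(n\epsilon^2)=O(1)$. For any fixed set $C$, $\PB_+(Y\in C)-\PB_-(Y\in C)=2\epsilon(|C\cap[0,\tfrac12]|-|C\cap(\tfrac12,1]|)$. This may vanish, so we use a richer family: we partition $[0,1]$ into $M$ bins and perturb each bin independently by $\pm\epsilon$ with a zero-sum pairing (Assouad over a hypercube), with $M$ chosen so that a set with at most $K$ intervals cannot be balanced across the perturbation pattern. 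The key geometric fact we want is that for any $C$ with Lebesgue mass about $1-\alpha$ (needed for near-coverage), the function $v\mapsto \sum_j v_j |C\cap \text{bin}_j|$ has average absolute value $\gtrsim \sqrt{M}/M$ times the bin mass, unless $C$ splits the bins in a balanced way, which requires about $M$ interval endpoints. With $M\asymp K$ bins, per-bin perturbation $\epsilon\asymp\sqrt{M/n}$ capped at $1$, an Assouad argument gives error $\gtrsim \min\{1,\sqrt{M/n}\}\cdot M^{-1/2}\cdot(\text{const}) $. Choosing $M\asymp\min\{K,n\}$ yields the claimed $\min\{K^{-1/2},n^{-1/2}\}$ up to logs.

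\emph{Main obstacle.} The main obstacle is the geometric step: showing that a union of at most $K$ intervals whose coverage is near $1-\alpha$ for the base (uniform) law must, for a constant fraction of the bins (at least $M-2K$ of them, which are fully inside or outside $C$ since only $2K$ endpoints exist), have $C\cap\text{bin}_j\in\{\varnothing,\text{bin}_j\}$. For those bins the signed sum $\sum_j v_j\mathbbm{1}\{\text{bin}_j\subseteq C\}$ is a Rademacher sum over roughly $(1-\alpha)M$ and $\alpha M$ bins, and its expected absolute value is $\gtrsim\sqrt{M}$ by Khintchine, provided $\alpha\le 1/2$ keeps both counts linear in $M$. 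This is where the hypothesis $\alpha\in(0,1/2]$ and $M\gtrsim K$ (e.g. $M=4K$) enter. We must combine this with the Assouad reduction carefully, since $C$ depends on the data. We handle this by conditioning on $C$ and using the standard argument that the posterior over each sign $v_j$ is nearly uniform when the per-coordinate KL is $O(1)$, so the realized signs remain nearly Rademacher given $C$. This leaves an $\Omega(\epsilon\sqrt{M}/M)$ deviation; the logarithmic slack absorbs constants from endpoint bins. Finally, a mollification makes all densities smooth so that they belong to $\gS$.
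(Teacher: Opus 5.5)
Your plan has the same architecture as the paper's proof. Both use a piecewise-constant density on $[0,1]$ with $M\asymp K$ bins carrying random signs (the paper takes $M=256K/\alpha$), the fact that a union of $K$ intervals partially covers at most $2K$ bins, and a Khintchine bound under the posterior. But the construction you write down is inconsistent, and either reading of it breaks a step.

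Read literally, the zero-sum pairing gives bins $I_{2i-1},I_{2i}$ densities $1\pm\epsilon v_i$. The coverage of $C$ is then $\mathsf{Leb}(C)+\epsilon\sum_i v_i\bigl(|C\cap I_{2i-1}|-|C\cap I_{2i}|\bigr)$. The single interval $C=[0,2j/M]$, with $2j/M$ closest to $1-\alpha$, makes every summand vanish. It therefore has gap at most $1/M$ under every $v$ without using the data, so your geometric step (a Rademacher sum over the fully covered bins) is false for this family.

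If instead the signs are independent across bins, as that step needs, the density must be renormalized by $1+\epsilon\overline V$. The deviation becomes $\frac{\epsilon}{1+\epsilon\overline V}\sum_j V_j\widetilde a_j$, with centered masses $\widetilde a_j=\mathsf{Leb}(C\cap I_j)-\mathsf{Leb}(C)/M$; this is what Lemma~\ref{lem:range_of_sum_tilde_a_j2} controls. The posterior, proportional to $\prod_j(1+\epsilon V_j)^{N_j}(1+\epsilon\overline V)^{-n}$, is no longer a product, so Khintchine cannot be applied conditionally on the data. Conditioning the signs to sum to zero has the same defect. The paper spends Lemma~\ref{lem:factor_approx_p} on exactly this: the posterior dominates $\frac13\prod_j q_j(V_j\mid y_{1:n})$, with $q_j\propto(1+\epsilon V_j)^{N_j}e^{-\epsilon V_j n/M}$, up to a Hoeffding tail in $\overline V$.

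The second gap is the step you call standard. Near-uniformity of each posterior sign holds only on average over the data. The loss $\bigl|\mathsf{Leb}(C)-(1-\alpha)+\frac{\epsilon}{1+\epsilon\overline V}\sum_jV_j\widetilde a_j\bigr|$ does not split over coordinates, so an Assouad reduction does not apply. And $C$ is chosen after seeing the data: it can absorb the posterior mean of $\sum_jV_j\widetilde a_j$ into $\mathsf{Leb}(C)$, and it can decide which bins to leave out according to which posteriors happen to be sharp. The paper handles this in three moves:
\begin{itemize}
\item It symmetrizes with an independent posterior copy $V'$, so only $\langle V-V',\widetilde a\rangle$ matters.
\item It combines the uniform bound $\widetilde a_j^2\le 1/M^2$ with the global information budget $\EB\sum_j(2q_j-1)^2\lesssim n\epsilon^2\log(2nM/\alpha^2)$ (Pinsker, the chain rule, and $I(V_{1:M};Y_{1:n})\le 36n\epsilon^2$; Lemma~\ref{lem:bd_2q-1^2}). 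This gives $\EB\sum_j\widetilde a_j^2\mathbbm{1}\{V_j\neq V_j'\}\ge\frac12\EB\sum_j\widetilde a_j^2-\frac{1}{2M^2}\EB\sum_j(2q_j-1)^2$, which is why $\epsilon$ is taken as small as $\frac{1}{64}\sqrt{\alpha M/(n\log(2nM/\alpha^2))}$.
\item Conditional Khintchine only gives a bound in terms of $\EB\bigl(\sum_j\widetilde a_j^2\mathbbm{1}\{V_j\neq V_j'\}\bigr)^{1/2}$, where Jensen points the wrong way. The paper therefore finishes with Paley--Zygmund and $\sum_j\widetilde a_j^2\le 1/(4M)$.
\end{itemize}

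There are also three smaller errors.
\begin{itemize}
\item Your reduction to fixed $x$ ``by convexity'' is backwards, since $|\EB_Xf(X)|\le\EB_X|f(X)|$. With any continuous $X$ independent of $Y$, take $\widehat{\gC}(x)=\mathbb{R}$ for $x$ below the empirical $(1-\alpha)$-quantile of the $X_i$ and $\varnothing$ otherwise. Every fixed-$x$ gap is at least $\alpha$, but the true gap is $O(n^{-1/2})$. So drop $X$ altogether, as the paper's proof does.
\item The choice $M\asymp\min\{K,n\}$ contradicts your own $M\gtrsim K$ and fails when $n\lesssim K$. For $M\le K$, the $M$-interval set $\bigcup_j[\frac{j-1}{M},\frac{j-\alpha}{M}]$ has coverage exactly $1-\alpha$ under every sign pattern (cf.\ Proposition~\ref{prop:pathological_cover}). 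Keep $M\asymp K/\alpha$ for all $n$, with $\epsilon\asymp\min\{\alpha^{5/2},\sqrt{\alpha M/(n\log(2nM/\alpha^2))}\}$. This gives $\epsilon/\sqrt M\asymp\min\{K^{-1/2},n^{-1/2}\}$ up to logarithms and powers of $\alpha$, with no regime split.
\item For $n\lesssim K$ the target is $K^{-1/2}$, not $n^{-1/2}$.
\end{itemize}
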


Proposition~\ref{cor:offline_tc_lb}---which is a direct consequence of Lemma~\ref{lem:coverage_gap_lower_bd} given in \Cref{sec:prf:thm:lower_bd_gnr_regret}---is independent of the online setting and provides a lower
bound for training-conditioned validity of full conformal prediction in the offline regime.
The result places no parametric restriction on the prediction set \(\widehat{\gC}(\cdot)\), 
and is therefore fundamentally different from those information-theoretic lower bounds in classical parametric estimation problems. The bound in Proposition~\ref{cor:offline_tc_lb} holds for a fixed algorithm 
and considers the worst case over a class of distributions, complementing the result of~\citet{bian2023training},
which instead fixes the distribution and takes the worst case over a class of algorithms.
Moreover, relative to prior work, our bound explicitly characterizes the learning limit
in terms of the structural complexity \(K\) of the prediction sets (i.e., the number of intervals). 
Determining the optimal $K$-dependence in training-conditional lower
bounds remains an interesting open direction, which we leave for future work.
We view this lower bound as a baseline that may
be useful more broadly in the study of conformal inference beyond the online setting.


\subsection{Comparisons with prior art}
\label{sec:full-conf-lit}
\paragraph{Existing training-conditional guarantees.}
Prior literature has established 
training-conditional coverage guarantees 
for split conformal methods~\citep{vovk2012conditional}. 
More recently,~\citet{bian2023training} showed that such 
guarantees can be achieved in a distribution-free manner 
by $K$-fold CV+ when the sample size is sufficiently large 
relative to the number of folds, but not achievable by full conformal 
methods or jackknife+. Training-conditional coverage 
guarantees for full conformal  methods and jackknife+ 
have instead largely been obtained under stability-type
assumptions; see, e.g., \citet{liang2025algorithmic,amann2023assumption} and \citet{pournaderi2024training}. Proposition~\ref{prop:training_conditioned_cov}
strengthens these results in three complementary ways.

First, \citet{liang2025algorithmic} expressed their bounds through an in-sample $m$-stability metric \(\beta_{m,n}^{\mathsf{in}}\) (see  Definition 3.4 therein), which measures the sensitivity of prediction when a training set of size $n$ is augmented by $m$ additional samples. For full conformal methods, \citet[Theorem~3.5]{liang2025algorithmic} showed that the training-conditional coverage error is bounded by
\(
O\Big(\sqrt{\frac{\log(1/\delta)}{\min\{m,n\}}}+\big(\frac{\beta_{m-1,n+1}^{\mathsf{in}}}{\gamma}\big)^{1/3}\Big),
\)
with $\gamma$ some inflation parameter. Achieving vanishing coverage error then requires both \(m\to\infty\) and
\(\beta^{\mathsf{in}}_{m-1,n+1}\to 0\), effectively demanding the fitted predictor \(\widehat\mu(\cdot)\) to stabilize as more
data arrive. As noted by \citet{amann2023assumption},  stabilization of this type can fail under distribution shift,
where \(\widehat\mu(\cdot)\) typically adapts to evolving data distributions. In contrast,
Proposition~\ref{prop:training_conditioned_cov} allows $\widehat{\mu}(\cdot)$ to evolve and remains applicable in drifting scenarios.

Second, \citet{amann2023assumption} derived training-conditional \emph{under}-coverage bounds for cross validation (CV), Jackknife, CV+ and Jackknife+ under stability assumptions, and further investigated the necessity of stability conditions. However, their results do not accommodate full conformal methods.


Third, \citet[Theorem~6]{pournaderi2024training} provided a training-conditional coverage guarantee for full conformal prediction under covariate shift. Their result, however, does not address shifts in the conditional distribution \(Y\mid X\). Moreover, relative to our \Cref{prop:training_conditioned_cov}, their analysis relies on additional structural assumptions,
including a uniform upper bound on the train--test density ratio and a parametric model for the fitted predictor with
a bi-Lipschitz dependence on its parameters.


\paragraph{Verification of stability conditions.}
Stability analyses for both empirical loss minimizers and stochastic optimization methods have been developed in, e.g., \citet{barber2021predictive}, 
\citet{ndiaye2022stable} and \citet{leeleave}. Compared to our work (mainly our results in \Cref{sec:exp_for_full_conf}), these prior results typically differ in several
important respects. First, they did not accommodate constrained optimization problems. Second, they often assume that,
for every data realization \(z\), the loss \(\ell(\cdot,z)\) is uniformly strongly convex, which most naturally holds for
explicitly regularized ERM objectives; in contrast, our verification only requires strong convexity of the
population risk \(L\) on \(\gC\). Third, while \citet{leeleave} (see their Example~2) examined stochastic
optimization methods, their analysis is essentially offline---the algorithm is rerun multiple times on a fixed dataset
under different permutations---and is derived for fixed stepsizes, which does not yield a stability coefficient that
vanishes with the sample size. Such vanishing stability is crucial in our online full conformal analysis in order to obtain
training-conditional coverage guarantees.

\section{Numerical experiments}\label{sec:experi}
\subsection{Experiments: online conformal prediction with pretrained scores}
In this subsection, we evaluate the performance of the proposed \driftocp algorithm against the ACI method under various distribution shift scenarios, assuming the presence of pretrained score functions. The experimental setup is described below.

\paragraph{Data generation.}
Consider a regression setting with a data stream $\{(X_t,Y_t)\}_{t\ge 1}$. The feature vector
$X_t$ is in $\mathbb{R}^d$ with $d = 5$,  and each component is generated by $X_{t,j} \overset{\text{i.i.d.}}{\sim} \mathcal{N}(0, 1)$. The response variable satisfies
\begin{equation*}
    Y_t = 2X_{t,1} + X_{t,2} + \mu_t + \sigma_t \cdot \varepsilon_t, \quad \varepsilon_t \overset{\text{i.i.d.}}{\sim} \mathcal{N}(0, 1),
\end{equation*}
where $\mu_t$ and $\sigma_t$ are varying parameters.
We examine four distribution shift cases as follows. 

\begin{itemize}
    \item \textbf{Setting 1 (piecewise variance shift):} $\mu_t = 0$ and
    \begin{equation*}
        \sigma_t = \begin{cases}
            0.5, & \text{if }t < 4000, \\
            2.0, & \text{if }4000 \leq t < 7000, \\
            3.5, & \text{if }t \geq 7000.
        \end{cases}
    \end{equation*}
    This setting simulates abrupt changes in noise level, representing sudden regime shifts.
    
    \item \textbf{Setting 2 (linear bias drift):} $\sigma_t = 0.5$ and $\mu_t = \kappa \cdot t$ with $\kappa = 0.002$, so that $\mu_T = 20$ at $T = 10000$. This represents smooth temporal drift in the conditional mean.
    
    \item \textbf{Setting 3 (smooth variance growth):} $\mu_t = 0$ and
    $\sigma_t = \sqrt{1 + 0.008t}$. This model continuously increases variability over time.
    
    \item \textbf{Setting 4 (no distribution drift):} $\mu_t = 0$ and $\sigma_t = 0.5$ for all $t$. This serves as a baseline where no distribution shift occurs.
\end{itemize}

\paragraph{Experimental protocol.}
For each setting, we use a training set of size $n = 500$ drawn from the initial distribution ($t = 0$) to fit a random forest regressor \citep{breiman2001random} with 100 trees, implemented in \textsf{scikit-learn} \citep{pedregosa2011scikit}, as the pre-trained prediction model.
The pretrained predictive model $\widehat{\mu}$ remains fixed throughout the online prediction phase and is not updated.

The non-conformity score at each time step $t$ is taken to be the absolute residual between the observed and predictive responses, namely, $s_t \coloneqq |Y_t - \widehat{\mu}(X_t)|$.
The initial quantile $\widehat{q}_0$ is set to be the $(1-\alpha)$-th empirical quantile of the training residuals $\{|Y_i - \widehat{\mu}(X_i)|\}_{i=1}^{n}$.

The test horizon is $T = 10{,}000$ time steps. We set the target miscoverage level to be $\alpha = 0.1$. All experiments are repeated 40 times with different random seeds, and we report the mean and standard deviation of cumulative regret.

\paragraph{Numerical evaluation of cumulative regret.}
We measure performance using the cumulative regret defined in Eqn.~\eqref{eq:def_regret}. However, since the true miscoverage probability $\mathbb{P}(s_t > {q}_t \mid {q}_t)$ is intractable, we estimate it via Monte Carlo simulation. Specifically, for each time step $t$, we pre-generate a fixed evaluation set of $M = 500$ independent samples $\{(X_t^{(m)}, Y_t^{(m)})\}_{m=1}^{M}$ from the true distribution $\gD_t$ at that time step. The instantaneous coverage rate (defined in Eqn.~\eqref{eq:def_Cvg}) is then estimated as
\begin{equation*}
    \widehat{\mathsf{cvg}}_t = \frac{1}{M} \sum_{m=1}^{M} \mathbbm{1}\left\{ |Y_t^{(m)} - \widehat{\mu}(X_t^{(m)})| \le q_t \right\}.
\end{equation*}
The cumulative regret up to time $T$ is then calculated as
\begin{equation*}
    \widehat{\mathsf{Regret}}_T = \sum_{t=1}^{T} \big| \widehat{\mathsf{cvg}}_t - (1-\alpha) \big|.
\end{equation*}

\paragraph{Methods for comparison.}
We compare the following algorithms numerically. 
\begin{itemize}
    \item \textbf{\driftocp (\Cref{alg:OCID}):} We use a drift detection threshold in \Cref{alg:OCID} of $\sigma_{n,r} = 4$. To avoid false positives from high-variance estimates, we require a minimum window size of $t \geq 10$ before any drift detection can be declared.
    \item \textbf{ACI with decaying stepsizes \citep{angelopoulos2024online}:} $\eta_t = (t + 1)^{-\gamma}$ for $\gamma \in \{0.5, 0.6\}$.
    \item \textbf{ACI with fixed stepsizes \citep{gibbs2021adaptive}:} $\eta_t\equiv \eta \in \{0.01, 0.1, 0.5\}$.
\end{itemize}

\paragraph{Results.}
Figure~\ref{fig:driftocp_vs_aci} summarizes both the regret and calibration dynamics across the four data-generating settings. The top row plots the cumulative regret over time, while the bottom row tracks the corresponding evolution of the calibration quantiles; the black dashed curve in the bottom row is an approximation of the ground-truth quantile, obtained via repeated simulations at each time point. Taken together, the two rows highlight the tuning trade-off of ACI: a large constant stepsize reacts quickly to distributional changes, but yields highly variable quantile trajectories even under stationarity, leading to substantial cumulative regret; conversely, smaller or decaying stepsizes stabilize the quantile updates in stationary periods, yet may adapt too slowly after distribution shifts and consequently lag behind the moving target (most notably in Setting~1). 
As a result, the optimal stepsize for ACI differs across various settings, 
making it difficult to select a priori in practice. 
In contrast, \driftocp adapts to different regimes in a data-driven manner, 
achieving stable tracking during stationary segments and rapid re-alignment following change points. 
This behavior leads to uniformly controlled regret across regimes, comparable to that attained by hindsight-optimal tuning of ACI.
%


\begin{figure}
    \centering
    \includegraphics[width=\linewidth]{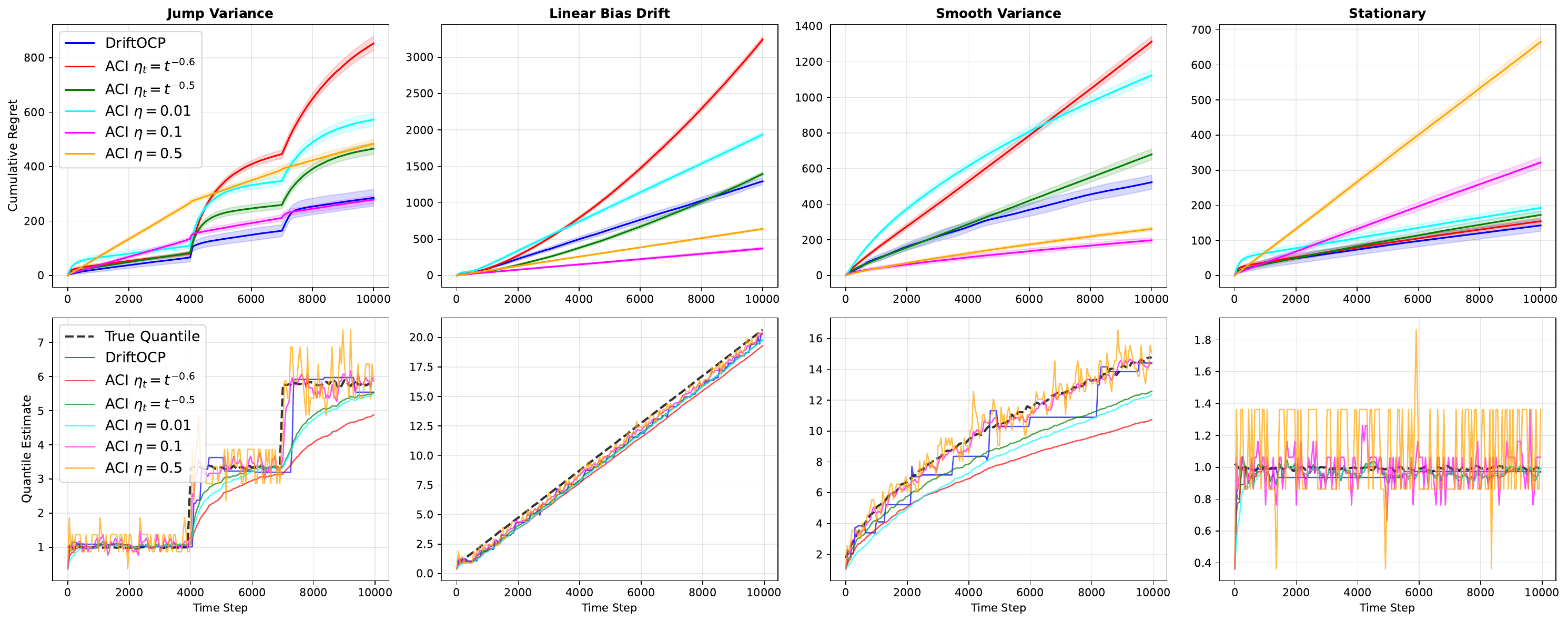}
    \caption{\textbf{Cumulative regret and calibration quantiles under four data-generating settings.}
    Top row: cumulative regret trajectories. Bottom row: calibration-quantile evolution; the black dashed curve indicates an approximation to the ground-truth quantile obtained via repeated simulations at each time point. Across settings, ACI exhibits a clear stepsize trade-off: ACI with large constant stepsizes adapts quickly but produces volatile quantile updates and suboptimal performance under stationarity, whereas ACI with smaller or decaying stepsizes yields more stable updates at the cost of slower adaptation to distributional changes. In comparison, \driftocp is stable within stationary time segments and adapts rapidly to distribution shifts, yielding consistently controlled regret. Curves are averaged over 20 runs; shaded bands indicate \(\pm 1\) standard deviation.
    }
    \label{fig:driftocp_vs_aci}
\end{figure}

\subsection{Experiments: online conformal prediction with adaptively trained scores}

This section examines how our methods interact with different ways of generating non-conformity scores in the presence of distribution drift. 
In particular, we pair our drift-aware recalibration mechanism with (i) a
covariate-agnostic score, (ii) a fixed pretrained model, and (iii) an adaptively updated fitted model. 
The numerical comparisons illustrate tangible gains from adaptively updated models
in terms of predictive efficiency and validity.

\paragraph{Data generation.}
We consider an online regression stream \(\{(X_t,Y_t)\}_{t\ge 1}\) 
with feature dimension \(d=10\).  
Throughout these experiments, we construct prediction sets using a \emph{linear} predictor, and the non-conformity score function is computed from linear regression residuals. We then consider two data-generating models for \((X_t,Y_t)\) to distinguish well-specified learning from misspecification:
\begin{itemize}
\item \textbf{Well-specified case:} \(Y_t=X_t^\top\beta^*+\varepsilon_t\), with \(\varepsilon_t\sim\mathcal N(0,1)\), so the linear predictor can be correctly specified;
\item \textbf{Misspecified case:} \(Y_t=X_t^\top\beta^*+\frac{1}{100}\|X_t\|_2^2+\varepsilon_t\), with
\(\varepsilon_t\sim\mathcal N(0,1)\), where the additional quadratic term introduces a mild deviation from linearity.
\end{itemize}
In both cases, the true coefficient \(\beta^*\in\R^d\) is sampled
from \(\mathcal N(0,I_d)\) and is subsequently held fixed across simulation repetitions.
For each model specification, we introduce piecewise-stationary covariate shifts with change points at
\(t\in\{3333,6667\}\). Specifically, we consider:
\begin{itemize}
\item \textbf{Mean shifts:} \(X_t\sim\mathcal N(\mu_t\mathbf 1_d, I_d)\), where
\[
\mu_t=\begin{cases}
0, & \text{if }t\le 3333;\\
3, & \text{if }3333<t\le 6667;\\
-2,& \text{if }t>6667;
\end{cases}
\]
\item \textbf{Variance shifts:} \(X_t\sim\mathcal N(0,\sigma_t^2 I_d)\), where
\[
\sigma_t=\begin{cases}
1, & \text{if }t\le 3333;\\
5, & \text{if }3333<t\le 6667;\\
10,& \text{if }t>6667.
\end{cases}
\]
\end{itemize}
In each run of the experiments, we first draw \(n_{\text{pretrain}}=100\) independent observations to fit an initial 
ridge regression model, and then draw an additional \(n_{\text{train}}=500\) observations 
to initialize the calibration quantile for
the non-conformity scores. We subsequently generate an online data stream of length \(T=10{,}000\).
\paragraph{Score construction strategies.}
We compare three score construction strategies that are paired with 
drift-aware recalibration. The first uses our full-conformal
variant tailored to online optimization, while the latter two 
use \driftocp with pretrained score functions:
\begin{itemize}
\item \textbf{\driftocpfull + online SGD:} the score is formed using a sequentially updated fitted model,
\(s_t=|Y_t-X_t^\top\widehat\beta_t|\), where \(\widehat\beta_t\) is updated by online SGD with stepsize
\(\eta_t=0.01/\sqrt{t}\);
\item \textbf{\driftocp + pretrained ridge:} the score uses a fixed ridge-regression predictor \citep{hoerl1970ridge} with regularization parameter $\lambda = 1.0$, implemented in \textsf{scikit-learn} \citep{pedregosa2011scikit}, as the pretrained model;
\item \textbf{\driftocp + absolute response:} a covariate-agnostic baseline \(s_t=\abs{Y_t}\).
\end{itemize}
All methods employ the same drift-detection threshold \(\sigma=4\) and the same doubling-round structure.
The target miscoverage level is $\alpha = 0.1$ for all settings.

\paragraph{Results.}
We report the prediction interval width and the local coverage rate computed over a sliding window of 100 time
steps. As shown in~\Cref{fig:adaptive_vs_pretrain},
the results are averaged over 20 independent runs, 
with shaded regions indicating \(\pm 1\) standard deviation. 
The top row plots the prediction interval width over time, 
while the bottom row shows the local coverage rate computed using a rolling window of 100 steps. The horizontal dashed line marks the target level 
\(1-\alpha = 0.9\) and the vertical dashed lines correspond to 
the change points.

Across all four settings---well-specified or misspecified models, and under either mean or variance drift---the adaptive-score method with online SGD consistently achieves the most favorable tradeoff, producing the narrowest intervals while maintaining stable coverage around the target level. 
In contrast, the pretrained-score baseline tends to be sensitive to mismatches between the pretraining and test covariate distributions, 
resulting in wider intervals and a higher degree of coverage fluctuations after the change points. The model-free baseline (\(s_t=\abs{Y_t}\)) is in general conservative and 
produces substantially wider intervals than the other two methods throughout.
It is also sensitive to distribution shift, exhibiting undercoverage at change points.


We also observe a transient effect at the beginning of the data stream: the adaptive method exhibits slightly biased local
coverage and inflated widths early on, which is as expected since the fitted model is still in its initialization phase and the online updates are relatively volatile. As more data arrive, the adaptively fitted model stabilizes, and the
resulting score becomes better calibrated, after which the method tracks the target coverage tightly even after
distributional shifts.

Finally, note that the adaptive method relies on an online SGD-trained predictor, whose trajectory depends on the data
order and thus does not satisfy permutation symmetry. The strong empirical performance of this non-symmetric learning
pipeline provides additional evidence supporting our training-conditional guarantees, which do not require symmetry of
the underlying fitted model.
\begin{figure}
    \centering
    \includegraphics[width=\linewidth]{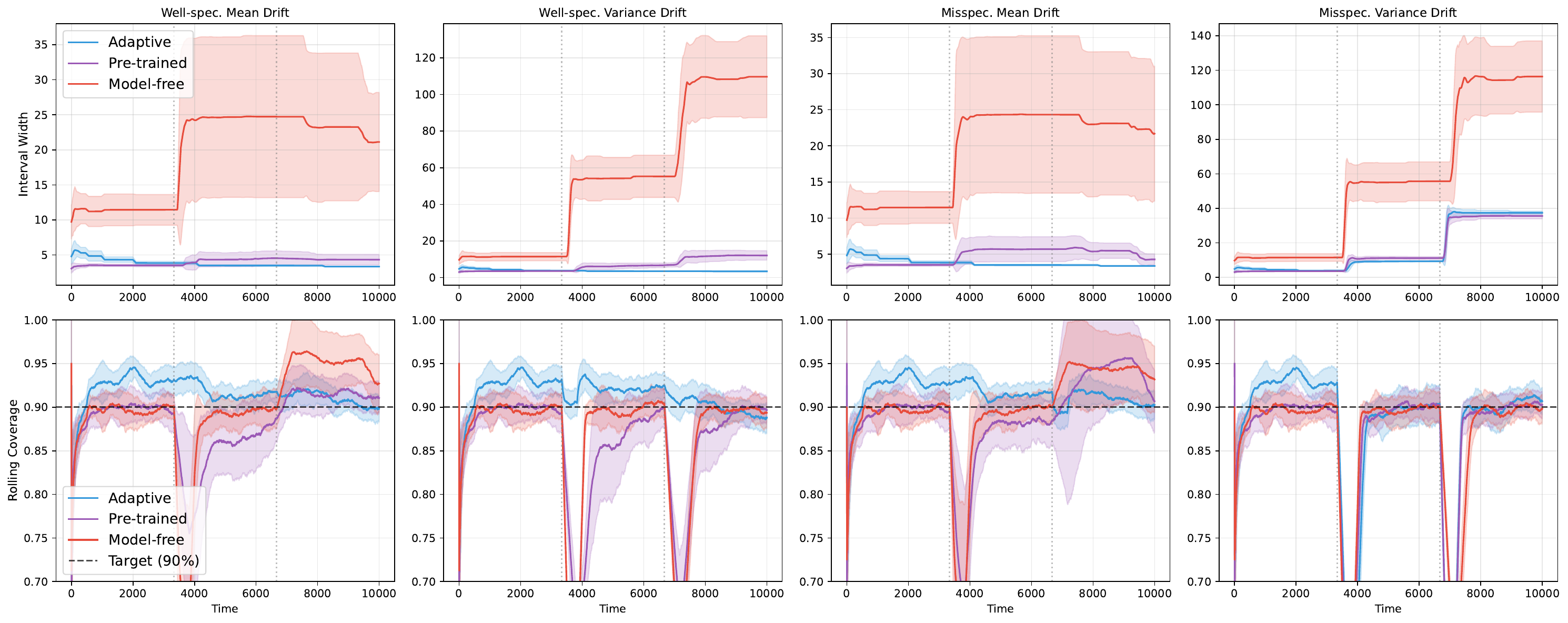}
    \caption{\textbf{Online conformal prediction with different score constructions.}
Top row: prediction-interval width over time. Bottom row: local coverage rate computed with a rolling window of 100
steps; the horizontal line marks the target level \(1-\alpha=0.9\). Vertical dashed lines indicate change points at
\(t=3333\) and \(t=6667\). Columns correspond to the four settings (well-specified vs.\ misspecified model, each under
mean vs.\ variance drift). The adaptive-score method (online SGD) yields noticeably shorter intervals and more stable
coverage under variance drift, whereas the pretrained-score method is sensitive to a mismatch between the pretraining
and test covariate distributions. The model-free baseline (\(s_t=\abs{Y_t}\)) is in general conservative and produces wide prediction intervals; it is also 
sensitive to distribution shift, exhibiting undercoverage at change points.
Curves are averaged over 20 runs, with shaded bands indicating \(\pm1\) standard deviation.}
    \label{fig:adaptive_vs_pretrain}
\end{figure}


\section{Additional related work}

In this section, we briefly discuss a small sample of other related papers.  
To start with, a substantial body of work has established theoretical coverage guarantees for
conformal prediction~\citep{angelopoulos2021gentle,angelopoulos2024theoretical}.  The majority of these results, however, rely on the
assumption that the data are exchangeable (most notably, i.i.d.~observations)
(e.g.,  \citet{vovk2005algorithmic,vovk2015cross,lei2018distribution,barber2021predictive}).  
A growing literature has investigated how conformal prediction procedures can be
modified to retain validity when exchangeability is violated, aiming to preserve meaningful coverage guarantees under various forms of distributional shift. For instance, 
\citet{tibshirani2019conformal,barber2023conformal} developed weighted split conformal methods
that restore \emph{marginal} validity under a weighted-exchangeability condition. Their methods rely on importance weights tied to the data distribution; in a distribution-free setting, one must either use a non-data-dependent weight (as in \citet{barber2023conformal}) or impose stronger structural assumptions (e.g., invariance of \(Y\mid X\) as in \citep{tibshirani2019conformal}). 
In parallel,~\citet{podkopaev2021distribution,si2024pac} developed split conformal  
methods under label shift, when the marginal distribution of $Y$ differs across environments 
while the conditional distribution $P(X\mid Y)$ remains invariant. 
In another line of work~\citep{chernozhukov2018exact,cauchois2024robust,ai2024not,gui2024distributionally,aolaritei2025conformal}, distribution shifts between training and test environments are tackled from a distributionally robust optimization perspective.
Meanwhile, a growing literature developed conformal prediction methods for time-series data \citep{zaffran2022adaptive,xu2021conformal,xu2023sequential,xu2023conformal,xu2024conformal,chen2024conformalized,cleaveland2024conformal,stocker2025gentle}.

For temporally dependent, non-exchangeable sequences, 
\citet{oliveira2024split} showed that split conformal retains \emph{approximate marginal} validity up to an explicit penalty term controlled by decoupling/mixing
conditions (including \(\beta\)-mixing), with sharper results subsequently obtained in~\citet{barber2025predictive}.
These results, however, do not yield sharp training-(and-calibration)-conditional concentration
bounds and typically require stationarity-type dependence assumptions.
In contrast, training-(and-calibration)-conditional coverage guarantees beyond exchangeability are comparatively rarer.

In addition, a line of work connected ACI-style calibration with ideas from online learning and studied regret bounds
under various performance criteria \citep{bhatnagar2023improved,gibbs2024conformal,zhang2024discounted,ramalingam2024relationship,liu2026online}, which, however, fell short of ensuring training-conditional coverage. 
Moreover, the idea of ACI has been extended for broader settings, such as risk control \citep{feldman2022achieving,farinhas2024non}, stochastic control for time series \citep{yang2024bellman}, and parametric quantile calibration \citep{arecesonline}, among other things.

\section{Discussion}

In this work, we have developed two online conformal prediction methods that adapt efficiently to temporal distribution drift, 
producing prediction sets that are both valid and informative. For scenarios involving pretrained score functions, our \driftocp algorithm leverages an efficient drift detection subroutine to update the calibration set sequentially, achieving regret bounds that are minimax optimal (up to logarithmic factors) across both change-point and smooth drift regimes. For scenarios where the predictive models (and hence the score functions) are trained adaptively from prior observations, we propose \driftocpfull, a full-conformal-style online algorithm that enjoys strong regret guarantees under stability assumptions; for this setting,  we further establish matching minimax lower bounds under suitable restrictions on the prediction sets.  Unlike much of the prior work that focused on metrics like empirical long-term coverage frequency or adversarial regret,  our analysis exploits additional independence  assumption across data while remaining otherwise distribution-free, which has enabled training-conditional regret guarantees. The proposed algorithms are horizon-free, computationally efficient,  and supported by fully non-asymptotic, minimax-optimal theoretical guarantees.

Our results naturally suggest several directions for future investigation. To begin with, while our current theoretical development 
hinges upon independence across data samples, many online predictive inference problems involve temporally dependent observations, as commonly encountered in, say, time-series settings \citep{xu2021conformal,zaffran2022adaptive,oliveira2024split}. A natural and
challenging direction is therefore to extend our online conformal methods to tackle temporally dependent, non-stationary Markovian environments, while still delivering rigorous training-conditional coverage guarantees. 
On another front, 
the training-conditional guarantees in \Cref{sec:full_conf}
rely on stability
assumptions for the underlying model-fitting algorithms. When the desirable stability conditions fail to hold or are difficult to verify---e.g., in certain high-dimensional, nonparametric, or deep learning models  \citep{gibbs2025characterizing,lei2011efficient,romano2019conformalized}---it is fundamentally important to develop new online full conformal methods that ensure both validity and efficiency without stability. Finally, our algorithmic and analysis frameworks might shed light on other online statistical problems, such as online multicalibration \citep{collina2026optimal}.  

\section*{Acknowledgments}

Y.~Chen is supported in part by the Alfred P.~Sloan Research Fellowship,  the ONR grant N00014-25-1-2344,  the NSF grants 2221009 and 2218773, 
the Wharton AI \& Analytics Initiative's AI Research Fund, 
and the Amazon Research Award. 
Z.~Ren is supported by the NSF grant DMS-2413135
and Wharton Analytics. 
Y.~Chen would like to thank Jiahao Ai for extensive discussion about adaptive conformal inference.


\appendix

\section{Proof of Fact~\ref{lem:long-term_notge_regret}}
\label{sec:proof-tc_regret_def}

\paragraph{Proof of (i).}
By the triangle inequality, we obtain
\begin{align*}
    \left|
\frac{1}{T}\sum_{t=1}^T 
\PB(Y_t \in \mathcal{C}_t)
- (1-\alpha)
\right|&\le \frac{1}{T}\ssum{t}{1}{T}\big|\PB(Y_t \in \mathcal{C}_t)
- (1-\alpha)\big|
=
\frac{1}{T}\ssum{t}{1}{T}\abs{\EB_{\gC_t(\cdot)}\left[\PB\big(Y_t \in \mathcal{C}_t\mid \gC_t(\cdot)\big)\right]
- (1-\alpha)}
\\
&\le 
\frac{1}{T}\ssum{t}{1}{T}\EB_{\gC_t(\cdot)}\left[\abs{\PB\big(Y_t \in \mathcal{C}_t\mid \gC_t(\cdot)\big)
- (1-\alpha)}\right]
\le 
\frac{\mathsf{regret}_T
}{T}.
\end{align*}

\paragraph{Proof of (ii).}
For any time point $t$, let us construct $\gC_t$ as follows:
\begin{equation*}
    \gC_t \coloneqq\begin{cases}
        \varnothing, & \text{with probability }\alpha,\\
        \RB, & \text{with probability }1-\alpha.
    \end{cases}
\end{equation*}
On the one hand, it can be easily derived that, for any $1\leq t\leq T$,
\[
\PB(Y_t \in \gC_t) = \alpha \mathbbm{1}\{Y_t \in \varnothing\} + (1-\alpha)\mathbbm{1}\{Y_t \in \RB\} = 1-\alpha,
\]
thus implying that
\[
\mathsf{lt\text - cvg}_T = 
\frac{1}{T}\ssum{t}{1}{T}\PB(Y_t \in \gC_t) = 1-\alpha.
\]
On the other hand, it is seen that
\begin{align*}
    &\abs{\PB(Y_t \in \varnothing) - (1-\alpha)} = 1-\alpha;\\
    &\abs{\PB(Y_t \in \RB) - (1-\alpha)} =\abs{1 -(1- \alpha)} = \alpha.
\end{align*}
Therefore, the following holds naturally:
\[
\abs{\PB\big(Y_t \in \gC_t \mid \gC_t(\cdot) \big) - (1-\alpha)} \ge \min\{1-\alpha, \alpha\}= \alpha.
\]
Summing over all $t=1,\dots,T$ and recalling the definition of $\mathsf{regret}_T$, we complete the proof.

\section{Detailed proofs in \Cref{sec:split_conf}}
Before proceeding, let us introduce some convenient notation.

\subsection{Proof of Theorem~\ref{thm:split_regret}}
\label{sec:proof:thm:split_regret}

We now turn to the proof of \Cref{thm:split_regret}. At a high level, we shall first establish a per-round regret bound, then aggregate these
bounds within each stage, and finally sum over all stages to obtain regret bounds over the entire time horizon $[T]$.

\subsubsection{Additional notation}
\label{sec:additional-notation-split-thm-UB}
To facilitate presentation for the analysis of Algorithm~\ref{alg:OCID}, we introduce some additional notation. First,  write $$s_{n,r,l}=s_{n,r,l}(X_{n,r,l}, Y_{n,r,l})\qquad \text{and}\qquad s_t = s_t(X_t,Y_t)$$ as long as it is clear from the context. 
In addition, for any $ 1\le i < j\le T$, we define the ``typical'' event:
\begin{subequations}
\label{eq:typical_set_gA_ij_nr}
\begin{equation}\label{eq:typical_set_gA_ij}
    \gA(i,j) \coloneqq \left\{\sup\limits_{x\in \RB}\left\{\abs{\ssum{t}{i}{j}
    \big(\mathbbm{1}\{s_t \le x\} - \PB(s_t \le x)\big)
    }\right\} \le 6\sqrt{(j-i+1)\log j}\right\}.
\end{equation}
To see that this is a high-probability event, invoking \Cref{lem:DKW_ineq} with $\delta = j^{-6}$ and using the fact
\[
\frac{4}{\sqrt{j-i+1}} + \sqrt{\frac{6\log j}{2(j-i+1)}} \le 4\sqrt{\frac{\log j}{j-i+1}} + \sqrt{\frac{3\log j}{j-i+1}} < 6\sqrt{\frac{\log j}{j-i+1}},
\]
we can establish that
\begin{equation}\label{eq:up_bd_A_ij_c}
\begin{aligned}
    \PB\big(\gA(i,j)^{\mathrm{c}}\big) \le j^{-6}.
\end{aligned}
\end{equation}
Moreover, for any stage-round pair $(n,r)$, we define
\begin{equation}\label{eq:typical_set_gA_nr}
    \gA_{n,r} \coloneqq \bigcap\limits_{i=\tau_{n,r}}^{\tau_{n,r+1}-1}\bigcap\limits_{j = i+1}^{\tau_{n,r+1}-1} \gA(i,j),
\end{equation}
\end{subequations}
where we recall that $\tau_{n,r}$ indicates the time at which round $r$ of stage $n$ begins and $\tau_{n,r_n + 1} = \tau_{n+1, 1}$. 
For convenience, we also write %
\begin{align}
\label{eq:defn-taun-proof}\tau_n \coloneqq \tau_{n,1}.
\end{align}

Moreover, we introduce several notations for the change-point setting. Recall that under the change-point model, 
the entire horizon $[T]$ is partitioned into $N^{\mathsf{cp}}+1$ time segments, 
within each of which the score distributions remain fixed. 
\begin{definition}\label{def:notation_of_cp}
We define the following notation:
\begin{itemize}
    \item $\mathcal{I}_1,\cdots,\mathcal{I}_{N^{\mathsf{cp}}+1}$: the $N^{\mathsf{cp}}+1$ time segments over the entire horizon;

    \item $K_{n,r}$: the total number of time segments in round $r$ of stage $n$; 

    \item $S_{n,r}$ and $t_n$: the total number of iterations in round $r$ of stage $n$;\\ in particular, let $t_n=S_{n,r_n}$, i.e., the number of iterations in the last round of this stage;

    \item $\mathcal{I}_{n,r,k}$ $(k=1,\cdots,K_{n,r})$: the $k$-th time segment in round $r$ of stage $n$; 

    \item $S_n$: the total number of iterations in stage $n$;

    \item $J_n$: the total number of time segments in stage $n$;

    \item $\mathcal{I}_{n,j}$ $ (j=1,\cdots,J_n)$: the $j$-th time segment in stage $n$.
\end{itemize}
\end{definition}
\noindent Also, for any time segment $\mathcal{I}$, we let $|\mathcal{I}|$ represent the length of this time segment. In addition, while the last round $r_n$ of stage $n$ contains $t_n\leq T_{r_n}$ iterations, we generate---for convenience of presentation---a set of random variables $\{s_{n,r,l}\}$ for $l>t_n$ in a way that obeys
\begin{align}
\PB(s_{n,r_n,l} > q_{n,r_n}\mid q_{n,r_n}) = \alpha
\qquad \text{for all }l> t_n. 
\label{eq:auxiliary-sl-exceed}
\end{align}

Moreover, for round $r$ in stage $n$, we define the cumulative KS distance within this round as
\begin{subequations}
\label{eq:defn-KS-tau-n-r}
\begin{align}
\mathsf{KS}_{n,r}^{\mathsf{round}}
    \coloneqq \ssum{l}{1}{S_{n,r}-1} \mathsf{KS}(s_{n,r,l}, s_{n,r,l+1}).
    \end{align}
We also define the cumulative KS distance within stage $n$ (which contains $r_n$ rounds) as
%
%
\begin{align}
    \label{eq:defn-KS-tau-n}
    \mathsf{KS}_{n}^{\mathsf{stage}}
    \coloneqq 
    \sum_{r=1}^{r_n} \mathsf{KS}_{n,r}^{\mathsf{round}}. 
    \end{align}
\end{subequations}

\subsubsection{Decomposing and bounding the cumulative regret}

In order to bound the cumulative regret, 
we first decompose it based on stages and rounds of \driftocp as well as the typical events $\{\mathcal{A}_{n,r}\}$ introduced in \Cref{sec:additional-notation-split-thm-UB}: 
\begin{equation}\label{eq:typical_and_rare_decomp}
\begin{aligned}
\ssum{t}{1}{T}\big|\PB(s_t > q_t \mid q_{t}) - \alpha\big| &= \ssum{n}{1}{N}\ssum{r}{1}{r_n}\ssum{l}{1}{T_r}\big|\PB(s_{n,r,l} > q_{n,r}\mid q_{n,r}) - \alpha\big|\mathbbm{1}\{\gA_{n,r}\}\\
&\quad + \ssum{n}{1}{N}\ssum{r}{1}{r_n}\ssum{l}{1}{T_r}\big|\PB(s_{n,r,l} > q_{n,r}\mid q_{n,r}) - \alpha\big|\mathbbm{1}\{\gA_{n,r}^{\mathrm{c}}\}.
\end{aligned}
\end{equation}
As it turns out, the first term on the right-hand side of \eqref{eq:typical_and_rare_decomp} serves as the dominant term, as argued below.

Consider any time point $t$ belonging to round $r$ of stage $n$. According to the procedure of \driftocp---particularly the fact that the rounds length grow geometrically with $T_r=3^r$---it follows that neither $t/4$ nor $4t$ lies within the same round $(n,r)$, and as a result,
$$\gE_t \coloneqq \bigcap\limits_{j=t/4}^{4t}\bigcap\limits_{i=1}^j \gA(i,j)  \subseteq \gA_{n,r}.$$ 
It can thus be seen from \eqref{eq:up_bd_A_ij_c} that
\begin{align*}
    \PB(\gE_t^{\mathrm{c}}) &\le \ssum{j}{t/4}{4t}\ssum{i}{1}{j}\PB\big(\gA(i,j)^{\mathrm{c}}\big) \le \ssum{j}{t/4}{4t}\ssum{i}{1}{j}\frac{1}{j^4}
    \le \ssum{j}{t/4}{4t}\frac{1}{j^3} \le \frac{32}{t^2},
\end{align*}
which helps us control the second term on the right-hand side of \eqref{eq:typical_and_rare_decomp} as
\begin{equation}\label{eq:rare_eve_bd}
\begin{aligned}
    \EB\left[\ssum{n}{1}{N}\ssum{r}{1}{r_n}\ssum{l}{1}{T_r}\big|\PB(s_{n,r,l} > q_{n,r}\mid q_{n,r}) - \alpha\big|\,\mathbbm{1}\{\gA_{n,r}^{\mathrm{c}}\}\right]&\le \EB\left[\ssum{t}{1}{T} \big|\PB(s_t > q_t\mid q_t) - \alpha\big|\,\mathbbm{1}\{\gE_t^{\mathrm{c}}\}\right]\\
    &\le \ssum{t}{1}{T}\PB\big(\gE_t^{\mathrm{c}}\big) 
    \le \ssum{t}{1}{T}\frac{32}{t^2} = O(1).
\end{aligned}
\end{equation}
%


As a consequence, the remainder of this proof is devoted primarily to bounding the first term of \eqref{eq:typical_and_rare_decomp}. Towards this end, we begin by looking at the cumulative coverage gaps in round $r$ of stage $n$. Informally,  in the change-point setting, the cumulative coverage gap over this round on the typical events defined in \Cref{sec:additional-notation-split-thm-UB} is upper bounded by a sum of square-root terms in the lengths of the time segments. In contrast, under smooth drift, the bound contains a term that scales with a suitable KS distance raised to the 
$1/3$ power, in a manner that resembles the final regret bound in \Cref{thm:split_regret}. This is stated in the following lemma, with the proof deferred to \Cref{sec:prf:lem:one_round_regret}.


\begin{lemma}\label{lem:one_round_regret}
    Consider any stage-round pair $(n,r)$ in Algorithm~\ref{alg:OCID}. 
    If no distribution shift has been detected by  the subroutine $\textsc{DriftDetect}$ by the end of this round, then it holds that
    \begin{align*}
    \bigg( \ssum{l}{1}{T_{r}}\Big|\PB(s_{n,r,l} > q_{n,r}\mid q_{n,r}) - \alpha\Big| \bigg) \mathbbm{1}\{\gA_{n,r}\} =
    \begin{cases}
    \widetilde{O}\bigg(\ssum{k}{1}{K_{n,r}}\sqrt{\abs{\gI_{n,r,k}}}\bigg), & \text{for the change-point setting;}\\
    \widetilde{O}\left(\sqrt{T_{r}} + (\mathsf{KS}_{n,r}^{\mathsf{round}})^{\frac{1}{3}}T_{r}^\frac{2}{3}\right), & \text{for the smooth drift setting.}
    \end{cases}
    \end{align*}
\end{lemma}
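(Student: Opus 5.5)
The argument rests on one observation. On the event that no drift was flagged during round $(n,r)$, every block statistic $|\mathsf{cvg}\text{-}\mathsf{err}_{q_{n,r}}(j,t)|/\sqrt{t-j+1}$ with $\tau_{n,r}\le j\le t<\tau_{n,r+1}$ is at most $\sigma_{n,r}=\widetilde O(1)$. On $\gA_{n,r}$, the empirical block error differs from the population block error $\mathsf{cvg}\text{-}\mathsf{err}^\star_{q_{n,r}}(j,t)$ by at most $6\sqrt{(t-j+1)\log t}$, uniformly in the threshold; this covers the data-dependent $q_{n,r}$, because $q_{n,r}$ is computed from the previous round and the definition of $\gA(i,j)$ takes a supremum over $x$. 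Combining the two gives
\[
\Big|\sum_{l\in B}\big(\PB(s_l\le q_{n,r}\mid q_{n,r})-(1-\alpha)\big)\Big|\le C\sqrt{|B|\log\tau_{n,r+1}}
\]
for every contiguous block $B$ inside the round. The quantities $d_l:=\PB(s_{n,r,l}>q_{n,r}\mid q_{n,r})-\alpha$ therefore have small \emph{signed} sums on every sub-block, and the task is to convert this into a bound on $\sum_l|d_l|$. The padded indices $l>t_n$ from \eqref{eq:auxiliary-sl-exceed} contribute $d_l=0$, so only the realized part of the round needs to be treated. In the last round, where detection did occur, the bound also holds up to the step before detection, and that single step costs $O(1)$.

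\emph{Change-point setting.} On each segment $\gI_{n,r,k}$ the scores are i.i.d., so $d_l$ equals a constant $d^{(k)}$ on the segment. Taking $B=\gI_{n,r,k}$ gives $|\gI_{n,r,k}|\,|d^{(k)}|\le C\sqrt{|\gI_{n,r,k}|\log T}$. Hence $\sum_{l\in\gI_{n,r,k}}|d_l|=\widetilde O(\sqrt{|\gI_{n,r,k}|})$, and summing over $k$ gives the first claim.

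\emph{Smooth drift.} The map $l\mapsto \PB(s_l\le q)$ has total variation at most $\mathsf{KS}^{\mathsf{round}}_{n,r}$, uniformly in $q$, so $d_l$ has total variation at most $V:=\mathsf{KS}^{\mathsf{round}}_{n,r}$ along the round. Partition the round into $M$ consecutive blocks $B_1,\dots,B_M$ of length $\approx T_r/M$. On each block write $|d_l|\le |\bar d_{B_i}|+|d_l-\bar d_{B_i}|$, where $\bar d_{B_i}$ is the block average. The signed-sum bound gives $|B_i||\bar d_{B_i}|\le C\sqrt{|B_i|\log T}$. The deviation term is at most $|B_i|\cdot V_i$, where $V_i$ is the variation of $d_l$ within $B_i$. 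Summing over blocks,
\[
\sum_l|d_l|\lesssim M\sqrt{T_r/M}\,\sqrt{\log T}+(T_r/M)V=\sqrt{MT_r\log T}+T_rV/M.
\]
Choosing $M=\max\{1,\lceil V^{2/3}T_r^{1/3}\rceil\}$ gives $\widetilde O\big(\sqrt{T_r}+V^{1/3}T_r^{2/3}\big)$, as claimed.

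The step I expect to need the most care is the first one: justifying the uniform empirical-to-population replacement at the random threshold $q_{n,r}$ on $\gA_{n,r}$, and making the no-detection condition cover all sub-blocks $[j,t]$ rather than only those ending at the current time. The latter is in fact automatic, since \textsc{DriftDetect} is called at every $t$ of the round with window start $\tau_{n,r}$ and scans all $j$. The blocking argument for smooth drift is routine once the signed-block control is in place.
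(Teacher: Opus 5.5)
Your proposal is correct. Its first half is the paper's proof: you combine the no-detection condition on every sub-block $[j,t]$ of the round with the uniform event $\gA_{n,r}$, evaluated at the threshold $q_{n,r}$ (which is independent of the current round's scores), to get the signed-block bound. The change-point case then follows exactly as you describe, since $d_l$ is constant on each $\gI_{n,r,k}$.

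For smooth drift you take a different route.

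\emph{The paper's argument.} It splits the round into maximal runs on which $d_l=\PB(s_{n,r,l}>q_{n,r}\mid q_{n,r})-\alpha$ has constant sign, and writes $\sum_l|d_l|=\sum_l d_l+2\sum_{k\ \mathrm{even}}\sum_{l\in\gI_k}(-d_l)$. The first term is bounded by the signed bound over the whole round. On each minority-sign run, the average $A_k$ is controlled in two ways:
\begin{itemize}
\item $\sqrt{|\gI_k|}\,A_k\lesssim\sqrt{\log T}$, from the detection test;
\item $A_k$ is at most the cumulative $\mathsf{KS}$ distance from the last point of the preceding run to the end of $\gI_k$, because that preceding point has $d_l$ of the opposite sign.
\end{itemize}
H\"older's inequality then gives $T_r^{2/3}(\mathsf{KS}_{n,r}^{\mathsf{round}})^{1/3}$ with no free parameter.

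\emph{Your argument.} You use the classical fixed-blocking (bias--variance) device from nonstationary online learning. You take a deterministic partition into $M$ equal blocks and split each term into the block mean plus the within-block oscillation $|d_l-\bar d_{B_i}|\le V_i$. You then choose $M\asymp\max\{1,V^{2/3}T_r^{1/3}\}$ in the analysis only, which is legitimate because the bound is pathwise on the event.

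\emph{Comparison.} Your route is more elementary and uses nothing about the sign pattern of $d_l$, only its total variation. The paper's route is tuning-free: it avoids the case split on whether $V^{2/3}T_r^{1/3}\ge 1$ and any block-length rounding, and it charges drift only on minority-sign runs. Both give the same rate up to constants and logarithmic factors.
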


Owing to the doubling trick employed in \driftocp (i.e., the round lengths grow geometrically), we can lift the per-round cumulative gap bound in \Cref{lem:one_round_regret} to a per-stage cumulative gap bound, again on the typical events defined in \Cref{sec:additional-notation-split-thm-UB}. This result is formalized in the following lemma, whose proof is postponed to \Cref{sec:prf:lem:one_stage_regret}. 
\begin{lemma}\label{lem:one_stage_regret}
    Consider any stage $n$ in Algorithm~\ref{alg:OCID}, which comprises $r_n$ rounds. 
    Then we have 
    \[
    \ssum{r}{1}{r_n}\bigg(\ssum{l}{1}{T_{r}}\Big|\PB(s_{n,r,l} > q_{n,r}\mid q_{n,r}) - \alpha\Big| \bigg)\mathbbm{1}\{\gA_{n,r}\} = 
    \begin{cases}
        \widetilde{O}\bigg(\ssum{j}{1}{J_n}\sqrt{\abs{\gI_{n,j}}}\bigg),~ &\text{for the change-point setting};\\
        \widetilde{O}\left(\sqrt{S_n}  + (\mathsf{KS}_{n}^{\mathsf{stage}})^{\frac{1}{3}}S_n^{\frac{2}{3}}\right),~ &\text{for the smooth drift setting}.
    \end{cases}
    \]
\end{lemma}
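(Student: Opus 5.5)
The plan is to sum the per-round bound of Lemma~\ref{lem:one_round_regret} over the rounds $r=1,\dots,r_n$ of stage $n$. The geometric growth $T_r=3^r$ makes the total dominated by the last round. One point needs care first. Lemma~\ref{lem:one_round_regret} is stated for rounds in which no drift is detected by the end of the round. That holds for every round $r<r_n$, since a detection ends the stage. For the final round $r_n$, drift is detected at iteration $t_n$, or the horizon ends. I would therefore apply the lemma's argument to the truncated window $[\tau_{n,r_n},\tau_{n,r_n}+t_n-2]$, on which no detection had occurred. The single detection step then contributes at most $1$, and the auxiliary padding in \eqref{eq:auxiliary-sl-exceed} contributes zero gap for $l>t_n$. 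So the last round still obeys the bound with $T_{r_n}$ replaced by $t_n$.

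\textbf{Change-point setting.} Every time segment $\gI_{n,r,k}$ in round $r$ is a sub-piece of some stage segment $\gI_{n,j}$. Each stage segment is cut into at most $r_n$ round pieces, of lengths $a_1,\dots,a_{r_n}$ summing to $|\gI_{n,j}|$. By Cauchy--Schwarz, $\sum_r\sqrt{a_r}\le\sqrt{r_n\,|\gI_{n,j}|}$. Since $3^{r_n-1}\le S_n\le T$, we have $r_n=O(\log T)$. Summing over $j$ then gives $\widetilde O\big(\sum_{j}\sqrt{|\gI_{n,j}|}\big)$, where the extra $\sqrt{\log T}$ is absorbed into $\widetilde O$. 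Segments interior to a single round are counted exactly once, so no further loss occurs.

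\textbf{Smooth drift setting.} Summing the per-round bounds gives
\[
\sum_{r=1}^{r_n}\Big(\sqrt{T_r}+(\mathsf{KS}^{\mathsf{round}}_{n,r})^{1/3}T_r^{2/3}\Big).
\]
The first part is a geometric sum, $\sum_{r<r_n}\sqrt{3^r}+\sqrt{t_n}\lesssim\sqrt{3^{r_n-1}}+\sqrt{t_n}\lesssim\sqrt{S_n}$, because $S_n\ge\sum_{r<r_n}3^r\gtrsim 3^{r_n-1}$. For the second part I apply H\"older with exponents $3$ and $3/2$:
\[
\sum_r (\mathsf{KS}^{\mathsf{round}}_{n,r})^{1/3}T_r^{2/3}\le\Big(\sum_r\mathsf{KS}^{\mathsf{round}}_{n,r}\Big)^{1/3}\Big(\sum_r T_r\Big)^{2/3}\le(\mathsf{KS}^{\mathsf{stage}}_n)^{1/3}S_n^{2/3}.
\]
This uses \eqref{eq:defn-KS-tau-n} and the fact that the effective round lengths sum to $S_n$, with the last round truncated to $t_n$ as above. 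The KS increments across round boundaries are simply dropped, which only helps.

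\textbf{Main obstacle.} The delicate step is the last, truncated round. The sum must be bounded by $t_n$ rather than $T_{r_n}$; if $t_n\ll T_{r_n}$, a crude bound via $T_{r_n}$ could exceed $S_n$ by a constant factor at most, since $T_{r_n}\le 3^{r_n}\lesssim 3\cdot 3^{r_n-1}+t_n$ fails only when $r_n=1$. I would handle this by re-running the proof of Lemma~\ref{lem:one_round_regret} up to time $t_n$, noting that its argument uses only the non-detection on windows ending before the current time together with $\gA_{n,r}$. Failing that, $T_{r_n}\le 3(S_n+3)$ whenever $r_n\ge 2$, and the case $r_n=1$ gives a constant-size round, so either way the bound holds up to constants.
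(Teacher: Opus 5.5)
Your proposal is correct. Your handling of the final round matches the paper's proof exactly: you apply the per-round argument to iterations $1,\dots,t_n-1$, where no window has triggered detection, pay at most $1$ for iteration $t_n$, and note that the padded scores of \eqref{eq:auxiliary-sl-exceed} contribute zero. Your smooth-drift argument also matches: a geometric sum for the $\sqrt{T_r}$ terms and H\"older with exponents $3$ and $3/2$ for the KS terms.

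The genuine difference is in the change-point case. You apply Cauchy--Schwarz to the at most $r_n=O(\log T)$ round-pieces of each stage segment $\gI_{n,j}$, which gives $\sqrt{r_n}\sum_j\sqrt{|\gI_{n,j}|}$. The paper instead locates, for each $\gI_{n,j}$, its first and last overlapping rounds. Every round strictly in between lies entirely inside $\gI_{n,j}$ and so contributes a single piece of length $T_r$. The geometric series then gives $\sum_{r<r^{(j+1)}}\sqrt{T_r}\le 3\sqrt{T_{r^{(j+1)}-1}}\le 3\sqrt{|\gI_{n,j}|}$, and adding the two boundary pieces yields a log-free factor of $5$.

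The paper's route is sharper by a $\sqrt{\log T}$ factor. Yours is shorter and avoids the first/last-overlapping-round bookkeeping. The lost factor is immaterial, since the per-round bound already carries $\sqrt{\log(4\tau_{n,r})}$ and everything is stated up to $\widetilde O$. Both arguments rely on the geometric growth of $T_r$: you use it only to make $r_n$ logarithmic, the paper to make the interior sum geometric.

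The fallback in your last paragraph is unnecessary. The obstacle in the final round is not its length; indeed $3^{r_n}\le 2S_n+3$ always. It is that the no-detection hypothesis of Lemma~\ref{lem:one_round_regret} fails on windows ending at $t_n$, and your truncation already resolves that.
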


It remains to see how the per-stage cumulative regret bounds derived above can be leveraged to establish Theorem~\ref{thm:split_regret}. To this end, we cope with the change-point and smooth drift settings separately in what follows.


\subsubsection{Controlling the dominant term in the change-point setting}

Recall the definition of $\{\mathcal{I}_k\}_{k=1}^{N^{\textsf{cp}}+1}$ and $\{\mathcal{I}_{n,j}\}_{j=1}^{J_n} $ in \Cref{sec:additional-notation-split-thm-UB}, and 
%
%
denote 
\begin{align}
\label{eq:defn-event-Bn-12345}
\gB_{n}=\mathcal{A}_{n,r_n-1}\cap \mathcal{A}_{n,r_n}
.
\end{align}
Suppose that \driftocp contains $N$ stages. 
Applying \Cref{lem:one_stage_regret} tells us that
%
\begin{align}
\sum_{n=1}^N \sum_{r=1}^{r_n}\sum_{l=1}^{T_r}
&\big|\mathbb{P}(s_{n,r,l} > q_{n,r}\mid q_{n,r}) - \alpha\big|\,
\mathbbm{1}\{\mathcal{A}_{n,r}\}
\le  \widetilde{O}\Bigg(
\sum_{n=1}^N\bigg(\sum_{j=1}^{J_n}\sqrt{|\mathcal{I}_{n,j}|}\bigg)
\Bigg)\notag\\
&\le \widetilde{O}\Bigg(
\sum_{n=1}^N\bigg(\sum_{j=1}^{J_n}\sqrt{|\mathcal{I}_{n,j}|}\bigg)\,
\mathbbm{1}\{\gB_{n}\}
+\sum_{n=1}^N (\tau_{n+1}-\tau_{n})\,
\mathbbm{1}\{\gB_n^{\mathrm{c}}\}\Bigg),
\label{eq:cp-typical_decomp}
\end{align}
%
where the second relation follows since, for any $n\in[N]$, 
\[
\ssum{j}{1}{J_n}\sqrt{\abs{\gI_{n,j}}} \le \sqrt{J_n\bigg(\ssum{j}{1}{J_n}\abs{\gI_{n,j}}\bigg)} = \sqrt{J_n (\tau_{n+1} - \tau_n)} \le \tau_{n+1} - \tau_n.
\]
In the sequel, we bound the two terms on the right-hand side of \eqref{eq:cp-typical_decomp} separately.

\begin{itemize}
\item To bound the first term on the right-hand side of \eqref{eq:cp-typical_decomp}, we first show that on the typical events, the final two rounds of any stage do not share exactly the same score distributions. This is formally stated in the lemma below, whose proof is provided in \Cref{sec:prf:lem:seg-separation}.  
\begin{lemma}
\label{lem:seg-separation}
Consider any stage $n$ in the change-point setting. 
%
On the event
\(\gA_{n,r_n-1}\cap \gA_{n,r_n}\), the rounds $r_n-1$ and $r_n$ cannot both be entirely contained within the same time segment from $\{\gI_k\}_{k=1}^{N^{\mathsf{cp}}+1}$.
\end{lemma}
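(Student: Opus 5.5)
We argue by contradiction. Suppose that on $\gA_{n,r_n-1}\cap \gA_{n,r_n}$ both rounds $r_n-1$ and $r_n$ lie inside one segment $\gI_k$, so all scores $s_t$ with $t\in[\tau_{n,r_n-1},\tau_{n,r_n+1})$ share a common continuous CDF $F$. Round $r_n$ is the last round of stage $n$. So either a drift is declared during it, or the horizon ends. In the latter case there is nothing to separate, and we read the lemma as concerning stages that end with a detection. Hence at some time $\tau_{n,r_n}+t-1$ within round $r_n$, \textsc{DriftDetect} returns \textsf{true}. This means there is a $j$ with
\[
\big|\mathsf{cvg}\text{-}\mathsf{err}_{q_{n,r_n}}(j,\tau_{n,r_n}+t-1)\big| > \sigma_{n,r_n}\sqrt{\tau_{n,r_n}+t-j},
\qquad \sigma_{n,r_n}=24\sqrt{\log(4\tau_{n,r_n})}.
\]
The plan is to show that both typical events force this statistic to be at most a constant times $\sqrt{\log(\cdot)}$, and that this constant is strictly smaller than $24$.

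First, we control the calibration quantile. The quantile $q_{n,r_n}$ minimizes $\big|\sum_{i}(\mathbbm{1}\{s_{n,r_n-1,i}>q\}-\alpha)\big|$ over the $T_{r_n-1}$ scores of round $r_n-1$. With a continuous CDF there are no ties almost surely, so the empirical coverage at $q_{n,r_n}$ is within $1$ of $(1-\alpha)T_{r_n-1}$. On $\gA(\tau_{n,r_n-1},\tau_{n,r_n}-1)\subseteq\gA_{n,r_n-1}$ the empirical and population CDFs over that block differ uniformly by at most $6\sqrt{T_{r_n-1}\log \tau_{n,r_n}}$. Because the distribution is common across the block, this gives
\[
|F(q_{n,r_n})-(1-\alpha)|\le 6\sqrt{\log \tau_{n,r_n}/T_{r_n-1}}+1/T_{r_n-1}.
\]

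Second, we control the detection statistic on the window $[j,t_1]$ with $t_1=\tau_{n,r_n}+t-1$. Set $m=t_1-j+1\le T_{r_n}=3T_{r_n-1}$. Decompose $\mathsf{cvg}\text{-}\mathsf{err}_{q}(j,t_1)$ into
\[
\sum_{l=j}^{t_1}\big(\mathbbm{1}\{s_l\le q\}-F(q)\big)+m\big(F(q)-(1-\alpha)\big).
\]
Apply the uniform bound from $\gA(j,t_1)\subseteq\gA_{n,r_n}$ with $x=q_{n,r_n}$. The sup over $x$ is what handles the data-dependence of $q$, and it is legitimate because $q_{n,r_n}$ depends only on round $r_n-1$. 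This bounds the first term by $6\sqrt{m\log t_1}$. The second term is at most $m\cdot 6\sqrt{\log\tau_{n,r_n}/T_{r_n-1}}+m/T_{r_n-1}$. Dividing by $\sqrt m$ and using $m\le 3T_{r_n-1}$, the second term contributes at most $6\sqrt3\sqrt{\log\tau_{n,r_n}}+\sqrt{3/T_{r_n-1}}$.

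Third, we compare logarithms. Since $t_1<\tau_{n,r_n}+T_{r_n}\le 4\tau_{n,r_n}$ (the rounds grow geometrically and $\tau_{n,r_n}\ge T_{r_n-1}$), we have $\log t_1\le\log(4\tau_{n,r_n})$. The total normalized statistic is therefore at most $(6+6\sqrt3+O(1))\sqrt{\log(4\tau_{n,r_n})}<24\sqrt{\log(4\tau_{n,r_n})}$, which contradicts detection.

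The main obstacle is bookkeeping. One must check that the relevant intervals $(i,j)$ indeed appear in the definitions of $\gA_{n,r_n-1}$ and $\gA_{n,r_n}$; if a needed endpoint falls outside, we pass to the enclosing $\gE_t$-type event. One must also treat the edge case where $r_n=1$, so that round $r_n-1$ belongs to the previous stage. There $q_{n,1}$ is the inherited $\widetilde q$, and the same argument applies, with the previous stage's final round serving as the calibration block under the convention $(n,0)=(n-1,r_{n-1})$.
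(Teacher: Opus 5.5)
Your argument is correct and is essentially the paper's proof. Both combine the empirical-quantile property of $q_{n,r_n}$ on round $r_n-1$ with the uniform bounds on $\gA_{n,r_n-1}$ and $\gA_{n,r_n}$, and they differ only in bookkeeping: you bound the normalized detection statistic by $(6+6\sqrt{3})\sqrt{\log(4\tau_{n,r_n})}+\sqrt{3}<24\sqrt{\log(4\tau_{n,r_n})}$, whereas the paper shows $|\PB(s>q_{n,r_n}\mid q_{n,r_n})-\alpha|$ is simultaneously at most $14$ and more than $18$ times $\sqrt{\log\tau_{n+1}/T_{r_n}}$.

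One correction: your $r_n=1$ remark is wrong as stated. The inherited $\widetilde q$ was calibrated on round $r_{n-1}-1$ of stage $n-1$, not on that stage's final round, and that final round is exactly where $\widetilde q$ triggered a detection. The case is vacuous, however. The statistic never exceeds $\sqrt{t_1-j+1}$, so a detection needs a window longer than $576\log 4>3^6$, which forces $r_n\ge 7$; the paper's proof tacitly relies on the same fact.
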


In view of \Cref{lem:seg-separation}, each time segment $\mathcal{I}_k$ cannot overlap with more than two consecutive stages; in other words, each $\mathcal{I}_k$ can contain at most two time segments from 
$\{\mathcal{I}_{n,j}:\ n\in [N],\ j\in [J_n]\}$. As a consequence, 
\begin{equation*}
\sum_{n=1}^N\Big(\sum_{j=1}^{J_n}\sqrt{|\mathcal{I}_{n,j}|}\Big)\,
\mathbbm{1}\{\gB_{n}\}
\le 2\sum_{k=1}^{N^{\textsf{cp}}+1}\sqrt{|\mathcal{I}_k|}
\le 2\sqrt{(N^{\textsf{cp}}+1)T},
\end{equation*}
where the last step follows from Cauchy-Schwarz as well as the fact that $\sum_{k=1}^{N^{\mathsf{cp}}+1}|\mathcal{I}_k|=T$. 

\item 
Turning to the second term on the right-hand side of \eqref{eq:cp-typical_decomp}, we make note of the decomposition: 
\begin{equation}\label{eq:cp_setting_rare_eve_decomp}
\begin{aligned}
    &\EB\left[\ssum{n}{1}{N}(\tau_{n+1} - \tau_n)\mathbbm{1}\left\{
    \gB_n^{\mathrm{c}}
    \right\}\right] \le \ssum{n}{1}{\infty}\EB\left[
    (\tau_{n+1} - \tau_n)\mathbbm{1}\left\{\gB_n^{\mathrm{c}}\right\}
    \right]\\
    &\qquad \le \ssum{n}{1}{\infty}\sum\limits_{i< j}(j-i)\PB\left(\tau_{n}=i,\tau_{n+1}=j, \gB_n^{\mathrm{c}}\right)\\
    &\qquad \le \ssum{n}{1}{\infty}\sum\limits_{i<j}(j-i) \PB(\tau_{n}=i)\PB\left(\bigcup\limits_{k=i\vee \frac{j}{16}}^j \bigcup\limits_{l=i\vee \frac{j}{16}}^{k-1} \gA(l,k)^{\mathrm{c}}, \tau_{n+1} = j ~\Big|~ \tau_{n}=i \right),
\end{aligned}
\end{equation}
where the last inequality uses a simple property of \textsc{DriftOCP}, namely, \(16\tau_{n,r_n-1} \ge \tau_{n+1}\) due to the exponential growth of round lengths. Additionally, observe that under the independent data assumption, for any $l,k\in[i,j]$ the event $\gA(l,k)^{\mathrm{c}}$ is independent of what has happened prior to time $i$, which taken together with \eqref{eq:up_bd_A_ij_c} and the union bound gives
\begin{align*}
    \PB\Bigg(\bigcup\limits_{k=i\vee \frac{j}{16}}^j \bigcup\limits_{l=i\vee \frac{j}{16}}^{k-1} \gA(l,k)^{\mathrm{c}}, \tau_{n+1} = j ~\Big|~ \tau_{n}=i \Bigg)
    &\le \ssum{k}{i\vee \frac{j}{16}}{j}\ssum{l}{i\vee \frac{j}{16}}{k-1}\PB\big(\gA(l,k)^{\mathrm{c}} \mid \tau_{n}=i\big)\\
    &= \ssum{k}{i\vee \frac{j}{16}}{j}\ssum{l}{i\vee \frac{j}{16}}{k-1} \PB\big(\gA(l,k)^{\mathrm{c}}\big)
    \le \ssum{k}{i\vee \frac{j}{16}}{j}\ssum{l}{i\vee \frac{j}{16}}{k-1}\frac{1}{k^6}\\
    &\le \ssum{k}{i\vee \frac{j}{16}}{j}\frac{1}{k^5}
    < \frac{1}{4\left(i\vee \frac{j}{16}\right)^4}.
\end{align*}
Substituting this bound into \eqref{eq:cp_setting_rare_eve_decomp} results in
\begin{align*}
    \EB\left[\ssum{n}{1}{N}(\tau_{n+1} - \tau_n)\mathbbm{1}\left\{
    \gB_n^{\mathrm{c}}
    \right\}\right]
    &\le \ssum{n}{1}{\infty} \sum\limits_{i<j}(j-i)\PB(\tau_{n}=i)\frac{1}{4\left(i\vee \frac{j}{16}\right)^4}\\
    &\lesssim \ssum{n}{1}{\infty}\ssum{i}{1}{\infty}\PB(\tau_n = i)\left\{\ssum{j}{i+1}{16i}\frac{j-i}{i^4} + \ssum{j}{16i+1}{\infty}\frac{j-i}{j^4}\right\}\\
    &\lesssim \ssum{n}{1}{\infty}\ssum{i}{1}{\infty}\PB(\tau_n =i)\frac{1}{i^2}
    \overset{\mathrm{(a)}}{\le} \ssum{n}{1}{\infty}\ssum{i}{1}{\infty}\PB(\tau_n =i)\frac{1}{n^2} \notag\\
    &=\ssum{n}{1}{\infty} \frac{1}{n^2}
    = O(1 ),
\end{align*}
where (a) relies on the elementary bound $\tau_n\ge n$.
\end{itemize}

Putting the preceding bounds together, we arrive at
\[
    \EB\left[\ssum{n}{1}{N}\ssum{r}{1}{r_n}\ssum{l}{1}{T_r}
    \big|\PB(s_{n,r,l} > q_{n,r}\mid q_{n,r}) - \alpha\big|
    \mathbbm{1}\{\gA_{n,r}\}\right]
    \le 2\sqrt{(N^{\textsf{cp}}+1)T} + O(1).
\]
The advertised bound in the change-point setting then follows by combining this with  \eqref{eq:rare_eve_bd}.

\subsubsection{Controlling the dominant term in the smooth drift setting}

From Lemma~\ref{lem:one_stage_regret}, we have established how to bound the cumulative regret within a single stage. As before, suppose there are $N$ stages in total. Summing over the stage index $n$ then yields the following bound:
\begin{align}
   \ssum{n}{1}{N}\ssum{r}{1}{r_n}\ssum{l}{1}{T_r}\big| \PB(s_{n,r,l} > q_{n,r}\mid q_{n,r}) - \alpha \big| \mathbbm{1}\{\gA_{n,r}\}
    &\le \ssum{n}{1}{N}\widetilde{O}\left(\sqrt{S_n} + \big(\mathsf{KS}^{\textsf{stage}}_n\big)^{\frac{1}{3}}S_n^{\frac{2}{3}}\right) \notag\\
    &\overset{\mathrm{(a)}}{\le}\widetilde{O}\left(\ssum{n}{1}{N-1}\sqrt{S_n} + \sqrt{T} + \ssum{n}{1}{N}\big(\mathsf{KS}^{\textsf{stage}}_n\big)^{\frac{1}{3}}S_n^{\frac{2}{3}}\right) \notag\\
    &\overset{\mathrm{(b)}}{\le} 
    \widetilde{O}\left(\ssum{n}{1}{N-1}\sqrt{S_n} + \sqrt{T} + \bigg(\ssum{n}{1}{N}\mathsf{KS}^{\textsf{stage}}_n\bigg)^{\frac{1}{3}}\bigg(\ssum{n}{1}{N}S_n\bigg)^{\frac{2}{3}}\right) \notag\\
    &=
    \widetilde{O}\left(\ssum{n}{1}{N-1}\sqrt{S_n} + \sqrt{T} + (\mathsf{KS}_T)^{\frac{1}{3}}T^{\frac{2}{3}}\right).
    \label{eq:UB-quantile-gap-Anr}
\end{align}
Here, (a) holds since $\sqrt{S_{N}}\le \sqrt{T}$, whereas (b) results from H\"older’s inequality.

With the above inequality in mind, a crucial task is to bound  $\sum_{n=1}^{N-1}\sqrt{S_n}$, which can be decomposed into
\begin{align}
    \sum_{n=1}^{N-1}\sqrt{S_n}
    =\sum_{n=1}^{N-1}\sqrt{S_n} \mathbbm{1}\{\gB_n\}
    + \sum_{n=1}^{N-1}\sqrt{S_n} \mathbbm{1}\{\gB_n^{\mathrm{c}}\}
\end{align}
with $\gB_n$ denoting the event $\gB_n=\gA_{n,r_{n-1}}\cap \gA_{n,r_n}$ (see \eqref{eq:defn-event-Bn-12345}). 
By applying an argument analogous to the one used to control the second term in \eqref{eq:cp-typical_decomp}, we can readily obtain
\begin{align}
\EB\left[\ssum{n}{1}{N-1}\sqrt{S_n}\mathbbm{1}\{\gB_n^{\mathrm{c}}\}\right] = O(1),
\label{eq:sum-St-Bc-135}
\end{align}
where we omit the details for brevity. Therefore, everything comes down to controlling 
$\sum_{n=1}^{N-1}\sqrt{S_n} \mathbbm{1}\{\gB_n\}$, 
which forms the main content of the remainder of this subsection.

For $n\in[N-1]$, note that the last round $r_n$ of each of these stages ends with a restart; that is, a drift detection is declared in round $r_n$. Following the proof of Lemma~\ref{lem:one_stage_regret},  let $t_n$ denote the number of iterations in round $r_n$, then according to the drift detection subroutine, there exists some $j_n\in[t_n]$ such that
\begin{align}
\bigg|\ssum{l}{j_n}{t_n}\left(\mathbbm{1}\{s_{n,r_n,l} > q_{n,r_n}\} - \alpha\right)\bigg| > 24\sqrt{ (t_n - j_n + 1)\log (4\tau_{n,r_n})}.
\label{eq:jn-devation-13579}
\end{align}
On the event 
%
$\gB_n$,
%
it is observed that
\begin{equation}\label{eq:split_regret_2}
\begin{aligned}
    &\bigg|\ssum{l}{j_n}{t_n} \left(\PB(s_{n,r_n,l} > q_{n,r_n}\mid q_{n,r_n}) - \alpha\right)\bigg|\\
    &\ge \bigg|\ssum{l}{j_n}{t_n} \big(\mathbbm{1}\{s_{n,r_n,l} > q_{n,r_n}\} - \alpha\big)\bigg| - \bigg|\ssum{l}{j_n}{t_n} \big(\mathbbm{1}\{s_{n,r_n,l} > q_{n,r_n}\} - \PB(s_{n,r_n,l} > q_{n,r_n}\mid q_{n,r_n})\big)\bigg|\\
    &> 24\sqrt{(t_n - j_n+1)\log (4\tau_{n,r_n})} - 6\sqrt{(t_n - j_n+1)\log \tau_{n+1}}
    > 18\sqrt{(t_n - j_n + 1)\log \tau_{n+1}},
\end{aligned}
\end{equation}
where the last line arises from \eqref{eq:jn-devation-13579} and the definition of $\mathcal{A}_{n,r_n}$. 
Further, let us introduce
\[
B_n \coloneqq \frac{1}{t_n - j_n + 1}\ssum{l}{j_n}{t_n}\big(
\PB(s_{n,r_n,l} > q_{n,r_n}\mid q_{n,r_n}) - \alpha
\big),
\]
which, according to Eqn.~\eqref{eq:split_regret_2},  satisfies
\begin{equation}\label{eq:split_regret_3}
    \frac{\sqrt{t_n - j_n + 1}}{18\sqrt{\log \tau_{n+1}}}\abs{B_n}\mathbbm{1}\{\gB_n\} \ge \mathbbm{1}\{\gB_n\}.
\end{equation}
With this inequality in place, we can readily obtain
\begin{align}
    &\ssum{n}{1}{N-1}\sqrt{S_n}\mathbbm{1}\{\gB_n\}
    \overset{\eqref{eq:split_regret_3}}{\le} \ssum{n}{1}{N-1} \sqrt{S_n}\left(\frac{\sqrt{t_n - j_n + 1}}{18\sqrt{\log \tau_{n+1}}}\abs{B_n}\right)^{\frac{1}{3}}\mathbbm{1}\{\gB_n\} \notag\\
    &\qquad \le \ssum{n}{1}{N-1}S_n^{\frac{2}{3}}\abs{B_n}^{\frac{1}{3}}\mathbbm{1}\{\gB_n\}
    \le \bigg(\ssum{n}{1}{N-1}S_n\bigg)^{\frac{2}{3}}\bigg(\ssum{n}{1}{N-1}\abs{B_n}\mathbbm{1}\{\gB_n\}\bigg)^\frac{1}{3}
    \le T^{\frac{2}{3}}\bigg( \ssum{n}{1}{N-1}\abs{B_n}\mathbbm{1}\{\gB_n\}\bigg)^{\frac{1}{3}}, 
    \label{eq:Sn-UB-Bn-479}
\end{align}
where the last line follows from
H\"older’s inequality and Jensen's inequality. Thus, it amounts to bounding $\sum_{n=1}^{N-1}\abs{B_n}\mathbbm{1}\{\gB_n\}$, which we accomplish next. 

For every $B_n$, it follows from the triangle inequality that
\begin{align*}
    \abs{B_n} &= \bigg|\frac{1}{t_n - j_n + 1}\ssum{l}{j_n}{t_n}\big(\PB(s_{n,r_n,l} > q_{n,r_n}\mid q_{n,r_n}) - \alpha\big)\bigg|\\
    &\le \underbrace{\bigg|\frac{1}{t_n - j_n + 1}\ssum{l}{j_n}{t_n}\bigg(\PB(s_{n,r_n,l} > q_{n,r_n}\mid q_{n,r_n}) - \frac{1}{T_{r_n-1}}\ssum{i}{1}{T_{r_n-1}}\PB(s_{n,r_n-1,i}' > q_{n,r_n}\mid q_{n,r_n})\bigg)
    \bigg|}_{\eqqcolon\gT_{n,1}}\\
    &~~~~~+ \underbrace{\bigg|\frac{1}{T_{r_n-1}}\ssum{l}{1}{T_{r_n-1}}\big(\PB(s_{n,r_n-1,l}' > q_{n,r_n}\mid q_{n,r_n}) - \mathbbm{1}\{s_{n,r_n-1,l} > q_{n,r_n}\}
    \big)\bigg|}_{\eqqcolon\gT_{n,2}} \\
    &~~~~~+ \underbrace{\bigg|\frac{1}{T_{r_n-1}}\ssum{l}{1}{T_{r_n-1}}\big(\mathbbm{1}\{s_{n,r_n-1,l} > q_{n,r_n}\} - \alpha
    \big)\bigg|}_{\eqqcolon\gT_{n,3}} ,
\end{align*}
where, for each $i=1,\ldots,T_{r_n-1}$, $s'_{n,r_n-1,i}$ is an independent copy of $s_{n,r_n-1,i}$.
The above inequality reduces the problem to controlling three terms.
\begin{itemize}
\item 
Regarding $\gT_{n,1}$, one can insert $\PB(s_{n,r_n,1}>q_{n,r_n}\mid q_{n,r_n})$ into each summand to derive
\begin{equation}\label{eq:split_regret_4}
\begin{aligned}
    \gT_{n,1} &\le \bigg|\frac{1}{t_n - j_n+ 1}\ssum{l}{j_n}{t_n}\big(\PB(s_{n,r_n,l} > q_{n,r_n}\mid q_{n,r_n}) - \PB(s_{n,r_n,1}> q_{n,r_n}\mid q_{n,r_n})\big)\bigg|\\
    &~~~~~ + \bigg|\frac{1}{T_{r_n-1}}\ssum{l}{1}{T_{r_n-1}}
    \big(\PB(s_{n,r_n,1} > q_{n,r_n}\mid q_{n,r_n}) - \PB(s_{n,r_n-1, l}'> q_{n,r_n}\mid q_{n,r_n})\big)
    \bigg|.
\end{aligned}
\end{equation}
For any $l \in [j_n,t_n]$, we can apply the telescoping technique and the triangle inequality to obtain
\begin{align*}
\big|\PB(s_{n,r_n,l} > q_{n,r_n}\mid  q_{n,r_n}) - \PB(s_{n,r_n,1}>\, &q_{n,r_n} \mid q_{n,r_n})\big| \le
\sup\limits_{q\in \RB}\left\{\big|\PB(s_{n,r_n,l}> q) - \PB(s_{n,r_n,1}> q)\big|\right\}\\
&\leq \ssum{j}{1}{l-1}\sup\limits_{q\in \RB}\left\{\Big|\PB(s_{n,r_n,j+1} > q) - \PB(s_{n,r_n,j}> q)\Big|\right\}\\
&\le \ssum{j}{1}{l-1}\mathsf{KS}(s_{n,r_n,j}, s_{n,r_n,j+1}) \le \ssum{j}{1}{t_n - 1}\mathsf{KS}(s_{n,r_n,j},s_{n,r_n,j+1});
\end{align*}
Repeating the same arguments and adopting the notation $s_{n,r_n-1,T_{r_n-1}+1} \coloneqq s_{n,r_n,1}$ also give
\begin{align*}
    \PB(s_{n,r_n,1} > q_{n,r_n}\mid q_{n,r_n}) - \PB(s_{n,r_n-1, l}'> q_{n,r_n}\mid q_{n,r_n}) \le \ssum{j}{1}{T_{r_n-1}}\mathsf{KS}(s_{n,r_n-1,j},s_{n,r_n - 1, j+1}).
\end{align*}
Substituting these two results into Eqn.~\eqref{eq:split_regret_4} yields
\begin{align*}
    \gT_{n,1} &\le  \frac{1}{t_n - j_n+1}\ssum{l}{j_n}{t_n}\ssum{j}{1}{t_n-1}\mathsf{KS}(s_{n,r_n,j},s_{n,r_n,j+1}) + \frac{1}{T_{r_n-1}}\ssum{l}{1}{T_{r_n-1}}\ssum{j}{1}{T_{r_n-1}}
    \mathsf{KS}(s_{n,r_n-1,j},s_{n,r_n - 1, j+1})
    \\
    &= \ssum{j}{1}{T_{r_n-1}}\mathsf{KS}(s_{n,r_n-1,j},s_{n,r_n - 1, j+1}) + \ssum{j}{1}{t_n-1}\mathsf{KS}(s_{n,r_n,j},s_{n,r_n,j+1}),
\end{align*}
where we let $r_n - 1$ represent round $r_{n-1}$ of stage $n-1$ if $r_n = 1$. Therefore, when summing over $n$, each KS distance term is counted at most twice, and as a result, 
\[
\ssum{n}{1}{N-1}\gT_{n,1} \le 2\ssum{t}{1}{T-1}\mathsf{KS}(s_t, s_{t+1}) = 2\mathsf{KS}_T.
\]

 \item With regards to $\gT_{n,2}$, on the event $\gB_n$ (cf.~\eqref{eq:defn-event-Bn-12345}), we can from the definition of $\mathcal{A}_{n,r_n-1}$ (cf.~\eqref{eq:typical_set_gA_ij_nr}) that
\begin{align*}
\gT_{n,2} = \bigg|\frac{1}{T_{r_n-1}}\ssum{l}{1}{T_{r_n-1}}\big(\PB(s_{n,r_n-1,l}' > q_{n,r_n}\mid q_{n,r_n}) - \mathbbm{1}\{s_{n,r_n-1,l} > q_{n,r_n}\}
    \big)\bigg| \le 6\sqrt{\frac{\log \tau_{n+1}}{T_{r_n-1}}},
\end{align*}
which in turn implies that
\begin{align*}
\ssum{n}{1}{N-1}\gT_{n,2}\mathbbm{1}\{\gB_n\} &\le \ssum{n}{1}{N-1}6\sqrt{\frac{\log \tau_{n+1}}{T_{r_n-1}}}\mathbbm{1}\{\gB_n\}\\
&\overset{\eqref{eq:split_regret_3}}{\le}\ssum{n}{1}{N-1}6\sqrt{\frac{\log \tau_{n+1}}{T_{r_n-1}}}\left(\frac{\sqrt{t_n}}{18\sqrt{\log \tau_{n+1}}}\abs{B_n}\mathbbm{1}\{\gB_n\}\right)
\le \ssum{n}{1}{N-1} \frac{2}{3}
\abs{B_n}\mathbbm{1}\{\gB_n\}.
\end{align*}
Here, the last inequality follows from the fact that $t_n \le T_{r_n} \le 4T_{r_n - 1}$ for $r_n > 1$ and $t_n \le T_{r_n} = 1 \le T_{r_n-1}$ for $r_n = 1$.

\item 
When it comes to $\gT_{n,3}$, it is seen from the definition of $q_{n,r}$---which is chosen to be the $\alpha$-empirical-quantile of $\{s_{n,r_n-1,l}\}_{l=1}^{T_{r_n-1}}$---that
\begin{align*}
\gT_{n,3}\mathbbm{1}\{\gB_n\} &= \bigg|\frac{1}{T_{r_n-1}}\ssum{l}{1}{T_{r_n-1}}\big(\mathbbm{1}\{s_{n,r_n-1,l} > q_{n,r_n}\} - \alpha
    \big)\bigg|\mathbbm{1}\{\gB_n\}\\
    &\overset{\mathrm{(a)}}{\le} \frac{1}{T_{r_n - 1}}\mathbbm{1}\{\gB_n\} \le \frac{3\sqrt{\log \tau_{n+1}}}{2\sqrt{t_n-j_n+1}}\mathbbm{1}\{\gB_n\} \overset{\eqref{eq:split_regret_3}}{\le}\frac{1}{12}|B_n|\mathbbm{1}\{\gB_n\}.
\end{align*}
Here (a) holds because Assumption~\ref{assump:split} ensures that the distributions of scores $\{s_t\}_{t=1}^\infty$ are non-degenerate.
\end{itemize}

\noindent 
Taking together the preceding bounds on $\gT_{n,1}$, $\gT_{n,2}$ and $\gT_{n,3}$ results in
\begin{align*}
\ssum{n}{1}{N-1}\abs{B_n}\mathbbm{1}\{\gB_n\} &\le \ssum{n}{1}{N-1}(\gT_{n,1} + \gT_{n,2} + \gT_{n,3})\mathbbm{1}\{\gB_n\}\\
&\le 2\mathsf{KS}_T + \frac{2}{3}\ssum{n}{1}{N-1}\abs{B_n}\mathbbm{1}\{\gB_n\} + \frac{1}{12}\ssum{n}{1}{N-1}\abs{B_n}\mathbbm{1}\{\gB_n\} = 2\mathsf{KS}_T + \frac{3}{4}\ssum{n}{1}{N-1}|B_n|\mathbbm{1}\{\gB_n\},
\end{align*}
from which it follows that
\(
\ssum{n}{1}{N-1}\abs{B_n}\mathbbm{1}\{\gB_n\} \le 8\mathsf{KS}_T.
\)
Combine this bound with \eqref{eq:Sn-UB-Bn-479} to arrive at
\begin{equation}\label{eq:split_regret_5}
\ssum{n}{1}{N-1}\sqrt{S_n}\mathbbm{1}\{\gB_n\} \le 2(\mathsf{KS}_T)^{\frac{1}{3}}T^{\frac{2}{3}}.
\end{equation}


To finish up, taking \eqref{eq:UB-quantile-gap-Anr}, \eqref{eq:sum-St-Bc-135} and \eqref{eq:split_regret_5} collectively yields
\begin{align*}
\EB\bigg[\ssum{n}{1}{N}\ssum{r}{1}{r_n}\ssum{l}{1}{T_r}&\big|\PB(s_{n,r,l} > q_{n,r}\mid q_{n,r}) - \alpha\big|\mathbbm{1}\{\gA_{n,r}\}\bigg] \le \widetilde{O}\Bigg( \EB\bigg[\ssum{n}{1}{N-1}\sqrt{S_n}\bigg] + \sqrt{T} + \mathsf{KS}_T^{\frac{1}{3}}T^{\frac{2}{3}} \Bigg)\\
    &\le \widetilde{O}\bigg( \EB\bigg[\ssum{n}{1}{N-1}\sqrt{S_n}\mathbbm{1}\{\gB_n\}\bigg] + \EB\bigg[\ssum{n}{1}{N-1}\sqrt{S_n}\mathbbm{1}\{\gB_n^{\mathrm{c}}\}\bigg]
    +\sqrt{T} + (\mathsf{KS}_T)^{\frac{1}{3}}T^{\frac{2}{3}} \bigg)\\
    &\overset{\eqref{eq:split_regret_5}}{\le} 
    \widetilde{O}\big(\sqrt{T}+ (\mathsf{KS}_T)^{\frac{1}{3}}T^{\frac{2}{3}}  \big).
\end{align*}
We can immediately finish the proof for the smooth drift setting by combining the above result with \eqref{eq:rare_eve_bd}.

\subsection{Proof of \Cref{thm:lower_bd_split_regret}}
\label{sec:proof:thm:lower_bd_split_regret}
Consider a given $T$, along with a change-point budget $N^{\mathsf{cp}}$ in the change-point setting and a cumulative variation budget $\mathsf{KS}_T$ in the smooth drift setting. In what follows, we intend to construct a subclass of distributions $\gL'$ and use it to establish the claimed minimax lower bound. 


\paragraph{Step 1: construction of a distribution subclass $\gL'$.}
Partition the horizon $[T]\coloneqq\{1,\dots,T\}$ into $m$ consecutive time segments
$\gI_1,\dots,\gI_m$ of (nearly) equal size, where
\[
\gI_j \;\coloneqq\; \bigl\{(j-1)\lceil T/m\rceil+1,\ \dots,\ \min\{j\lceil T/m\rceil,\,T\}\bigr\},
\qquad j\in[m].
\]
Define $\gL'$ as the collection of distribution sequences whose corresponding score distributions $\{\mathcal{D}^{\mathsf{score}}_t\}_{t=1}^T$ obey
\begin{enumerate}
\item for each $t\in[T]$, $\gD^{\mathsf{score}}_t\in\{\mathrm{Exp}(1),\mathrm{Exp}(1+\varepsilon)\}$, where $\mathrm{Exp}(\beta)$ denotes the exponential distribution with the rate parameter $\beta$, and the parameter $\varepsilon\in(0,1]$ will be specified momentarily;
\item $\gD^{\mathsf{score}}_t$ is blockwise constant, namely, for each $j\in[m]$, one has $\gD^{\mathsf{score}}_t=\gD^{\mathsf{score}}_{t'}$ for all $t,t'\in \gI_j$.
\end{enumerate}

Clearly, if $m=N^{\mathsf{cp}}+1$, then $\mathcal{L}'\subseteq \mathcal{L}_1(N^{\mathsf{cp}})$. 
We also verify that in the smooth drift setting, $\gL'\subseteq \gL_2(\mathsf{KS}_T)$ for sufficiently small $\varepsilon$.  
%
%
To see this, we make the observation that
\begin{align*}
\mathsf{KS}\bigl(\mathrm{Exp}(1),\mathrm{Exp}(1+\varepsilon)\bigr)
= \sup_{x\ge 0}\bigl\{\bigl|e^{-x}-e^{-(1+\varepsilon)x}\bigr|\bigr\}
 &= \sup_{x\ge 0}\bigl\{ e^{-x}\bigl(1 -e^{-\varepsilon x}\bigr)\bigr\}\\
&\le \sup\limits_{x \ge 0}\bigl\{\varepsilon xe^{-x}\bigr\}
\le 2\varepsilon,
\end{align*}
where we have used the elementary inequalities $1-e^{-u} \le u$ for $u\ge 0$ and $xe^{-x}\le \frac{x}{1+x}\le 1$ for $x\ge 0$. 
Consequently, for any $\{\gD^{\mathsf{score}}_t\}_{t=1}^T\in\gL'$, it is easily seen that
\[
\sum_{t=1}^{T-1}\mathsf{KS}(\gD^{\mathsf{score}}_t,\gD^{\mathsf{score}}_{t+1})
\le\sum_{j=1}^{m-1}\mathsf{KS}\bigl(\mathrm{Exp}(1),\mathrm{Exp}(1+\varepsilon)\bigr)
\le 2\varepsilon m.
\]
Choosing $\varepsilon \le \mathsf{KS}_T/(2m)$ ensures that $\sum_{t=1}^{T-1}\mathsf{KS}(\gD^{\mathsf{score}}_t,\gD^{\mathsf{score}}_{t+1})\le \mathsf{KS}_T$,
and as a result, $\gL'\subseteq \gL_2(\mathsf{KS}_T)$.

\paragraph{Step 2: lower bound for a single time segment.}
Consider a fixed time segment and suppress the segment index here for notational simplicity. Within this time segment,
the score random variables $\{s_t\}$ are i.i.d.\ drawn from either $\mathrm{Exp}(1)$ or $\mathrm{Exp}(1+\varepsilon)$. Denote $s_{1:t}=\{s_1,\dots,s_t\}$,  let $q_t=q(s_{1:t})$ be an arbitrary
estimator based on the past observations, and set
\(
q^\star_0 \coloneqq \log(1/\alpha)\) and  \(q^\star_1 \coloneqq \frac{1}{1+\varepsilon}\log(1/\alpha).
\)

Denote by $\gD_{1:t}^{0,\mathsf{score}}$ and $\gD_{1:t}^{1,\mathsf{score}}$ the joint distributions of $(s_1,\ldots,s_t)$ when $s_i\sim \mathrm{Exp}(1)$ and $s_i\sim \mathrm{Exp}(1+\varepsilon)$, respectively.
Let the Bernoulli random variable $H\sim \mathrm{Ber}(0.5)$ indicate which distribution generates the sequence $\{s_t\}$.
Then, letting $s$ be a random variable---independent of $s_{1:t}$ conditioned on $H$---such that $s\mid H=0\, \sim \mathrm{Exp}(1)$ and $s\mid H=1\, \sim \mathrm{Exp}(1+\varepsilon)$, and denoting by $\mathbb{P}_0$ (resp.~$\mathbb{P}_1$) the distribution when $H=0$ (resp.~$H=1$), 
we can derive
\begin{equation}\label{eq:split_lower_bd_1}
\begin{aligned}
      \EB_{H, s_{1:t}\sim \gD_{1:t}^{H,\mathsf{score}}}\left[\abs{\PB_H(s> q_t\,|\, q_t) - \alpha}\right] &= 
    \frac{1}{2}\EB
    [\abs{\PB_0(s> q_t\,|\, q_t) - \alpha}] + \frac{1}{2}\EB_{s_{1:t}}[\abs{\PB_{1}(s> q_t\,|\, q_t) - \alpha}]\\
    &=  \frac{1}{2}\EB\left[
    \left(\abs{e^{-q_t} - \alpha} + \big|e^{-(1+\varepsilon)q_t}-\alpha\big|\right)
    \right] \\
    &\ge \frac{1}{2}\EB\left[
    \left(\abs{e^{-q_t} - \alpha} + \big|e^{-(1+\varepsilon)q_t}-\alpha\big|\right)\mathbbm{1}\left\{q_t \in \gK\right\}
    \right]+ \frac{\alpha}{4}\PB\left(
    q_{t}\notin \gK
    \right)\\
    &= \EB_{H,s_{1:t}\sim \gD_{1:t}^{H,\mathsf{score}}}\left[\abs{\PB_H(s> q_t\,|\, q_t)-\alpha}\mathbbm{1}\{q_t \in \gK\}\right] + \frac{\alpha}{4}\PB(q_t\notin \gK),
\end{aligned}
\end{equation}
where we take $\gK\coloneqq  \big[\frac{\log (2/3\alpha)}{1+\varepsilon},\log \frac{2}{\alpha} \big]$. Here, the  inequality above holds due to the elementary fact
\[
\abs{e^{-q}-\alpha} +\big|e^{-(1+\varepsilon)q}-\alpha\big| \ge \frac{\alpha}{2}
\]
as long as $q > \log \frac{2}{\alpha}$ or $q < \frac{\log (2/3\alpha)}{1+\varepsilon}$.

Furthermore, for any $\lambda \in \{1,1+\varepsilon\}$, the mean value theorem tells us the existence of some $\xi$ between $q_t$ and $q^\star_\lambda$ obeying
\[
|\PB_H(s>q_t\,|\, q_t)-\alpha|
=
|\PB_H(s>q_t\,|\, q_t)-\PB_H(s>q^\star_H)|
=
\lambda e^{-\lambda \xi}\,|q_t-q^\star_H|.
\]
Note that when $\varepsilon\le 1/2$, $1\le\lambda\le 1+\varepsilon$ and $\xi \le \log(2/\alpha)$, we have
$\lambda e^{-\lambda \xi}\ge e^{-\lambda \log \frac{2}{\alpha}}\ge e^{-(1+\varepsilon)\log \frac{2}{\alpha}}=(\alpha/2)^{1+\varepsilon} \ge \alpha^{1+\varepsilon}/3$, and as a consequence,
\begin{equation}\label{eq:lin_miscoverage_quantile_err}
|\PB_H(s> q_t\,|\, q_t)-\alpha|\,\mathbbm{1}\{q_t\in \gK\}
\;\ge\;
\frac{\alpha^{1+\varepsilon}}{3}\,|q_t-q^\star_H|\,\mathbbm{1}\{q_t\in\gK\}.
\end{equation}
Substituting \eqref{eq:lin_miscoverage_quantile_err} into \eqref{eq:split_lower_bd_1} leads to
\begin{equation}\label{eq:split_lower_bd_2}
\begin{aligned}
    \EB_{H, s_{1:t}\sim \gD_{1:t}^{H,\mathsf{score}}}\left[\abs{\PB_H(s> q_t\,|\, q_t) - \alpha}\right] &\ge \frac{\alpha}{4}\PB(q_t \notin \gK) + \EB_{H,s_{1:t}\sim \gD_{1:t}^{H,\mathsf{score}}}\left[\abs{\PB_H(s> q_t\,|\, q_t)-\alpha}\mathbbm{1}\{q_t \in \gK\}\right]\\
    &\ge\frac{\alpha^{1+\varepsilon}}{3}\EB\left[
    \abs{q_t - q_H^\star}
    \mathbbm{1}\{q_t \in \gK\}\right] + \frac{\alpha}{4}\PB(q_t \notin \gK).
\end{aligned}
\end{equation}

To further bound the second term on the right-hand side of \eqref{eq:split_lower_bd_2}, let us look at the following test:
\[
\widehat H
=
0~\text{ if } q_t\ge \frac{q^\star_{0}+q^\star_{1}}{2};
\quad
\widehat H
=
1~\text{ otherwise}.
\]
Then given that $H\in\{0,1\}$, it can be derived that
\[
\mathbbm{1}\{\widehat H\neq H\}
\;\le\;
\mathbbm{1}\bigl\{|q_t-q^\star_{H}|>(q_0^\star - q_1^\star)/2\bigr\},
\]
and hence
\begin{equation}\label{eq:quantile_err_ge_test_err}
\begin{aligned}
\EB\bigl[|q_t-q^\star_{H}|\mathbbm{1}\{q_t \in \gK\}\bigr]
&\ge
\frac{q_0^\star - q_1^\star}{2}\,\PB\Bigl(|q_t-q^\star_{H}|>\frac{q_0^\star - q_1^\star}{2};~ q_t \in \gK\Bigr)\\
&\ge
\frac{q_0^\star - q_1^\star}{2}\,\PB(\widehat H\neq H;~ q_t \in \gK).
\end{aligned}
\end{equation}
In addition, if $q_0^\star -q_1^\star = \log\frac{1}{\alpha} - \frac{\log(1/\alpha)}{1+\alpha} = \frac{\varepsilon \log(1/\alpha)}{1+\varepsilon} < \frac{3}{4}$, then we have
\begin{equation}\label{eq:split_lower_bd_3}
    \frac{\alpha}{4}\PB(q_t\notin \gK) \ge 
    \frac{\alpha^{1+\varepsilon}}{4}\PB(q_t\notin \gK; \widehat H \neq H) \ge
    \frac{\alpha^{1+\varepsilon}}{3}(q_0^\star - q_1^\star)\PB(q_t \notin \gK; \widehat H \neq H).
\end{equation}
Substituting \eqref{eq:quantile_err_ge_test_err} and \eqref{eq:split_lower_bd_3} into \eqref{eq:split_lower_bd_2} yields
\begin{equation}\label{eq:split_lower_bd_4}
\begin{aligned}
    \EB_{H, s_{1:t}\sim \gD_{1:t}^{H,\mathsf{score}}}\left[\abs{\PB_H(s> q_t\,|\, q_t) - \alpha}\right] &\ge \frac{\alpha^{1+\varepsilon}}{6}(q_0^\star - q_1^\star)\Big(\PB(q_t\in \gK;~ \widehat H\neq H) + \PB(q_t\notin \gK;~ \widehat H\neq H)\Big)\\
    &= \frac{\alpha^{1+\varepsilon}\varepsilon\log(1/\alpha)}{6(1+\varepsilon)} \PB(\widehat H \neq H).
\end{aligned}
\end{equation}

It remains to lower bound the probability $\PB(\widehat H \neq H)$.
 Le Cam's two-point method~\citep[Theorem 2.2]{tsybakov2009introduction} and Pinsker's inequality~\citep[Lemma 2.5]{tsybakov2009introduction} imply that:
\begin{align*}
\PB(\widehat H \neq H) &=
\frac{1}{2}\PB_0(\widehat H\neq 0)+\frac{1}{2}\PB_1(\widehat H\neq 1)\\
&\ge
\frac{1}{2}\Bigl(1-\TV\bigl(\gD_{1:t}^{0,\mathsf{score}},\gD_{1:t}^{1,\mathsf{score}}\bigr)\Bigr)
\;\ge\;
\frac12\Bigl(1-\sqrt{\tfrac12\,\mathsf{KL}\bigl(\gD_{1:t}^{0,\mathsf{score}}\,\|\,\gD_{1:t}^{1,\mathsf{score}}\bigr)}\Bigr).
\end{align*}
Since the observations are i.i.d.\ within this time segment, one has
\[
\mathsf{KL}\bigl(\gD_{1:t}^{0,\mathsf{score}}\,\|\,\gD_{1:t}^{1,\mathsf{score}}\bigr)
=
t\cdot \mathsf{KL}(\mathrm{Exp}(1)\,\|\,\mathrm{Exp}(1+\varepsilon)).
\]
Moreover, the KL divergence admits the following closed-form expression:
\[
\mathsf{KL}(\mathrm{Exp}(1)\,\|\,\mathrm{Exp}(1+\varepsilon))
=
\log\frac{1}{1+\varepsilon}+(1+\varepsilon)-1
=
\varepsilon-\log(1+\varepsilon),
\]
and for $\varepsilon\in(0,1]$ we have $\varepsilon-\log(1+\varepsilon)\le \varepsilon^2/2$ since
$\log(1+x)\ge x-x^2/2$ for $x\in[0,1]$. Therefore,
\[
\mathsf{KL}\bigl(\gD_{1:t}^{0,\mathsf{score}}\,\|\,\gD_{1:t}^{1,\mathsf{score}}\bigr)\le t\,\varepsilon^2/2.
\]
Choosing $t\le 1/(4\varepsilon^2)$ yields $\mathsf{KL}(\gD_{1:t}^{0,\mathsf{score}}\|\gD_{1:t}^{1,\mathsf{score}})\le 1/8$, hence
$\TV(\gD_{1:t}^{0,\mathsf{score}},\gD_{1:t}^{1,\mathsf{score}})\le 1/4$. Consequently,
\begin{equation}\label{eq:quantile_err_lb}
\PB(\widehat H \neq H) = \frac12\PB_0(\widehat H\neq 0)+\frac12\PB_1(\widehat H\neq 1)\ \ge\ \frac{3}{8}.
\end{equation}
Substituting \eqref{eq:quantile_err_lb} into \eqref{eq:split_lower_bd_4} then yields, for all $\varepsilon\leq 1/ (2\sqrt{t})$, 
\begin{equation}\label{eq:miscoverage_lb_single_t}
\mathop{\EB}_{H\sim \mathrm{Ber}(0.5),~ s_{1:t}\sim \gD_{1:t}^{H,\mathsf{score}}}\Bigl[\bigl| \PB_{H}(s>q_t\mid q_t)-\alpha \bigr|\Bigr]
\;\ge\;
\frac{\alpha^{1+\varepsilon}\varepsilon\log(1/\alpha)}{6}\cdot \frac{3}{8}= \frac{\alpha^{1+\varepsilon}\varepsilon\log(1/\alpha)}{16}.
\end{equation}

\paragraph{Step 3: extension from one time segment to entire horizon.}
We now demonstrate how the single-segment lower bound in Step 2 can be adapted to establish a lower bound for the entire horizon $[T]$. 
%
Recall that, at Step 1, $[T]$ is partitioned into $m$ consecutive time segments $\gI_1,\dots,\gI_m$ obeying $|\gI_j|\asymp T/m$.
Let $H_1,\dots,H_m$ be i.i.d.\ Bernoulli random variables, and construct a random distribution
sequence $\{\gD^{\mathsf{score}}_t\}_{t=1}^T$ by setting, for each time segment $j$ and each $t\in \gI_j$,
\[
\gD^{\mathsf{score}}_t=
\begin{cases}
\mathrm{Exp}(1), & H_j= 0,\\
\mathrm{Exp}(1+\varepsilon), & H_j=1.
\end{cases}
\]

Consider any online procedure producing $q_t=q(s_{1:t})$. Condition on the history up to the end of time segment $j-1$.
Since $H_j$ is independent of the past, the conditional prior on $H_j$ remains uniform over $\{0,1\}$, and the observations within time segment $j$ are i.i.d.\ from $\mathrm{Exp}(1)$ or $\mathrm{Exp}(1+\varepsilon)$ accordingly. Therefore, the single-segment
lower bound \eqref{eq:miscoverage_lb_single_t} applies for all times within time segment $\gI_j$, with the proviso that $4t\varepsilon^2 \le 1$.

To ensure that \eqref{eq:miscoverage_lb_single_t} holds throughout the entire time segment $\mathcal{I}_j$, we impose the condition
\(
4\varepsilon^2\,|\gI_j|\le 1.
\)
Let \(\gF_{j-1} \) denote the \(\sigma\)-field generated by the samples \({s_t}\) observed prior to time segment $j$. 
We can then take the sum of  \eqref{eq:miscoverage_lb_single_t} over $t\in \gI_j$ to reach, for each $\gI_j$,
\[
\EB\bigg[\sum\limits_{t\in \gI_j} \bigl|\PB(s_t>q_t\mid q_t)-\alpha\bigr|\ \bigg|\ \gF_{j-1}\bigg]
\ \ge\
|\gI_j|\cdot \frac{\varepsilon\alpha^{1+\varepsilon}}{16}\cdot 
\log\frac{1}{\alpha},
\]
and hence, summing over $j=1,\dots,m$ and taking expectation over $\{H_j\}_{j=1}^m$ gives
\begin{equation}\label{eq:block_sum_lb_eps}
\begin{aligned}
\sup\limits_{\{\gD^{\mathsf{score}}_k\}_{k=1}^m}\EB\Bigg[\ssum{t}{1}{T}\bigl|\PB(s_t>q_t\mid q_t)-\alpha\bigr|\Bigg]
&\ge
\mathop{\EB}\limits_{H_{1:m},s_{1:T}}\left[\ssum{t}{1}{T}\Big|\PB(s_t > q_t\mid q_t) - \alpha\Big|\right]\\
&= \ssum{k}{1}{m}\sum\limits_{i\in \gI_k} \mathop{\EB}\limits_{H_k, s_{\gI_k}}\left[\Big|\PB(s_i > q_i\mid q_i) - \alpha\Big|\right]\\
&\geq \bigg(\frac{\alpha^{1+\varepsilon}\varepsilon}{16}\log \frac{1}{\alpha} \bigg)\ssum{k}{1}{m} |\gI_k| 
\geq \frac{T\varepsilon \alpha^{1+\varepsilon} }{16} \log \frac{1}{\alpha}, 
\end{aligned}
\end{equation}
provided that $\varepsilon \le \frac{\sqrt{m}}{2\sqrt{T}}$.
To connect this inequality to the advertised minimax lower bounds, we look at the two distribution-shift settings separately.

\begin{itemize}
\item 
\textit{Change-point setting.}
In this case, 
taking $m=N^{\mathsf{cp}}+1$ and $\varepsilon=\sqrt{(N^{\mathsf{cp}}+1)/(4T)}$ in 
\eqref{eq:block_sum_lb_eps} yields
\[
\sup_{\{\gD^{\mathsf{score}}_k\}_{k=1}^m}
\EB \left[\ssum{t}{1}{T}\bigl|\PB(s_t> q_t\mid q_t)-\alpha\bigr|\right]
=\Omega
\left(T\varepsilon\,\alpha^{1+\varepsilon}\log\frac{1}{\alpha}\right)
=\Omega\left(\alpha^{2}\log(1/\alpha)
\sqrt{(N^{\mathsf{cp}}+1)T}\right).
\]

\item \textit{Smooth drift setting.}
In order for \eqref{eq:block_sum_lb_eps} to be applicable in this setting, it suffices to choose $\varepsilon$ such that
\[
\varepsilon \le \min\Bigl\{\frac{\mathsf{KS}_T}{2m},\ \sqrt{\frac{m}{4T}}\Bigr\}.
\]
Now, if $\mathsf{KS}_T \sqrt{T}\ge 1$, then we can choose
\[
m=\mathsf{KS}_T^{2/3}T^{1/3}\,(\ge 1),
\qquad
\varepsilon=\frac{\mathsf{KS}_T^{1/3}}{2T^{1/3}},
\]
which satisfies the above requirement. Plugging this choice into \eqref{eq:block_sum_lb_eps} gives
\begin{align*}
\sup_{\{\gD^{\mathsf{score}}_k\}_{k=1}^m}
\EB \left[\ssum{t}{1}{T}\bigl|\PB(s_t> q_t\mid q_t)-\alpha\bigr|\right]
&=\Omega\left(
T\varepsilon\,\alpha^{1+\varepsilon}\log\frac{1}{\alpha}\right)
=\Omega\left(
\alpha^2\log(1/\alpha)
\,\mathsf{KS}_T^{1/3}T^{2/3}\right).
\end{align*}
On the other hand, if $\mathsf{KS}_T\sqrt{T}< 1$, then one can apply \eqref{eq:miscoverage_lb_single_t} to the entire horizon $[T]$ to arrive at
\begin{align*}
\EB_{H\sim \mathrm{Ber}(0.5),~ s_{1:T}\sim \gD_{1:T}^{H,\mathsf{score}}}&\bigg[\ssum{t}{1}{T}\bigl| \PB_{H}(s>q_t\mid q_t)-\alpha \bigr|\bigg]\\
&=
\EB_{H\sim \mathrm{Ber}(0.5)}\left[\ssum{t}{1}{T}\EB_{s_{1:t}\sim \gD_{1:t}^{H,\mathsf{score}}}\left[\big|\PB(s > q_t \mid q_t)-\alpha\big|\right]\bigg|~ H\right]\\
&\;\ge\;
\frac{\alpha^{1+\varepsilon}T\varepsilon\log(1/\alpha)}{6}\cdot \frac{3}{8}= \frac{\alpha^{1+\varepsilon}T\varepsilon\log(1/\alpha)}{16},
\end{align*}
for all $\varepsilon\le \frac{1}{2\sqrt{T}}$. 
Thus, taking $\varepsilon = \frac{1}{2\sqrt{T}}$ leads to
\[
\sup_{\{\gD^{\mathsf{score}}_k\}_{k=1}^m}
\EB \left[\ssum{t}{1}{T}\bigl|\PB(s_t> q_t\mid q_t)-\alpha\bigr|\right] = \Omega \left(\alpha^2\sqrt{T}\log(1/\alpha)\right).
\]
These two cases taken collectively conclude the proof for the smooth drift setting. 
\end{itemize}

\subsection{Proof of auxiliary lemmas}

\subsubsection{Proof of \Cref{lem:one_round_regret}}\label{sec:prf:lem:one_round_regret}
    Since round $r$ has not been  terminated due to the detection of distribution shift, it follows that 
    %
    \begin{equation}\label{eq:one_round_regret_1}
        \abs{\ssum{l}{i}{j}\big(\mathbbm{1}\{s_{n,r,l} > q_{n,r}\}  - \alpha\big)} \le \sigma_{n,r} \sqrt{j-i+1}
        \le 24 \sqrt{(j-i+1)\log (4\tau_{n,r})}
    \end{equation}
    for every $1\leq i,j \leq T_{r}$. 
    On the event $\gA_{n,r}$, 
     combine the definition \eqref{eq:typical_set_gA_ij_nr} of $\gA_{n,r}$ and Eqn.~\eqref{eq:one_round_regret_1} to reach
    \begin{equation}\label{eq:one_round_regret_2_5}
    \abs{\ssum{l}{i}{j}\big(\PB(s_{n,r,l} > q_{n,r}\mid q_{n,r}) - \alpha\big)
    } \le 30\sqrt{(j-i+1)\log (4\tau_{n,r})},\quad \text{for all } 1\le i< j \le T_{r}.
    \end{equation}
    Below, we look at the two drift settings separately.

    \paragraph{Change-point setting.}
    In this setting, the score distribution remains fixed within each time segment \(\gI_{n,r,k}\).
Consequently, for every \(l\in \gI_{n,r,k}\), the conditional exceedance probability
\(\PB\!\left(s_{n,r,l}> q_{n,r}\mid q_{n,r}\right)\) is identical (i.e., does not depend on \(l\)), which in turn implies that
\[
\sum_{l\in \gI_{n,r,k}} \big|\PB\!\left(s_{n,r,l}> q_{n,r}\mid q_{n,r}\right) - \alpha\big|
= \bigg| \sum_{l\in \gI_{n,r,k}}\big(\PB\!\left(s_{n,r,l}> q_{n,r}\mid q_{n,r}\right) - \alpha\big) \bigg|.
\]
Combining this with Eqn.~\eqref{eq:one_round_regret_2_5} yields
\[
\bigg(
\sum_{l\in \gI_{n,r,k}}\big|\PB(s_{n,r,l} > q_{n,r}\mid q_{n,r}) - \alpha\big| \bigg)\mathbbm{1}\{\gA_{n,r}\} \le 30\sqrt{\abs{\gI_{n,r,k}}\log (4\tau_{n,r})}.
\]
Summing this inequality over \(k=1,\ldots,K_{n,r}\) (i.e., summing over all segments in this round) yields the advertised bound for the change-point setting.

    \paragraph{Smooth drift setting.}
    Consider a given $q_{n,r}$. Partition the time indices $1,2,\ldots,T_{r}$ into $K$ consecutive time segments $\gI_k \coloneqq \{i_{k-1}+1,\ldots,i_{k}\}$, $k=1,\ldots,K$, with $i_0=0$ and $i_{K}=T_{r}$. This partition is chosen so that, for $l\in \gI_1,\gI_3,\ldots,\gI_{2[\frac{K-1}{2}]+1}$, the quantity $\PB(s_{n,r,l}>q_{n,r})-\alpha$ has the same sign; without loss of generality, assume that the signs are positive. Then for any $l\in \gI_2\cup\cdots\cup \gI_{2[\frac{K}{2}]}$, we have $\PB(s_{n,r,l}>q_{n,r})-\alpha<0$. Consequently, the cumulative regret within this round can be expressed by grouping terms with positive and negative signs as follows:
    \begin{align}
        \ssum{l}{1}{T_{r}}& \big|\PB(s_{n,r,l} > q_{n,r}\mid q_{n,r}) - \alpha\big| \notag\\
        &= \sum\limits_{k: k\text{ is odd}} \sum\limits_{l\in \gI_k} \big(\PB(s_{n,r,l} > q_{n,r}\mid q_{n,r}) - \alpha\big) + \sum\limits_{k: k \text{ is even}}\sum\limits_{l\in \gI_k} \big(\alpha - \PB(s_{n,r,l} > q_{n,r}\mid q_{n,r})\big) \notag\\
        &= \ssum{k}{1}{K}\sum\limits_{l\in \gI_k} \big(\PB(s_{n,r,l} > q_{n,r}\mid q_{n,r}) - \alpha\big) + 2\sum\limits_{k: k \text{ is even}}\sum\limits_{l\in \gI_k} \big(\alpha - \PB(s_{n,r,l} > q_{n,r}\mid q_{n,r})\big).
        \label{eq:regret-partition-Ik-even-odd}
    \end{align}

    The first term on the right-hand side of \eqref{eq:regret-partition-Ik-even-odd} can be readily controlled 
    on the event $\gA_{n,r}$; more specifically, it is seen from \eqref{eq:one_round_regret_2_5} that, on the event $\gA_{n,r}$, 
    \begin{align}
    \ssum{k}{1}{K}\sum\limits_{l\in \gI_k} \big(\PB(s_{n,r,l} > q_{n,r}\mid q_{n,r}) - \alpha\big) = \ssum{l}{1}{T_{r}}\big(\PB(s_{n,r,l} > q_{n,r}\mid q_{n,r}) - \alpha\big) \le 30\sqrt{T_{r}\log (4\tau_{n,r})}. 
    \label{eq:sum-Pscore-alpha-13579}
    \end{align}
    %
    We then turn to the second term on the right-hand side of \eqref{eq:regret-partition-Ik-even-odd}. For every $k\in[K]$, set $$A_k \coloneqq \frac{1}{\abs{\gI_k}}\sum_{l\in \gI_k}\big(\alpha - \PB(s_{n,r,l}>q_{n,r}\mid q_{n,r})\big).$$ On the event $\gA_{n,r}$, it agains follows from \eqref{eq:one_round_regret_2_5} that
    \begin{equation}\label{eq:one_round_regret_3}
    \frac{\sqrt{\abs{\gI_k}}}{30\sqrt{\log (4\tau_{n,r})}} A_k = \frac{\sqrt{\abs{\gI_k}}}{30\sqrt{\log (4\tau_{n,r})}}\left(\frac{1}{\abs{\gI_k}}\sum_{l\in \gI_k}\big(\alpha - \PB(s_{n,r,l}>q_{n,r})\mid q_{n,r}\big)\right) \le 1.
    \end{equation}
    This allows one to deduce that, on the event $\gA_{n,r}$,
    %
    \begin{align}
        \sum\limits_{k: k \text{ is even}}\sum\limits_{l\in \gI_k} &\big(\alpha - \PB(s_{n,r,l} > q_{n,r}\mid q_{n,r})\big) 
        = \sum\limits_{k: k \text{ is even}} \abs{\gI_k}\cdot A_k = \sum\limits_{k: k \text{ is even}}\abs{\gI_k}^{\frac{2}{3}}\left(\abs{\gI_k}^{\frac{1}{2}}A_k\right)^{\frac{2}{3}}A_k^{\frac{1}{3}} \notag\\
        &{\overset{\eqref{eq:one_round_regret_3}}{\le}} \sum\limits_{k: k \text{ is even}}30\abs{\gI_k}^{\frac{2}{3}} A_k^{\frac{1}{3}}\sqrt{\log T} \le 30\sqrt{\log T}\bigg(\sum\limits_{k: k \text{ is even}}\abs{\gI_k}\bigg)^{\frac{2}{3}}\bigg(\sum\limits_{k: k \text{ is even}}A_k\bigg)^{\frac{1}{3}},
    \label{eq:one_round_regret_4}
    \end{align}
    %
    where the last inequality follows from H\"older's inequality. 
    This leaves us with two sums to control. 
    \begin{itemize}
    \item 
    Regarding the summation of $|\mathcal{I}_k|$, it is easily seen that
    \begin{equation}\label{eq:one_round_regret_5}
    \sum\limits_{k: k \text{ is even}}\abs{\gI_k} \le \ssum{k}{1}{K}\abs{\gI_k} 
    = T_{r}.
    \end{equation}
    \item 
    Let us now turn to the summation of $A_k$. Given the way we partition the sets $\gI_k$, we see that for any even number $k~(\ge2)$,  $\PB\big(s_{n,r,i_{k-1}} > q_{n,r}\big) > \alpha$. 
 Consequently, for any even  $k$, one can bound $A_k$ as
    \begin{equation}\label{eq:one_round_regret_6}
    \begin{aligned}
        A_k &= \frac{1}{\abs{\gI_k}}\sum_{l\in \gI_k}\big(\alpha - \PB(s_{n,r,l}>q_{n,r}\mid q_{n,r})\big)\\
        &\le \frac{1}{\abs{\gI_k}}\sum_{l\in \gI_k}\big(\PB\big(s_{n,r,i_{k-1}} > q_{n,r}\mid q_{n,r}\big) - \PB(s_{n,r,l}>q_{n,r}\mid q_{n,r})\big) = \frac{1}{\abs{\gI_k}}\sum\limits_{l\in \gI_k}\ssum{i}{i_{k-1}}{l-1}\Delta_{n,r,i},
    \end{aligned}
    \end{equation}
    where we define 
    $$\Delta_{n,r,i}\coloneqq \PB\big(s_{n,r,i} > q_{n,r}\mid q_{n,r}\big) - \PB\big(s_{n,r,i+1} > q_{n,r}\mid q_{n,r}\big).$$ 
    The definition \eqref{eq:defn-KS-dist} of the KS distance tells us that
    \[
    \Delta_{n,r,i} \le \mathsf{KS}\big(s_{n,r,i} , s_{n,r,i+1}\big)\quad  \text{for all }i \in [T_{r}],
    \]
    which combined with Eqn.~\eqref{eq:one_round_regret_6} leads to
    \begin{align}
        A_k 
        &\le \frac{1}{\abs{\gI_k}}\sum\limits_{l\in \gI_k}\ssum{i}{i_{k-1}}{l-1}\mathsf{KS}\big(s_{n,r,i} , s_{n,r,i+1}\big)\notag\\
        &\le \frac{1}{\abs{\gI_k}}\sum\limits_{l\in \gI_k}\ssum{i}{i_{k-1}}{i_k - 1}\mathsf{KS}\big(s_{n,r,i} , s_{n,r,i+1}\big) 
        = \ssum{i}{i_{k-1}}{i_k - 1}\mathsf{KS}\big(s_{n,r,i} , s_{n,r,i+1}\big).
        \label{eq:one_round_regret_777}
    \end{align}
    Let $s_{n,r,0}=s_{n,r,1}$. Summing over all even $k$ yields 
    \begin{equation}\label{eq:one_round_regret_7}
    \begin{aligned}
        \sum\limits_{k: k \text{ is even}}A_k \le \ssum{k}{1}{K}\ssum{i}{i_{k-1}}{i_k-1}\mathsf{KS}\big(s_{n,r,i}, s_{n,r,i+1}\big) = \ssum{i}{0}{T_{r}}\mathsf{KS}\big(s_{n,r,i} ,  s_{n,r,i+1}\big).
    \end{aligned}
    \end{equation}
    
    \end{itemize} 
    Putting Eqns.~\eqref{eq:one_round_regret_5} and \eqref{eq:one_round_regret_7} together yields
    \[
    \left(\sum\limits_{k: k \text{ is even}}\abs{\gI_k}\right)^{\frac{2}{3}}\left(\sum\limits_{k: k \text{ is even}}A_k\right)^{\frac{1}{3}} \le T_{r}^{\frac{2}{3}}\left(\ssum{i}{1}{T_{r}}\mathsf{KS}\big(s_{n,r,i}, s_{n,r,i+1}\big)\right)^{\frac{1}{3}} = \big(\mathsf{KS}_{n,r}^{\mathsf{round}}\big)^{\frac{1}{3}}T_{r}^{\frac{2}{3}},
    \]
    which taken together with Eqns.~\eqref{eq:regret-partition-Ik-even-odd}, \eqref{eq:sum-Pscore-alpha-13579} and \eqref{eq:one_round_regret_4} establishes that
    \begin{align*}
         \ssum{l}{1}{T_{r}}\Big|\PB(s_{n,r,l} > q_{n,r}\mid q_{n,r}) - \alpha\Big|\cdot \mathbbm{1}\{\gA_{n,r}\}
        &\le 30\sqrt{\log T}\left(\sqrt{T_r} +  \big(\mathsf{KS}_{n,r}^{\mathsf{round}}\big)^{\frac{1}{3}}T_{r}^{\frac{2}{3}}\right) = \widetilde{O}\left(\sqrt{T_{r}} + \big(\mathsf{KS}_{n,r}^{\mathsf{round}}\big)^{\frac{1}{3}}T_{r}^{\frac{2}{3}}\right).
    \end{align*}

\subsubsection{Proof of \Cref{lem:one_stage_regret}}\label{sec:prf:lem:one_stage_regret}

Consider stage $n$, and
    denote by $t_n$  the number of iterations in the $r_n$-th round (recall that $r_n$ denotes the index of the last round of stage $n$). Observe that for any $r\in [r_n-1]$ (resp.~for $r=r_n$), no initiation of a new stage---i.e., no detection of distribution drift---is triggered within iterations $\{1,\ldots, T_{r}\}$ (resp.~$\{1,\ldots, t_n-1\}$). In what follows, we look at the two drift settings separately.

\paragraph{Change-point setting.}
Note that the collection of time segments \(\{\gI_{n,r,k}\}\) described in  \Cref{sec:additional-notation-split-thm-UB} can be viewed as a refinement of
\(\{\gI_{n,j}\}_{j=1}^{J_n}\); for instance, a given $\mathcal{I}_{n,j}$ might appear in more than one round, possibly due to imperfect drift detection. 
For each \(j\in[J_n]\), denote by 
$r^{(j)}$  the index of the first round that overlaps with the segment \(\gI_{n,j}\); it is straightforward to see that the last round that overlaps with \(\gI_{n,j}\) cannot exceed $r^{(j+1)}$ (here, if this is already the last time segment in stage $n$, we can simply let $r^{(j+1)}$ be the last round of this stage). Note that 
\(\gI_{n,j}\) may share its first and/or last
round with \(\gI_{n,j-1}\) or \(\gI_{n,j+1}\). Therefore, we have
\begin{align}
\gI_{n,j} \subseteq \Bigg\{\bigcup\limits_{r=r^{(j)}+1}^{r^{(j+1)}-1} \bigcup\limits_{k=1}^{K_{n,r}}\gI_{n,r,k}\Bigg\}\cup \left\{\gI_{n,r^{(j)}, K_{n,r^{(j)}}}\right\}\cup \left\{\gI_{n,r^{(j+1)},1}\right\},
\end{align}
which in turn gives
\begin{equation}\label{eq:one_stage_cp_0}
\ssum{r}{1}{r_n}\ssum{k}{1}{K_{n,r}}\sqrt{\abs{\gI_{n,r,k}}} \le 
\ssum{j}{1}{J_n}\left\{\ssum{r}{r^{(j)}+1}{r^{(j+1)}-1}\ssum{k}{1}{K_{n,r}}\sqrt{\abs{\gI_{n,r,k}}}+ \sqrt{\big|
\gI_{n,r^{(j)}, K_{n,r^{(j)}}}
\big|} + \sqrt{\abs{\gI_{n,r^{(j+1)},1}}}\right\}.
\end{equation}

Recognizing that each of the intermediate rounds \(r^{(j)}+1,\ldots,r^{(j+1)}-1\) is fully contained within $\mathcal{I}_{n,j}$, we see that, by construction, each of these rounds also contains a single time segment from the collection $\{\mathcal{I}_{n,r,k}\}$. 
This means that for each $r\in \{r^{(j)}+1,\ldots,r^{(j+1)}-1\}$, we have
\[
\ssum{k}{1}{K_{n,r}} \sqrt{\abs{\gI_{n,r,k}}}
= \sqrt{\abs{\gI_{n,r,1}}}
\leq \sqrt{T_r}.
\]
Consequently, for
each \(j\in[J_n]\), we can bound
\begin{equation}\label{eq:one_stage_cp_1}
    \ssum{r}{r^{(j)}+1}{r^{(j+1)-1}} \ssum{k}{1}{K_{n,r}} \sqrt{\abs{\gI_{n,r,k}}}+ 
    \sqrt{\big|\gI_{n,r^{(j)}, K_{n,r^{(j)}}}\big|}
    + \sqrt{\abs{\gI_{n,r^{(j+1)},1}}}
    {\le} \ssum{r}{r^{(j)}+1}{r^{(j+1)-1}}\sqrt{T_{r}} + 2\sqrt{\abs{\gI_{n,j}}},
\end{equation}
where the last inequality holds since \(\gI_{n,r^{(j)}, K_{n,r^{(j)}}} \cup \gI_{n,r^{(j+1)},1} \subseteq \gI_{n,j}\).

Next, we bound the summation of $\sqrt{T_r}$ on the right-hand side of  \eqref{eq:one_stage_cp_1}. 
If $r^{(j)}+1 > r^{(j+1)}-1$, then this summation term is equal to \(0\); otherwise, it can be seen that (by construction)
$$T_{r^{(j+1)}-1}=\abs{\gI_{n,r^{(j+1)}-1, 1}} \le \abs{\gI_{n,j}},$$  
which implies that
\begin{equation}\label{eq:one_stage_cp_2}
    \ssum{r}{r^{(j)}+1}{r^{(j+1)-1}}\sqrt{T_{r}} \le  \ssum{r}{1}{r^{(j+1)-1}}\sqrt{T_{r}}
    \overset{\mathrm{(a)}}{=} \ssum{r}{1}{r^{(j+1)-1}} 3^{\frac{r}{2}}
    \leq 3\cdot 3^{\frac{r^{(j+1)}-1}{2}}
    \overset{\mathrm{(b)}}=  3\sqrt{T_{r^{(j+1)}-1}} \le 3\sqrt{\abs{\gI_{n,j}}}.
\end{equation}
Here, (a) and (b) are valid due to our choice $T_r=3^r$ and the fact that, except for the last round of this stage, the $r$-th round has time length exactly equal to $T_r$.
%

Invoking \Cref{lem:one_round_regret} as well as \eqref{eq:auxiliary-sl-exceed}, we can demonstrate that
\begin{align*}
    &\ssum{r}{1}{r_n} \bigg\{\ssum{l}{1}{T_r}\big|\PB(s_{n,r,l} > q_{n,r}\mid q_{n,r}) - \alpha\big| \bigg\} \mathbbm{1}\{\gA_{n,r}\} \notag\\
    & \leq
        \ssum{r}{1}{r_n-1} \bigg\{\ssum{l}{1}{T_r}\big|\PB(s_{n,r,l} > q_{n,r}\mid q_{n,r}) - \alpha\big| \bigg\} \mathbbm{1}\{\gA_{n,r}\}
        + 
        \bigg\{
        \ssum{l}{1}{t_n-1}\big|\PB(s_{n,r_n,l} > q_{n,r_n}\mid q_{n,r_n}) - \alpha\big| \bigg\} \mathbbm{1}\{\gA_{n,r_n}\} + 1\notag\\
    & \leq \widetilde{O}\bigg( \ssum{r}{1}{r_n}\ssum{k}{1}{K_{n,r}}\sqrt{\abs{\gI_{n,r,k}}} \bigg) + 1
    \leq \widetilde{O}\bigg( \ssum{j}{1}{J_n} \sqrt{\abs{\gI_{n,j}}} \bigg),
\end{align*}
where the last line follows from \Cref{lem:one_round_regret} and the fact that no distribution shift has been detected before the last iteration of stage $n$. This taken collectively with \eqref{eq:one_stage_cp_0}-\eqref{eq:one_stage_cp_2}  then yields
\begin{align*}
    \ssum{r}{1}{r_n} \bigg\{\ssum{l}{1}{T_r}\big|\PB(s_{n,r,l} > q_{n,r}\mid q_{n,r}) - \alpha\big| \bigg\} \mathbbm{1}\{\gA_{n,r}\}
    \leq \widetilde{O}\bigg( \ssum{r}{1}{r_n}\ssum{k}{1}{K_{n,r}}\sqrt{\abs{\gI_{n,r,k}}} \bigg) 
    \leq \widetilde{O}\bigg( \ssum{j}{1}{J_n} \sqrt{\abs{\gI_{n,j}}} \bigg)
\end{align*}
as claimed. 
\paragraph{Smooth drift setting.}
To begin with, Lemma~\ref{lem:one_round_regret} tells us that
    \begin{subequations}\label{eq:one_stage_regret_1}
    \begin{align}
        \bigg\{ \ssum{l}{1}{T_{r}} \big| \PB(s_{n,r,l} > q_{n,r}\mid q_{n,r}) - \alpha\big| \mathbbm{1} \bigg\} \{\gA_{n,r}\} &= \widetilde{O}\left(\sqrt{T_{r}} + \big(\mathsf{KS}_{n,r}^{\mathsf{round}}\big)^{\frac{1}{3}}T_{r}^{\frac{2}{3}}\right),\quad r = 1,2, \ldots, r_n - 1
        \end{align}
        and
        \begin{align}
        \ssum{l}{1}{t_{n}} \big|\PB(s_{n,r_n,l} > q_{n,r_n}\mid q_{n,r_n}) - \alpha\big|\mathbbm{1}\{\gA_{n,r_n}\}
        &\leq 
        \ssum{l}{1}{t_{n}-1} \big|\PB(s_{n,r_n,l} > q_{n,r_n}\mid q_{n,r_n}) - \alpha\big|\mathbbm{1}\{\gA_{n,r_n}\}
        + 1 \notag\\
        &= \widetilde{O}\left(\sqrt{t_{n}} + \big(\mathsf{KS}_{n,r_n}^{\mathsf{round}}\big)^{\frac{1}{3}}t_{n}^{\frac{2}{3}}\right).
    \end{align}
    \end{subequations}
    %
   Sum Eqn.~\eqref{eq:one_stage_regret_1} over all rounds in this stage to arrive at:
    \begin{align*}
        \ssum{r}{1}{r_n-1}\ssum{l}{1}{T_{r}} &\big|\PB(s_{n,r,l} > q_{n,r}\mid q_{n,r}) - \alpha\big|\mathbbm{1}\{\gA_{n,r}\} + \ssum{l}{1}{t_n}\big|\PB(s_{n,r_n,l}> q_{n,r_n}\mid q_{n,r_n}) -\alpha\big|\mathbbm{1}\{\gA_{n,r_n}\}\\
        &= \ssum{r}{1}{r_n-1}\widetilde{O}\left(\sqrt{T_{r}} + \big(\mathsf{KS}_{n,r}^{\mathsf{round}}\big)^{\frac{1}{3}}T_{r}^{\frac{2}{3}}\right) + \widetilde{O}\left(\sqrt{t_n} + \big(\mathsf{KS}_{n,r_n}^{\mathsf{round}}\big)^{\frac{1}{3}}t_n^{\frac{2}{3}}\right)\\
        &\overset{\mathrm{(a)}}{\leq } \widetilde{O}\left(\ssum{r}{1}{r_n}3^{\frac{r}{2}} + \ssum{r}{1}{r_n - 1}\big(\mathsf{KS}_{n,r}^{\mathsf{round}}\big)^{\frac{1}{3}}T_{r}^{\frac{2}{3}} + \big(\mathsf{KS}_{n,r_n}^{\mathsf{round}}\big)^{\frac{1}{3}}t_n^{\frac{2}{3}}\right)\\
        &\overset{\mathrm{(b)}}{\leq }\widetilde{O}\left(3^{\frac{r_n}{2}} + \bigg(\ssum{r}{1}{r_n}\big(\mathsf{KS}_{n,r}^{\mathsf{round}}\big)\bigg)^{\frac{1}{3}}S_n^{\frac{2}{3}}\right)
        \overset{\mathrm{(c)}}{\leq} \widetilde{O}\left(\sqrt{S_n} + \mathsf{KS}_{n,S_n}^{\frac{1}{3}}S_n^{\frac{2}{3}}\right).
    \end{align*}
    Here, (a) holds since $T_{r} = 3^r$ and $t_n \le 3^{r_n}$, (b) follows from H\"older's inequality, and (c) holds since the total number $S_n$ of time points within stage $n$ satisfies
    \[
    \sqrt{S_n} \ge \bigg(\ssum{r}{1}{r_n-1}3^r\bigg)^{1/2} \asymp 3^{\frac{r_n}{2}}.
    \]
This taken together with \eqref{eq:auxiliary-sl-exceed} concludes the proof.


\subsubsection{Proof of \Cref{lem:seg-separation}}\label{sec:prf:lem:seg-separation}
We establish this lemma by contradiction. Suppose that round \(r_n-1\) and round \(r_n\) are completely contained within the same time segment from the collection \(\{\gI_k\}_{k=1}^{N^{\mathsf{cp}}+1}\). According to the procedure of \driftocp, on the event \(\gA_{n,r_n-1}\cap\gA_{n,r_n}\) there exists an index
\(j_n\) such that
\begin{subequations}
\begin{align}
\frac{1}{T_{r_n-1}}
\Bigg|\sum_{l=1}^{T_{r_n-1}}\Bigl(\mathbbm{1}\{s_{n,r_n-1,l}>q_{n,r_n}\}-\alpha\Bigr)\Bigg|
&\le \frac{1}{T_{r_n-1}}, \label{eq:pre-bound}\\
\frac{1}{t_n-j_n+1}
\Bigg|\sum_{l=j_n}^{t_n}\Bigl(\mathbbm{1}\{s_{n,r_n,l}>q_{n,r_n}\}-\alpha\Bigr)\Bigg|
&> \frac{24\sqrt{\log (4\tau_{n,r_n})}}{\sqrt{t_n-j_n+1}}, \label{eq:det-stat}
\end{align}
\end{subequations}
where \eqref{eq:pre-bound} is valid since $q_{n,r_n}$ is taken to be the $\alpha$-empirical-quantile of the set $\{s_{n,r_n-1,l}\}_{l=1}^{T_{r_n-1}}$, and in \eqref{eq:det-stat} we use the detection threshold $\sigma_{n,r}=24\sqrt{\log (4\tau_{n,r})}$.

Since the two rounds lie within the same time
segment from the collection \(\{\gI_k\}_{k=1}^{N^{\mathsf{cp}}+1}\), the scores in these two rounds are identically distributed.
Let \(s\) denote an independent copy of the score from this segment. Then, on
\(\gA_{n,r_n-1}\cap\gA_{n,r_n}\) (cf.~\eqref{eq:typical_set_gA_ij_nr}), we can invoke \eqref{eq:pre-bound} and the triangle inequality to obtain
\begin{align}\label{eq:pop-upper}
&\Bigl|\PB(s>q_{n,r_n}\mid q_{n,r_n})-\alpha\Bigr|
= 
\frac{1}{T_{r_n-1}}
\Bigg|\sum_{l=1}^{T_{r_n-1}}\Bigl(\mathbb{P}(s_{n,r_n-1,l}>q_{n,r_n}\mid q_{n,r_n})-\alpha\Bigr)\Bigg| 
\notag\\
&
\le 
\frac{1}{T_{r_n-1}}
\Bigg|\sum_{l=1}^{T_{r_n-1}}\Bigl(\mathbbm{1}\{s>q_{n,r_n}\}-\alpha\Bigr)\Bigg| 
+ 
\frac{1}{T_{r_n-1}}
\Bigg|\sum_{l=1}^{T_{r_n-1}}\Bigl(\mathbbm{1}\{s_{n,r_n-1,l}>q_{n,r_n}\}-\mathbb{P}(s>q_{n,r_n}\mid q_{n,r_n})\Bigr)\Bigg| 
\notag\\ 
&\le \frac{1}{T_{r_n-1}}+\frac{6\sqrt{\log \tau_{n+1}}}{\sqrt{T_{r_n-1}}}
\leq \frac{7\sqrt{\log \tau_{n+1}}}{
\sqrt{T_{r_{n-1}}}}
\leq \frac{14\sqrt{\log \tau_{n+1}}}{
\sqrt{T_{r_{n}}}}. 
\end{align}
In the meantime, applying \eqref{eq:det-stat} again on the same event and invoking the triangle inequality gives
%
\begin{align}
\Bigl|\PB(s>q_{n,r_n}\mid q_{n,r_n})-\alpha\Bigr|
&= \frac{1}{t_n - j_n +1}\abs{\ssum{l}{j_n}{t_n}\Bigl(\PB(s_{n,r_n,l}>q_{n,r_n}\mid q_{n,r_n})-\alpha\Bigr)}\notag\\
&\ge \frac{1}{t_n-j_n+1}
\Bigg|\sum_{l=j_n}^{t_n}\Bigl(\mathbbm{1}\{s_{n,r_n,l}>q_{n,r_n}\}-\alpha\Bigr)\Bigg|\notag\\
&\quad - \frac{1}{t_n-j_n+1}
\Bigg|\sum_{l=j_n}^{t_n}\Bigl(\mathbbm{1}\{s_{n,r_n,l}>q_{n,r_n}\}-\PB(s_{n,r_n,l}>q_{n,r_n}\mid q_{n,r_n})\Bigr)\Bigg|
\notag\\
&\overset{\mathrm{(a)}}{\ge} \frac{24\sqrt{\log (4\tau_{n,r_n})}}{\sqrt{t_n-j_n+1}}
      -\frac{6\sqrt{\log \tau_{n+1}}}{\sqrt{t_n-j_n+1}}
\ge \frac{18\sqrt{\log \tau_{n+1}}}{\sqrt{T_{r_n}}}, 
\label{eq:pop-lower}
\end{align}
where (a) follows by combining \eqref{eq:det-stat} with the event $\gA_{n,r_n}$. 
However, \eqref{eq:pop-lower} contradicts \eqref{eq:pop-upper}, which in turn completes the proof.

\section{Detailed proofs in \Cref{sec:full_conf}}\label{sec:prf_of_sec_full_conf}

This section is devoted to establishing the main results in \Cref{sec:full_conf}.  
Throughout this section, we define, 
 for any cumulative distribution function (CDF) $F$,  the quantile function
 \begin{align}
 \Q_{1-\alpha}(F) \coloneqq \inf\{x\in \RB: F(x) \ge 1-\alpha\}.
 \end{align}
 Also, we denote by $\gC(\cdot\mid \gS^{\mathsf{cal}}, \gS^{\mathsf{train}})$ the prediction-set mapping constructed via \eqref{eq:full_conformal_set}, where $\gS^{\mathsf{train}}$ is used to fit the model and $\gS^{\mathsf{cal}}$ is used to form the quantile.

\subsection{Proof of Proposition~\ref{prop:training_conditioned_cov}}\label{sec:prf:prop:training_conditioned_cov}

We first present the proof of Proposition~\ref{prop:training_conditioned_cov}, which concerns the training-conditional coverage guarantees for standard full conformal methods.  
Before embarking on the proof, we introduce the following convenient notation (when there is no ambiguity), which shall be used repeatedly throughout Section~\ref{sec:prf:prop:training_conditioned_cov}.

\begin{definition}[Basic notation]\label{def:basic_notation}
We introduce the following notation, all conditioned on the realization $Z_{m+1:n}^{\mathsf{train}}=z_{m+1:n}^{\mathsf{train}}$ (i.e., the portion of the training set that is disjoint from the calibration set). 
\begin{itemize}
    \item For any dataset $\gS\subseteq \gX\times\RB$, let
    $\widehat{\mu}_{\gS}(\cdot)\coloneqq \gA\big(\gS\cup z_{m+1:n}^{\mathsf{train}}\big)$ be the fitted model trained obtained by algorithm $\mathcal{A}$ on $\mathcal{S}\cup z_{m+1:n}^{\mathsf{train}}$.
    
    \item For any dataset $\gS\subseteq \gX\times\RB$ and any $Z=(X,Y)$, let
    $\widehat{\mu}^{Z}_{\gS}(\cdot)\coloneqq \widehat{\mu}_{\gS\cup \{Z\}}(\cdot) $  be the fitted model trained obtained by algorithm $\mathcal{A}$ on $\mathcal{S}\cup \{(X,Y)\}\cup z_{m+1:n}^{\mathsf{train}}$.
    \item For any dataset $\gS\subseteq \gX\times\RB$ and any $Z=(X,Y)$, $Z'=(X',Y')$, define the scores
    \begin{equation}
    s_{\gS}(Z')\coloneqq \bigl|Y'-\widehat{\mu}_{\gS}(X')\bigr| \qquad \text{and}\qquad  
    s^{Z}_{\gS}(Z')\coloneqq \bigl|Y'-\widehat{\mu}^{Z}_{\gS}(X')\bigr|.
    \label{eq:defn-sS-Z-1357}
    \end{equation}
    %
\end{itemize}
\end{definition}
\begin{remark}
Note that we introduce $\widehat{\mu}_{\gS}^{Z}(\cdot)$ in addition to $\widehat{\mu}_{\gS}(\cdot)$. This is because, in the full conformal algorithm, the target sample $Z$ is used for both model fitting and construction of the calibration quantile. To emphasize this role and distinguish it from the pretrained-score setting, we adopt a separate notation.
\end{remark}

\subsubsection{Key lemmas}

We first single out two key lemmas that play a pivotal role in the proof of Proposition~\ref{prop:training_conditioned_cov}. Here and throughout, we take $L=L_1L_2$. 

The first lemma characterizes the discrepancy between the tail distribution of the scores conditional on a random calibration set and the corresponding tail distribution obtained after averaging over the randomness of the calibration set, provided that a stable learning algorithm is used for model fitting. The proof is deferred to  \Cref{sec:prf:lem:cond_prob_concentration}. 
\begin{lemma}\label{lem:cond_prob_concentration}
    Consider the same setting as in \Cref{prop:training_conditioned_cov}. Let $\widehat{\mu}^{(X,Y)}(\cdot)$ denote a fitted model trained on the data $Z_{1:n}^{\mathsf{train}}$ together with the target sample $Z=(X,Y)$, and assume that $\widehat{\mu}^{(X,Y)}(\cdot)$ satisfies Assumption~\ref{ass:fair_alg} with coefficient $L_2$. Further, suppose that for each $i\in [m]$, $Z_i^{\mathsf{cal}} = (X_i^{\mathsf{cal}}, Y_i^{\mathsf{cal}})$ is independently drawn from $\gD_i$, and let the target pair $Z=(X,Y)\sim \gD$.   
Then, for any $\delta \in (0,1)$ and conditional on any given realization $Z_{m+1:n}^{\mathsf{train}}=z_{m+1:n}^{\mathsf{train}}$, we have
\begin{align}
&\sup\limits_{x\in \RB}\left\{
\bigg|
\PB_{\gD}\left(\big|Y-\widehat{\mu}^{(X,Y)}(X)\big|> x \,\Big|\, Z_{1:m}^{\mathsf{cal}}, 
Z_{m+1:n}^{\mathsf{train}}=z_{m+1:n}^{\mathsf{train}} \right)  -  \PB_{\gD_{1:m}\times \gD}\left(\big|Y-\widehat{\mu}^{(X,Y)}(X)\big|> x \right)
\bigg|
\right\} \notag\\
& \qquad\qquad \le \frac{16{L}}{n}\sqrt{m\log \frac{1}{\delta}}
\end{align}
with probability at least $1 -\delta$. Here, we adopt the notation 
\[
\PB_{\gD_{1:m}\times \gD}\left(\big|Y-\widehat{\mu}^{(X,Y)}(X)\big|> x \right) \coloneqq  \EB_{Z_{1:m}^{\mathsf{cal}}}\left[
\PB_{\gD}\left(\big|Y-\widehat{\mu}^{(X,Y)}(X)\big|> x \,\Big|\, Z_{1:m}^{\mathsf{cal}},Z_{m+1:n}^{\mathsf{train}}=z_{m+1:n}^{\mathsf{train}}\right)
\right].
\]
\end{lemma}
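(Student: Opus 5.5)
The natural tool is McDiarmid's bounded-differences inequality applied to the function of the calibration data, combined with a discretization or chaining argument to handle the supremum over $x$. Fix $z_{m+1:n}^{\mathsf{train}}$ and, for each $x\in\RB$, set
\[
G_x(Z_{1:m}^{\mathsf{cal}}) \coloneqq \PB_{\gD}\big(|Y-\widehat{\mu}^{(X,Y)}(X)|>x \,\big|\, Z_{1:m}^{\mathsf{cal}}\big),
\]
where the probability is over the fresh $(X,Y)\sim\gD$ only. The quantity $\PB_{\gD_{1:m}\times\gD}(\cdot)$ is exactly $\EB[G_x]$, so we need $\sup_x|G_x-\EB G_x|$ to be small with high probability.

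\textbf{Step 1: bounded differences.} Replace $Z_i^{\mathsf{cal}}$ by an arbitrary $w$. By Assumption~\ref{ass:fair_alg}, applied to the training set of size $n+1$ (the $n$ training points plus $(X,Y)$), the fitted value $\widehat{\mu}^{(X,Y)}(X)$ moves by at most $L_2/(n+1)\le L_2/n$, uniformly in $(X,Y)$. Hence the event $\{|Y-\widehat\mu(X)|>x\}$ changes only when $Y$ lies in two intervals of length at most $2L_2/n$ near $\widehat\mu(X)\pm x$.

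To control this, condition on $X$ and use Assumption~\ref{ass:lip_cond_distr}. One subtlety is that $\widehat\mu^{(X,Y)}$ depends on $Y$ itself. Stability with respect to the $Y$ coordinate handles this: replacing $(X,Y)$ by $(X,y_0)$ also moves predictions by at most $L_2/n$. So we sandwich the event between $\{|Y-c(X)|>x\pm 2L_2/n\}$, where $c(X)=\widehat\mu^{(X,y_0)}(X)$ does not depend on $Y$. Applying Lipschitz continuity of the conditional CDF to these sandwiching events gives $|G_x(Z)-G_x(Z_{i,w})|\le c_0 L_1L_2/n = c_0L/n$, with $c_0$ a small constant around $4$–$8$.

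\textbf{Step 2: McDiarmid pointwise.} For fixed $x$, McDiarmid gives
\[
\PB\big(|G_x-\EB G_x|>t\big)\le 2\exp\!\big(-2t^2 n^2/(m c_0^2L^2)\big).
\]
Thus $t\asymp (L/n)\sqrt{m\log(1/\delta)}$ suffices.

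\textbf{Step 3: the supremum over $x$.} This is the main obstacle, because the target bound carries no $\log n$ or grid factor. The first approach to try is to apply McDiarmid directly to $F\coloneqq\sup_x|G_x-\EB G_x|$. A supremum of functions that each have bounded differences $c_0L/n$ itself has bounded differences $2c_0L/n$. So $F\le \EB F + (L/n)\sqrt{m\log(1/\delta)}$ up to constants, which reduces the problem to bounding $\EB F\lesssim (L/n)\sqrt{m}$.

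For $\EB F$, we can use the structure that $x\mapsto G_x$ is monotone and that $G_x-\EB G_x$ is a sub-Gaussian process with increments controlled in $x$. If a chaining (Dudley) bound proves delicate, the fallback is a grid argument. Monotonicity of $G_x$ and $\EB G_x$, together with Lipschitz continuity in $x$ (slope at most $2L_1$, from Assumption~\ref{ass:lip_cond_distr}), let us restrict to a grid with spacing $\epsilon$, losing only an additive $O(L_1\epsilon)$. A union bound over a grid of polynomial size then costs a $\log$ factor, which would need to be absorbed into the constant $16$ or into $\log(1/\delta)$. Since the stated bound has no such factor, the intended route is presumably the "sup has bounded differences" reduction plus a direct bound on the expectation. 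I would pursue that route first, and accept an extra logarithm only if necessary; a factor like $\log(45n/\delta)$ does appear in Proposition~\ref{prop:training_conditioned_cov}.
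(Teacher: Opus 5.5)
Your plan matches the paper's up to the last step. Both use a bounded-difference constant of order $L/n$ for $G_x$, and both apply McDiarmid to $F=\sup_x|G_x-\EB G_x|$ itself rather than pointwise. Your sandwich through $c(X)=\widehat{\mu}^{(X,y_0)}(X)$ handles the dependence of the fitted model on $Y$ more carefully than the paper does.

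The gap is the step that carries the whole lemma: showing $\EB F\lesssim L\sqrt{m}/n$ with no logarithmic factor. You never establish it, and neither suggested tool delivers it.

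\textbf{Grid fallback.} Since $x$ ranges over all of $\RB$, you would need a monotone grid in the levels of $\EB G_x$. That needs on the order of $n/(L\sqrt m)$ points, so it proves only $\frac{L}{n}\sqrt{m\log(n/\delta)}$, which is weaker than the lemma as stated.

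\textbf{Chaining in $x$.} This does not obviously help. Without smoothness of the conditional density, the bounded-difference constant of an increment $G_x-G_{x'}$ is not small until $|x-x'|\lesssim L_2/n$. Over an unbounded index set, that is exactly the regime where the entropy integral produces a logarithm. As written, the proposal does not prove the lemma.

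The missing idea is that no control over the index $x$ is needed at all.

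\textbf{Symmetrize.} Let $Z^*_{1:m}$ be an independent copy of the calibration set. Since $\EB G_x=\EB_{Z^*_{1:m}}[G_x(Z^*_{1:m})]$, Jensen gives $\EB F\le \EB\sup_x|G_x(Z_{1:m})-G_x(Z^*_{1:m})|$.

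\textbf{Remove the test point from training.} For fixed $z_{1:m},z^*_{1:m}$, replace the test point inside each training set by a fixed auxiliary point $z_0$. This is your Step~1 argument and costs $4L/n$ per replacement. The resulting model $\widehat{\mu}^0_{z_{1:m}}$ no longer depends on $(X,Y)$. So Assumption~\ref{ass:lip_cond_distr}, applied conditionally on $X$, gives a bound that is uniform in $x$:
\[
\sup_{x\in\RB}\bigl|G_x(z_{1:m})-G_x(z^*_{1:m})\bigr|\le \frac{8L}{n}+4L_1\,\EB_X\bigl|\widehat{\mu}^0_{z_{1:m}}(X)-\widehat{\mu}^0_{z^*_{1:m}}(X)\bigr|.
\]

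\textbf{Bound one stable statistic.} The supremum has disappeared; what remains is the $L^1$ fluctuation of a single stable statistic. Decompose $\widehat{\mu}^0_{Z_{1:m}}(X)-\EB[\widehat{\mu}^0_{Z_{1:m}}(X)\mid X]$ into Doob martingale differences over the calibration points. Each difference is bounded by $L_2/n$ by Assumption~\ref{ass:fair_alg}. Orthogonality and Cauchy--Schwarz then give $\EB|\widehat{\mu}^0_{Z_{1:m}}(X)-\widehat{\mu}^0_{Z^*_{1:m}}(X)|\le 2L_2\sqrt{m}/n$.

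Hence $\EB F\le 8L\sqrt{m}/n+8L/n$. Combined with McDiarmid for $F$, this yields a bound of the stated form $O\bigl(L\sqrt{m\log(1/\delta)}/n\bigr)$, with no $n$ inside the logarithm.
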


Another key lemma establishes a high-probability upper bound on the deviation of the average empirical scores from their mean over a given time window.  In contrast to the pretrained-score setting, full conformal methods induce complicated statistical dependency among the scores $\{s_i\}$, leading to additional technical difficulties. The proof of this lemma is provided in \Cref{sec:prf:lem:empr_prob_concentration}.   
\begin{lemma}\label{lem:empr_prob_concentration}
Consider the same setting as in \Cref{prop:training_conditioned_cov}. Let 
$\widehat{\mu}(\cdot)$ represent a fitted model trained on $Z_{1:n}^{\mathsf{train}} $ satisfying Assumption~\ref{ass:fair_alg} with coefficient $L_2$.
Further, suppose that for each $i\in [m]$, $Z_i^{\mathsf{cal}} = (X_i^{\mathsf{cal}}, Y_i^{\mathsf{cal}}) $ is independently drawn from $ \gD_i$.
For every $i = 1,\ldots, m$, we let
\begin{align*}
s_{i} & \coloneqq\big|Y_{i}^{\mathsf{cal}}-\widehat{\mu}(X_{i}^{\mathsf{cal}})\big|,\qquad i=1,\ldots,m.
\end{align*}
Then, for any $\delta\in(0,1)$ and conditional on any realization $Z_{m+1:n}^{\mathsf{train}}=z_{m+1:n}^{\mathsf{train}}$, the following event 
\begin{align*}
\sup_{x\in \RB}\Biggl\{\frac{1}{m}
\bigg|\,
\sum_{i=1}^{m}\Big(
\mathbbm{1}\left\{s_i\le x\right\}
&- \PB_{\gD_{1:m}}\big(s_i \le x\big)
\Big)
\,\biggr|
\Biggr\}\le\;
24\sqrt{\frac{\log(10/\delta)}{m}} + \frac{24L}{n}\sqrt{m\log \left(\frac{10m}{\delta} + n \right)} 
\end{align*}
happens with probability at least $1 - \delta$.
\end{lemma}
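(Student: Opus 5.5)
The plan is to prove \Cref{lem:empr_prob_concentration} by separating two sources of fluctuation. The first is the randomness of the calibration points entering the indicators with the model held fixed. The second is the dependence of $\widehat{\mu}$ on those same calibration points, through $Z_{1:m}^{\mathsf{train}}=Z_{1:m}^{\mathsf{cal}}$. Throughout I condition on $z_{m+1:n}^{\mathsf{train}}$. Write $F(x)\coloneqq\frac1m\sum_i\mathbbm{1}\{s_i\le x\}$ and $\bar F(x)\coloneqq\frac1m\sum_i\PB_{\gD_{1:m}}(s_i\le x)$.

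\textbf{Step 1 (decoupling).} For each $i$, define the leave-one-out model $\widehat{\mu}_{-i}$, trained with $Z_i^{\mathsf{cal}}$ replaced by an independent ghost copy $\widetilde Z_i$, and set $\widetilde s_i\coloneqq |Y_i^{\mathsf{cal}}-\widehat{\mu}_{-i}(X_i^{\mathsf{cal}})|$. By Assumption~\ref{ass:fair_alg}, $|s_i-\widetilde s_i|\le L_2/n$ deterministically. This gives the sandwich
\[
\mathbbm{1}\{\widetilde s_i\le x-L_2/n\}\le \mathbbm{1}\{s_i\le x\}\le \mathbbm{1}\{\widetilde s_i\le x+L_2/n\}.
\]
Conditionally on $\widehat{\mu}_{-i}$, the point $Z_i^{\mathsf{cal}}$ is independent of the model. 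Assumption~\ref{ass:lip_cond_distr} then implies that the conditional CDF of $\widetilde s_i$ is $2L_1$-Lipschitz, so each shift of $x$ by $L_2/n$ costs at most $2L/n$ in probability. Hence it suffices to control the centered indicators built from the $\widetilde s_i$, with their conditional probabilities $p_i(x)\coloneqq\PB(\widetilde s_i\le x\mid \widehat{\mu}_{-i})$.

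\textbf{Step 2 (bounded differences for $\sup_x|F-\bar F|$).} I then split
\[
F-\bar F=\Big[F-\tfrac1m\textstyle\sum_i p_i\Big]+\Big[\tfrac1m\textstyle\sum_i p_i-\bar F\Big].
\]
For the second bracket, consider $G(x)\coloneqq\frac1m\sum_ip_i(x)$ as a function of $(Z_1^{\mathsf{cal}},\dots,Z_m^{\mathsf{cal}},\widetilde Z_{1:m})$. Replacing a single $Z_j$ moves each $\widehat{\mu}_{-i}$ with $i\ne j$ by at most $L_2/n$, hence moves $p_i$ by at most $2L/n$; it changes $p_j$ by at most $1$. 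The total bounded difference is therefore $c=1/m+2L/n$ uniformly in $x$.

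\textbf{Step 3 (martingale control of the first bracket).} The sum $\sum_i(\mathbbm{1}\{\widetilde s_i\le x\}-p_i(x))$ is not a martingale in the natural order, because $\widehat{\mu}_{-i}$ depends on the other points. I handle this with McDiarmid applied directly to $\Phi(x)\coloneqq F(x)$ smoothed by the sandwich. Changing $Z_j$ changes $s_j$'s indicator by at most $1/m$, and changes every other $s_i$ by at most $L_2/n$. The latter moves $F$ only through indicators near the threshold, so the expected effect is $O(L/n)$ per coordinate; the worst case, however, is not small. To repair this, I intersect with the high-probability event that at most $O(m L_2/n+\sqrt{m\log(m/\delta)})$ scores lie in any window of width $2L_2/n$. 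This count is obtained from a Bernstein bound on a fixed grid plus a union bound, and it is the source of the $\log(10m/\delta+n)$ factor. On this event the effective bounded difference is $1/m+O(L/n)$, and a typical-bounded-differences McDiarmid variant gives deviation $\sqrt{m\log(1/\delta)}\,(1/m+L/n)$.

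\textbf{Step 4 (uniformity and the main obstacle).} Uniformity over $x$ comes from a grid of $O(m+n)$ points together with monotonicity of both $F$ and $\bar F$, plus the Lipschitz property of $\bar F$ to interpolate between grid points. Summing the contributions yields the bound $24\sqrt{\log(10/\delta)/m}+\frac{24L}{n}\sqrt{m\log(10m/\delta+n)}$. The hard step is Step 3: the worst-case bounded difference of the raw indicator sum is $O(1)$, not $O(L/n)$. Making the argument work therefore requires the event-restricted McDiarmid with the window-count event; if that proves delicate, I would fall back on a symmetrization and chaining argument over the ghost sample.
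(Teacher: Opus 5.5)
Your Step 3 has a genuine gap, and it is the step that carries the lemma. The good event you invoke (at most $O(mL_2/n+\sqrt{m\log(m/\delta)})$ scores in every window of width $2L_2/n$) cannot be obtained from Bernstein plus a union bound. The indicators $\mathbbm{1}\{s_i\in W\}$ are not independent: every $s_i$ depends on the common $\widehat{\mu}$. Your leave-one-out scores $\widetilde s_i$ are still coupled, because $\widehat{\mu}_{-i}$ is a function of all $Z_j^{\mathsf{cal}}$ with $j\neq i$. Bounding the number of dependent scores in a narrow window, uniformly over windows, is a localized version of the very statement you are proving, so the argument as written is circular.

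Even granting that event, the bookkeeping fails. A window count of order $\sqrt{m\log(m/\delta)}$ adds $\sqrt{\log(m/\delta)/m}$ to the bounded difference of $F$ in every coordinate, not $O(L/n)$. McDiarmid then only gives a deviation of order $\sqrt{\log(1/\delta)\log(m/\delta)}$, which does not vanish. A Bernstein-scale count $mL/n+\log(m/\delta)$ would still leave an extra $\log^{3/2}(m/\delta)/\sqrt{m}$. Separately, obtaining uniformity in $x$ from a union bound over a grid of $O(m+n)$ points puts a $\log(m+n)$ into the leading term. You would then not recover the stated $24\sqrt{\log(10/\delta)/m}$, which matters when $L$ is small.

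The missing idea is a decoupling that yields genuine conditional independence rather than leave-one-out closeness. The paper conditions on $X_{1:m}$ and replaces $\widehat{\mu}$ by its average over the responses, $\widetilde{\mu}_{X_{1:m}}(X_i)=\EB[\widehat{\mu}(X_i)\mid X_{1:m}]$. Given the covariates this is deterministic, so the surrogates $\widetilde s_i=|Y_i-\widetilde{\mu}_{X_{1:m}}(X_i)|$ are conditionally independent. McDiarmid over $Y_{1:m}$ then gives $\max_i|\widehat{\mu}(X_i)-\widetilde{\mu}_{X_{1:m}}(X_i)|\le \frac{L_2}{n}\sqrt{m\log(10m/\delta)}$.

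The replacement error is bounded by the fraction of these independent surrogates lying in a window of that width. This is controlled once, additively, by the generalized DKW inequality and Assumption~\ref{ass:lip_cond_distr}. That is where the $\frac{L}{n}\sqrt{m\log(10m/\delta)}$ term comes from, and the window fluctuation is never amplified by $\sqrt{m}$ as it is in a bounded-differences argument.

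The remaining dependence through $X_{1:m}$ is handled in three pieces:
\begin{itemize}
\item DKW conditional on $X_{1:m}$;
\item McDiarmid over the covariates applied to the supremum, with differences $2/m+2L/n$ and mean controlled via \Cref{lem:cond_prob_concentration};
\item DKW for the independent quantities $\PB(\widetilde s_i\le u\mid X_i)$.
\end{itemize}
Your Step 2 resembles the paper's treatment of the covariate term. Step 3, however, needs this conditional-expectation surrogate, or some other genuine source of independence, to go through.
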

\subsubsection{Proof of Proposition~\ref{prop:training_conditioned_cov}}
For notational convenience,  we write $Z_{1:m}$ in place of $Z_{1:m}^{\mathsf{cal}}$ throughout this proof when it is clear from the context.
Fix an auxiliary sample $z_0=(x_0,y_0)$, which shall be treated as deterministic in the following. We introduce several addition notation: 
\begin{itemize}
    \item $s_i \coloneqq s_{Z_{1:m}\cup \{z_0\}}(Z_i) = \bigl|\, Y_i - \widehat{\mu}_{Z_{1:m}\cup \{z_0\}}(X_i)\,\bigr|$ for $i = 1,\ldots, m$, and $s_{\mathsf{test}} \coloneqq \bigl|\, Y - \widehat{\mu}_{Z_{1:m}\cup\{z_0\}}(X)\,\bigr|$, where $(X,Y)$ is not used for model fitting; 

 \item  $s_i^{(X,Y)} \coloneqq s_{Z_{1:m}}^{Z}(Z_i) = \bigl|\, Y_i - \widehat{\mu}^{(X,Y)}_{Z_{1:m}}(X_i)\,\bigr|$ for $i = 1,\ldots, m$, and $ s_{\mathsf{test}}^{(X,Y)} \coloneqq \bigl|\, Y - \widehat{\mu}_{Z_{1:m}}^{(X,Y)}(X)\,\bigr|$, where $(X,Y)$ is used for for model fitting;
    

    \item $\widehat{\Q}_{1-\alpha} \coloneqq\Q_{1-\alpha}\left(\frac{1}{m+1}\Bigl\{\delta\{s_\mathsf{test}^{(X,Y)}\} +\ssum{i}{1}{m}\delta\{s_{i}^{(X,Y)}\} \Bigr\}\right)$, which indicates the quantile when $(X,Y)$ is also used for model fitting; 
    
    \item $ \widetilde{\Q}_{1-\alpha}\coloneqq\Q_{1-\alpha}\left(\frac{1}{m+1}\Bigl\{\delta\{s_\mathsf{test}^{(X,Y)}\} + \ssum{i}{1}{m}\delta\{s_{i}\} \Bigr\}\right)$; note that except for $s_{\mathsf{test}}^{(X,Y)}$, the remaining scores  $\{s_i\}_{i=1}^m$ are computed when $(X,Y)$ is not used for training;

    \item $F_{\mathsf{test}}(u; z_{1:m}) \coloneqq \PB_{(X,Y)\sim \gD}\Bigl(\bigl|\,
    Y - \widehat{\mu}_{z_{1:m}}^{(X,Y)}(X)
    \,\bigr|\le u\Bigr),~ F_{\mathsf{test}}(u)\coloneqq \EB_{Z_{1:m}\sim \gD_{1:m}}[F_{\mathsf{test}}(u;Z_{1:m})]$;
    \item $F_i(u)\coloneqq \PB_{\gD\times \gD_{1:m}}\bigl(s_i^{(X,Y)} \le u\bigr)$ and $ F_i^0(u)\coloneqq \PB_{\gD_{1:m}}\bigl(s_i \le u\bigr)$ for  $i=1,\ldots, m$.
\end{itemize}

\paragraph{Step 1: eliminating the dependence of the fitted model on $(X,Y)$.}
The first step of the proof is to examine the effect of removing the dependence of the fitted model $\widehat{\mu}^{(X,Y)}(\cdot)$ on the target sample $(X,Y)$.
To be precise, consider the discrepancy between 
\[
\PB_{\gD}(Y \in \gC(X)\mid Z_{1:m}) = \PB_{\gD}(s_{\mathsf{test}}^{(X,Y)}\le \widehat{\Q}_{1-\alpha}\mid Z_{1:m}) 
\qquad \text{and} \qquad \PB_{\gD}(s_{\mathsf{test}}^{(X,Y)}\le \widetilde{\Q}_{1-\alpha}).
\]
For the two score sets $\{s_{\mathsf{test}}^{(X,Y)}\}\cup \{s_i^{(X,Y)}\}_{i=1}^m$ and $\{s_{\mathsf{test}}^{(X,Y)}\}\cup \{s_i\}_{i=1}^m$, Assumption~\ref{ass:fair_alg} tells us that
\[
\max\limits_{i\in [m]}\bigl\{\bigl|\,
s_{i}^{(X,Y)} - s_i
\,\bigr|\bigr\}\le \max\limits_{i\in[m]}
\bigl\{\bigl|\,
\widehat{\mu}_{Z_{1:m}}^{(X,Y)}(X_i) - \widehat{\mu}_{Z_{1:m}}(X_i)
\,\bigr|\bigr\} \le \frac{L_2}{n}.
\]
Then by virtue of \citet[Lemma B.1]{han2024distribution}, we obtain 
\begin{align}
\bigl|\,\widehat{\Q}_{1-\alpha}-\widetilde{\Q}_{1-\alpha}\,\bigr| \le \frac{L_2}{n}
\label{eq:Q-hat-Q-tilde-diff}
\end{align}
for any $(X,Y)$ and $ Z_{1:m}$. Combining this with Assumption~\ref{ass:lip_cond_distr} yields
\begin{equation}\label{eq:decorr_XY_1}
\begin{aligned}
\Bigl|\, \PB_{\gD}(Y \in \gC(X)\mid Z_{1:m}) &- \PB_{\gD}\big(s_{\mathsf{test}}^{(X,Y)}\le \widetilde{\Q}_{1-\alpha}\mid Z_{1:m}\big)\,\Bigr|\\
&= \Bigl|\, \PB_{\gD}\big(s_{\mathsf{test}}^{(X,Y)}\le \widehat{\Q}_{1-\alpha}\mid Z_{1:m}\big) - \PB_{\gD}\big(s_{\mathsf{test}}^{(X,Y)}\le \widetilde{\Q}_{1-\alpha}\mid Z_{1:m}\big)\,\Bigr|\\
&\le \PB_{\gD}\Big(s_{\mathsf{test}}^{(X,Y)}\in \big[ \widetilde{\Q}_{1-\alpha} - \bigl|\,\widehat{\Q}_{1-\alpha}-\widetilde{\Q}_{1-\alpha}\,\bigr|, \widetilde{\Q}_{1-\alpha} + \bigl|\,\widehat{\Q}_{1-\alpha}-\widetilde{\Q}_{1-\alpha}\,\bigr|\big]\mid Z_{1:m}\Big)\\
&\le \PB_{\gD}\Big(s_{\mathsf{test}}^{(X,Y)}\in \Big[ \widetilde{\Q}_{1-\alpha} - \frac{L_2}{n}, \widetilde{\Q}_{1-\alpha} + \frac{L_2}{n}\Big]\,\Big|\, Z_{1:m}\Big)
\\
&\leq \frac{4L_1L_2}{n} = \frac{4L}{n},
\end{aligned}
\end{equation}
where the penultimate line results from \eqref{eq:Q-hat-Q-tilde-diff}, and the last line is due to Assumption~\ref{ass:lip_cond_distr}. This inequality allows us to switch attention to $\PB_{\gD}(s_{\mathsf{test}}^{(X,Y)}\le \widetilde{\Q}_{1-\alpha}\mid Z_{1:m})$.

\paragraph{Step 2: replacing $\widetilde{\Q}_{1-\alpha}$ with an adjusted quantile independent of $(X,Y)$.}
Note that by definition,  $\widetilde{\Q}_{1-\alpha}$ still depends on the test score $s_{\mathsf{test}}^{(X,Y)}$, which motivates us to consider replacing $\widetilde{\Q}_{1-\alpha}$ with an alternative quantile independent of $s_{\mathsf{test}}^{(X,Y)}$. More precisely, define the following adjusted quantile
\begin{align}
\check{\Q}_{1-\alpha}
\;\coloneqq\; \inf\Big\{x\in\RB: \ssum{n}{1}{m} \mathbbm{1}\{s_i\le x\}\ge \lceil (1-\alpha)m - \alpha \rceil\Big\}.
,
\label{eq:defn-Q-check-prop}
\end{align}
which satisfies the following property. 
\begin{claim}\label{clm:equiv-quantile}
The adjusted quantile defined in \eqref{eq:defn-Q-check-prop} satisfies
\[
\bigl\{s_{\mathsf{test}}^{(x,y)}\le \widetilde{\Q}_{1-\alpha}\bigr\}
\quad\Longleftrightarrow\quad
\bigl\{s_{\mathsf{test}}^{(x,y)}\le \check{\Q}_{1-\alpha}\bigr\}.
\]
\end{claim}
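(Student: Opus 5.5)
The plan is to reduce the claim to a counting statement about order statistics of the augmented multiset $\{s_{\mathsf{test}}^{(x,y)}\}\cup\{s_i\}_{i=1}^m$. Write $s_0 \coloneqq s_{\mathsf{test}}^{(x,y)}$ and $k \coloneqq \lceil (1-\alpha)(m+1)\rceil$. By the definition of $\Q_{1-\alpha}$ applied to the empirical CDF of the $m+1$ atoms, $\widetilde{\Q}_{1-\alpha}$ is the smallest $u$ with $\mathbbm{1}\{s_0\le u\}+\sum_{i=1}^m \mathbbm{1}\{s_i\le u\}\ge k$, i.e., the $k$-th smallest element of the augmented multiset. The first step is to note that $\lceil (1-\alpha)m-\alpha\rceil = \lceil (1-\alpha)(m+1)\rceil - 1 = k-1$, since the two expressions differ by the integer $1$. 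Thus $\check{\Q}_{1-\alpha}$ is the $(k-1)$-th smallest of $\{s_i\}_{i=1}^m$, with the convention $\check{\Q}_{1-\alpha}=-\infty$ when $k-1\le 0$ and $+\infty$ when $k-1>m$. This convention should be checked against the edge cases of the infimum in \eqref{eq:defn-Q-check-prop}.

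The second step is the standard full-conformal exchange argument. For the implication ($\Leftarrow$), suppose $s_0\le \check{\Q}_{1-\alpha}$. Evaluate the augmented counting function at $u=\check{\Q}_{1-\alpha}$: at least $k-1$ of the $s_i$ are $\le u$, and $s_0\le u$ as well, so the count is $\ge k$. Hence $\widetilde{\Q}_{1-\alpha}\le \check{\Q}_{1-\alpha}$. This alone does not give $s_0\le\widetilde{\Q}_{1-\alpha}$, so I would argue directly instead. Every $u<s_0$ has augmented count equal to $\sum_i\mathbbm{1}\{s_i\le u\}$. If this reached $k$ for some $u<s_0$, then $\check{\Q}_{1-\alpha}\le u<s_0$, a contradiction, because the $(k-1)$-th order statistic is no larger than the $k$-th. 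Therefore no $u<s_0$ qualifies, and $\widetilde{\Q}_{1-\alpha}\ge s_0$.

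For the implication ($\Rightarrow$), suppose $s_0\le\widetilde{\Q}_{1-\alpha}$, and argue by contradiction, assuming $s_0>\check{\Q}_{1-\alpha}$. At $u=\check{\Q}_{1-\alpha}<s_0$, the augmented count equals the count of the $s_i\le u$, which is at least $k-1$. I would refine this to the value $u'=\max\{\,\cdot\,\}$ just below $s_0$, namely $u'=s_0^-$. The count over $s_i$ with $s_i<s_0$ is $\ge k-1$, since $\check{\Q}_{1-\alpha}<s_0$. Adding $s_0$ itself, the augmented count at $u=s_0$ is at least $k$, which only yields $\widetilde{\Q}_{1-\alpha}\le s_0$. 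This is consistent with, and not contradictory to, the assumption $s_0\le \widetilde{\Q}_{1-\alpha}$.

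So the real content is showing that the set of $s_0$ with $s_0\le\widetilde{\Q}_{1-\alpha}$ coincides with $\{s_0\le \check{\Q}_{1-\alpha}\}$. The correct identity is $\widetilde{\Q}_{1-\alpha}=\max\{s_0,\ \check{\Q}_{1-\alpha}\}\wedge(k\text{-th order stat of } s_{1:m})$. In the case $s_0>\check{\Q}_{1-\alpha}$ one then needs $\widetilde{\Q}_{1-\alpha}<s_0$. This holds exactly when at least $k$ of the $s_i$ lie strictly below $s_0$, i.e., when the $k$-th order statistic $s_{(k)}$ of $s_{1:m}$ is $<s_0$. The main obstacle is therefore the boundary situation $s_{(k-1)}<s_0\le s_{(k)}$, where the two events seemingly disagree. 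I would first try to resolve it by checking the exact indexing: the paper's threshold $\lceil(1-\alpha)m-\alpha\rceil$ may be intended as $k$ rather than $k-1$ under its quantile convention, in which case $\check{\Q}_{1-\alpha}=s_{(k)}$ and the identity $\{s_0\le \widetilde{\Q}_{1-\alpha}\}=\{s_0\le s_{(k)}\}$ follows cleanly by the two-direction argument above. Failing that, I would note that ties and boundary cases occur with probability zero under the continuity from Assumption~\ref{ass:lip_cond_distr}, so the equivalence holds almost surely, which suffices for the subsequent probability computations.
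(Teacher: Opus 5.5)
Your direction ($\Leftarrow$) is correct, and so is your identity $\widetilde{\Q}_{1-\alpha}=\max\{s_0,\check{\Q}_{1-\alpha}\}\wedge s_{(k)}$. Here $s_{(j)}$ is the $j$-th order statistic of $s_{1:m}$, with $s_{(0)}=-\infty$ and $s_{(m+1)}=+\infty$, so $\widetilde{\Q}_{1-\alpha}$ is the median of $s_{(k-1)},s_0,s_{(k)}$.

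That identity shows $s_0\le\widetilde{\Q}_{1-\alpha}$ holds if and only if $s_0\le s_{(k)}$. So direction ($\Rightarrow$) is not merely unproven: it is false, and neither of your rescues works.

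\emph{Option (b).} The disagreement set $\{s_{(k-1)}<s_0\le s_{(k)}\}$ is not a tie event but a nondegenerate window. Continuity from Assumption~\ref{ass:lip_cond_distr} does nothing to it. Its probability is of order $1/m$, and exactly $1/(m+1)$ if $s_0,s_1,\ldots,s_m$ were exchangeable and a.s.\ distinct.

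\emph{Counterexample.} Take $m=3$, $\alpha=1/2$, $(s_1,s_2,s_3)=(1,2,3)$, and any $s_0\in(1,2]$. Then $k=2$, $\lceil(1-\alpha)m-\alpha\rceil=1$, and $\check{\Q}_{1-\alpha}=1<s_0$. The augmented count first reaches $2$ at $q=s_0$, so $\widetilde{\Q}_{1-\alpha}=s_0$ and $s_0\le\widetilde{\Q}_{1-\alpha}$.

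\emph{Option (a).} This is the right diagnosis, but it is not a matter of quantile convention. With the explicit threshold in \eqref{eq:defn-Q-check-prop}, $\check{\Q}_{1-\alpha}=s_{(k-1)}$ unambiguously, as you computed yourself. So (a) amounts to correcting the statement, not proving it.

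The paper's own proof runs the same counting reduction you do. It rewrites $s_0\le\widetilde{\Q}_{1-\alpha}$ as $\sum_{i=1}^m\mathbbm{1}\{s_i\le s_0\}\le k-1$, then asserts this is equivalent to $s_0\le\check{\Q}_{1-\alpha}$. That last step is exactly the off-by-one you ran into. The count condition $\sum_{i}\mathbbm{1}\{s_i\le s_0\}\le k-1$ is equivalent to $s_0<s_{(k)}$. Absent ties, $s_0\le s_{(k-1)}$ corresponds instead to a count of at most $k-2$.

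The correct move is to say explicitly that the claim is false as written and prove the corrected version. Define $\check{\Q}_{1-\alpha}$ with threshold $\lceil(1-\alpha)(m+1)\rceil=k$, i.e.\ $\check{\Q}_{1-\alpha}=s_{(k)}$. Your two-direction argument, with $k-1$ replaced by $k$, then proves it exactly, ties included.

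The only downstream change is in $\gT_4$. For $k\le m$ one has $k/m-(1-\alpha)\in[(1-\alpha)/m,(2-\alpha)/m)$, so the bound $1/m$ becomes $2/m$. If $k=m+1$, then $\gT_4=\alpha<1/(m+1)$. Either way, Proposition~\ref{prop:training_conditioned_cov} survives up to constants.
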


It then follows immediately from Claim~\ref{clm:equiv-quantile} that 
\begin{equation}\PB_{\gD}\big(s_{\mathsf{test}}^{(X,Y)}\le \widetilde{\Q}_{1-\alpha}\mid Z_{1:m}\big) = \PB_{\gD}\big(s_{\mathsf{test}}^{(X,Y)}\le \check{\Q}_{1-\alpha}\mid Z_{1:m}\big) = F_{\mathsf{test}}(\check{\Q}_{1-\alpha}; Z_{1:m}),
\label{eq:equiv-stest-Ftest-bound}
\end{equation}
where $F_{\mathsf{test}}(\cdot;\cdot)$ is defined at the beginning of this subsection. It then boils down to controlling $F_{\mathsf{test}}(\check{\Q}_{1-\alpha}; Z_{1:m})$. 

\begin{proof}[Proof of Claim~\ref{clm:equiv-quantile}]
From the definition of the quantile functional, one has
\begin{align*}
\bigl\{s_{\mathsf{test}}^{(x,y)}\le \widetilde{\Q}_{1-\alpha}\bigr\}\quad 
&\Longleftrightarrow\quad
s_{\mathsf{test}}^{(x,y)}
\le
\inf\Bigl\{q\in\R:\;
\frac{1}{m+1}\mathbbm{1}\{s_{\mathsf{test}}^{(x,y)}\le q\}
+\frac{1}{m+1}\sum_{i=1}^m \mathbbm{1}\{s_i\le q\}
\ge 1-\alpha
\Bigr\} \\
&\Longleftrightarrow\quad
\mathbbm{1}\{s_{\mathsf{test}}^{(x,y)}\le s_{\mathsf{test}}^{(x,y)}\}
+\sum_{i=1}^m \mathbbm{1}\{s_i\le s_{\mathsf{test}}^{(x,y)}\}
\le \lceil (1-\alpha)(m+1) \rceil.
\end{align*}
Given the trivial fact $\mathbbm{1}\{s_{\mathsf{test}}^{(x,y)}\le s_{\mathsf{test}}^{(x,y)}\}=1$, the last display is equivalent to
\[
\sum_{i=1}^m \mathbbm{1}\{s_i\le s_{\mathsf{test}}^{(x,y)}\}
\le \lceil (1-\alpha)(m+1)-1 \rceil = \lceil (1-\alpha)m - \alpha \rceil.
\]
By the definition \eqref{eq:defn-Q-check-prop} of $\check{\mathsf{Q}}_{1-\alpha}$, this inequality holds if and only if
$s_{\mathsf{test}}^{(x,y)} \le \check{\Q}_{1-\alpha}$.
This proves the claim.
\end{proof}

\paragraph{Step 3: controlling $F_{\mathsf{test}}(\check{\Q}_{1-\alpha}; Z_{1:m})$.}
In order to control $F_{\mathsf{test}}(\check{\Q}_{1-\alpha}; Z_{1:m})$, we begin with the following decomposition:
\begin{align}
    \Big|F_{\mathsf{test}}(\check{\Q}_{1-\alpha};Z_{1:m}) {-}(1- \alpha)\Big|
 &\le
    \underbrace{\abs{
     F_{\mathsf{test}}(\check{\Q}_{1-\alpha};Z_{1:m}) - F_{\mathsf{test}}(\check{\Q}_{1-\alpha})
    }}_{\eqqcolon \gT_{1}} + \underbrace{\bigg|
    \frac{1}{m}\ssum{i}{1}{m}\left[F_i^0(\check{\Q}_{1-\alpha})
    - \mathbbm{1}\left\{s_{i} \le \check{\Q}_{1-\alpha}\right\}
    \right]
    \bigg|}_{\eqqcolon\gT_{2}} \notag\\
    &\quad + \underbrace{\bigg|
    F_{\mathsf{test}}(\check{\Q}_{1-\alpha}) - \frac{1}{m}\ssum{i}{1}{m}F_i^0(\check{\Q}_{1-\alpha})
    \bigg|}_{\eqqcolon\gT_{3}} + \underbrace{\bigg|
    \frac{1}{m}\ssum{i}{1}{m}\mathbbm{1}\left\{s_{i} \le \check{\Q}_{1-\alpha}\right\} - (1-\alpha)
    \bigg|}_{\eqqcolon\gT_{4}},
    \label{eq:Ftest-decompose-4-terms}
\end{align}
where both $F_{\mathsf{test}}(\cdot)$ and $F_i^0(\cdot)$ are defined at the beginning of this subsection.  This decomposition leaves us with four terms to cope with. 
\begin{itemize}
\item {\em Bounding $\mathcal{T}_1$ and $\mathcal{T}_2$.}
Define the typical events $\gE_1$ and $\gE_2$ as:
\begin{align*}
\gE_{1} &\coloneqq \Bigg\{\sup\limits_{u\in \RB}\bigg\{\Big|F_{\mathsf{test}}(u;Z_{1:m})- F_{\mathsf{test}}(u)\Big|\bigg\}
\le \frac{16{L}}{n}\sqrt{m\log \frac{2}{\delta}}
\Bigg\};
\\
\gE_2 &\coloneqq \Bigg\{
\sup\limits_{u\in \RB}\bigg\{
\Big|\frac{1}{m}\ssum{i}{1}{m}\Big(\mathbbm{1}\left\{s_{i} \le u\right\} - F_i^0(u)\Big)\Big|
\bigg\} \le 24\sqrt{\frac{\log (40/\delta)}{m}} + \frac{24{L}}{n}\sqrt{m\log \left(\frac{40m}{\delta} + n\right)}
\Bigg\}.
\end{align*}
Lemmas~\ref{lem:cond_prob_concentration} and~\ref{lem:empr_prob_concentration} imply that, with probability at least \(1-\delta\), these two events occur simultaneously.
On the event $\gE_1 \cap \gE_2$, the terms $\gT_{1}$ and $\gT_{2}$ satisfy
\begin{align}
\gT_{1} & = \abs{
     F_{\mathsf{test}}(\check{\Q}_{1-\alpha};Z_{1:m}) - F_{\mathsf{test}}(\check{\Q}_{1-\alpha})
    }
\le \sup\limits_{u\in\RB}\Big|\,F_{\mathsf{test}}(u; Z_{1:m}) - F_{\mathsf{test}}(u)\,\Big|
\le \frac{16{L}\sqrt{m\log (2/\delta)}}{n};\label{eq:UB-T1-1479}\\
\gT_{2} &= \bigg|
    \frac{1}{m}\ssum{i}{1}{m}\left[F_i^0(\check{\Q}_{1-\alpha})
    - \mathbbm{1}\left\{s_{i} \le \check{\Q}_{1-\alpha}\right\}
    \right]
    \bigg|
\le \sup\limits_{u\in\RB}\bigg|
\frac{1}{m}\ssum{i}{1}{m}\left[F_i^0(u)  - \mathbbm{1}\left\{s_i \le u\right\}\right]
\bigg|\notag\\
&\le 24\sqrt{\frac{\log (40/\delta)}{m}} + \frac{24{L}}{n}\sqrt{m\log \left(\frac{40m}{\delta} + n\right)}.
\label{eq:UB-T2-1479}
\end{align}

\item {\em Bounding $\mathcal{T}_3$.}
Regarding $\gT_{3}$, it follows from the triangle inequality and the definition of the total-variation distance that
\begin{align}
\gT_{3} &= \bigg|
    F_{\mathsf{test}}(\check{\Q}_{1-\alpha}) - \frac{1}{m}\ssum{i}{1}{m}F_i^0(\check{\Q}_{1-\alpha})
    \bigg| \le \frac{1}{m}\ssum{i}{1}{m}\abs{F_{\mathsf{test}}(\check{\Q}_{1-\alpha}) - F_i^0(\check{\Q}_{1-\alpha})} \notag\\
&\le \frac{1}{m}\ssum{i}{1}{m}\Big(\sup\limits_{u\in \RB}\big\{\abs{
F_{\mathsf{test}}(u) - F_i(u)
}
\big\} + \sup\limits_{u\in \RB}\big\{\abs{
F_{i}^0(u) - F_i(u)
}
\big\}
\Big) \notag\\
&\overset{\mathrm{(a)}}\le \frac{1}{m}\ssum{i}{1}{m}\Big(\mathsf{KS}\big(s_{\mathsf{test}}^{(X,Y)}, s_i^{(X,Y)}\big) + \frac{4L}{n} \Big) \overset{\mathrm{(b)}}{\le} \frac{2}{m}\ssum{i}{0}{m}\mathsf{TV}(Z, Z_{i}) + \frac{8L}{n},
\label{eq:UB-T3-1479}
\end{align}
where $F_i(\cdot)$ is defined at the beginning of this subsection. Inequalities (a) and (b) are justified below.
\begin{itemize}
\item To validate inequality (a) in \eqref{eq:UB-T3-1479}, observe that
\begin{align*}
    F_i^0(u) - F_i(u) &= \PB\big(s_i^{(X,Y)}\le u\big) - \PB(s_i \le u)\\
    &\overset{\mathrm{(i)}}{\le} \PB\Big(
    u-\big|\,\widehat{\mu}_{Z_{1:m}}^{(X,Y)}(X_i) - \widehat{\mu}_{Z_{1:m}\cup \{z_0\}}(X_i) \,\big| < s_i
    \le u+ \big|\,\widehat{\mu}_{Z_{1:m}}^{(X,Y)}(X_i) - \widehat{\mu}_{Z_{1:m}\cup \{z_0\}}  (X_i) \,\big|
    \Big)\\
    &\overset{\mathrm{(ii)}}{\le} \PB\Big(u-\frac{L_2}{n}< s_i \le u+\frac{L_2}{n}\Big)\\
    &\le \PB \Big(\big(\widehat{\mu}_{Z_{1:m}\cup\{z_0\}}(X_i) - u\big) - \frac{L_2}{n} < Y_i \le \big(\widehat{\mu}_{Z_{1:m}\cup\{z_0\}}(X_i) - u\big) + \frac{L_2}{n}\Big)\\
    &\quad + \PB \Big(\big(\widehat{\mu}_{Z_{1:m}\cup\{z_0\}}(X_i) + u\big) - \frac{L_2}{n} < Y_i \le \big(\widehat{\mu}_{Z_{1:m}\cup\{z_0\}}(X_i) + u\big) + \frac{L_2}{n}\Big) \\
    & \overset{\mathrm{(iii)}}\le \frac{4L_1L_2}{n}=\frac{4L}{n},
\end{align*}
where (ii) arises from Assumption~\ref{ass:fair_alg},   (iii) follows from Assumption~\ref{ass:lip_cond_distr}, respectively, and (i) is a direct consequence of the definition of $s_i^{(X,Y)}$ and $s_i$ (see the beginning of the subsection) and 
the following elementary fact.
\begin{fact}\label{fact:ind_diff}
$\big|\PB(\abs{a}>u) - \PB(\abs{a + \delta}>u) \big| \le 
\PB(u-\abs{\delta} \le \abs{a} \le u + \abs{\delta}).
$
\end{fact}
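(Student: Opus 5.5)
Write $p=\PB(\abs{a}>u)$ and $p_\delta=\PB(\abs{a+\delta}>u)$. The plan is to compare the two events $\{\abs{a}>u\}$ and $\{\abs{a+\delta}>u\}$ pointwise. We show their symmetric difference is contained in $\{u-\abs{\delta}\le \abs{a}\le u+\abs{\delta}\}$, and then bound $\abs{p-p_\delta}$ by the probability of that symmetric difference. The randomness may be in $a$, in $\delta$, or both: the argument is carried out realization by realization and only integrated at the end, so $\delta$ may be random and correlated with $a$. This is the case needed for step (i) of the proof of \eqref{eq:UB-T3-1479}, where $\delta$ is the difference of two fitted models.

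The first step is the elementary reverse triangle inequality $\bigl|\abs{a+\delta}-\abs{a}\bigr|\le\abs{\delta}$. From it we get, for every realization,
\[
\bigl|\mathbbm{1}\{\abs{a}>u\}-\mathbbm{1}\{\abs{a+\delta}>u\}\bigr|\le \mathbbm{1}\{u-\abs{\delta}\le\abs{a}\le u+\abs{\delta}\}.
\]
We check this by cases. The left side is nonzero only if exactly one of $\abs{a}$ and $\abs{a+\delta}$ exceeds $u$, with the other at most $u$.

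If $\abs{a}>u\ge\abs{a+\delta}$, then $u<\abs{a}\le\abs{a+\delta}+\abs{\delta}\le u+\abs{\delta}$, so $\abs{a}$ lies in the interval.

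If $\abs{a}\le u<\abs{a+\delta}$, then $\abs{a}\ge\abs{a+\delta}-\abs{\delta}>u-\abs{\delta}$, so again $\abs{a}$ lies in the interval.

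The second step is to take expectations and use $\abs{\EB[\xi]}\le\EB\abs{\xi}$ with $\xi=\mathbbm{1}\{\abs{a}>u\}-\mathbbm{1}\{\abs{a+\delta}>u\}$. This yields the stated bound. There is no real obstacle; the only point needing care is doing the case analysis pointwise, which is what makes it valid when $\delta$ is random.
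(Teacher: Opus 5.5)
Your proof is correct and is essentially the paper's argument. Both rest on the pointwise inequality $\abs{a}-\abs{\delta}\le\abs{a+\delta}\le\abs{a}+\abs{\delta}$: the paper sandwiches both probabilities between $\PB(\abs{a}-\abs{\delta}>u)$ and $\PB(\abs{a}+\abs{\delta}>u)$ and subtracts, while you bound the indicator difference realization by realization and then integrate, so both arguments equally cover a random $\delta$ correlated with $a$.
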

\begin{proof}[Proof of Fact~\ref{fact:ind_diff}] We observe that
\begin{align*}
\big|\PB(\abs{a}>u) - \PB(\abs{a + \delta}>u) \big|
&= \max\Big\{\PB(\abs{a} > u) - \PB(\abs{a+\delta} > u),\, \PB(\abs{a+\delta} > u) - \PB(\abs{a} > u)\Big\}\\
 &\le \PB(\abs{a}+\abs{\delta} > u) - \PB(\abs{a}-\abs{\delta} > u)
 \le \PB(u-\abs{\delta} \le \abs{a} \le u + \abs{\delta})
\end{align*}
as claimed. 
\end{proof}
\item 
We now justify inequality (b) in \eqref{eq:UB-T3-1479}. Consider any index $i$, and define ${Z}_{1:m}^i$ as the dataset obtained from $Z_{1:m}$ by replacing the $i$-th sample $Z_i$ with the target sample $Z$. Based on ${Z}_{1:m}^i$, introduce the auxiliary score
\[
{s}_i'\coloneqq \big|\, Y_i - \widehat{\mu}_{{Z}_{1:m}^i}^{(X_i,Y_i)}(X_i)\,\big|.
\]
 In view of Assumption~\ref{ass:fair_alg}, $s_i'$ differs from $s_i^{(X,Y)}$ by at most $2L_2/n$. Combining this with Assumption~\ref{ass:lip_cond_distr} immediately yields
\[
\mathsf{KS}\big(s_i', s_i^{(X,Y)}\big) \le \frac{4L_1L_2}{n}=\frac{4L}{n}.
\]
As a consequence,  for each $i = 1,\ldots, m$ we have
\begin{align*}
    \mathsf{KS}\big(s_{\mathsf{test}}^{(X,Y)}, s_i^{(X,Y)}\big) &\le \mathsf{KS}\big(s_{\mathsf{test}}^{(X,Y)}, s_i'\big) + \mathsf{KS}\big(s_i', s_i^{(X,Y)}\big) \\
    &\overset{\mathrm{(iv)}}{\le} \mathsf{TV}\Big(\big(Z_{1:m},Z\big), \big(Z_{1:m}^i,Z_i\big)\Big) + \frac{4L}{n} \\
    &\overset{(\mathrm{v})}{\le} 
    2\mathsf{TV}(Z,Z_i) + \frac{4L}{n}.
\end{align*}
Here, (iv) is valid since $s_{\mathsf{test}}^{(X,Y)}$ and $s_i'$ are outputs of the same measurable mapping, evaluated at $(Z_{1:m},Z)$ and $(Z_{1:m}^i, Z_i)$, respectively, whereas (v) invokes \citet[Lemma 1]{barber2023conformal}.

\end{itemize}

\item {\em Bounding $\mathcal{T}_4$.}
According to the definition of $\check{\Q}_{1-\alpha}$, the term $\gT_{4}$ can be bounded by
\begin{align}
\gT_4 &= \bigg|
    \frac{1}{m}\ssum{i}{1}{m}\mathbbm{1}\left\{s_{i} \le \check{\Q}_{1-\alpha}\right\} - (1-\alpha)
    \bigg| = 
    \bigg|
    \frac{\lceil (1-\alpha)m - \alpha \rceil}{m} - \left(1-\alpha\right)
    \bigg| < \frac{1}{m},
    \label{eq:UB-T4-1479}
\end{align}
where the last inequality follows since
\begin{align*}
    -1 < {(1-\alpha)m - \alpha} - (1-\alpha)m
    &\le {\lceil (1-\alpha)m - \alpha \rceil} - (1-\alpha)m\\
    &\le (1-\alpha)m - \alpha +1 - (1-\alpha)m <1.
\end{align*}
\end{itemize}
Substituting the preceding bounds on $\mathcal{T}_1,\dots,\mathcal{T}_4$ into \eqref{eq:Ftest-decompose-4-terms}, we arrive at
\begin{align}
\Big|F_{\mathsf{test}}(\check{\Q}_{1-\alpha};Z_{1:m}) {-}(1- \alpha)\Big|
 &\le  
 \frac{16{L}\sqrt{m\log (2/\delta)}}{n} + 
 24\sqrt{\frac{\log (40/\delta)}{m}} + \frac{24{L}}{n}\sqrt{m\log \left(\frac{40m}{\delta} + n\right)}\notag\\
 &\quad + 
 \frac{2}{m}\ssum{i}{0}{m}\mathsf{TV}(Z, Z_{i}) + \frac{8L}{n} + \frac{1}{m}\notag\\
 &\le \frac{40L\sqrt{m\log (45n/\delta)}}{n} + 24\sqrt{\frac{\log (40/\delta)}{m}} +  
 \frac{2}{m}\ssum{i}{0}{m}\mathsf{TV}(Z, Z_{i}) + \frac{8L}{n} + \frac{1}{m}\notag\\
 &\le \frac{48L\sqrt{m\log (45n/\delta)}}{n} + 25\sqrt{\frac{\log (40/\delta)}{m}} + \frac{2}{m}\ssum{i}{0}{m}\mathsf{TV}(Z, Z_{i}).
 \label{eq:F-test-gap-alpha}
\end{align}

\paragraph{Step 4: putting all this together.} 
Finally, taking 
  the above results \eqref{eq:decorr_XY_1}, \eqref{eq:equiv-stest-Ftest-bound} and \eqref{eq:F-test-gap-alpha} collectively, we can readily finish the proof of Proposition~\ref{prop:training_conditioned_cov}.

\subsection{Proof of Theorem~\ref{thm:full_regret}}

This section is dedicated to establishing Theorem~\ref{thm:full_regret}. 

\paragraph{Notation.} 
For ease of presentation, we adopt the notation introduced in \Cref{def:notation_of_cp} as well as in \Cref{sec:full_conf_alg}. In addition, we shall adopt the following notation:
\begin{itemize}
    \item For any $k\le m<t\in[T]$, let
    \begin{align}
    \Q_{1-\alpha}^{k,m,t}\coloneqq \Q_{1-\alpha}\!\left(
    \frac{1}{m-k+2}\Bigl(\delta\{s^{Z_t}_{Z_{1:m}}(Z_t)\}+\sum_{l=k}^{m}\delta\{s^{Z_t}_{Z_{1:m}}(Z_l)\}\Bigr)
    \right),
    \end{align}
    representing the quantile when (i) the data $Z_{1:m}\cup \{Z_t\}$ are used for training; and (ii) the data $Z_{k:m}\cup \{Z_t\}$ are used for calibration. 
    \item For any $k\le m<i\le j$, define the event
        \begin{subequations}
    \label{eq:event-Anr-full}
    \begin{align}
    \gA(k,m;i,j)\coloneqq
    \Bigg\{
    \bigg|
    \sum_{t=i}^{j}\Bigl(\mathbbm{1}\big\{s^{Z_t}_{Z_{1:m}}(Z_t)\le \Q_{1-\alpha}^{k,m,t}\big\}
    -\PB_{\gD_t}\big(s^{Z_t}_{Z_{1:m}}(Z_t)\le \Q_{1-\alpha}^{k,m,t}\mid Z_{1:m}\big)\Bigr)
    \bigg| \notag\\
    \le 2\sqrt{(j-i+1)\log(2j)}
    \Bigg\},
    \end{align}
    which is concerned with the deviation of the empirical coverage from the training-conditional mean coverage over the time window $[i,j]$. As we shall see momentarily, this is a high-probability event. 
    
    \item For any stage-round pair $(n,r)$, take
    \begin{equation}
    \gA_{n,r}\coloneqq
    \bigcap_{i=\tau_{n,r}}^{\tau_{n,r+1}-1}\ \bigcap_{j=i+1}^{\tau_{n,r+1}-1}
    \gA(\tau_{n,r-1},\tau_{n,r}-1;i,j).
    \end{equation}
    \end{subequations}
\end{itemize}

\paragraph{Step 1: regret decomposition.} 
Following the proof structure of \Cref{thm:split_regret}, we decompose the cumulative regret of interest as
\begin{equation}\label{eq:typical_and_rare_decomp_full}
\begin{aligned}
\ssum{t}{1}{T}\big|\PB(Y_t \in \gC_t(X_t)\mid \gC_t) -(1- \alpha)\big| &= \ssum{n}{1}{N}\ssum{r}{1}{r_n}\ssum{l}{1}{T_r}\Big|\PB(Y_{n,r,l} \in \gC_{n,r}(X_{n,r,l})\mid \gC_{n,r}) - (1- \alpha)\Big|\,\mathbbm{1}\{\gA_{n,r}\}\\
&~ + \ssum{n}{1}{N}\ssum{r}{1}{r_n}\ssum{l}{1}{T_r}\Big|\PB(Y_{n,r,l} \in \gC_{n,r}(X_{n,r,l})\mid \gC_{n,r}) - (1-\alpha)\Big|\,\mathbbm{1}\{\gA_{n,r}^{\mathrm{c}}\}. 
\end{aligned}
\end{equation}
Here we write $\PB(\,\cdot \mid \gC_{n,r})$ (or $\PB(\,\cdot \mid \gC_t)$) for the conditional probability given the set-valued mapping $\gC_{n,r}(\cdot)$ (or $\gC_t(\cdot)$).
Further, similar to \eqref{eq:auxiliary-sl-exceed}, we augment the data to simplify notation: although the last round of stage $n$ contains only $t_n\leq T_r$ time instances, we still generate $Z_{n,r,l}=(X_{n,r,l},Y_{n,r,l})$ for every $l>t_n$ in an i.i.d.~manner obeying
\begin{align}
    \PB(Y_{n,r,l} \in \gC_{n,r}(X_{n,r,l})\mid \gC_{n,r}) = 1- \alpha
    \qquad \text{for all }l>t_n.
\end{align}

\paragraph{Step 2: bounding the second term on the right-hand side of \eqref{eq:typical_and_rare_decomp_full}. }
Akin to the pretrained-score setting, the first term on the right-hand side of \eqref{eq:typical_and_rare_decomp_full} is the dominant term in the above regret decomposition. To justify this, let us look at the second term on the right-hand side of \eqref{eq:typical_and_rare_decomp_full}. 
Fix a realization $Z_{1:m}=z_{1:m}$. Then for each $t\in[i,j]$ with $m<i$, the indicator
$\mathbbm{1}\!\big\{s_{z_{1:m}}^{Z_t}(Z_t)\le Q_{1-\alpha}^{k,m,t}\big\}$ is a function of $Z_t$ only, and hence this collection of indicator variables over the time window $[i,j]$ are statistically independent.
Moreover, the conditional mean of this indicator variable at time $t$ is
$\PB_{\gD_t}\!\big(s_{z_{1:m}}^{Z_t}(Z_t)\le Q_{1-\alpha}^{k,m,t}\mid Z_{1:m}=z_{1:m}\big)$.
Therefore, Hoeffding's inequality readily yields
\begin{subequations}\label{eq:up_bd_A_kmij_c}
\begin{align}
    & \PB\big(\gA(k,m;i,j)^{\mathrm{c}} \mid Z_{1:m}=z_{1:m}\big) \le j^{-8}\quad \text{for all }z_{1:m} \\
    \Longrightarrow \qquad &\PB\big(\gA(k,m;i,j)^{\mathrm{c}}\big) \le \EB_{Z_{1:m}}\big[\PB\big(\gA(k,m;i,j)^{\mathrm{c}} \mid Z_{1:m}\big)\big] \le j^{-8}.
\end{align}
\end{subequations}

Now consider any time point $t\ge 4$ that resides within round $r$ of stage $n$. By the construction of \driftocpfull, we have
\[
\frac{t}{16}\le \frac{\tau_{n,r}}{4}\le \tau_{n,r-1}<\tau_{n,r}\le t\le\tau_{n,r+1}\le 4t.
\]
Consequently, defining
\[
\gE_t \coloneqq 
\bigcap\limits_{m=\frac{t}{4}}^{t}\bigcap\limits_{k=\frac{t}{16}}^m
\bigcap\limits_{j=\frac{t}{4}}^{4t}\bigcap\limits_{i=\frac{t}{4}}^t \gA(k,m;i,j)
\]
in which the index pair $(k,m)$ ranges over all values that
$(\tau_{n,r-1},\tau_{n,r})$ may take, 
we have 
$$\gE_t \subseteq \gA_{n,r}.$$
%
Regarding this event $\mathcal{E}_t$, it follows from \eqref{eq:up_bd_A_kmij_c} that
\begin{align*}
\PB(\gE_t^\mathrm{c})&\le \ssum{m}{\frac{t}{4}}{t}\ssum{k}{\frac{t}{16}}{m}\ssum{j}{\frac{t}{4}}{4t}\ssum{i}{\frac{t}{4}}{j} \PB\big(
\gA(k,m;i,j)^{\mathrm{c}}
\big)\le t^2 \ssum{j}{\frac{t}{4}}{4t}j\PB\big(
\gA(k,m;i,j)^{\mathrm{c}}
\big)
\le t^2\ssum{j}{\frac{t}{4}}{4t}\frac{1}{j^7} = O\big(t^{-4}\big).
\end{align*}
Therefore, the second term on the right-hand side of  \eqref{eq:typical_and_rare_decomp_full} can be bounded above by
\begin{equation}\label{eq:rare_eve_bd_full}
\begin{aligned}
    \EB\bigg[\ssum{n}{1}{N}\ssum{r}{1}{r_n}\ssum{l}{1}{T_r}&\Big|\PB(Y_{n,r,l} \in \gC_{n,r}(X_{n,r,l})\mid \gC_{n,r}) - (1- \alpha)\Big|\,\mathbbm{1}\{\gA_{n,r}^{\mathrm{c}}\}\bigg]\\&\le \EB\left[\ssum{t}{1}{T} \big|\PB(Y_t\in \gC_t(X_t)\mid \gC_{n,r}) - \alpha\big|\,\mathbbm{1}\{\gE_t^{\mathrm{c}}\}\right]\le \ssum{t}{1}{T}\PB\big(\gE_t^{\mathrm{c}}\big) 
    = O\left( \ssum{t}{1}{T}\frac{1}{t^4} \right)= O(1).
\end{aligned}
\end{equation}


\paragraph{Step 3: bounding the first term on the right-hand side of \eqref{eq:typical_and_rare_decomp_full}. }
It remains to bound the first term on the right-hand side of  \eqref{eq:typical_and_rare_decomp_full}.
The overall proof follows a similar strategy to that used in the pretrained-score setting.
To avoid unnecessary repetition, we shall focus primarily on the steps that differ nontrivially from the pretrained-score case. 

To begin with, 
by adapting the arguments in the proof of  Lemma~\ref{lem:one_stage_regret}, we obtain the following result, whose proof of \Cref{lem:one_stage_regret_full} is deferred to \Cref{sec:prf:lem:one_stage_regret_full}. 
\begin{lemma}\label{lem:one_stage_regret_full}
    Consider any stage $n$ in Algorithm~\ref{alg:FOCID}, which comprises $r_n$ rounds and $S_n$ time points.  
    Reusing the notation introduced in \Cref{def:notation_of_cp}, we have 
    \begin{align}
    &\ssum{r}{1}{r_n}\bigg(\ssum{l}{1}{T_{r}}\Big|\PB(Y_{n,r,l} \in \gC_{n,r}\mid \gC_{n,r}) - (1-\alpha)\Big| \bigg)\mathbbm{1}\{\gA_{n,r}\} \notag\\
    &\qquad\qquad \leq 
    \begin{cases}
        \widetilde{O}\bigg(\ssum{j}{1}{J_n}\sqrt{\abs{\gI_{n,j}}}\bigg),~ &\text{for the change-point setting},\\
        \widetilde{O}\left(\sqrt{S_n}  + (\mathsf{TV}_{n}^{\mathsf{stage}})^{\frac{1}{3}}S_n^{\frac{2}{3}}\right),~ &\text{for the smooth drift setting},
    \end{cases}
    \end{align}
    where 
    we set
    \begin{equation}
    \mathsf{TV}_n^{\mathsf{stage}}\coloneqq \sum_{r=1}^{r_n}\sum_{l=1}^{S_{n,r}-1} \mathsf{TV}(Z_{n,r,l}, Z_{n,r,l+1})
    .
    \label{eq:defn-TV-per-stage-full-conformal}
    \end{equation}
    %
\end{lemma}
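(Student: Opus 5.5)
The plan is to mirror the pretrained-score argument (Lemmas~\ref{lem:one_round_regret} and \ref{lem:one_stage_regret}): first prove a per-round bound for the full conformal setting, then aggregate over rounds via the geometric growth $T_r=3^r$.

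\emph{Per-round bound.} Fix a stage-round pair $(n,r)$ with no drift detected before the last iteration. Conditional on $\mathcal{Z}^{\mathsf{train}}_{n,r}$, the mapping $\gC_{n,r}(\cdot)$ is fixed. The indicators $\mathbbm{1}\{Y_t\in\gC_{n,r}(X_t)\}$ for $t$ in the current round are then independent, and each equals $\mathbbm{1}\{s^{Z_t}_{Z_{1:m}}(Z_t)\le \Q^{k,m,t}_{1-\alpha}\}$ with $(k,m)=(\tau_{n,r-1},\tau_{n,r}-1)$. This identification is the content of the definition of $\gC_{n,r}$ in \eqref{eq:full_conformal_inf}. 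On $\gA_{n,r}$ (cf.~\eqref{eq:event-Anr-full}), the empirical block sums therefore track the conditional coverage sums within $2\sqrt{(j-i+1)\log(2j)}$. Since \textsc{DriftDetect+} did not fire, the empirical block errors are at most $\sigma_{n,r}\sqrt{j-i+1}$. Combining the two gives, for all $i<j$ in the round,
\[
\Big|\sum_{l=i}^{j}\big(\PB(Y_{n,r,l}\in\gC_{n,r}\mid\gC_{n,r})-(1-\alpha)\big)\Big|\lesssim \log^3(40\tau_{n,r})\sqrt{j-i+1},
\]
which is the analogue of \eqref{eq:one_round_regret_2_5}. The last iteration contributes at most $1$, and the augmented points beyond $t_n$ contribute zero by construction.

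\emph{Per-round bound, the two settings.} In the change-point setting, $\PB(Y_l\in\gC_{n,r}(X_l)\mid\gC_{n,r})$ is constant for $l$ in each segment $\gI_{n,r,k}$, because $\gC_{n,r}$ is fixed and $\gD_l$ is constant there. The absolute sum over each segment thus equals the absolute value of the sum, which is $\widetilde O(\sqrt{|\gI_{n,r,k}|})$. In the smooth-drift setting, I would repeat the sign-partition argument of Lemma~\ref{lem:one_round_regret} verbatim, using the H\"older step with exponents $(2/3,1/3)$. The only change is to replace the KS bound on $\Delta_{n,r,i}$ by
\[
\big|\PB_{\gD_i}(Y\in\gC_{n,r}(X))-\PB_{\gD_{i+1}}(Y\in\gC_{n,r}(X))\big|\le \mathsf{TV}(\gD_i,\gD_{i+1}),
\]
which holds because $\{(x,y):y\in\gC_{n,r}(x)\}$ is a fixed measurable set given $\gC_{n,r}$. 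This yields $\widetilde O\big(\sqrt{T_r}+(\mathsf{TV}^{\mathsf{round}}_{n,r})^{1/3}T_r^{2/3}\big)$.

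\emph{Aggregation over a stage.} This step is identical to the proof of Lemma~\ref{lem:one_stage_regret}. For the change-point setting, each stage segment $\gI_{n,j}$ is covered by its fully contained rounds plus two boundary pieces, and the geometric sum $\sum_{r\le r^{(j+1)}-1}3^{r/2}\lesssim\sqrt{|\gI_{n,j}|}$ controls the former. For smooth drift, sum the per-round bounds, use $\sum_r 3^{r/2}\lesssim\sqrt{S_n}$, and apply H\"older to obtain $(\sum_r\mathsf{TV}^{\mathsf{round}}_{n,r})^{1/3}S_n^{2/3}\le(\mathsf{TV}^{\mathsf{stage}}_n)^{1/3}S_n^{2/3}$.

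\emph{Main obstacle.} The delicate point is the per-round step. Unlike the split case, the events $\gA_{n,r}$ are defined through the round-dependent pair $(k,m)$ and through test points that enter the training set. One must check that the realized prediction set coincides with the event $\{s^{Z_t}_{Z_{1:m}}(Z_t)\le\Q^{k,m,t}_{1-\alpha}\}$, and that conditional independence given $Z_{1:m}$ legitimately yields the uniform concentration over all $i<j$ in the round. Once this is verified, everything else transfers mechanically; the larger threshold $\sigma_{n,r}$ only affects polylogarithmic factors.
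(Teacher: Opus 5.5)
Your proposal is correct and follows essentially the same route as the paper. The paper likewise reuses the per-round and per-stage arguments of Lemmas~\ref{lem:one_round_regret} and~\ref{lem:one_stage_regret} verbatim, changing only the $\mathsf{KS}$ bound on consecutive coverage differences to a $\mathsf{TV}$ bound, justified as you do by the fact that $\gC_{n,r}$ is fixed within the round. One clarification: the conditional-independence and uniform-concentration issue you flag as the main obstacle is not needed here, since the lemma is a deterministic statement on $\gA_{n,r}$; the probability of $\gA_{n,r}$ is controlled separately, via Hoeffding and a union bound, in the proof of Theorem~\ref{thm:full_regret}.
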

Given that \Cref{lem:one_stage_regret_full} controls the cumulative regret within a single stage, it remains to extend this bound to the entire time horizon, which we handle separately for the two drift settings in Steps 4 and 5. 

Before proceeding, let us introduce a collection of typical events that will be used in both settings.
For any two time points $1 \le k< m\le T$, define the event
\begin{align}
\gG(k,m) \coloneqq \bigg\{
    \Big|\,\PB\Big(Y_{m}\notin \gC(X_m &\mid Z_{k:m-1}, Z_{1:m-1})\mid Z_{1:m-1}\Big) - \alpha\,\Big| \le 2^6\sqrt{\frac{\log(40m)}{m-k}}\notag\\ 
    &+ \frac{2^7L\sqrt{(m-k)\log (40m)}}{m} + 
    \frac{1}{m-k}\ssum{l}{k}{m-1}\mathsf{TV}\big(Z_{m}, Z_{l}\big)
    \bigg\},
    \label{eq:defn-Gkm-full}
\end{align}
as motivated by \Cref{prop:training_conditioned_cov}. Here the notation $\gC(\cdot\mid \cdot, \cdot)$ is defined at the beginning of \Cref{sec:prf_of_sec_full_conf}.
For any stage $n$, define the typical event $\gB_{n}$ as follows
\begin{equation}\label{eq:def_gB_n-full}
    \gB_{n}\coloneqq \gA_{n,r_n}\cap \gG(\tau_{n,r_n-1},\tau_{n,r_n}).
\end{equation}
In addition, the following two lemmas will be used in the analysis for both drift settings, and we therefore state them here for subsequent use. The first lemma shows that $\mathcal{B}_n$ is an event with sufficiently high probability (even when suitably weighted by $\tau_{n+1}$); the proof can be found in \Cref{sec:prf:lem:control_rare_event_full}.
\begin{lemma}\label{lem:control_rare_event_full}
For any $n\ge 1$,
    recall that $\tau_n$ denotes the starting time of stage $n$. Then we have
    \[
    \EB \big[\tau_{n+1}\mathbbm{1}\{\gB_n^{\mathrm{c}}\}\big] \le O\big(n^{-2}\big).
    \]
\end{lemma}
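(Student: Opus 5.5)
The plan is to split $\gB_n^{\mathrm c}=\gA_{n,r_n}^{\mathrm c}\cup\gG(\tau_{n,r_n-1},\tau_{n,r_n})^{\mathrm c}$ and to bound $\EB[\tau_{n+1}\mathbbm{1}\{\cdot\}]$ for each piece. For each piece we decompose over the realized values of the random times, in the same way as in the rare-event part of the change-point analysis of \Cref{thm:split_regret}. By the geometric growth of the rounds, every index involved (namely $\tau_{n,r_n-1}$, $\tau_{n,r_n}$, and the window endpoints $i,j$ appearing in $\gA_{n,r_n}$) lies in $[\tau_{n+1}/16,\tau_{n+1}]$. Hence
\begin{align*}
\EB\big[\tau_{n+1}\mathbbm{1}\{\gB_n^{\mathrm c}\}\big]
\le \sum_{J}J\,\PB\Big(\tau_{n+1}=J,\ \bigcup_{(k,m,i,j)\in\Lambda_J}\big[\gA(k,m;i,j)^{\mathrm c}\cup\gG(k,m)^{\mathrm c}\big]\Big),
\end{align*}
where $\Lambda_J$ collects all quadruples with $J/16\le k\le m<i\le j\le J$; there are at most $J^4$ of them. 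Using the union bound, it suffices to prove $\PB(\gA(k,m;i,j)^{\mathrm c})\le j^{-8}$, which is \eqref{eq:up_bd_A_kmij_c}, together with $\PB(\gG(k,m)^{\mathrm c})\lesssim m^{-c}$ for a sufficiently large $c$ (say $c\ge 8$).

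For $\gG(k,m)$, we apply \Cref{prop:training_conditioned_cov} with calibration set $Z_{k:m-1}$ (size $m-k$), training set $Z_{1:m-1}$ (size $m-1$), and target $Z_m$. Conditioning on $Z_{1:k-1}$ places this exactly in the proposition's setting: the calibration block is independent of the extra training points, which are held fixed. We take $\delta=m^{-8}$, so that $\log(45n/\delta)\le 9\log(45m)\lesssim\log(40m)$. The constants $52\sqrt{9}\le 2^7\cdot$ (a small factor) and $25\sqrt{9\log(40m)}\le 2^6\sqrt{\log(40m)}$ are then absorbed into the definition \eqref{eq:defn-Gkm-full}. The $\mathsf{TV}$ term carries a factor $2$ in the proposition but only $1$ in \eqref{eq:defn-Gkm-full}. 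I would either take the factor $2$ as intended (a harmless typo) or rescale. If needed, I would re-derive the bound with the $\mathsf{KS}\le\mathsf{TV}$ step, where the factor $2$ enters through \citet[Lemma 1]{barber2023conformal}, replaced by a direct coupling argument. I would also have to handle the index-alignment issue: the proposition requires the calibration set to be the \emph{first} $m$ training points. Since Assumption~\ref{ass:fair_alg} is permutation-free, I expect this to be fixable by relabeling, provided $\gA$ is applied to the concatenated set, so I would check that the proofs of Lemmas~\ref{lem:cond_prob_concentration} and~\ref{lem:empr_prob_concentration} only use independence and stability.

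The main obstacle is that the random times $\tau_{n,r_n-1}$ and $\tau_{n,r_n}$ are data-dependent, so the fixed-index bounds cannot be plugged in directly. This is exactly why we take the union over all admissible $(k,m)$ rather than conditioning on the times. With the union, no conditioning on stopping events is needed, and the per-index bounds are unconditional. Summing gives
\begin{align*}
\sum_J J\cdot J^4\cdot O\big((J/16)^{-8}\big)\,\PB\big(\tau_{n+1}=J\big)\lesssim \EB\big[\tau_{n+1}^{-3}\big]\le n^{-3},
\end{align*}
using $\tau_{n+1}\ge n$. This is stronger than the claimed $O(n^{-2})$. If the $\gG$ tail is only polynomial of lower order, I would enlarge the log factor (taking $\delta=m^{-10}$) to keep the exponent large enough.
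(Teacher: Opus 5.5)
Your union over deterministic index tuples, instead of conditioning on the data-dependent round boundaries, is a sensible idea, and it works for the $\gA$ events. The final display, however, is not justified. There you bound $\PB\big(\tau_{n+1}=J,\,E_J\big)$, where $E_J$ is the union of bad events over $\Lambda_J$, by $\PB(\tau_{n+1}=J)\cdot J^4\,O\big((J/16)^{-8}\big)$. This treats $\{\tau_{n+1}=J\}$ as independent of the bad events in the window, but both are functions of the same observations. In the pretrained-score argument the factorization is over the \emph{start} $\tau_n$, which depends only on data preceding the window; that is why it works there. All you may use is $\PB(\tau_{n+1}=J,E_J)\le\PB(E_J)$, which gives $\sum_{J>n}J\,\PB(E_J)$. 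For the $\gA$ events this is still $\lesssim\sum_{J>n}J\cdot J^4\cdot J^{-8}\lesssim n^{-2}$, though not $n^{-3}$. For the $\gG$ events there are $\asymp J^2$ admissible pairs $(k,m)$, so you now need $\PB(\gG(k,m)^{\mathrm{c}})\lesssim m^{-6}$.

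That tail cannot be obtained from \Cref{prop:training_conditioned_cov} for the event in the lemma, because the constants $2^6$ and $2^7$ in \eqref{eq:defn-Gkm-full} are fixed. With $\delta=m^{-c}$, the proposition gives the terms $25\sqrt{\log(40m^{c})/(m-k)}$ and $52L\sqrt{(m-k)\log(45(m-1)m^{c})}/(m-1)$. As $m\to\infty$, these are dominated by the corresponding terms of $\gG(k,m)$ only if $25^2c\le 2^{12}$ and $52^2(c+1)\le 2^{14}$, i.e.\ essentially $c\le 5$.

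In particular, your absorption step is false: $25\cdot 3=75>2^6$ and $52\cdot 3=156>2^7$. Enlarging the log factor would change $\gG(k,m)$, and hence $\gB_n$, so it does not prove the stated lemma. With $c\le 5$, your sum is at best $\sum_{J>n}J^{-2}\asymp n^{-1}$.

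The missing idea is to attach the weight to $k=\tau_{n,r_n-1}$ rather than to $J$, using $\tau_{n+1}\le 16\tau_{n,r_n-1}$, and to take the union over $m$ only. The paper does this by conditioning on $\{\tau_{n,r_n-1}=k\}$ and $Z_{1:k-1}$. Then $\delta=m^{-4}$ suffices: $16k\sum_{m>k}m^{-4}\le 6k^{-2}$, so the total is at most $6\,\EB[\tau_{n,r_n-1}^{-2}]=O(n^{-2})$.

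If you want to avoid conditioning on a data-dependent time altogether, use the pointwise bound $\tau_{n+1}\mathbbm{1}\{\gG(\tau_{n,r_n-1},\tau_{n,r_n})^{\mathrm{c}}\}\le\sum_{k\ge n-1}16k\sum_{m>k}\mathbbm{1}\{\gG(k,m)^{\mathrm{c}}\}$. This needs only $\PB(\gG(k,m)^{\mathrm{c}})\lesssim m^{-5}$, which the constants just accommodate, and yields $\sum_{k\ge n-1}k^{-3}\lesssim n^{-2}$.

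The factor-$2$ mismatch in the $\mathsf{TV}$ term that you noticed is genuine, but it affects the paper's argument in the same way.
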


Another useful lemma shows that the aggregate total variation over the last two rounds of each stage is sufficiently large (at least on some high-probability event). The proof can be found in \Cref{sec:prf:lem:tv_lower_bound_on_typical}.
\begin{lemma}\label{lem:tv_lower_bound_on_typical}
For any stage $n \le N-1$, define
\begin{equation}\label{eq:def-TV_n-tail}
\mathsf{TV}_{n}^{\mathsf{tail}}\coloneqq \ssum{j}{1}{T_{r_n}-1}\mathsf{TV}(Z_{n,r_n,j}, Z_{n,r_n,j{+}1}) + \ssum{i}{1}{T_{r_n{-}1}}\mathsf{TV}(Z_{n,r_n-1,i}, Z_{n,r_n{-}1,i+1}),
\end{equation}
and introduce the event
\begin{equation}\gH_n\coloneqq \Big\{T_{r_n-1}\sqrt{\log(40\tau_{n,r_n})}\le \frac{\tau_{n,r_n}}{256}\Big\}.
\label{eq:defn-gHn-full-conformal}
\end{equation}
Recalling that $t_n\leq T_{r_n}$ is the number of iterations in round $r_n$ of stage $n$, 
one has
\[
\sqrt{t_n}\,\mathsf{TV}_n^{\mathsf{tail}}\mathbbm{1}\big\{\gB_n\cap \gH_n\big\} \ge 3\cdot \mathbbm{1}\big\{\gB_n\cap \gH_n\big\}.
\]
\end{lemma}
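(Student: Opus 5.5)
The plan is to mirror the argument behind \eqref{eq:split_regret_2}–\eqref{eq:split_regret_3} in the pretrained-score proof. We sandwich the average training-conditional coverage gap over the detection window of round $r_n$ between a lower bound, forced by the detection event, and an upper bound, supplied by the event $\gG(\tau_{n,r_n-1},\tau_{n,r_n})$ together with a total-variation transport argument. Throughout we write $\gC\coloneqq\gC_{n,r_n}$ and $p_l\coloneqq \PB(Y_{n,r_n,l}\in\gC(X_{n,r_n,l})\mid \gC)$.

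\emph{Step 1 (lower bound from detection).} Since $n\le N-1$, stage $n$ ends with a detection in round $r_n$. Hence there is some $j_n\in[t_n]$ with
\[
\Big|\sum_{l=j_n}^{t_n}\big(\mathbbm{1}\{Y_{n,r_n,l}\in\gC(X_{n,r_n,l})\}-(1-\alpha)\big)\Big|>\sigma_{n,r_n}\sqrt{t_n-j_n+1},
\qquad \sigma_{n,r_n}=10\log^3(40\tau_{n,r_n}).
\]
On $\gA_{n,r_n}\supseteq\gB_n$ (see \eqref{eq:event-Anr-full}), the empirical coverage differs from the sum of the $p_l$ by at most $2\sqrt{(t_n-j_n+1)\log(2\tau_{n+1})}$. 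Using $\tau_{n+1}\le 4\tau_{n,r_n}$, define
\[
B_n\coloneqq \frac{1}{t_n-j_n+1}\sum_{l=j_n}^{t_n}\big(p_l-(1-\alpha)\big).
\]
We then obtain $|B_n|\ge c_0\log^3(40\tau_{n,r_n})/\sqrt{t_n}$ for an absolute constant $c_0$ close to $10$, using $t_n-j_n+1\le t_n$.

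\emph{Step 2 (upper bound from $\gG$ and TV transport).} The mapping $\gC$ is exactly $\gC(\cdot\mid Z_{\tau_{n,r_n-1}:\tau_{n,r_n}-1},Z_{1:\tau_{n,r_n}-1})$. The event $\gG(\tau_{n,r_n-1},\tau_{n,r_n})$ therefore controls the coverage gap of this fixed mapping at the test point $Z_{\tau_{n,r_n}}=Z_{n,r_n,1}$ by three terms:
\[
2^6\sqrt{\log(40\tau_{n,r_n})/T_{r_n-1}},\qquad \frac{2^7L\sqrt{T_{r_n-1}\log(40\tau_{n,r_n})}}{\tau_{n,r_n}},\qquad \frac{1}{T_{r_n-1}}\sum_i\mathsf{TV}(Z_{n,r_n,1},Z_{n,r_n-1,i}).
\]
\begin{itemize}
\item On $\gH_n$, the $L$-term is at most $2^7L\tau_{n,r_n}/(256\,T_{r_n-1}^{1/2}\tau_{n,r_n})\lesssim L/\sqrt{T_{r_n-1}}$, so it has the same order as the first term.
\item By telescoping, each $\mathsf{TV}(Z_{n,r_n,1},Z_{n,r_n-1,i})$ is at most the sum of consecutive TVs in round $r_n-1$, which is at most $\mathsf{TV}_n^{\mathsf{tail}}$.
\item Since $\gC$ is fixed, for every $l$ we have $|p_l-p_1|\le\mathsf{TV}(Z_{n,r_n,l},Z_{n,r_n,1})\le\sum_{j<T_{r_n}}\mathsf{TV}(Z_{n,r_n,j},Z_{n,r_n,j+1})$.
\end{itemize}
Averaging over the window gives
\[
|B_n|\le \frac{c_1(1+L)\sqrt{\log(40\tau_{n,r_n})}}{\sqrt{T_{r_n-1}}}+2\,\mathsf{TV}_n^{\mathsf{tail}}.
\]

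\emph{Step 3 (combine).} Use $t_n\le T_{r_n}=3T_{r_n-1}$; if $r_n=1$, use $t_n\le 3=3T_0$ with the paper's convention for round $0$. This gives $1/\sqrt{T_{r_n-1}}\le\sqrt3/\sqrt{t_n}$. Comparing Steps 1 and 2 on $\gB_n\cap\gH_n$ yields
\[
2\sqrt{t_n}\,\mathsf{TV}_n^{\mathsf{tail}}\ge c_0\log^3(40\tau_{n,r_n})-\sqrt3\,c_1(1+L)\sqrt{\log(40\tau_{n,r_n})}\ge 6.
\]
The main obstacle is this last inequality: we must check that the threshold $\log^3$ dominates the constant $2^6\sqrt3$ and the $L$-dependent term. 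The $L$-term is the delicate one. I would first try to absorb it through a sharper use of $\gH_n$, or a constant smaller than $1/256$, which makes it $O(1/\sqrt{T_{r_n-1}})$ with a small constant. Failing that, I would treat $L$ as being within polylogarithmic factors, which the $\widetilde O$ statement of Theorem~\ref{thm:full_regret} tolerates. I would then verify the numerical constants, using $\log(40\tau)\ge\log 40>3$.
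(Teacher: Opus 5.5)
Your Steps 1 and 2 follow the paper's argument: the detection lower bound on $|B_n|$ via $\gA_{n,r_n}$, then the $\gG(\tau_{n,r_n-1},\tau_{n,r_n})$ bound at $Z_{n,r_n,1}$, plus TV transport within round $r_n$. Subtracting the $2^6\sqrt{\log(40\tau_{n,r_n})/T_{r_n-1}}$ term at the end, rather than absorbing it into $\tfrac12|B_n|$ as the paper does, is fine and even avoids the paper's side condition $\tau_{n,r_n-1}\ge 2$.

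The gap is the step you flag and leave open. With $\gH_n$ as displayed, the $L$-term is only bounded by $L/(2\sqrt{T_{r_n-1}})$. After multiplying by $\sqrt{t_n}\le\sqrt{3T_{r_n-1}}$ it contributes about $\sqrt3L/2$, so your final inequality $8\log^3(40\tau_{n,r_n})-64\sqrt{3\log(40\tau_{n,r_n})}-\sqrt3L/2\ge 6$ fails once $L\gtrsim\log^3(40\tau_{n,r_n})$. No universal constant in place of $1/256$ repairs this. In that regime the events $\gB_n\cap\gH_n$ do not even exclude a detection with $\mathsf{TV}_n^{\mathsf{tail}}=0$, because $\gG$ then permits a zero-drift coverage gap of order $L/\sqrt{T_{r_n-1}}$. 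Your fallback of treating $L$ as polylogarithmic proves a different, weaker statement.

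The missing idea is that the threshold in $\gH_n$ must scale like $1/L$. The paper's proof of this lemma, and the count \eqref{eq:full_conf_B_n_bound_2} in the proof of Theorem~\ref{thm:full_regret}, both use $\gH_n=\{T_{r_n-1}\sqrt{\log(40\tau_{n,r_n})}\le \tau_{n,r_n}/(256L)\}$; the $L$ was dropped from the displayed definition. On this event the $L$-term is at most $1/(2\sqrt{T_{r_n-1}})$, hence at most $\sqrt3/2$ after multiplying by $\sqrt{t_n}$. Your Step 3 then closes, since $8\log^3(40\tau)-64\sqrt{3\log(40\tau)}-\sqrt3/2\ge 6$ whenever $\log(40\tau)\ge\log 40$. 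The factor $L$ is not free: the theorem pays for it through $\sum_n\mathbbm{1}\{\gH_n^{\mathrm{c}}\}\le 256L\log^{3/2}(40T)$, which is exactly the source of the $\sqrt{LT}$ term. Restricting $L$ would therefore forfeit the $L$-dependence the theorem claims.
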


\paragraph{Step 4: analysis for the change-point setting.} 
In this setting, 
\Cref{lem:one_stage_regret_full} allows one to decompose
\begin{align}
\sum_{n=1}^N \sum_{r=1}^{r_n}\sum_{l=1}^{T_r}
&\big|\mathbb{P}(Y_{n,r,l} \notin \gC_{n,r}(X_{n,r,l})\,|\, \gC_{n,r}) - \alpha\big|\,
\mathbbm{1}\{\mathcal{A}_{n,r}\}
\le  \widetilde{O}\Bigg(
\sum_{n=1}^N\bigg(\sum_{j=1}^{J_n}\sqrt{|\mathcal{I}_{n,j}|}\bigg)
\Bigg)\notag\\
&\le \widetilde{O}\Bigg(
\sum_{n=1}^N\bigg(\sum_{j=1}^{J_n}\sqrt{|\mathcal{I}_{n,j}|}\bigg)\,
\mathbbm{1}\{\gB_{n}\}
+\sum_{n=1}^N (\tau_{n+1}-\tau_{n})\,
\mathbbm{1}\{\gB_n^{\mathrm{c}}\}\Bigg),
\label{eq:cp-typical_decomp_full}
\end{align}
where the last line follows since, by Cauchy-Schwarz, 
\[
\ssum{j}{1}{J_n}\sqrt{\big|\,\gI_{n,j}\,\big|}
\le \sqrt{J_n \ssum{j}{1}{J_n} \big|\,\gI_{n,j}\,\big|}
= \sqrt{J_n(\tau_{n+1} - \tau_n)}
\le \tau_{n+1} - \tau_n.
\]

\begin{itemize}
\item 
We start with the last term on the right-hand side of \eqref{eq:cp-typical_decomp_full}, for which \Cref{lem:control_rare_event_full} indicates that
\begin{equation}\label{eq:change-pt_rare_event_bd}
    \EB\bigg[\ssum{n}{1}{N}(\tau_{n+1}-\tau_n)\mathbbm{1}\{\gB_n^{\mathrm{c}}\}\bigg] \le \ssum{n}{1}{\infty} \EB\big[\tau_{n+1}\mathbbm{1}\{\gB_n^{\mathrm{c}}\}\big] \leq  O\left( \ssum{n}{1}{\infty}\frac{1}{n^2}\right) = O(1).
\end{equation}

\item 
When it comes to the first term on the right-hand side of \eqref{eq:cp-typical_decomp_full}, we first divide it into
\begin{equation}\label{eq:cp-typical_decomp_full_1}
\begin{aligned}
\sum_{n=1}^N\bigg(\sum_{j=1}^{J_n}\sqrt{|\mathcal{I}_{n,j}|}\bigg)\,
\mathbbm{1}\{\gB_{n}\} &\le \ssum{n}{1}{N}\bigg(\ssum{j}{1}{J_n-1}\sqrt{|\gI_{n,j}|}\bigg)\\
&\quad + \ssum{n}{1}{N}\sqrt{|\gI_{n,J_n}|}\mathbbm{1}\{\gB_{n}\cap \gH_n\}
+ \ssum{n}{1}{N}\sqrt{|\gI_{n,J_n}|}\mathbbm{1}\{ \gH_n^\mathrm{c}\},
\end{aligned}
\end{equation}
where $\gH_n
$ is defined in \eqref{eq:defn-gHn-full-conformal}. 
We shall bound the three terms on the right-hand side of  \eqref{eq:cp-typical_decomp_full_1} separately. 

\begin{itemize}
\item 
As for the first term on the right-hand side of \eqref{eq:cp-typical_decomp_full_1}, we first make the observation that: the time segments $\gI_{n,j}$  ($n=1,\ldots, N$ and $j=1,\ldots, J_n-1$) belong to distinct time segments in $\{\gI_k\}_{k=1}^{N^{\mathsf{cp}}+1}$. As a result, we can derive
\begin{equation}\label{eq:cp-typical_decomp_full_2}
    \ssum{n}{1}{N}\ssum{j}{1}{J_n-1}\sqrt{|\gI_{n,j}|} \le \ssum{k}{1}{N^{\mathsf{cp}}+1}\sqrt{|\gI_k|} \le \sqrt{(N^{\mathsf{cp}}+1)T},
\end{equation}
where the last relation arises from the Cauchy-Schwarz inequality. 

\item With regards to the second term on the right-hand side of \eqref{eq:cp-typical_decomp_full_1}, by the construction of $\gI_{n,j}$ we know that, for each terminal interval $\gI_{n,J_n}$, there exists a unique time segment $\gI_{k_n}$ defined in \Cref{def:notation_of_cp} such that $\gI_{n,J_n}\subseteq \gI_{k_n}$. Moreover, the indices are nondecreasing, namely $k_n \le k_{n+1}$ for $n=1,\ldots,N-1$. In particular, when $J_n\ge 2$, stage $n$ must contain a distribution change, which implies $k_n>k_{n-1}$. Using these properties, we can obtain
\begin{equation}\label{eq:cp-typical_decomp_full_3}
\begin{aligned}
    \ssum{n}{1}{N}\sqrt{|\gI_{n,J_n}|}\mathbbm{1}\{\gB_n\cap \gH_n\} &\overset{\mathrm{(a)}}{\le} \ssum{n}{1}{N}\sqrt{|\gI_{n,J_n}|}\mathbbm{1}\{J_n \ge 2\}\\
    &\le \ssum{n}{1}{N}\sqrt{\big|\, \gI_{k_n}\,\big|}\mathbbm{1}\{k_n > k_{n-1}\}
    \le \ssum{k}{1}{N^{\mathsf{cp}}+1}\sqrt{|\gI_k|} \le \sqrt{(N^{\mathsf{cp}}+1)T}.
\end{aligned}
\end{equation}
Here, (a) follows since, on the event $\gB_n\cap \gH_n$, we have $\sqrt{t_n}\mathsf{TV}_n^{\mathsf{tail}} \ge 3 > 0$ (according to \Cref{lem:tv_lower_bound_on_typical}), which implies that stage $n$ must contain a distribution shift and hence necessarily requires $J_n \ge 2$.

\item 
It remains to bound the third term on the right-hand side of  \eqref{eq:cp-typical_decomp_full_1}. To this end, we make note of the following relations between the two sequences $\{\tau_{n,r}\}_{n,r}$ and $\{T_{r}\}_{r}$:
\begin{align*}
    \tau_{n-1, r_{n-1}} \le \tau_{n,r_n-1};\quad 
    \tau_{n,r_n} - \tau_{n,r_n-1} = T_{r_n-1}.
\end{align*}
In particular, the intervals $\{[\tau_{n,r_n-1}+1, \tau_{n,r_n}]\}_{n=1}^N$ are pairwise disjoint, and as a result, 
\begin{align}
    \ssum{n}{1}{N}\mathbbm{1}\{\gH_n^{\mathrm{c}}\} &= \ssum{n}{1}{N}\mathbbm{1}\left\{{T_{r_n-1}\sqrt{\log (40\tau_{n,r_n})}} > \frac{\tau_{n,r_n}}{256{L}}\right\}\le \ssum{n}{1}{N}\frac{256{L}\sqrt{\log (40T)}(\tau_{n,r_n} - \tau_{n,r_n-1})}{\tau_{n,r_n}} \notag\\
    &\le 256{L}\sqrt{\log (40T)}\ssum{n}{1}{N}\ssum{i}{\tau_{n,r_n-1}+1}{\tau_{n,r_n}} \frac{1}{\tau_{n,r_n}}
    \le 256{L}\sqrt{\log (40T)}\ssum{n}{1}{N}\ssum{i}{\tau_{n,r_n-1}+1}{\tau_{n,r_n}} \frac{1}{i} \notag\\
    &\le 256{L}\sqrt{\log (40T)}\ssum{i}{1}{T}\frac{1}{i} \le 256{L}\big(\log (40T)\big)^{\frac{3}{2}}.
\label{eq:full_conf_B_n_bound_2}
\end{align}
Consequently, taking this together with the Cauchy-Schwarz inequality yields
\begin{equation}\label{eq:cp-typical_decomp_full_4}
\begin{aligned}
    \ssum{n}{1}{N}\sqrt{|\gI_{n,J_n}|}\mathbbm{1}\{ \gH_n^\mathrm{c}\} &\le \ssum{n}{1}{N}\sqrt{S_n}\mathbbm{1}\left\{\gH_n^{\mathrm{c}}\right\}
    {\le}  \sqrt{\left(\ssum{n}{1}{N}S_n\right)\left(\ssum{n}{1}{N}\mathbbm{1}\{\gH_n^{\mathrm{c}}\}\right)}
    \\
    &\le \sqrt{T\left(\ssum{n}{1}{N}\mathbbm{1}\{\gH_n^{\mathrm{c}}\}\right)}
    \overset{\eqref{eq:full_conf_B_n_bound_2}}{\leq } \widetilde{O}\big(\sqrt{LT}\big).
\end{aligned}
\end{equation}
\end{itemize}
\end{itemize}
Combining \eqref{eq:cp-typical_decomp_full}--\eqref{eq:cp-typical_decomp_full_3} and \eqref{eq:cp-typical_decomp_full_4} reveals that
\begin{align}
\mathbb{E}\Bigg[ \sum_{n=1}^N \sum_{r=1}^{r_n}\sum_{l=1}^{T_r}
&\big|\mathbb{P}(Y_{n,r,l} \notin \gC_{n,r}(X_{n,r,l})\,|\, \gC_{n,r}) - \alpha\big|\,
\mathbbm{1}\{\mathcal{A}_{n,r}\}
\Bigg] \le \widetilde{O}(\sqrt{(N^{\mathsf{cp}}+L+1)T}),
\end{align}
which together with 
\eqref{eq:typical_and_rare_decomp_full} and \eqref{eq:rare_eve_bd_full} 
establishes the advertised regret bound for the change-point setting. 

\paragraph{Step 5: analysis for the smooth drift setting.}
With \Cref{lem:one_stage_regret_full} in mind, we first analyze 
\(\sum_{n=1}^N \sqrt{S_n}\).
Recalling the definition of $\mathsf{TV}_n^{\mathsf{tail}}$ in \eqref{eq:def-TV_n-tail}, we make the observation that
\begin{equation}\label{eq:ssum_sqrt_T_n_bound_1}
\begin{aligned}
    \ssum{n}{1}{N}\sqrt{S_n}\mathbbm{1}\left\{\gB_n\right\}
    &\le \sqrt{S_N} + \ssum{n}{1}{N-1}\sqrt{S_n}\mathbbm{1}\left\{\gH_n\right\}\mathbbm{1}\left\{\gB_n\right\}
    + \ssum{n}{1}{N}\sqrt{S_n}\mathbbm{1}\left\{\gH_n^{\mathrm{c}}\right\}\mathbbm{1}\left\{\gB_n\right\}\\
    &\le \sqrt{T} + \ssum{n}{1}{N-1} \sqrt{S_n}\left(\sqrt{t_n}\mathsf{TV}_n^{\mathsf{tail}}\right)^{\frac{1}{3}} + \sqrt{\left(\ssum{n}{1}{N}S_n\right)\left(\ssum{n}{1}{N}\mathbbm{1}\{\gH_n^{\mathrm{c}}\}\right)}
    \\
    &\le \sqrt{T} +\ssum{n}{1}{N-1}S_n^{\frac{2}{3}}\big(\mathsf{TV}_n^{\mathsf{tail}}\big)^{\frac{1}{3}} + \sqrt{T\left(\ssum{n}{1}{N}\mathbbm{1}\{\gH_n^{\mathrm{c}}\}\right)}\\
    &\le \sqrt{T} +\left(\ssum{n}{1}{N-1}S_n\right)^{\frac{2}{3}}\left(\ssum{n}{1}{N-1}\mathsf{TV}_n^{\mathsf{tail}}\right)^\frac{1}{3} + \sqrt{T\left(\ssum{n}{1}{N}\mathbbm{1}\{\gH_n^{\mathrm{c}}\}\right)}\\
    &\le \sqrt{T} +2T^{\frac{2}{3}}\mathsf{TV}_T^{\frac{1}{3}} + \sqrt{T\left(\ssum{n}{1}{N}\mathbbm{1}\{\gH_n^{\mathrm{c}}\}\right)} \overset{\eqref{eq:full_conf_B_n_bound_2}}{=} \widetilde{O}\Big(T^{\frac{2}{3}} \mathsf{TV}_T^{\frac{1}{3}}+ \sqrt{(L+1)T}\Big),
\end{aligned}
\end{equation}
where the second line arises from \Cref{lem:tv_lower_bound_on_typical} and the Cauchy-Schwarz inequality, the penultimate line results from H\"older’s inequality, and the last inequality holds because for any $n$, $\mathsf{TV}_{n}^{\mathsf{tail}}$ is counted at most twice in the summation from $1$ to $N$. Taking this collectively with \Cref{lem:control_rare_event_full}, we can demonstrate that
\begin{align*}
\EB\left[\ssum{n}{1}{N}\sqrt{S_n}\right] 
&\le \EB\left[\ssum{n}{1}{N}\sqrt{S_n}\mathbbm{1}\{\gB_n\}\right]
+ \EB\left[\ssum{n}{1}{N}\sqrt{S_n}\mathbbm{1}\{\gB_n^{\mathrm{c}}\}\right]\\
&\le \widetilde{O}\left( \sqrt{(L+1)T} + T^{\frac{3}{2}}\mathsf{TV}_{T}^{\frac{1}{3}}\right)
+ \EB\left[\ssum{n}{1}{N}\tau_{n+1}\mathbbm{1}\{\gB_n^c\}\right]\\
&\le \widetilde{O}\left( \sqrt{(L+1)T} + T^{\frac{3}{2}}\mathsf{TV}_{T}^{\frac{1}{3}}\right)
+ \ssum{n}{1}{\infty}\EB\big[\tau_{n+1}\mathbbm{1}\{\gB_n^c\}\big]\\
&= \widetilde{O}\left( \sqrt{(L+1)T} + T^{\frac{3}{2}}\mathsf{TV}_{T}^{\frac{1}{3}} + \ssum{n}{1}{\infty}\frac{1}{n^2}\right)
= \widetilde{O}\left(\sqrt{(L+1)T} + \mathsf{TV}_T^{\frac{1}{3}}T^{\frac{2}{3}}\right).
\end{align*}

Armed with the above bound, we can readily invoke \Cref{lem:one_stage_regret_full} and apply H\"older's inequality to yield
\begin{align*}
    \EB\Bigg[\ssum{n}{1}{N}\ssum{r}{1}{r_n}\ssum{l}{1}{T_r}\Big|\, \PB(Y_{n,r,l}\in &\gC_{n,r}(X_{n,r,l})\,|\, \gC_{n,r}) - (1-\alpha) \,\Big|\mathbbm{1}\{\gA_{n,r}\}\Bigg]
    = \widetilde{O}\bigg(
    \EB\bigg[\ssum{n}{1}{N}\sqrt{S_n}\bigg] + \ssum{n}{1}{N}\big(\mathsf{TV}_n^{\mathsf{stage}}\big)^{\frac{1}{3}}S_n^{\frac{2}{3}}
    \bigg)\\
    &\le \widetilde{O}\Bigg(
    \sqrt{(L+1)T}+ \mathsf{TV}_T^{\frac{1}{3}}T^{\frac{2}{3}} + \bigg(\ssum{n}{1}{N}\mathsf{TV}_n^{\mathsf{stage}}\bigg)^{\frac{1}{3}}\bigg(\ssum{n}{1}{N}S_n\bigg)^{\frac{2}{3}}
    \Bigg)
    \notag\\
    &\le \widetilde{O}\Big(\sqrt{(L+1)T} + \mathsf{TV}_T^{\frac{1}{3}}T^{\frac{2}{3}}\Big).
\end{align*}
Taking this together with \eqref{eq:typical_and_rare_decomp_full} and \eqref{eq:rare_eve_bd_full} establishes the claimed regret bound for the smooth drift setting.

\subsection{Proof of auxiliary lemmas} 
\subsubsection{Proof of Lemma~\ref{lem:cond_prob_concentration}}\label{sec:prf:lem:cond_prob_concentration}
For notational convenience, we write $Z_{1:m}$ here in place of $Z_{1:m}^{\mathsf{cal}}$ as long as it is clear from the context.

To apply McDiarmid’s inequality,  consider two given calibration datasets, 
$z_{1:m}$ and $z_{1:m}^\prime$, which differ in exactly one sample. 
For any given point $(x,y)\in\gZ$, define the corresponding scores as
\[
s^{(X,Y)}_{z_{1:m}}(x,y)\coloneqq \bigl|y-\widehat{\mu}^{(X,Y)}_{z_{1:m}}(x)\bigr|,
\qquad
s^{(X,Y)}_{z'_{1:m}}(x,y)\coloneqq \bigl|y-\widehat{\mu}^{(X,Y)}_{z'_{1:m}}(x)\bigr|.
\]
Note that both $\widehat{\mu}^{(X,Y)}_{z_{1:m}}(\cdot)$ and $\widehat{\mu}^{(X,Y)}_{z_{1:m}'}(\cdot)$ are trained on $n+1$ data points. Then, by Assumption~\ref{ass:fair_alg}, for any $x\in \gX$ we have
\[
\abs{\widehat{\mu}_{z_{1:m}}^{(X,Y)}(x) - \widehat{\mu}_{z_{1:m}^\prime}^{(X,Y)} (x)} \le \frac{L_2}{n}.
\]
Combining this with Assumption~\ref{ass:lip_cond_distr}, which assumes the Lipschitz continuity of the distribution function, we see that: for $(X,Y)\sim \mathcal{D}$, 
\begin{align}
    \abs{
    \PB_{\gD}\left(s_{z_{1:m}}^{(X,Y)}(X,Y)> u \right) - \PB_{\gD}\left(s_{z_{1:m}'}^{(X,Y)}(X,Y)> u \right)
    }
    &\overset{\mathrm{\mathrm{(a)}}}{\le} 
    \PB_{\gD}\left(u - \Delta \le s_{z_{1:m}}^{(X,Y)}(X,Y) \le u + \Delta \right) \notag\\ 
    &\overset{\mathrm{(b)}}{\le} 4L_1 \sup\limits_{X,Y}\left\{\abs{\widehat{\mu}_{z_{1:m}}^{(X,Y)}(X) - \widehat{\mu}_{z_{1:m}'}^{(X,Y)}(X)}\right\} \notag\\ 
    &\le \frac{4L_1L_2}{n} = \frac{4L}{n},
    \label{eq:PD-s-sprime-UB-4L}
\end{align}
where $\Delta \coloneqq \big|\widehat{\mu}_{z_{1:m}}^{(X,Y)}(X) - \widehat{\mu}_{z_{1:m}^\prime}^{(X,Y)}(X)\big|$. Here, (a) 
is a result of Fact~\ref{fact:ind_diff} 
%
%
whereas (b) follows since
\begin{align}
    \PB_{\gD}\left(u - \Delta \le s_{z_{1:m}}^{(X,Y)}(X,Y) \le u + \Delta \right) &\le \PB_{\gD}\left(-u-\Delta \le Y- \widehat{\mu}_{z_{1:m}}^{(X,Y)}\le  - u+\Delta\right) \notag\\
    &\qquad + \PB_{\gD}\left(u-\Delta \le Y- \widehat{\mu}_{z_{1:m}}^{(X,Y)}\le u+ \Delta \right) \notag\\
    &\le \sup\limits_{\mu \in \RB} \PB_{\gD}\Bigl( (\mu-u)-\Delta' \le Y \le \Delta' + (\mu - u)\Bigr) \notag\\
    &\qquad + \sup\limits_{\mu \in \RB}\PB_{\gD}\Bigl( (\mu + u)-\Delta' \le Y \le \Delta' + (\mu + u)\Bigr) \leq 4L_1 \Delta'
    \label{eq:PD-Delta-Deltaprime-UB-XY}
\end{align}
with $\Delta' \coloneqq \sup\limits_{X,Y}\left\{\big|\widehat{\mu}_{z_{1:m}}^{(X,Y)}(X) - \widehat{\mu}_{z_{1:m}'}^{(X,Y)}(X)\big|\right\}$.

From \eqref{eq:PD-s-sprime-UB-4L}, we observe that $\PB_{\mathcal{D}}\big(s_{z_{1:m}}^{(X,Y)}(X,Y) > u\big)$---when viewed as a function of $z_{1:m}$---satisfies the bounded difference property with coefficient $4L/n$.  
Now, we make the following definition:
\begin{align*}
    Q(z_{1:m},u) &\coloneqq \abs{
    \PB_{\gD}\left(s_{z_{1:m}}^{(X,Y)}(X,Y)> u\right) - \PB_{\gD_{1:m}\times \gD}\left(s_{Z_{1:m}}^{(X,Y)}(X,Y)>u\right)
    }, \notag\\
    Q(z_{1:m}) &\coloneqq \sup\limits_{u\in \RB}\big\{Q(z_{1:m}, u) \big\},
\end{align*}
where
\begin{equation}
\PB_{\gD_{1:m}\times \gD}\left(s_{Z_{1:m}}^{(X,Y)}(X,Y)>u\right) \coloneqq  \EB_{Z_{1:m}\sim \mathcal{D}_{1:m}}\left[
\PB_{(X,Y)\sim\gD}\left(s_{Z_{1:m}}^{(X,Y)}(X,Y)>u \,\Big|\, Z_{1:m}\right)
\right].
\end{equation}
Then,  basic calculation yields
\begin{align*}
    Q(z_{1:m}) - Q(z_{1:m}') &= \sup\limits_{u\in \RB}\big\{Q(z_{1:m},u)\big\} - \sup\limits_{u\in \RB}\big\{Q(z_{1:m}',u)\big\}\\
    &\le \sup\limits_{u\in \RB}\big\{Q(z_{1:m},u) - Q(z_{1:m}',u)\big\}
    \le \frac{4L}{n}.
\end{align*}
Hence, by applying McDiarmid’s inequality (Lemma~\ref{lem:mcdiarmid}), we can demonstrate that
\begin{equation}\label{eq:cond_conc_pt}
\begin{aligned}
\sup\limits_{u\in \RB}&\left\{\abs{\PB_{\gD}\left(s_{Z_{1:m}}^{(X,Y)}(X,Y)> u ~\Big|~ Z_{1:m}\right)
- \PB_{\gD_{1:m}\times\gD}\left(s_{Z_{1:m}}^{(X,Y)}(X,Y)> u \right)
}\right\}
=Q(Z_{1:m})\\
&\le \EB_{Z_{1:m}}\left[Q(Z_{1:m})\right] + 4{L}\frac{\sqrt{m\log \frac{1}{\delta}}}{n}
\end{aligned}
\end{equation}
holds with probability exceeding $1 - {\delta}$. 

Now consider any given $z_0 = (x_0,y_0)$, and denote $\widehat{\mu}_{z_{1:m}}^0(\cdot) \coloneqq \widehat{\mu}_{z_{1:m}}^{(x_0,y_0)}(\cdot)$ and $s_{z_{1:m}}^0(x,y) = \big|y-\widehat{\mu}_{z_{1:m}}^0(x)\big|$.
Let $Z_{1:m}^*$ be an independent copy of $Z_{1:m}$.
Then one can show that
\begin{equation}\label{eq:EB_Q_1}
\begin{aligned}
    \EB_{Z_{1:m}}[Q(Z_{1:m})] &= \EB_{Z_{1:m}}\left[\sup\limits_{u\in \RB}\abs{\PB_{\gD}\big(s_{Z_{1:m}}^{(X,Y)}(X,Y)>u\mid Z_{1:m}\big) - \PB_{\gD_{1:m}\times\gD}\big(s_{Z_{1:m}}^{(X,Y)}(X,Y)>u\big)}\right]\\
    &\le \mathop{\EB}\limits_{Z_{1:m},Z_{1:m}^*}\left[\sup\limits_{u\in \RB}\abs{
    \PB_{\gD}\big(s_{Z_{1:m}}^{(X,Y)}(X,Y)>u\mid Z_{1:m}\big) - \PB_{\gD}\big(s_{Z_{1:m}^*}^{(X,Y)}(X,Y)>u\mid Z_{1:m}^*\big)
    }\right]\\
    &\le 4L_1\mathop{\EB}\limits_{Z_{1:m},Z_{1:m}^*}\left[
    \sup\limits_{u\in \RB}\left\{\EB_{X}\left[\abs{\widehat{\mu}_{Z_{1:m}}^0(X) - \widehat{\mu}_{Z_{1:m}^*}^0(X)} + \frac{L_2}{n}\right]\right\}
    \right]\\
    &= 4L_1\mathop{\EB}\limits_{Z_{1:m}, Z_{1:m}^*, X}\left[\abs{\widehat{\mu}_{Z_{1:m}}^0(X) - \widehat{\mu}_{Z_{1:m}^*}^0(X)}\right] + \frac{8L}{n}.
\end{aligned}
\end{equation}
Here, the second line follows from Jensen's inequality; the penultimate line follows since, for any given two arrays $z_{1:m}$ and $z_{1:m}^*$, one has
\begin{align*}
    &\bigg|\,\PB_{\gD}\Big(s_{z_{1:m}}^{(X,Y)}(X,Y)>u\Big) - \PB_{\gD}\Big(s_{z_{1:m}^*}^{(X,Y)}(X,Y)>u\Big)\,\bigg|\\
    &\le \bigg|\,\PB_{\gD}\Big(s_{z_{1:m}}^{(X,Y)}(X,Y)>u\Big) - \PB_{\gD}\Big(s_{z_{1:m}}^{0}(X,Y)>u\Big)\,\bigg| {+} 
    \bigg|\,\PB_{\gD}\Big(s_{z_{1:m}^*}^{(X,Y)}(X,Y)>u\Big) - \PB_{\gD}\Big(s_{z_{1:m}^*}^{0}(X,Y)>u\Big)\,\bigg|\\
    &\quad + \bigg|\,\PB_{\gD}\Big(s_{z_{1:m}}^{0}(X,Y)>u\Big) - \PB_{\gD}\Big(s_{z_{1:m}^*}^{0}(X,Y)>u\Big)\,\bigg|\\
    &\overset{\mathrm{(c)}}{\le} \frac{8L}{n} + \bigg|\,\PB_{\gD}\Big(s_{z_{1:m}}^{0}(X,Y)>u\Big) - \PB_{\gD}\Big(s_{z_{1:m}^*}^{0}(X,Y)>u\Big)\,\bigg|\\
    &\overset{\text{Fact}~\ref{fact:ind_diff}}{\le}
    \frac{8L}{n}+ \EB_{X}\left[\PB\left(
    u - \big|\widehat{\mu}_{z_{1:m}}^0(X) - \widehat{\mu}_{z_{1:m}*}^0(X)\big|
    \le s_{z_{1:m}}^0(X,Y) \le u + \big|\widehat{\mu}_{z_{1:m}}^0(X) - \widehat{\mu}_{z_{1:m}*}^0(X)\big|\,\Big|\, X
    \right)\right]\\
    &\overset{\mathrm{(d)}}{\le} \frac{8L}{n} + 4L_1 \EB_X\left[\abs{\widehat{\mu}_{z_{1:m}}^0(X) - \widehat{\mu}_{z_{1:m}^*}^0(X)}\right],
\end{align*}
where (c) holds by Fact~\ref{fact:ind_diff}, 
Assumptions~\ref{ass:lip_cond_distr} and \ref{ass:fair_alg} (similar to the arguments for \eqref{eq:PD-s-sprime-UB-4L}), and (d) makes use of Assumptions~\ref{ass:lip_cond_distr} (similar to the arguments for \eqref{eq:PD-Delta-Deltaprime-UB-XY}).

To control the first term on the right-hand side of \eqref{eq:EB_Q_1}, we introduce the quantity below for any given $X$:
\[
\nu_i(X)\coloneqq \EB\left[\widehat{\mu}_{Z_{1:m}}^0(X)\,\big|\, Z_{1:i+1}\right] - \EB\left[\widehat{\mu}_{Z_{1:m}}^0(X)\,\big|\, Z_{1:i}\right],\quad i = 0,\ldots, m-1.
\]
It is readily seen that $\{\nu_i(X)\}_{i=0}^{m-1}$ forms a martingale difference sequence.
Moreover, Assumption~\ref{ass:lip_cond_distr} tells us that, for any $i= 0,\ldots, m-1$ and $X\in \gX$,
\[
\abs{\nu_i(X)} \le \sup\limits_{z_{1:i+1}}\EB_{Z_{i+2:m}}\left[\abs{\widehat{\mu}_{z_{1:i}\cup z_{i+1}\cup Z_{i+2:m}}^0(X) - \EB_{Z_{i+1}}\left[\widehat{\mu}_{z_{1:i}\cup Z_{i+1}\cup Z_{i+2:m}}^0(X)\right]}\right] \le \frac{L_2}{n}.
\]
Therefore, it holds that
\begin{equation}\label{eq:EB_Q_2}
\begin{aligned}
  \mathop{\EB}\limits_{Z_{1:m}, Z_{1:m}^*, X}\left[\abs{\widehat{\mu}_{Z_{1:m}}^0(X) - \widehat{\mu}_{Z_{1:m}^*}^0(X)}\right] &\le 2\EB_{\gD_{1:m}}\left[\abs{\widehat{\mu}_{Z_{1:m}}^0(X) - \EB_{\widetilde{Z}_{1:m}\sim \gD_{1:m}}\big[\widehat{\mu}_{\widetilde{Z}_{1:m}}^0(X)\big]}\right]\\
  &\le 2\EB_{\gD_{1:m}}\left[\abs{\ssum{i}{0}{m-1}\nu_i(X)}\right]
  \le 2\left(\EB_{\gD_{1:m}}\left[
  \left(\ssum{i}{0}{m-1}\nu_i(X)\right)^2
  \right]\right)^{\frac{1}{2}}\\
  &= 2\left(
  \ssum{i}{0}{m-1}\EB_{\gD_{1:m}}\left[\nu_i(X)^2\right]
  \right)^{\frac{1}{2}} \le \frac{2L_2\sqrt{m}}{n},
\end{aligned}
\end{equation}
where the last equality holds since $\{\nu_i(X)\}$ is a martingale difference sequence. 
Taking \eqref{eq:cond_conc_pt}, \eqref{eq:EB_Q_1} and \eqref{eq:EB_Q_2} together yields that, with probability at least $1-\delta$, 
\begin{align*}
    \sup\limits_{u\in \RB}&\left\{\abs{\PB_{\gD}\left(s_{Z_{1:m}}^{(X,Y)}(X,Y)> u ~\Big|~ Z_{1:m}\right)
- \PB_{\gD_{1:m}\times\gD}\left(s_{Z_{1:m}}^{(X,Y)}(X,Y)> u \right)
}\right\}\\
&\le 4L\frac{\sqrt{m\log (1/\delta)}}{n} + \frac{8L}{n} + 8L\frac{\sqrt{m\log(1/\delta)}}{n} \le 16L\frac{\sqrt{m\log (1/\delta)}}{n},
\end{align*}
thereby concluding the proof of Lemma~\ref{lem:cond_prob_concentration}.

\subsubsection{Proof of Lemma~\ref{lem:empr_prob_concentration}}\label{sec:prf:lem:empr_prob_concentration}

Before proceeding, we introduce several additional convenient notations below. 
\begin{itemize}

    \item $Z_{1:m}$: we often use it in place of $Z_{1:m}^{\mathsf{cal}}$ when there is no ambiguity. 

    \item $\widehat{\mu}_{Z_{1:m}}(\cdot)$: we remind the reader that this indicates the fitted model trained on $Z_{1:m}\cup z_{m+1:n}^{\mathsf{train}}$ (see Definition~\ref{def:basic_notation}). 

    \item $Z_{1:m}^x$: this refers to $\{(x_i,Y_i)\}_{i=1}^m$, where the features are frozen to be $\{x_1,\dots, x_m\}$; with this notation one clearly has $Z_{1:m}^X = Z_{1:m}$. 

    \item $\widehat{\mu}_{Z_{1:m}^x}(\cdot)$: the fitted model trained on $\{(x_i,Y_i)\}_{i=1}^m$, with fixed features $\{x_i\}_{i=1}^m$ and random responses $\{Y_i\}_{i=1}^m$.

    \item $\widetilde{\mu}_{x_{1:m}}(x_i)$: the expected prediction of $\widehat{\mu}_{Z_{1:m}^x}(\cdot)$ w.r.t.~$x_i$, with the expectation taken over the randomness of $Y_{1:m}$, i.e.,  
        \begin{align}
    \widetilde{\mu}_{x_{1:m}}(x_i) \coloneqq \EB_{Y_{1:m}\mid X_{1:m}}
    \left[\widehat{\mu}_{Z_{1:m}^x}(x_i) \mid X_{1:m} = x_{1:m}\right], \qquad \text{  } i = 1, \ldots, m,
    \label{eq:defn-mu-tilde-exp}
    \end{align}

\end{itemize}
%


\noindent 
The proof of this lemma is organized into several steps below.  
\paragraph{Step 1: proximity of $\widehat{\mu}_{Z_{1:m}^x}$ and its conditional expectation.}

Equipped with the above set of notation, we immediately note that:
\begin{itemize}
\item $\widehat{\mu}_{Z_{1:m}}(\cdot)$ is a function jointly dependent on the random objects $X_{1:m}$ and $Y_{1:m}$; 

\item For any fixed realization $x_{1:m}$,  the collection 
$\{\widehat{\mu}_{Z_{1:m}^x}(x_i)\}_{i=1}^m$ 
can be viewed as a family of functions dependent on the random variables $Y_{1:m}$;   

 \item Conditional on $X_{1:m} = x_{1:m}$, 
the random variables $Y_1, \ldots, Y_m$ are mutually independent.  
\end{itemize}
We now make note of the following basic fact. 
\begin{claim}\label{clm:mu_conc}
    Recall the definition of $\widehat{\mu}_{Z_{1:m}^x}(x_i)$ and $ \widetilde{\mu}_{x_{1:m}}(x_i)
    $ defined at the beginning of this subsection. Then 
    %
    %
    for any fixed $x_{1:m}$ and any $0<\delta<1$, the event
    \begin{align}
    \gE_1(x_{1:m}) \coloneqq \left\{
    \sup\limits_{i\in \{1,\dots,m\}} \left\{
    \abs{
    \widehat{\mu}_{Z_{1:m}^x}(x_i) - \widetilde{\mu}_{x_{1:m}}(x_i)
    }\right\}\le \frac{L_2}{n}\sqrt{m\log \frac{10m}{\delta}}
    \right\}
    \label{eq:E-event-1-to-m-C2}
    \end{align}
    occurs with probability at least $1 - \delta/5$.
\end{claim}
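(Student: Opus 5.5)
The plan is to prove Claim~\ref{clm:mu_conc} by applying McDiarmid's inequality (Lemma~\ref{lem:mcdiarmid}) coordinatewise, followed by a union bound over the $m$ indices. Fix $x_{1:m}$ and an index $i\in[m]$. Consider the map
\[
g_i(y_{1:m}) \coloneqq \widehat{\mu}_{z_{1:m}^x}(x_i), \qquad z_{1:m}^x=\{(x_l,y_l)\}_{l=1}^m .
\]
This is a measurable function of the responses, because $\widehat{\mu}(\cdot\mid\cdot)$ is measurable by Assumption~\ref{ass:fair_alg}. Conditional on $X_{1:m}=x_{1:m}$, the responses $Y_1,\dots,Y_m$ are independent, since the data are independent and each $Y_l$ depends only on its own $X_l$. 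Moreover, $\EB[g_i(Y_{1:m})\mid X_{1:m}=x_{1:m}]=\widetilde{\mu}_{x_{1:m}}(x_i)$ by \eqref{eq:defn-mu-tilde-exp}.

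The key steps are as follows.

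\textbf{Bounded differences.} Replacing a single $y_l$ by $y_l'$ changes exactly one element of the training set $z_{1:m}^x\cup z_{m+1:n}^{\mathsf{train}}$, which has size $n$. Assumption~\ref{ass:fair_alg} therefore gives
\[
|g_i(y_{1:m})-g_i(y_{1:m}')|\le L_2/n
\]
uniformly over the evaluation point, including $x_i$ itself. Note that $x_i$ is fixed and not perturbed, so Assumption~\ref{ass:fair_alg} applies directly.

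\textbf{Concentration for each index.} McDiarmid's inequality with coefficients $c_l=L_2/n$ yields
\[
\PB\bigl(|g_i-\widetilde{\mu}_{x_{1:m}}(x_i)|>\epsilon \,\big|\, X_{1:m}=x_{1:m}\bigr)\le 2\exp\!\Bigl(-\frac{2\epsilon^2 n^2}{m L_2^2}\Bigr).
\]

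\textbf{Union bound.} Take $\epsilon=\frac{L_2}{n}\sqrt{m\log(10m/\delta)}$. Each index then fails with probability at most $2(\delta/(10m))^2\le \delta/(5m)$, using $\delta<1$ and $m\ge1$. Summing over the $m$ indices gives a total failure probability of at most $\delta/5$, which establishes the event $\gE_1(x_{1:m})$ in \eqref{eq:E-event-1-to-m-C2}. There is ample slack in the constants: the factor $1/2$ that McDiarmid's inequality could supply is not needed.

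I expect no real obstacle here. The only point requiring care is justifying that, after conditioning on $X_{1:m}=x_{1:m}$, the $Y_l$ remain independent. This holds because $\{(X_l,Y_l)\}_{l=1}^m$ are independent pairs, so the conditional law factorizes as $\prod_l \PB(Y_l\in\cdot\mid X_l=x_l)$. A second point is that the training-set size used in Assumption~\ref{ass:fair_alg} is $n$, since $z_{m+1:n}^{\mathsf{train}}$ is held fixed; this is what produces the $L_2/n$ coefficient rather than $L_2/m$.
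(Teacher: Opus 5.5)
Your proposal is correct and follows the route the paper indicates: McDiarmid's inequality with bounded-difference coefficients $L_2/n$ supplied by Assumption~\ref{ass:fair_alg} (on a training set of size $n$), applied conditionally on $X_{1:m}=x_{1:m}$, followed by a union bound over $i\in[m]$. The paper omits the details, and your computation, in particular the bound $2(\delta/(10m))^2\le \delta/(5m)$, fills them in correctly.
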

\begin{proof} This claim follows directly by invoking McDiarmid’s inequality (Lemma~\ref{lem:mcdiarmid}) along with Assumption~\ref{ass:fair_alg} and a union bound over $i\in\{1,\dots,m\}$. We omit the details for brevity. 
\end{proof}
%

\paragraph{Step 2: a surrogate empirical distribution.}
With Claim~\ref{clm:mu_conc} in place, our next step is to approximate the target empirical distribution
\begin{equation}
\widehat{F}_{Z_{1:m}}(u) \coloneqq \frac{1}{m}\ssum{i}{1}{m}\mathbbm{1}\left\{
s_i \le u
\right\}
\end{equation}
using a surrogate empirical distribution
\begin{equation}
\widetilde{F}_{Z_{1:m}}(u) \coloneqq \frac{1}{m}\ssum{i}{1}{m}\mathbbm{1}\left\{
\widetilde{s}_{i} \le u
\right\},\quad  \text{ where } \widetilde{s}_{i} \coloneqq \big|Y_i - \widetilde{\mu}_{X_{1:m}}(X_i)\big|,~~ i=1,\ldots, m.
\label{eq:defn-Ftilde-si-tilde}
\end{equation}
In words, the fitted outcome $\widehat{\mu}_{Z_{1:m}}(X_i)$ is now replaced with $\widetilde{\mu}_{X_{1:m}}(X_i)$, the latter of which averages out the randomness over $Y_{1:m}$ (cf.~\eqref{eq:defn-mu-tilde-exp}).  
On the event $\gE_1(X_{1:m})$ defined in \eqref{eq:E-event-1-to-m-C2}, we can bound the difference between these two quantities as follows
\begin{equation}\label{eq:tilde_F-hat_F_1}
\begin{aligned}
    \Big|&\widetilde{F}_{Z_{1:m}}(u) - \widehat{F}_{Z_{1:m}}(u)\Big|
    \le \frac{1}{m}\ssum{i}{1}{m}\big|\mathbbm{1}\{s_i > u\} - \mathbbm{1}\{\widetilde{s}_{i} > u\}\big|\\
    &\overset{\mathrm{(a)}}{\le}
    \frac{1}{m}\ssum{i}{1}{m}\mathbbm{1}\big\{
    u - \abs{s_i - \widetilde{s}_{i}}
    < \widetilde{s}_{i} \le u +\abs{s_i - \widetilde{s}_{i}}
    \big\}\\
    &\le \frac{1}{m}\ssum{i}{1}{m}\mathbbm{1}\big\{
    u - \abs{\widehat{\mu}_{Z_{1:m}}(X_i) {-} \widetilde{\mu}_{X_{1:m}}(X_i)} < \widetilde{s}_{i} \le u + \abs{\widehat{\mu}_{Z_{1:m}}(X_i) {-} \widetilde{\mu}_{X_{1:m}}(X_i)}
    \big\}\\
    &\overset{\mathrm{(b)}}{\le} 
    \frac{1}{m}\ssum{i}{1}{m}\mathbbm{1}\left\{
    u - \frac{L_2}{n}\sqrt{{m}\log \frac{10{m}}{\delta}}
    < \widetilde{s}_{i} \le u + \frac{L_2}{n}\sqrt{{m}\log \frac{10{m}}{\delta}}
    \right\},
\end{aligned}
\end{equation}
where (a) holds because of Fact~\ref{fact:ind_diff} 
%
and (b) arises from the definition of the event $\gE_1(X_{1:m})$.   

For simplicity of notation, denote 
\begin{align}
\Delta_{n,m} \coloneqq 
\frac{L_2}{n}\sqrt{{m}\log\frac{10{m}}{\delta}}
\qquad \text{and} \qquad 
\gB(u, \varepsilon) \coloneqq (u - \varepsilon, u + \varepsilon], 
\end{align}
%
and consider the event
\[
\gE_2(x_{1:m}) \coloneqq \left\{
\sup\limits_{u\in \RB}\left\{\frac{1}{m}\abs{\ssum{i}{1}{m}
\Big(\mathbbm{1}\{\widetilde{s}_i \in \gB(u,\Delta_{n,m})\} {-} \PB\big(\widetilde{s}_i \in \gB(u,\Delta_{n,m})\,|\, X_{1:m}=x_{1:m}\big)\Big)}\right\} \le 10\sqrt{\frac{\log(10/\delta)}{m}}
\right\}.
\]
We would like to prove that this event occurs with high probability conditional on $X_{1:m} = x_{1:m}$. 
Towards this end, we first observe that, for any $i=1,\ldots, m$ and any $x\in \RB$, 
\begin{align*}
    \mathbbm{1}\left\{\widetilde{s}_i \in \gB(u,\Delta_{n,m})\right\}
    &- \PB\bigl(\widetilde{s}_i \in \gB(u, \Delta_{n,m})\,|\, X_{1:m}=x_{1:m}\bigr)\\
    &= \Bigl(
    \mathbbm{1}\{\widetilde{s}_i \le u + \Delta_{n,m}\} - \PB\big(\widetilde{s}_i \le u + \Delta_{n,m}\,|\,X_{1:m} = x_{1:m}\big)
    \Bigr)\\ &\quad - \Bigl(
    \mathbbm{1}\{\widetilde{s}_i \le u - \Delta_{n,m}\} - \PB\big(\widetilde{s}_i \le u - \Delta_{n,m}\,|\,X_{1:m} = x_{1:m}\big)
    \Bigr),
\end{align*}
which in turn implies that
\begin{align}
    \frac{1}{m}\biggl|
    \ssum{i}{1}{m}\Bigl(
    \mathbbm{1}\{\widetilde{s}_i \in \gB(u,\Delta_{n,m})\} &- \PB\bigl(\widetilde{s}_i \in \gB(u,\Delta_{n,m})\,|\, X_{1:m} = x_{1:m}\bigr)
    \Bigr)\biggr| \notag\\
    &\le \frac{1}{m}\biggl|
    \ssum{i}{1}{m}\Bigl(
    \mathbbm{1}\{\widetilde{s}_i \le u+\Delta_{n,m})\} - \PB\big(\widetilde{s}_i \le u + \Delta_{n,m}\,|\,X_{1:m} = x_{1:m}\big)
    \Bigr)\biggr| \notag\\
    &\quad+ \frac{1}{m}\biggl|
    \ssum{i}{1}{m}\Bigl(
    \mathbbm{1}\{\widetilde{s}_i \le u-\Delta_{n,m})\} - \PB\big(\widetilde{s}_i \le u - \Delta_{n,m}\,|\,X_{1:m} = x_{1:m}\big)
    \Bigr)\biggr|.
    \label{eq:average-ind-sitilde-deviation}
\end{align}
In addition, it is straightforward to verify that: conditional on $X_{1:m} = x_{1:m}$, 
the quantity $\widetilde{s}_{i}$ (see \eqref{eq:defn-Ftilde-si-tilde} and \eqref{eq:defn-mu-tilde-exp}) is independent of $Y_{1:m} \setminus \{Y_i\}$, so that 
the collection $\{\widetilde{s}_{i}\}_{i=1}^m$ 
forms a set of mutually independent random variables.  
Hence, \Cref{lem:DKW_ineq} readily tells us that, conditional on $X_{1:m} = x_{1:m}$, 
\begin{align*}
    &\sup\limits_{u\in\RB}\left\{\frac{1}{m}\abs{\ssum{i}{1}{m}\Bigl(
    \mathbbm{1}\{\widetilde{s}_i \le u+ \Delta_{m,n}\} - 
    \PB\big(\widetilde{s}_i \le u + \Delta_{n,m}\,|\,X_{1:m}= x_{1:m}\big)
    \Bigr)}\right\} \le 5\sqrt{\frac{\log (10/\delta)}{m}}\\
    &\sup\limits_{u\in\RB}\left\{\frac{1}{m}\abs{\ssum{i}{1}{m}\Bigl(
    \mathbbm{1}\{\widetilde{s}_i \le u- \Delta_{m,n}\} - 
    \PB\big(\widetilde{s}_i \le u - \Delta_{n,m}\,|\,X_{1:m}= x_{1:m}\big)
    \Bigr)}\right\} \le 5\sqrt{\frac{\log (10/\delta)}{m}}
\end{align*}
hold with probability exceeding $1-\delta/5$, which taken together with \eqref{eq:average-ind-sitilde-deviation} shows that for any realization $x_{1:m}$, 
\begin{equation}\label{eq:gE_2_ge_1-delta}
    \PB\left(\gE_2(x_{1:m})\mid X_{1:m}=x_{1:m}\right) \ge 1- \frac{\delta}{5}.
\end{equation}

Continuing from the derivation in Eqn.~\eqref{eq:tilde_F-hat_F_1}, we can now see that: on the event $\gE_1(x_{1:m}) \cap \gE_2(x_{1:m})$,  
for any $u \in \RB$ we have
\begin{equation}\label{eq:tilde_F-hat_F_2}
\begin{aligned}
    \abs{\widetilde{F}_{Z_{1:m}}(u) - \widehat{F}_{Z_{1:m}}(u)}
    &\le \frac{1}{m}\ssum{i}{1}{m}\mathbbm{1}\left\{\widetilde{s}_i \in \gB(u,\Delta_{n,m})\right\}\\
    &\le \frac{1}{m}\ssum{i}{1}{m}\PB\big(\widetilde{s}_i \in \gB(u,\Delta_{n,m})\,|\, X_{1:m}=x_{1:m}\big) + 10\sqrt{\frac{\log ({10}/{\delta})}{m}}\\
    &\le \frac{1}{m}\ssum{i}{1}{m}2L_1\Delta_{n,m} + 10\sqrt{\frac{\log (10/\delta)}{m}}\\
    &= \frac{2{L}}{n}\sqrt{{m}\log \frac{10{m}}{\delta}} + 10\sqrt{\frac{\log (10/\delta)}{m}},
\end{aligned}
\end{equation}
where the second line is valid on $\gE_2(x_{1:m})$,  and the third line makes use of the assumption stated in Assumption~\ref{ass:lip_cond_distr}, 
along with the fact that the interval $\gB(u, \varepsilon)$ has length $2\varepsilon$. 
As a remark, in view of Claim~\ref{clm:mu_conc} and \eqref{eq:gE_2_ge_1-delta}, we know that \eqref{eq:tilde_F-hat_F_2} holds with high probability when conditioned on $X_{1:m}=x_{1:m}$.

\paragraph{Step 3: the surrogate empirical distribution vs.~the marginal coverage rate.}
Next, we would like to bound the discrepancy between the surrogate empirical distribution $\widetilde{F}_{Z_{1:m}}(u)$ (cf.~\eqref{eq:defn-Ftilde-si-tilde}) and the marginal coverage rate 
\begin{align}
\overline{F}_m(u) \coloneqq \frac{1}{m}\sum_{i=1}^m 
\PB_{\gD_{1:m}}\big(\widetilde{s}_{i} \le u\big).
\label{eq:defn-Fm-bar-u}
\end{align}
Towards this end, we find it convenient to introduce two auxiliary coverage rates conditional on \(X_{1:m}\) as intermediary quantities, namely, 
\begin{align}
\widetilde{F}_{X_{1:m}}(u) \coloneqq \frac{1}{m}\ssum{i}{1}{m}\PB_{Y_i\mid X_{i}}\left(\widetilde{s}_{i}\le u~\big|~ X_{1:m}\right),\quad 
\overline{F}_{X_{1:m}}(u) \coloneqq \frac{1}{m}\ssum{i}{1}{m} \PB_{Y_i\mid X_i}(\widetilde{s}_i \le u\mid X_i),
\label{eq:defn-Ftilde-Fbar-X-m}
\end{align}
where for any $i= 1,\ldots, m$,  we define
\begin{align}
\PB_{Y_i\mid X_i}(\widetilde{s}_i \le u\mid X_i)\coloneqq \EB_{X_{1:m}\backslash \{X_i\}}\big[\PB(\widetilde{s}_i \le u\mid X_{1:m})\big].
\label{eq:defn-P-Yi-Xi-stilde}
\end{align}
It then follows from the triangle inequality that
\begin{equation}\label{eq:F_X-F_m_decomp}
\begin{aligned}
    \abs{\widetilde{F}_{Z_{1:m}}(u) - \overline{F}_m(u)} &\le
    \abs{\widetilde{F}_{Z_{1:m}}(u) - \widetilde{F}_{X_{1:m}}(u)} + 
    \abs{\widetilde{F}_{X_{1:m}}(u) - \overline{F}_{X_{1:m}}(u)} +
    \abs{\overline{F}_{X_{1:m}}(u) - \overline{F}_m(u)}
    \\
    &=\underbrace{\frac{1}{m}\biggl|\,\ssum{i}{1}{m}\Bigl(\mathbbm{1}\big\{\widetilde{s}_{i}\le u\big\}
    - \PB_{Y_i\mid X_{1:m}}\left(\widetilde{s}_{i} \le u~\big|~ X_{1:m}\right)\Bigr)\,\biggr|
    }_{\eqqcolon \gT_1(u, Z_{1:m})}\\
    &\quad + \underbrace{
    \frac{1}{m}\biggl|\,\ssum{i}{1}{m}\Bigl(\PB_{Y_i\mid X_{1:m}}\left(\widetilde{s}_{i} \le u\,\big|\, X_{1:m}\right)
    - \PB_{Y_i\mid X_i}(\widetilde{s}_i \le u\mid X_i)
    \Bigr)
    \,\biggr|
    }_{\eqqcolon \gT_2(u,X_{1:m})}\\
    &\quad + \underbrace{
    \frac{1}{m}\biggl|\,\ssum{i}{1}{m}\left(
     \PB_{Y_i\mid X_{i}}\left(\widetilde{s}_{i} \le u\,\big|\, X_{i}\right) - \PB_{\gD_{1:m}}\left(\widetilde{s}_{i} \le u\right)
    \right)\,\biggr|
    }_{\eqqcolon \gT_3(u,X_{1:m})},
\end{aligned}
\end{equation}
thus leaving us with three terms to cope with.

\paragraph{Step 4: a bound on the first term in \eqref{eq:F_X-F_m_decomp}.} 
Regarding the first term $\gT_1(u, Z_{1:m})$ on the right-hand side of \eqref{eq:F_X-F_m_decomp}, consider the following event w.r.t.~a given realization $x_{1:m}$:
\begin{equation}
\gE_3(x_{1:m}) \coloneqq \left\{\sup\limits_{u\in\RB}\gT_1(u, Z_{1:m}) \le 5\sqrt{\frac{\log (5/\delta)}{m}}\right\}.
\label{eq:defn-eG3-x1m}
\end{equation}
As mentioned earlier, when conditioned on $X_{1:m} = x_{1:m}$,  
the random variables $\{\widetilde{s}_{i}\}_{i=1}^m$ are mutually independent, allowing us to invoke the generalized DKW inequality (\Cref{lem:DKW_ineq}) to show that 
\begin{equation}
\PB\left(\gE_3(x_{1:m}) \mid X_{1:m} = x_{1:m}\right) \ge 1 - \delta/5.
\label{eq:P-E3-event-prob}
\end{equation}


\paragraph{Step 5: a bound on the second term in \eqref{eq:F_X-F_m_decomp}.} 

Regarding the second term $\gT_2(u, X_{1:m})$ on the right-hand side of \eqref{eq:F_X-F_m_decomp}, we first single out a few properties about $\overline{F}_{X_{1:m}}(u)$ (cf.~\eqref{eq:defn-Ftilde-Fbar-X-m}).  
Without loss of generality, consider two realizations $x_{1:m}$ and $x_{1:m}^\prime$ that differ only in the first sample (i.e., $x_1 \neq x_1'$). We would like to show that, for any $i \ge 1$, the function 
$\PB_{Y_i \mid x_{1:m}}\big(\widetilde{s}_{i}\le u \mid X_{1:m}=x_{1:m}\big)$, 
viewed as a function of $x_{1:m}$, satisfies the bounded difference property. In fact, in view of the definition of $\widetilde{\mu}_{x_{1:m}}(x_i)$ (cf.~\eqref{eq:defn-mu-tilde-exp}), we have
\begin{align}
    &\abs{\widetilde{\mu}_{ x_{1:m}}(x_i) - \widetilde{\mu}_{x_{1:m}^\prime}(x_i)} = \abs{
    \EB_{Y_{1:m}\mid x_{1:m}}\left[\widehat{\mu}_{Z_{1:m}^x}(x_i)\right] - \EB_{Y_{1:m}^\prime\mid x_{1:m}^\prime}\left[\widehat{\mu}_{Z_{1,m}^{x'}}(x_i)\right]
    } \notag\\
    &\quad \overset{\mathrm{(a)}}{\le}
    \abs{
    \EB_{Y_{2:m}\mid x_{2:m}}\Big[
    \EB_{Y_1 \mid x_1}[\widehat{\mu}_{Z_{1:m}^x}(x_i) \mid Y_{2:m}]
    - \EB_{Y_1^\prime\mid x_1^\prime}[\widehat{\mu}_{Z_{1:m}^{x'}}(x_i)\mid Y_{2:m}]
    \Big]
    } \notag\\
    &\quad \le \EB_{Y_{2:m}\mid x_{2:m}}\Big[
    \EB_{Y_1\times Y_1^\prime|(x_1, x_1^\prime)}\left[
    \abs{\widehat{\mu}_{Z_1^x\cup Z_{2:m}^x}(x_i) - \widehat{\mu}_{Z_1^{x'}\cup Z_{2:m}^x}(x_i)} ~\big|~ Y_{2:m}
    \right]
    \Big] \notag\\
    &\quad \overset{\mathrm{(b)}}{\le} \EB_{Y_{2:m}\mid x_{2:m}}\left[\EB_{Y_1\times Y_1^\prime\mid (x_1,x_1^\prime)}\left[\frac{L_2}{n}~\Big|~ Y_{2:m}\right]\right] = \frac{L_2}{n},
    \label{eq:tilde-mu-tilde-mu-gap}
\end{align}
where we denote $Z_{1:m}^{x'}\coloneqq \{(x_i',Y_i')\}_{i=1}^m$, $Z_1^{x}\coloneqq (x_1,Y_1)$, and $Z_1^{x'}\coloneqq (x_1',Y_1')$.
Here, (b) results from Assumption~\ref{ass:fair_alg}. Regarding (a), it follows from the fact that $x_{2:m}=x'_{2:m}$; 
in particular, $Y_{2:m}$ and $Y'_{2:m}$ have the same joint distribution, and are both independent of $(Y_1,Y'_1)$, which allow us to couple $Y_{1:m}$ and $Y'_{1:m}$, so that the two samples differ only at the first data point, i.e., $(x_1,Y_1)\neq(x'_1,Y'_1)$. 
 Consequently,  combining \eqref{eq:tilde-mu-tilde-mu-gap} with Assumption~\ref{ass:lip_cond_distr} reveals that, for any $i=2,\ldots, m$, 
\begin{align}
    &\Big|~\PB_{Y_i\mid x_i}\left(\abs{Y_i - \widetilde{\mu}_{x_{1:m}}(x_i)} > u\right) - \PB_{Y_i\mid x_i}\left(\abs{Y_i - \widetilde{\mu}_{x_{1:m}^\prime}(x_i )} > u\right) \,\Big| \notag\\
    &\le \PB\Big(
    u - \abs{\widetilde{\mu}_{x_{1:m}}(x_i ) - \widetilde{\mu}_{x_{1:m}^\prime}(x_i)} \le \abs{Y_i - \widetilde{\mu}_{x_{1:m}}(x_i)}
    \le u + \abs{\widetilde{\mu}_{x_{1:m}}(x_i) - \widetilde{\mu}_{x_{1:m}^\prime}(x_i)}
    \Big) \notag\\
    &\le 2L_1 
    \abs{\widetilde{\mu}_{x_{1:m}}(x_i ) - \widetilde{\mu}_{x_{1:m}^\prime}(x_i)}
    \le \frac{2L_1L_2}{n}=\frac{2{L}}{n}.
    \label{eq:P-Ycondx-diff-tildemu}
\end{align}

Now, let us return to $\overline{F}_{x_{1:m}}(u)$. Applying the above bound \eqref{eq:P-Ycondx-diff-tildemu} along with straightforward calculations reveals that, for any $x$,
\begin{align*}
    \abs{\widetilde{F}_{x_{1:m}}(u) - \widetilde{F}_{x_{1:m}^\prime}(u)} &\le \frac{1}{m}\abs{\PB_{Y_1\mid x_1}\big(\widetilde{s}_{1} \le u\mid x_{1:m}\big)
     - \PB_{Y_1^\prime\mid x_1^\prime}\big(\widetilde{s}_{1}' \le u \mid x_{1:m}'\big)
    }\\
    &+ \frac{1}{m}\abs{\ssum{i}{2}{m}\Big(
    \PB_{Y_i\mid x_i}\big(\widetilde{s}_{i}\le u\mid x_{1:m}'\big) - \PB_{Y_i\mid x_i}\big(\widetilde{s}_{i}'\le u\mid x_{1:m}'\big)
    \Big)}
    \le \frac{1}{m} + \frac{2{L}}{n},
\end{align*}
where $\widetilde{s}_i' \coloneqq \bigl|Y_i' - \widetilde{\mu}_{x_{1:m}'}(x_i')\bigr|$.
Also, note that $\PB_{Y_i\mid x_i}(\widetilde{s}_i\le u \mid x_i)$ is a function of $x_i$ only, and hence
\[
\bigl|\,\overline{F}_{x_{1:m}}(u)  - \overline{F}_{x_{1:m}'}(u)\,\bigr|
= \frac{1}{m}\bigl|\, \PB_{Y_1\mid x_1}(\widetilde{s}_1 \le u \mid x_1) - \PB_{Y_1\mid x_1'}(\widetilde{s}_1' \le u \mid x_1')\,\bigr| \le \frac{1}{m}.
\]
As a consequence, for any $x$, the function $\bigl|\,\widetilde{F}_{x_{1:m}}(u) - \overline{F}_{x_{1:m}}(u)\,\bigr|$ satisfies the bounded difference property 
with coefficient $\tfrac{2}{m} + \tfrac{2{L}}{n}$.
If we define $$\gT_2(x_{1:m}) \coloneqq \sup\limits_{u\in \RB}\left\{\gT_2(u, x_{1:m})\right\},$$
then simple computation yields
\begin{align*}
    \gT_2(x_{1:m}) - \gT_2(x_{1:m}') &= \sup\limits_{u\in \RB}\left\{\gT_2(u,x_{1:m})\right\} - \sup\limits_{u'\in \RB}\left\{\gT_2(u',x_{1:m}')\right\}\\
    &\le \sup\limits_{u\in \RB}\left\{\gT_2(u,x_{1:m}) - \gT_2(u,x_{1:m}')\right\}
    \le \frac{2}{m}+\frac{2L}{n}.
\end{align*}
Thus, we can apply McDiarmid’s inequality (Lemma~\ref{lem:mcdiarmid}) to derive
\begin{equation}\label{eq:cond_conc_pt_2}
\begin{aligned}
\sup\limits_{u\in \RB}\left\{\abs{\widetilde{F}_{X_{1:m}}(u) - \overline{F}_{X_{1:m}}(u)
}\right\}
&=\gT_2(X_{1:m})\\
&\le \EB_{X_{1:m}}\left[\gT_2(X_{1:m})\right] + \left(\frac{2}{m}+\frac{2L}{n}\right)\sqrt{{m}\log \frac{5}{\delta}}
\end{aligned}
\end{equation}
holds with probability at least $1 - \delta/5$.

It then comes down to bounding $\EB[\gT_2(X_{1:m})]$. From the definition of $\gT_2(X_{1:m})$, we have
\begin{align}
    \EB[\gT_2(X_{1:m})] &= 
    \EB_{X_{1:m}}\left[\sup\limits_{u\in \RB}\abs{\frac{1}{m} \ssum{i}{1}{m} \Bigl(\,\PB_{Y_i\mid X_i}(\widetilde{s}_i \le u \mid X_{1:m}) - \PB_{Y_i\mid X_i}(\widetilde{s}_i \le u\mid X_i)\,\Bigr) }\right] \notag\\
    &\le \EB_{X_{1:m}}\left[\sup\limits_{u\in \RB}\frac{1}{m} \ssum{i}{1}{m} \Bigl|\,\PB_{Y_i\mid X_i}(\widetilde{s}_i \le u \mid X_{1:m}) - \PB_{Y_i\mid X_i}(\widetilde{s}_i \le u\mid X_i)\,\Bigr| \right]\notag\\
    &\le \frac{1}{m}\ssum{i}{1}{m}\EB_{X_{1:m}}\left[
    \sup\limits_{u\in \RB}\Bigl|\, \PB_{Y_i\mid X_i}(\widetilde{s}_i \le u \mid X_{1:m}) - \PB_{Y_i\mid X_i}(\widetilde{s}_i \le u\mid X_i)
    \,\Bigr|\right]\notag\\
    &\le \frac{1}{m}\ssum{i}{1}{m}\EB_{X_i}\Biggl[\,
    \mathop{\EB}\limits_{X_{1:m}\backslash \{X_i\}}\Bigl[\,\underbrace{\sup\limits_{u\in \RB}\biggl|\, \PB_{Y_i\mid X_i}(\widetilde{s}_i \le u \mid X_{1:m}) - \PB_{Y_i\mid X_i}(\widetilde{s}_i \le u\mid X_i)
    \,\Bigr|}_{\eqqcolon \,\gK_i(X_{1:m})}\,\big|\, X_i\,\biggr]
    \,\Biggr],
    \label{eq:E-T2-X-1m-UB135}
\end{align}
which motivates us to control $\gK_i(X_{1:m}),~ i = 1,\ldots, m$. Without loss of generality, it suffices to analyze $\gK_1(X_{1:m})$, since the same argument applies to the remaining $i$. Fix $X_1 = x_1$, it is seen that $\widetilde{\mu}_{X_{1:m}}(\cdot)$ satisfies Assumption~\ref{ass:fair_alg} with parameter $L_2/n$ with respect to the remaining samples $X_{2:m}$. Applying \Cref{lem:cond_prob_concentration} reveals that, for any $x_1$ (it can be regarded as the target sample is $(X_1,Y_1)$ and $X_1 \sim \delta_{\{x_1\}}$ at this time), 
\[
\gK_1(\{x_1\}\cup X_{2:m}) \le \frac{16L}{n}\sqrt{m\log n}
\]
holds with probability at least $1-1/n$, which combined with \eqref{eq:E-T2-X-1m-UB135} gives
\[
\EB[\gT_2(X_{1:m})] \le \frac{1}{m}\ssum{i}{1}{m}\EB_{X_i}\left[\mathop{\EB}_{X_{1:m}\backslash \{X_i\}}\left[\gK_i(X_{1:m})\mid X_i \right]\right] \le \frac{16L}{n}\sqrt{m\log n}+ \frac{1}{n}.
\]
Plugging this into \eqref{eq:cond_conc_pt_2} yields that the following event
\begin{equation}
\gE_4\coloneqq \left\{\sup\limits_{u\in \RB}\{\gT_2(u,X_{1:m})\}\le 3\sqrt{\frac{\log ({5}/{\delta})}{m}} + \frac{18L}{n}\sqrt{{m}\log \left(\frac{5}{\delta}+n\right)}\right\}
\label{eq:defn-E4-claim-135}
\end{equation}
happens with probability at least $1 - \delta / 5$.

\paragraph{Step 6: a bound on the last term in \eqref{eq:F_X-F_m_decomp}.}  We now turn attention to the last term $\gT_3(u,X_{1:m})$ on the right-hand side of \eqref{eq:F_X-F_m_decomp}. 
Define 
$$H(u,X_i)\coloneqq \PB_{Y_i\mid X_i}(\widetilde{s}_i \le u\mid X_i),\qquad i=1,\ldots, m.$$ 
It is easily seen that the random variables $\{H(u,X_i)\}_{1\leq i\leq m}$ 
are mutually independent. Moreover, for any $i$ and any fixed $X_i$, $H(u,X_i)$ is a non-decreasing function in $u$.
Applying \Cref{lem:DKW_ineq} reveals that the event
\begin{equation}
\gE_5 \coloneqq \left\{
\sup\limits_{u\in \RB}\left\{\big|\overline{F}_{X_{1:m}}(u) - \overline{F}_m(u)\big|\right\} \le 5\sqrt{\frac{\log(5/\delta)}{m}}
\right\}
\label{eq:defn-E5-x1m}
\end{equation}
happens with probability at least $1- \delta / 5$, where we remind the reader of the definitions of $\overline{F}_{X_{1:m}}$ and $\overline{F}_m$ in \eqref{eq:defn-Ftilde-Fbar-X-m} and \eqref{eq:defn-Fm-bar-u}, respectively.


\paragraph{Step 7: putting all pieces together.}
To finish up,  let us put together the preceding results.
First, define
\begin{equation}
F_m(u) \coloneqq \frac{1}{m}\ssum{i}{1}{m}\PB_{\gD_{1:m}}\bigl( s_i \le u\bigr).
\end{equation}
On the event $\left(\bigcap_{i=1}^3 \gE_i(x_{1:m})\right)\cap \gE_4\cap \gE_5$, we see that for any $x \in \RB$, it always holds that
\begin{align*}
    \sup\limits_{u\in \RB}&\abs{\widehat{F}_{Z_{1:m}}(u) - {F}_m(u)} \le 
    \abs{\widehat{F}_{Z_{1:m}}(u) - \widetilde{F}_{Z_{1:m}}(u)} + \abs{\widetilde{F}_{Z_{1:m}}(u) - F_m(u)}\\
    &\overset{\eqref{eq:F_X-F_m_decomp}}{\le}
    \sup\limits_{u\in \RB}\Bigl\{\abs{\widehat{F}_{Z_{1:m}}(u) - \widetilde{F}_{Z_{1:m}}(u)} \Bigr\} + \sup\limits_{u\in\RB}\left\{\gT_1(u, Z_{1:m})\right\}\\
    &\quad + \sup\limits_{u\in\RB}\left\{\gT_2(u, Z_{1:m})\right\} + \sup\limits_{u\in\RB}\left\{\gT_3(u, Z_{1:m})\right\}
    + \sup\limits_{u\in \RB}\left\{\abs{\overline{F}_m(u) - F_m(u)}\right\}\\
    &\le \frac{2{L}}{n}\sqrt{{m}\log \frac{10{m}}{\delta}} + 10\sqrt{\frac{\log ({10}/{\delta})}{m}} + 5\sqrt{\frac{\log (5/\delta)}{m}}\\
    &\quad + 3\sqrt{\frac{\log(5/\delta)}{m}} + \frac{18L}{n}\sqrt{{m}\log\left(\frac{5}{\delta}+ n\right)} + 5\sqrt{\frac{\log (5/\delta)}{m}} + \sup\limits_{u\in \RB}\left\{\abs{\overline{F}_m(u) - F_m(u)}\right\}\\
    &\le 24\sqrt{\frac{\log(10/\delta)}{m}} + \frac{24L}{n}\sqrt{{m}\log \left(\frac{10{m}}{\delta} + n \right)}.
\end{align*}
To justify the last inequality, we observe that, for any $u$,
\begin{align*}
    &\abs{\overline{F}_m(u) - F_m(u)} \le \frac{1}{m}\ssum{i}{1}{m}
    \abs{\PB(\widetilde{s}_{i}\le u) - \PB({s}_{i} \le u)}\\
    &\le \frac{1}{m}\ssum{i}{1}{m}\Bigg\{
    \PB\Bigl(\widetilde{s}_{i} - \abs{\widetilde{\mu}_{X_{1:m}}(X_i) - \widehat{\mu}_{Z_{1:m}}(X_i)} \le u\Bigr)
    - \PB\Bigl(\widetilde{s}_{i} + \abs{\widetilde{\mu}_{X_{1:m}}(X_i) - \widehat{\mu}_{Z_{1:m}}(X_i)} \le u\Bigr)
    \Bigg\}\\
    &\overset{\mathrm{(a)}}{\le} \frac{1}{m}\ssum{i}{1}{m}\PB\left(
    \widetilde{s}_i \in \gB\left(u, \Delta\right)
    \right) 
    + \frac{1}{m}\ssum{i}{1}{m}\PB\bigl(\abs{\widetilde{\mu}_{X_{1:m}}(X_i) - \widehat{\mu}_{Z_{1:m}}(X_i)} > \Delta\bigr)
    \le \frac{4{L}}{n}\sqrt{{m}\log n}+ \frac{1}{n},
\end{align*}
where $\Delta \coloneqq \frac{2L_2}{n}\sqrt{{m}\log n}$, and (a) follows by invoking the same argument as in the analysis of $\gE_1(X_{1:m})$.
By combining our uniform high-probability bounds on $\gE_i(x_{1:m})$ for $i = 1, 2, 3$ given any $X_{1:m}=x_{1:m}$, and applying the high-probability bound of $\gE_4$ and $\gE_5$ as well as the union bound, we arrive at
\begin{align*}
    \PB_{\gD_{1:m}}&\left(\left\{\left(\bigcap\limits_{i=1}^3 \gE_i(X_{1:m})\right)\cap \gE_4\cap \gE_5\right\}^{\mathrm{c}}\right) =
    \PB_{\gD_{1:m}}\left(\left(\bigcup \gE_i(X_{1:m})^{\mathrm{c}}\right)\cup \gE_4^{\mathrm{c}} \cup \gE_5^{\mathrm{c}}\right)\\
    &\le \ssum{i}{1}{3} \PB_{\gD_{1:m}}\big(\gE_i(X_{1:m})^{\mathrm{c}}\big) + \frac{\delta}{5} + \frac{\delta}{5} = \ssum{i}{1}{3}\EB_{X_{1:m}}\left[\PB_{Y_{1:m}\mid X_{1:m}}\left(\gE_i(X_{1:m})^{\mathrm{c}}\mid X_{1:m}\right)\right] + \frac{2\delta}{5}\\
    &\le \frac{3\delta}{5} + \frac{2\delta}{5} = \delta.
\end{align*}
This completes the proof of \Cref{lem:empr_prob_concentration}.

\subsubsection{Proof of \Cref{lem:one_stage_regret_full}}\label{sec:prf:lem:one_stage_regret_full}
The proof closely follows the arguments in the proof of  \Cref{lem:one_round_regret,lem:one_stage_regret}. The only difference lies in that, for the smooth drift setting, inequality~\eqref{eq:one_round_regret_6} in the pretrained-score setting needs to be modified as follows:
\begin{align}
    A_k &= \frac{1}{\abs{\gI_k}}\sum_{l\in \gI_k}\big(\alpha - \PB(Y_{n,r,l}\notin \gC_{n,r}(X_{n,r,l})\mid \gC_{n,r})\big) \notag\\
    &\le \frac{1}{\abs{\gI_k}}\sum_{l\in \gI_k}\big(\PB\big(Y_{n,r,i_{k-1}} \notin \gC_{n,r}(X_{n,r,i_{k-1}})\mid \gC_{n,r}\big) - \PB(Y_{n,r,l}\notin \gC_{n,r}(X_{n,r,l}\mid \gC_{n,r}))\big) \notag\\
    &= \frac{1}{\abs{\gI_k}}\sum\limits_{l\in \gI_k}\ssum{i}{i_{k-1}}{l-1}\Big\{\PB\big(Y_{n,r,i} \notin \gC_{n,r}(X_{n,r,i})\mid \gC_{n,r}\big) - \PB\big(Y_{n,r,i+1} \notin \gC_{n,r}(X_{n,r,i+1})\mid \gC_{n,r}\big)\Big\}
\end{align}
for any even $k$, 
where we invoke the same arguments as in \eqref{eq:one_round_regret_6}, albeit with notation adjusted to the full conformal setting. 
We observe that, for any $i\in i_{k-1},\ldots, l-1$,
\begin{align*}
\PB\big(Y_{n,r,i} \notin \gC_{n,r}(X_{n,r,i})\mid \gC_{n,r}\big) &- \PB\big(Y_{n,r,i+1} \notin \gC_{n,r}(X_{n,r,i+1})\mid \gC_{n,r}\big)\\
&= \EB\big[
\mathbbm{1}\{Y_{n,r,i}\notin \gC_{n,r}(X_{n,r,i})\} 
\mid \gC_{n,r}\big] - 
\EB\big[
\mathbbm{1}\{Y_{n,r,i+1}\notin \gC_{n,r}(X_{n,r,i+1})\} 
\mid \gC_{n,r}\big]
\\
&\le \sup\limits_{h\in \gM([0,1])}\Big\{\EB\big[h(Z_{n,r,i})\big]-
\EB\big[h(Z_{n,r,i+1})\big]\Big\} = \mathsf{TV}\big(Z_{n,r,i}, Z_{n,r,i+1}\big),
\end{align*}
where $\gM([0,1])$ denotes all measurable functions of $Z\in \gX\times \RB$ that are bounded in $[0,1]$. Accordingly, the bound \eqref{eq:one_round_regret_777} on $A_k$ in the pretrained-score case  can now be replaced by
\begin{align*}
    A_k \le \frac{1}{\abs{\gI_k}}\sum\limits_{l\in \gI_k}\ssum{i}{i_{k-1}}{l-1} \mathsf{TV}(Z_{n,r,i}, Z_{n,r,i+1}) \le \ssum{i}{i_{k-1}}{i_k - 1} \mathsf{TV}(Z_{n,r,i}, Z_{n,r,i+1})
\end{align*}
for each even $k$. 
As a result, under smooth drift, the complexity measure used to control the cumulative regret in stage $n$ should now be  $\mathsf{TV}_n^{\mathsf{stage}}$ (cf.~\eqref{eq:defn-TV-per-stage-full-conformal}) rather than $\mathsf{KS}_n^{\mathsf{stage}}$ (cf.~\eqref{eq:defn-KS-tau-n-r}).

The remaining arguments are the same as for \Cref{lem:one_round_regret,lem:one_stage_regret}, and are hence omitted for brevity.

\subsubsection{Proof of \Cref{lem:control_rare_event_full}}\label{sec:prf:lem:control_rare_event_full}
For any $n$, given that $\gB_n=\gA_{n,r_n}\cap \gG(\tau_{n,r_n-1},\tau_{n,r_n})$, one has
\begin{equation}\label{eq:control_rare_1}
    \EB\big[\tau_{n+1}\mathbbm{1}\{\gB_n^{\mathrm{c}}\}\big]
    \le \EB\big[\tau_{n+1}\mathbbm{1}\{\gA_{n,r_n}^{\mathrm{c}}\}\big]
    + \EB\big[\tau_{n+1}\mathbbm{1}\{\gG(\tau_{n,r_n-1},\tau_{n,r_n})^{\mathrm{c}}\}\big],
\end{equation}
leaving us with two terms to control. 

We first bound $\EB\big[\tau_{n+1}\mathbbm{1}\{\gA_{n,r_n}^{\mathrm{c}}\}\big]$. Recognizing that
$n\le \tau_{n,r_n-1} < \tau_{n,r_n}$ and $\tau_{n+1}\le 4\tau_{n,r_n}$ (since the round lengths grow geometrically), we can deduce that
\begin{equation}\label{eq:control_rare_2}
\begin{aligned}
    \EB\big[\tau_{n+1}\mathbbm{1}\{\gA_{n,r_n}^{\mathrm{c}}\}\big]
    &\le \sum\limits_{1\le k<m\le T}\EB\Big[4m\,\mathbbm{1}\big\{\gA_{n,r_n}^\mathrm{c}\big\}\mathbbm{1}\{\tau_{n,r_n-1}=k;~ \tau_{n,r_n}=m\}\Big]\\
    &= \sum\limits_{1\le k<m\le T}4m\EB \Big[\mathbbm{1}\{\tau_{n,r_n-1}=k;~ \tau_{n,r_n}=m\}\EB\big[\mathbbm{1}\big\{\gA_{n,r_n}^{\mathrm{c}}\big\}\mid Z_{1:m-1}\big]\Big]\\
    &\le \sum\limits_{1\le k<m\le T}4m\EB \bigg[\mathbbm{1}\{\tau_{n,r_n-1}=k; \tau_{n,r_n}=m\}
    \Big(\sum\limits_{m\le i<j<\infty}\PB\big(\gA(k,m;i,j)^{\mathrm{c}}\,|\, Z_{1:m-1}\big)\Big)\bigg]\\
    &\overset{\eqref{eq:up_bd_A_kmij_c}}{\le } 
    \sum\limits_{1\le k<m\le T}4m\EB \bigg[\mathbbm{1}\{\tau_{n,r_n-1}=k;~ \tau_{n,r_n}=m\}
    \Big(\sum\limits_{m\le i<j<\infty}j^{-8}\Big)\bigg]\\
    &\le \sum\limits_{1\le k<m\le T}4m\EB \bigg[\mathbbm{1}\{\tau_{n,r_n-1}=k;~ \tau_{n,r_n}=m\}
    \big(m^{-6}/42\big)\bigg]
    = \frac{2}{21}\EB\big[\tau_{n,r_n}^{-5}\big]\le \frac{2}{21n^5}.
\end{aligned}
\end{equation}

Next, we turn attention to the term $\EB\big[\tau_{n+1}\mathbbm{1}\{\gG(\tau_{n,r_n-1},\tau_{n,r_n})^{\mathrm{c}}\}\big]$.
Using $\tau_{n+1}\le 16\tau_{n,r_n-1}$ (which is again due to the geometric growth of the round lengths), we obtain
\begin{equation}\label{eq:control_rare_3}
\begin{aligned}
    \EB\big[\tau_{n+1}\mathbbm{1}\{\gG(\tau_{n,r_n-1},\tau_{n,r_n})^{\mathrm{c}}\}\big]
    &\le \ssum{k}{1}{T}\EB\Big[16k\,\mathbbm{1}\{\tau_{n,r_n-1}=k\}\mathbbm{1}\Big\{\gG(k,\tau_{n,r_n})^{\mathrm{c}}\Big\}\Big]\\
    &\le \ssum{k}{1}{T}\EB\bigg[16k\,\mathbbm{1}\{\tau_{n,r_n-1}=k\}\Big(\ssum{m}{k+1}{\infty}\PB\big(\gG(k,m)^{\mathrm{c}}\,|\, Z_{1:k-1}\big)\Big)\Big]\\
    &\overset{\mathrm{(a)}}{\le} \ssum{k}{1}{T}\EB\bigg[16k\,\mathbbm{1}\{\tau_{n,r_n-1}=k\}\Big(\sum_{m=k+1}^{\infty}m^{-4}\Big)\Big]\\
    &\le \ssum{k}{1}{T}6\EB\Big[k^{-2}\mathbbm{1}\{\tau_{n,r_n-1}=k\}\Big]
    \le 6\EB[\tau_{n,r_n-1}^{-2}] \le \frac{6}{n^2},
\end{aligned}
\end{equation}
where (a) follows from \Cref{prop:training_conditioned_cov} with $\delta = m^{-4}$.

Taking together \eqref{eq:control_rare_1}--\eqref{eq:control_rare_3} thus completes the proof.

\subsubsection{Proof of \Cref{lem:tv_lower_bound_on_typical}}\label{sec:prf:lem:tv_lower_bound_on_typical}

Recall the algorithm procedure of \driftocpfull: in stage $n~(\le N-1)$, the distribution shift is detected in round $r_n$, and this round contains $t_n$ iterations. Then in light of our drift detection subroutine (see Algorithm~\ref{alg:full_drift-detection}), there exists some $j_n \in [t_n]$ such that:
\[
\Bigg|\ssum{l}{j_n}{t_n}\big(\mathbbm{1}\{Y_{n,r_n,l}\notin \gC_{n,r_n}(X_{n,r_n,l})\} - \alpha\big)\Bigg| > 10\sqrt{t_n - j_n + 1}\log^3(40\tau_{n,r_n}).
\]
Then on the event $\gA_{n,r_n}$ (cf.~\eqref{eq:event-Anr-full}), it holds that
%
\begin{align}
    \bigg|\,\ssum{l}{j_n}{t_n}& \big(\PB(Y_{n,r_n,l} \notin \gC_{n,r_n}(X_{n,r_n,l})\,|\, \gC_{n,r_n}) - \alpha\big)\,\bigg|
    \ge \Bigg|\ssum{l}{j_n}{t_n} \big(\mathbbm{1}\{Y_{n,r_n,l} \notin \gC_{n,r_n}(X_{n,r_n,l})\}- \alpha\big)\Bigg| \notag\\
    & \qquad - \Bigg|\ssum{l}{j_n}{t_n} \Big(\mathbbm{1}\big\{Y_{n,r_n,l} \notin \gC_{n,r_n}(X_{n,r_n,l})\big\} - \PB\big(Y_{n,r_n,l} \notin \gC_{n,r_n}(X_{n,r_n,l})\,|\, \gC_{n,r_n}\big)\Big)\Bigg| \notag\\
    &> 10\sqrt{t_n - j_n+1}\log^3 (40\tau_{n,r_n}) - 2\sqrt{(t_n - j_n+1)\log(2\tau_{n+1,1})}
    \notag\\
    &\ge 8\sqrt{t_n - j_n+1}\log^3 (40\tau_{n,r_n}).
    \label{eq:full_regret_2}
\end{align}
As a consequence, if we define
\begin{align}
B_n \coloneqq \frac{1}{t_n - j_n + 1}\ssum{l}{j_n}{t_n}\Big(
\PB\big(Y_{n,r_n,l} \notin \gC_{n,r_n}(X_{n,r_n,l})\,|\, \gC_{n,r_n}\big) - \alpha
\Big),
\label{eq:defn-Bn-full-conformal}
\end{align}
then \eqref{eq:full_regret_2} implies that
\begin{equation}\label{eq:full_regret_3}
    \frac{\sqrt{t_n - j_n + 1}}{8\log^3( 40\tau_{n,r_n})}\abs{B_n} \ge 1.
\end{equation}

The next step is to analyze the quantity $B_n$ defined in \eqref{eq:defn-Bn-full-conformal}. Note that $\tau_{n,r_n} - \tau_{n,r_n-1} = T_{r_n-1}$. Then on the event $\gB_n$ defined in \eqref{eq:def_gB_n-full}---more precisely, on the event $\gG(\tau_{n,r_n-1},\tau_{n,r_n})$ defined in \eqref{eq:defn-Gkm-full}---we have
%
\begin{align}
    \abs{B_n} &\le \Big|\, \PB\big(Y_{n,r_n,1}\notin \gC_{n,r_n}(X_{n,r_n,1})\,|\, \gC_{n,r_n}\big) - \alpha\,\Big| \notag\\
    & \quad+ \frac{1}{t_n-j_n + 1}\ssum{l}{j_n}{t_n}\Big|\, 
    \PB\big(Y_{n,r_n,l} \notin \gC_{n,r_n}(X_{n,r_n,l})\,|\, \gC_{n,r_n}\big) - \PB\big(Y_{n,r_n,1}\notin \gC_{n,r_n}(X_{n,r_n,1})\,|\, \gC_{n,r_n}\big)
    \,\Big| \notag\\
    &\overset{\mathrm{(a)}}{\le}\Big|\, \PB\big(Y_{n,r_n,1}\notin \gC_{n,r_n}(X_{n,r_n,1})\,|\, \gC_{n,r_n}\big) - \alpha\,\Big|
     + \frac{1}{t_n-j_n+1}\ssum{l}{j_n}{t_n}\mathsf{TV}(Z_{n,r_n,l}, Z_{n,r_n,1}) \notag\\
    &\overset{\mathrm{(b)}}{\le} \frac{1}{t_n-j_n+1}\ssum{l}{j_n}{t_n}\left(
    2^6\sqrt{\frac{\log (40\tau_{n,r_n})}{T_{r_n-1}}} +
    \frac{2^7{L}}{\tau_{n,r_n}}\sqrt{T_{r_n-1}\log (40\tau_{n,r_n})}\right) \notag\\
    & \quad+ \frac{1}{t_n-j_n+1}\ssum{l}{j_n}{t_n}\ssum{j}{1}{l-1}\mathsf{TV}\big(Z_{n,r_n,j}, Z_{n,r_n,j+1}\big) + \frac{1}{T_{r_n-1}}\ssum{l}{1}{T_{r_n-1}}\ssum{i}{l}{T_{r_n-1}}\mathsf{TV}\big(Z_{n,r_n-1,i}, Z_{n,r_n-1,i+1}\big) \notag\\
    &\le 2^6\sqrt{\frac{\log (40\tau_{n,r_n})}{T_{r_n-1}}} +
    \frac{2^7{L}}{\tau_{n,r_n}}\sqrt{T_{r_n-1}\log (40\tau_{n,r_n})}+ \mathsf{TV}_n^{\mathsf{tail}},
    \label{eq:full_conf_B_n_bound}
\end{align}
%
where (b) is valid on $\gB_n$, and (a) results from the fact that, for any $l= j_n,\ldots, t_n$,
\begin{align*}
\Big|\,\PB\big(Y_{n,r,l} \notin \gC_{n,r}(X_{n,r,l})\mid \gC_{n,r}\big) &- \PB\big(Y_{n,r,1} \notin \gC_{n,r}(X_{n,r,1})\mid \gC_{n,r}\big)\,\Big|\\
&= \Big|\,\EB\big[
\mathbbm{1}\{Y_{n,r,l}\notin \gC_{n,r}(X_{n,r,l})\} 
\mid \gC_{n,r}\big] - 
\EB\big[
\mathbbm{1}\{Y_{n,r,1}\notin \gC_{n,r}(X_{n,r,1})\} 
\mid \gC_{n,r}\big]\,\Big|
\\
&\le \sup\limits_{h\in \gM([0,1])}\Big\{\EB\big[h(Z_{n,r,l})\big]-
\EB\big[h(Z_{n,r,1})\big]\Big\} = \mathsf{TV}\big(Z_{n,r,l}, Z_{n,r,1}\big),
\end{align*}
where $\gM([0,1])$ denotes the set of all measurable functions $h:\gX\times \RB\to[0,1]$.

 Regarding the first term on the right-hand side of \eqref{eq:full_conf_B_n_bound}, 
 one can apply \eqref{eq:full_regret_3} together with a little algebra to show that: when $\tau_{n,r_n-1}\ge 2$:
\begin{equation}\label{eq:full_conf_B_n_bound_1}
\begin{aligned}
    2^6\sqrt{\frac{\log (40\tau_{n,r_n})}{T_{r_n-1}}} \overset{\eqref{eq:full_regret_3}}{\le} 
    \frac{2^6}{8\log^{5/2}(40\tau_{n,r_n})}\sqrt{\frac{t_n - j_n +1}{T_{r_n-1}}}\abs{B_n} \le \frac{1}{2}\abs{B_n}.
\end{aligned}
\end{equation}
Combine this with~\eqref{eq:full_conf_B_n_bound} and rearrange terms to reach
\[
\abs{B_n} \le \frac{2^8{L}}{\tau_{n,r_n}}\sqrt{T_{r_n-1}\log (40\tau_{n,r_n})} + 2\mathsf{TV}_n^{\mathsf{tail}}.
\]

To finish up, recall that $\gH_n \coloneqq \left\{{T_{r_n-1}\sqrt{\log (40\tau_{n,r_n})}} \le \frac{\tau_{n,r_n}}{256{L}}\right\}$. On the event $\gB_n \cap \gH_n$, combining the above expression with~\eqref{eq:full_regret_3} allows us to establish the following inequality:
\begin{align*}
    2 + 2\sqrt{t_n-j_n+1}\,\mathsf{TV}_n^{\mathsf{tail}}
    &\overset{\gH_n}{\ge} \sqrt{t_n-j_n+1}\left(\frac{2^{8}{L}}{\tau_{n,r_n}}\sqrt{T_{r_n-1}\log (40\tau_{n,r_n})} + 2\mathsf{TV}_{n}^{\mathsf{tail}}\right)\\
    &\ge\sqrt{t_n-j_n+1}\abs{B_n} \overset{\eqref{eq:full_regret_3}}{\ge} 8,
\end{align*}
which in turn implies that
\begin{equation}
\sqrt{t_n-j_n+1}\,\mathsf{TV}_n^{\mathsf{tail}}\mathbbm{1}\{\gB_n\cap\gH_n\} \ge 3\cdot\mathbbm{1}\{\gB_n\cap\gH_n\}.
\end{equation}
This immediately concludes the proof of this lemma. 
\subsection{Proof of Theorem~\ref{thm:lower_bd_gnr_regret}}\label{sec:prf:thm:lower_bd_gnr_regret}


%

Throughout this subsection, we consider the case where no features $\{X_t\}_{t=1}^T$ are observed; instead, only the response $Y_t$ is available at time $t$. Under this simplification,  the set-valued functions $\{\gC_t(\cdot)\}_{t\geq 1}$ induced by algorithm $\pi=\{\pi_t\}_{t\geq 1}$ in \eqref{eq:plcy_construct_C} admit the simpler representation
\begin{equation}\label{eq:plcy_construct_C_simp}
    \gC_t = \begin{cases}
        \pi_1(U), & \text{if }t=1,\\
        \pi_t(Y_{t-1}, \ldots, Y_1, U), & \text{if }t \ge 2,
    \end{cases}
\end{equation}
where 
\(\pi_{t}:\RB^{t-1}\times[0,1]\to 
\gB(\RB)
\) for $t\geq 2$. 
In this setting, $\gC_t$ is uniquely determined by $Y_{1:t-1}$ and $U$; accordingly, we often write it as $\gC(Y_{1:t-1},U)$ as long as it is clear from the context. 


Our proof of Theorem~\ref{thm:lower_bd_gnr_regret} is organized into several steps, presented below.

\paragraph{Step 1: constructing a class of distributions with piecewise flat density.}
We begin by introducing the distribution class $\gI$, constructed as follows. 

\begin{itemize}
\item First, divide the interval $[0,1]$ into $k$ subintervals:
\begin{equation}\label{eq:def_I_j}
I_j \coloneqq \left[\frac{j-1}{k}, \frac{j}{k}\right), \quad j = 1, \ldots, k-1;\quad I_k \coloneqq \left[\frac{k-1}{k},1\right].
\end{equation}

\item 
For any given sequence $V_1,\dots,V_k\in \{-1,1\}$, generate a distribution with probability density function
\begin{subequations}
\label{eq:construct-fx-V}
\begin{align}
f(y \mid V_{1:k}) \coloneqq \ssum{j}{1}{k} f_j(y), 
\end{align}
where $f_j(\cdot)$ is nonzero only within the subinterval $I_j$ as follows:
\[
f_j(y) \propto (1 + \epsilon V_j)\,\mathbbm{1}_{I_j}(y),
\]
with $\epsilon$ a small positive constant to be specified shortly, and 
$\mathbbm{1}_{I_j}(y) \coloneqq \mathbbm{1}\{y \in I_j\}$ the indicator function of $I_j$.
%
The normalization constant can then be computed as
\[
\ssum{j}{1}{k}\int_{I_j} (1 + \epsilon V_j)\, \rd y 
= \ssum{j}{1}{k} \frac{1 + \epsilon V_j}{k} 
\eqqcolon 1 + \epsilon \overline{V},
\]
thereby allowing us to express
\begin{align}
f_j(y) = \frac{(1 + \epsilon V_j)\mathbbm{1}_{I_j}(y)}{1 + \epsilon \overline{V}},
\quad j = 1, 2, \ldots, k.
\end{align}
\end{subequations}
%

\item 
Accordingly, we construct a distribution class as follows
\begin{align}
\gI \coloneqq 
\left\{\, f(y \mid V_{1:k}) \,\big\vert\, V_{1:k} \in \{-1, 1\}^k \,\right\}.
\label{eq:defn-I-dist-class-each-V}
\end{align}
\end{itemize}

\paragraph{Step 2: constructing a family of distribution sequences contained in $\mathcal{L}_3(N^{\mathsf{cp}})$ and $\mathcal{L}_4(\mathsf{TV}_T)$.}
With $\mathcal{I}$ in place, we would like to construct a family $\gL^\prime$ of distribution sequences such that it is composed of all  $\{\mathcal{D}_t\}_{t=1}^T$ satisfying the following two conditions:  
\begin{enumerate}
    \item $\gD_t \in \gI\text{ for every }t = 1, \ldots, T$;
    \item For every $l = 1 ,\ldots, m+1$,  it holds that $\gD_t = \gD_{t+1}$ for any $t$ obeying 
    $(l-1)\lfloor T/m\rfloor + 1 \leq t < \min\left\{l\lfloor T/m\rfloor, T\right\}$. 
    In other words, the distributions are identical within each segment
    \begin{equation}
    \gT_l \coloneqq \big[(l-1)\lfloor T/m\rfloor + 1 ,\min\left\{l\lfloor T/m\rfloor, T\right\}\big],
    \label{eq:defn-Tl-batch-lb}
    \end{equation}
    where each batch $\mathcal{T}_l$ (except for the $(m+1)$-th batch) contains $\lfloor T/m \rfloor$ time instances.   
\end{enumerate}


We now verify that $\mathcal{L}'$ belongs to both  $\mathcal{L}_3(N^{\mathsf{cp}})$ and $\mathcal{L}_4(\mathsf{TV}_T)$ under an appropriate choice of parameters.
\begin{itemize}
\item Regarding the change-point setting, it is clearly seen that $\mathcal{L}'\subset \mathcal{L}_3(N^{\mathsf{cp}})$ when $m=N^{\mathsf{cp}}$.

\item 
Turning to the smooth drift setting, we claim that $\gL'\subset \gL_4(\mathsf{TV}_T)$ for sufficiently small $\epsilon$.
To justify this, consider any two distributions $\gD_1,\gD_2\in \gI$, and suppose that their densities can be written as
$p_{\gD_i}(y)=f(y\mid V_{1:k}^{(i)})$ for $i=1,2$. Assuming that $\epsilon\leq 1/2$, we can calculate
\begin{equation}\label{eq:lower_bd_1}
\begin{aligned}
    &\mathsf{TV}(\gD_1 , \gD_2) 
    = \frac{1}{2}\int_0^1\abs{f_1(y\mid V_{1:k}^{(1)}) - f_1(y\mid V_{1:k}^{(2)})}\rd y= \frac{1}{2}\ssum{j}{1}{k}\int_{I_j}\abs{\frac{1+\epsilon V_j^{(1)}}{1 + \epsilon \overline{V}^{(1)}} - \frac{1+\epsilon V_j^{(2)}}{1 + \epsilon \overline{V}^{(2)}}}\rd y\\
    &\le \ssum{j}{1}{k}\frac{2}{k}\abs{\epsilon \big(V_j^{(1)} + \overline{V}^{(2)} - V_j^{(2)} - \overline{V}^{(1)}\big) + \epsilon^2\big(V_j^{(1)} \overline{V}^{(2)} - V_j^{(2)}\overline{V}^{(1)}\big)}
    \le
    \ssum{j}{1}{k}\frac{2}{k}(4\epsilon + 2\epsilon^2) \le 10\epsilon,
\end{aligned}
\end{equation}
where the inequalities in the last line result from $\epsilon \le 1/2$ and $\max\limits_{j\in [k] , i = 1,2}\{|V_j^{(i)}|,|\overline{V}^{(i)}|\} \le 1$.  As a result, if we take $\epsilon \le \min\{\mathsf{TV}_T/(20m), 1/2\}$, then for any $\{\gD_t\}_{t=1}^T$ we have
\begin{align*}
\ssum{t}{1}{T-1}\mathsf{TV}(\gD_t, \gD_{t+1}) &= \ssum{j}{1}{m}\mathsf{TV}\left(\gD_{(j-1)\lfloor\frac{T}{m}\rfloor + 1}, \gD_{j\lfloor\frac{T}{m}\rfloor+1}\right)\\
& \le m \sup\limits_{\gD_1, \gD_2 \in \gI}\mathsf{TV}(\gD_1, \gD_2)
\overset{\eqref{eq:lower_bd_1}}\le 20m\epsilon \le \mathsf{TV}_T,
\end{align*}
thus ensuring that $\gL^\prime \subset \gL_4(\mathsf{TV}_T)$.

\end{itemize}

\paragraph{Step 3: establishing a general regret lower bound.}
We now look at the cumulative regret within each batch $\gT_i$ ($i = 1, \ldots, m+1$) defined in \eqref{eq:defn-Tl-batch-lb}.  
To this end, we first establish the following lower bound on the coverage gap for a single time point; the proof is deferred to \Cref{sec:prf:lem:coverage_gap_lower_bd}.  

\begin{lemma}\label{lem:coverage_gap_lower_bd}
Suppose that $0<\alpha \leq 1/2$, $k \ge \frac{256K}{\alpha}$ and $\epsilon \le  \min\left\{\frac{\alpha^{5/2}}{200},\frac{1}{64}\sqrt{\frac{\alpha k}{n\log (2nk/\alpha^2)}}\right\}$. 
Consider any $n\geq 1$. 
Then, for any admissible algorithm $\pi \in \mathcal{P}_K$ (cf.~\eqref{eq:defn-PK-interval}), the set-value mapping $\gC$ induced by $\pi$ satisfies
    \begin{equation}\label{eq:coverage_gap_lower_bd}
    \begin{aligned}
    \frac{1}{\abs{\gI}}\sum\limits_{\gD\in \gI}\Big\{ \mathop{\EB}\limits_{Y_{1:n}\sim \gD^n,\, U \sim p_U}
\big[\big|\,\PB\big(Y_{n+1} \in \gC(Y_{1:n}, U)\,|\, Y_{1:n},U\big) - (1-\alpha)\,\big|\big]\Big\}\ge \frac{\alpha^{\frac{5}{2}}\epsilon}{144\sqrt{k}} - \frac{\alpha^{6}}{4n^3k^3},
\end{aligned}
    \end{equation}
    where $U$ is independently drawn from an arbitrary continuous distribution with density function $p_U(\cdot)$, and $\mathcal{I}$ is defined in Step 1 (with the parameter $\epsilon$). 
\end{lemma}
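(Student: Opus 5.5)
The proof averages over the prior $V_{1:k}\sim\mathrm{Unif}(\{-1,1\}^k)$, which is exactly the uniform average over $\gI$ on the left of \eqref{eq:coverage_gap_lower_bd}. The strategy is to reduce the coverage gap of any $K$-interval set $\gC$ to the number of subintervals $I_j$ on which $\gC$ puts the ``wrong'' mass given the unknown signs $V_j$. Then the signs $V_j$ are shown to be essentially unlearnable from $n$ samples when $\epsilon\lesssim\sqrt{k/n}$.

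\textbf{Step 1: structure of $\gC$.} Fix $(Y_{1:n},U)$ and write $\gC=\gC(Y_{1:n},U)$, a union of at most $K$ intervals. The set $\gC$ has at most $2K$ endpoints, so all but at most $2K$ of the subintervals $I_j$ are either fully contained in $\gC$ or disjoint from it. Let $a_j\coloneqq k\,|\gC\cap I_j|\in[0,1]$, which equals $0$ or $1$ except on at most $2K$ indices. The coverage is then
\[
\PB(Y_{n+1}\in\gC\mid V)=\frac{1}{k(1+\epsilon\overline V)}\sum_{j=1}^k a_j(1+\epsilon V_j).
\]
Using $k\ge 256K/\alpha$, the ``full'' indices number roughly $(1-\alpha)k$ up to $O(\alpha k)$; otherwise the gap is already of constant order $\gtrsim\alpha$, which dominates the target. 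The gap is affine in the independent signs $V_j$ with coefficients $\asymp\epsilon/k$, and these coefficients are nonzero on a set of $\Theta(\alpha k)$ indices: either the covered set or the uncovered set, both of size $\gtrsim \alpha k$.

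\textbf{Step 2: anti-concentration.} Suppose, contrary to fact, that the posterior of each $V_j$ given $Y_{1:n}$ stayed essentially uniform and independent across $j$. Then the centred sum $\frac{\epsilon}{k}\sum_j (a_j-\bar a)V_j$ has standard deviation $\asymp \epsilon\sqrt{\alpha k}/k=\epsilon\sqrt{\alpha/k}$. A Paley--Zygmund or Khintchine-type argument then gives $\EB|\text{gap}|\gtrsim \epsilon\sqrt{\alpha/k}$ times powers of $\alpha$, matching $\alpha^{5/2}\epsilon/(144\sqrt k)$. The restriction $\epsilon\le\alpha^{5/2}/200$ controls the nonlinearity from the normalizer $1+\epsilon\overline V$, making it lower order.

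\textbf{Step 3: the signs are nearly unidentifiable (main obstacle).} The $V_j$ are independent a priori. Given $Y_{1:n}$, the posterior factorizes through the counts $N_j=\#\{i:Y_i\in I_j\}$, up to coupling through $\overline V$. Each $N_j$ is approximately $\mathrm{Bin}(n,(1+\epsilon V_j)/k)$. The posterior log-odds of $V_j$ are $\approx 2\epsilon N_j-2\epsilon n/k$, of size $\epsilon\sqrt{n/k}\cdot\sqrt{\log(nk/\alpha^2)}$ on a high-probability event, by a Chernoff bound with a union bound over $j$. This is small by the assumed bound $\epsilon\le\frac1{64}\sqrt{\alpha k/(n\log(2nk/\alpha^2))}$, so each posterior $\PB(V_j=1\mid Y_{1:n})\in[1/2\pm c\sqrt\alpha]$ with $c$ small. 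Conditioning on the data, and on $U$ which is independent, we apply Step 2 to the posterior. There $\gC$ is fixed and the $V_j$ are conditionally independent, apart from the weak dependence through $\overline V$, which is handled by bounded-difference arguments. The shifted means contribute a deterministic offset. Because the gap is an absolute value around a fixed point, a nonzero mean only helps, so we lower bound via the variance term and keep the posterior variances $\ge 1-O(\alpha)$. The exceptional Chernoff event has probability $\le\alpha^6/(n^3k^3)$ by the choice of logarithm. It contributes at most that much, since the gap is bounded by $1$, which yields the subtracted term $\alpha^6/(4n^3k^3)$.

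I expect Step 3 to be the delicate part. It requires controlling the posterior uniformly over $j$ while treating $\overline V$'s global coupling, and converting the conditional anti-concentration into the stated constants.
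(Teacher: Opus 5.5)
The gap is in Step~3. You claim that, off an event of probability at most $\alpha^6/(n^3k^3)$, every posterior marginal satisfies $\PB(V_j=1\mid Y_{1:n})\in[1/2\pm c\sqrt\alpha]$. Your reason is that the log-odds $\mathrm{lo}_j=N_j\log\frac{1+\epsilon}{1-\epsilon}-\frac{2\epsilon n}{k}\approx 2\epsilon(N_j-n/k)$ are $O\big(\epsilon\sqrt{(n/k)\log(nk/\alpha^2)}\big)$ uniformly in $j$. That sub-Gaussian size only holds when $n/k\gtrsim\log(nk/\alpha^2)$.

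The Poisson regime breaks it, e.g.\ $k\asymp n$, which the hypotheses allow. There $\max_j N_j\gtrsim\log n/\log\log n$ with probability tending to one. Meanwhile the hypotheses permit $\epsilon\asymp\sqrt{\alpha/\log n}$, since for large $n$ the cap $\alpha^{5/2}/200$ is not binding. The largest log-odds is then of order $\sqrt{\alpha\log n}/\log\log n\to\infty$. So some posteriors are far from uniform on a \emph{typical} event, and your exceptional set has probability near one, not $\alpha^6/(n^3k^3)$. Neither constraint on $\epsilon$ controls the linear Bernstein term $\epsilon\log(\cdot)$, so better constants cannot rescue the uniform claim.

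What the argument actually needs is an averaged statement, and two other steps need to be made precise.

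First, the coupling through $\overline V$: ``bounded-difference arguments'' do not give anti-concentration at the $\sqrt{k}$ scale. You need a global comparison of the posterior with a product measure. The paper uses $(1+x)^n\le e^{nx}$ to get $p(V_{1:k}\mid y,u)\ge\mathcal G(y)\prod_j q_j(V_j\mid y)$, where $q_j(V_j\mid y)\propto(1+\epsilon V_j)^{N_j}e^{-\epsilon V_j n/k}$ and $\mathcal G(y)\ge\frac13\PB\big(|\overline V|\le\sqrt{6\log(2nk/\alpha^2)/k}\mid y\big)$. The subtracted $\alpha^6/(4n^3k^3)$ is $\frac13$ times a Hoeffding bound on $|\overline V|$ under the prior, not a Chernoff bound on counts.

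Second, ``a nonzero mean only helps'' is false as stated: $\EB|W+c|$ can be smaller than $\EB|W-\EB W|$. Use symmetrization, $\EB|W-m|\ge\frac12\EB|W-W'|$ with $W'$ an independent copy. This is how the paper removes both the offset $\mathsf{Leb}(\gC)-(1-\alpha)$ and the means of $V_j$ under the product measure.

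Under the product measure, each index contributes at most $\widetilde a_j^2\le 1/k^2$ to a variance of order $\alpha/k$. So it suffices that $\EB\sum_j(2q_j-1)^2\ll\alpha k$, where $q_j=q_j(1\mid y)$. The paper proves $\EB\sum_j(2q_j-1)^2\lesssim n\epsilon^2\log(2nk/\alpha^2)$ via Pinsker, the chain rule and $I(V_{1:k};Y_{1:n})\le 36n\epsilon^2$. Your log-odds viewpoint gives it more directly. Since $2q_j-1=\tanh(\mathrm{lo}_j/2)$, we have $|2q_j-1|\le|\mathrm{lo}_j|/2$. Combined with $\EB\sum_j(N_j-n/k)^2\le n+5n^2\epsilon^2/k$, this yields $\EB\sum_j(2q_j-1)^2\lesssim n\epsilon^2\ll\alpha k$.

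With these repairs your Khintchine route goes through. Conditionally on $(y,u)$, the bound $\sqrt{X}\ge X/\sqrt{\sum_j\widetilde a_j^2}$ for $X=\sum_j\widetilde a_j^2\delta_j$ can even replace the paper's Paley--Zygmund step.
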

With this intermediate result in hand, we can now analyze the cumulative coverage gap within each batch $\gT_i$ (cf.~\eqref{eq:defn-Tl-batch-lb}). In fact, letting $\tau_i \coloneqq (i-1)\left\lfloor T/m\right\rfloor + 1$, one can write
\begin{align*}
\sum_{t\in \gT_i} \EB\Big[\big|\,\PB\big(Y_t \in \gC(Y_{1:t-1},U)&\,|\, Y_{1:t-1}, U\big) -(1-\alpha)\,\big|\Big]\\
&= \sum_{t \in \gT_i}
\EB\bigg[
\Big|\,
\PB\Big(Y_t \in \gC(Y_{1:\tau_i-1}, Y_{\tau_i:t-1}, U)\,\big|\, Y_{1:t-1},U\Big)
- (1 - \alpha)
\,\Big|
\bigg].
\end{align*}
Note that by construction, the distribution selected for each batch is independent of the distributions assigned to all preceding batches. 
Therefore, we can view $(Y_{1:\tau_i-1}, U)$ jointly as a new random variable 
$\widetilde{U} \sim P_{\widetilde{u}}$ for some distribution $P_{\widetilde{u}}$,  which is independent of all randomness within $\gT_i$. 
Consequently, we shall write the prediction set 
$\gC(Y_{1:\tau_i-1},Y_{\tau_i:t-1},  U)$ 
as $\gC(Y_{\tau_i:t-1}, \widetilde{U})$ in the sequel (as long as it is clear from the context), in order to underscore the role of $Y_{\tau_i:t-1}$. 
Armed with this simplified notation, we define, 
for any given distribution $\gD$ and any index $i \in [m+1]$, the cumulative regret over batch $\mathcal{T}_i$ as
\begin{align*}
\mathsf{regret}_{\pi}(\gD, \gT_i) &\coloneqq \sum\limits_{t\in \gT_i}\EB\left[\abs{
\PB\Big(Y_t \in \gC(Y_{1:\tau_i-1}, Y_{\tau_i:t-1}, U)\,\big|\, Y_{1:t-1},U\Big)
- (1 - \alpha)
}\right] \\
& = \sum\limits_{t\in \gT_i}\EB\left[\abs{
\PB\Big(Y_t \in \gC( Y_{\tau_i:t-1}, \widetilde{U})\,\big|\, Y_{\tau_i:t-1}, \widetilde{U}\Big)
- (1 - \alpha)
}\right].
\end{align*}
Applying Lemma~\ref{lem:coverage_gap_lower_bd} for each time $t$, we reach
\begin{equation}\label{eq:one_seg_lower_bound_full}
\begin{aligned}
\frac{1}{\abs{\gI}}\sum\limits_{\gD\in \gI}\mathsf{regret}_{\pi}(\gD, \gT_i) &=
    \frac{1}{\abs{\gI}}\sum\limits_{\gD\in \gI}\sum\limits_{t\in \gT_i}\EB\Big[
\abs{
\PB\Big(Y_t \in \gC(Y_{\tau_i:t-1}, \widetilde{U})\,\big|\, Y_{\tau_i:t-1}, \widetilde{U}\Big)
- (1 - \alpha)
}
\Big]\\
&= \sum\limits_{t\in \gT_i}\left\{\frac{1}{\abs{\gI}}\sum\limits_{\gD\in \gI}
\EB\Big[
\abs{
\PB\Big(Y_t \in \gC(Y_{\tau_i:t-1}, \widetilde{U})\,\big|\, Y_{\tau_i,t-1}, \widetilde{U}\Big)
- (1 - \alpha)
}
\Big]
\right\}\\
&\ge \sum\limits_{t\in \gT_i}  \left( \frac{\alpha^{\frac{5}{2}}\epsilon}{144\sqrt{k}} - \frac{\alpha^{6}}{4(T/m)^3k^3} \right) 
= \left( \frac{\alpha^{\frac{5}{2}}\epsilon}{144\sqrt{k}} - \frac{\alpha^{6}}{4(T/m)^3k^3} \right)|\gT_i|,
\end{aligned}
\end{equation}
provided that %
\begin{align}\epsilon \le \min\left\{\frac{\alpha^{\frac{5}{2}}}{200}, \frac{1}{64}\sqrt{\frac{\alpha m  k}{T\log(Tk/\alpha m)}}\right\}.
\label{eq:eps-UB-minimax-lb-full}
\end{align}

Now, putting all batches together yields the following regret lower bound: for any algorithm $\pi \in \mathcal{P}_K$, 
\begin{equation}\label{eq:regret_lower_bound_with_eps}
\begin{aligned}
\mathsf{regret}_\pi(\gL', T, K) &= 
    \sup\limits_{\{\gD_t\}_{t=1}^T\in \gL^\prime} \mathsf{regret}_\pi \big(\gD_{1:T}, T\big) = \ssum{i}{1}{m+1}\sup\limits_{\gD \in \gI}\mathsf{regret}_{\pi}(\gD, \gT_i)\\
    &\ge \ssum{i}{1}{m+1}\frac{1}{\abs{\gI}}\sum\limits_{\gD\in\gI}\mathsf{regret}_{\pi}(\gD, \gT_i) \overset{\eqref{eq:one_seg_lower_bound_full}}{\ge} \left( \frac{\alpha^{\frac{5}{2}}\epsilon}{144\sqrt{k}} - \frac{\alpha^{6}}{4n^3k^3} \right)T.
\end{aligned}
\end{equation}

\paragraph{Step 4: instantiating the general lower bound to two drift settings.} 
It remains to connect the above lower bound to the two distribution-drift settings, which we discuss separately below.
\begin{itemize}
    \item \textit{The change-point setting.}
    In this drift scenario, setting $m=N^{\mathsf{cp}}$ ensures that $\mathcal{L}'\subset \mathcal{L}_3(N^{\mathsf{cp}})$ (as discussed in Step 2). Then, taking $k = \frac{256K}{\alpha}$ and $\epsilon = \min\Big\{\frac{\alpha^{5/2}}{200},\frac{1}{64}\sqrt{\frac{\alpha k (N^{\mathsf{cp}}+1)}{T\log (Tk/\alpha)}}\Big\}$  in \eqref{eq:regret_lower_bound_with_eps} yields
    \begin{align*}
        \mathsf{regret}_{\pi}\big(\gL_3(N^{\mathsf{cp}}), T,K\big) \ge \mathsf{regret}_{\pi}\big(\gL', T,K\big) &\ge \frac{1}{300}\min\Bigg\{\frac{\alpha^5 T}{200\sqrt{k}}, \frac{\alpha^3}{16} \sqrt{\frac{(N^{\mathsf{cp}}+1)T}{\log (Tk/\alpha)}}\Bigg\}\\
        &= \widetilde{\Omega}\Bigg(\min\left\{\frac{T}{\sqrt{K}},  \sqrt{(N^{\mathsf{cp}}+1)T}\right\}\Bigg).
    \end{align*}
    
    \item  \textit{The smooth drift setting.}
    In order to simultaneously satisfy \eqref{eq:eps-UB-minimax-lb-full} and $\gL^\prime \subset \gL_4(\mathsf{TV}_T)$ (which needs $\epsilon \le \min\{\mathsf{TV}_T/(20m), 1/2\}$ as discussed in Step 2), we take $\epsilon$ to be
    $$\epsilon = \min\left\{\frac{\alpha^{\frac{5}{2}}}{200}, \frac{\mathsf{TV}_T}{20m}, \frac{1}{64}\sqrt{\frac{\alpha mk}{T\log(Tk/\alpha m)}}\right\}.
    $$
    Substitution into inequality~\eqref{eq:regret_lower_bound_with_eps} leads to
\begin{equation}\label{eq:regret_lower_bound_smooth_1}
\mathsf{regret}_{\pi}\big(\gL_4(\mathsf{TV}_T), T,K\big) \ge
\mathsf{regret}_{\pi}(\gL', T,K) \ge \frac{\alpha^{\frac{5}{2}}T}{300\sqrt{k}}\min\left\{\frac{\alpha^{\frac{5}{2}}}{200},\frac{\mathsf{TV}_T}{20 m}, \frac{1}{64}\sqrt{\frac{\alpha mk}{T\log(Tk/\alpha m)}}\right\}.
\end{equation}
We now divide into two cases. 
\begin{itemize}
\item If $\mathsf{TV}_T\sqrt{\frac{\alpha T}{12K}} \ge 1$,
then let us take $m = \frac{\mathsf{TV}_T^{\frac{2}{3}}T^{\frac{1}{3}}\log^{\frac{2}{3}}(Tk/\alpha)}{(\alpha k)^{\frac{1}{3}}}$ and $k = \frac{256 K}{ \alpha}$, giving rise to
\begin{align*}
\frac{\alpha^{\frac{5}{2}}T}{300\sqrt{k}}\min\left\{\frac{\alpha^{\frac{5}{2}}}{200},\frac{\mathsf{TV}_T}{20 m}, \sqrt{\frac{\alpha mk}{2^8T\log(Tk/\alpha m)}}\right\}
&= \widetilde{\Omega}\Bigg(\frac{T}{\sqrt{k}}\min\bigg\{1, \Big( k T^{-1}\mathsf{TV}_T\Big)^{\frac{1}{3}}\bigg\}\Bigg)\\
&= \widetilde{\Omega}\left( \min\left\{ \frac{ T}{\sqrt{K}},
\frac{ \mathsf{TV}_T^{\frac{1}{3}} T^{\frac{2}{3}}}{K^{\frac{1}{6}}}
\right\}\right).
\end{align*}
Plugging this into \eqref{eq:regret_lower_bound_smooth_1} yields
\begin{equation}\label{eq:regret_lower_bound_smooth_2}
\mathsf{regret}_{\pi}\big(\gL_4(\mathsf{TV}_T), T,K\big) \geq  \widetilde{\Omega}\left( \min\left\{ \frac{ T}{\sqrt{K}},
\frac{ \mathsf{TV}_T^{\frac{1}{3}} T^{\frac{2}{3}}}{K^{\frac{1}{6}}}
\right\}\right)
\end{equation}
\item Next, consider the case with $\mathsf{TV}_T\sqrt{\frac{\alpha T}{12 K}} < 1$. Note that for any $\gD\in \gI$, the constant distribution sequence $\gD_{1:T}$ with $\gD_1=\cdots=\gD_T=\gD$ belongs to $\gL_4(\mathsf{TV}_T)$, since its cumulative variation equals $0$.
Then one can apply \Cref{lem:coverage_gap_lower_bd} with $\epsilon = \min\left\{\frac{\alpha^{5/2}}{200}, \sqrt{\frac{\alpha K}{2^8T\log(TK/\alpha)}}\right\}$ and $k = \frac{256}{\alpha}K$ to the entire horizon $[T]$ to arrive at
\begin{align}
\mathsf{regret}_{\pi}\big(\gL_4(\mathsf{TV}_T), T, K\big) &\ge \sup\limits_{\gD\in \gI}\mathsf{regret}_{\pi}(\gD^T, T, K) \ge \mathop{\EB}\limits_{\gD \in \gI}\bigg[\ssum{t}{1}{T}\Big|\, \PB\big(Y_t \in \gC(Y_{1:t-1})\big)-(1-\alpha)\,\Big|\bigg]
\notag \\
 &=\widetilde{\Omega}\bigg(\min\left\{\frac{ T}{\sqrt{K}}, \sqrt{T}\right\}\bigg).
 \label{eq:regret_lower_bound_smooth_3}
\end{align}
\end{itemize}
Combining the bounds \eqref{eq:regret_lower_bound_smooth_2} and \eqref{eq:regret_lower_bound_smooth_3} 
 establishes the desired lower bound for the smooth drift setting.
\end{itemize}
The proof of Theorem~\ref{thm:lower_bd_gnr_regret} is thus complete.

\subsubsection{Proof of Lemma~\ref{lem:coverage_gap_lower_bd}}\label{sec:prf:lem:coverage_gap_lower_bd}
Throughout this proof, we shall often write $\gC(Y_{1:n}, U)$ simply as $\gC$, as long as it is clear from the context.  
For each $j = 1, \ldots, k$, we define
\begin{subequations}
\label{eq:defn-aj-Y-U}
\begin{align}
a_j= a_j(Y_{1:n}, U) \coloneqq \mathsf{Leb}(\gC \cap I_j),
\end{align}
where we often abbreviate $a_j(Y_{1:n},U)$ as $a_j$. 
Here, $I_j$ is given in \eqref{eq:def_I_j} and $\mathsf{Leb}(\cdot)$ denotes the Lebesgue measure on the interval $[0,1]$. 
Further, let 
\begin{equation}\overline{a} \coloneqq \frac{1}{k}\sum_{j=1}^{k} a_j
\qquad \text{and}\qquad 
\widetilde{a}_j \coloneqq a_j - \overline{a} ~~(j = 1,\ldots, k).
\end{equation}
\end{subequations}
The following lemma characterizes the range of $\sum_{j=1}^{k}\widetilde{a}_j^2$, which will be used repeatedly in our analysis. Its proof is deferred to \Cref{sec:prf:lem:range_of_sum_tilde_a_j2}.
\begin{lemma}\label{lem:range_of_sum_tilde_a_j2}
For any given realization of $Y_{1:n}$ and $U$, it always holds that
    \[
     \frac{1}{k}\bigg(\mathsf{Leb}(\gC)\big(1-\mathsf{Leb}(\gC)\big) - \frac{2K}{k}\bigg)_+\le \sum_{j=1}^k \widetilde{a}_j^2 \le \frac{\mathsf{Leb}(\gC)\big(1 - \mathsf{Leb}(\gC)\big)}{k}
     \leq \frac{1}{4k}.
    \]
\end{lemma}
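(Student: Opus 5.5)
Write $c\coloneqq\mathsf{Leb}(\gC)$ and $a_j=\mathsf{Leb}(\gC\cap I_j)\in[0,1/k]$. Then $\sum_j a_j=c$, so $\overline a=c/k$ and $\sum_j\widetilde a_j^2=\sum_j a_j^2-c^2/k$. Both bounds are therefore statements about $\sum_j a_j^2$ under the constraints $0\le a_j\le 1/k$ and $\sum_j a_j=c$. The upper bound will follow from the box constraint alone. The lower bound must use the $K$-interval structure of $\gC$: without it, the pathological sets in \eqref{eq:pathological_Cn} make every $\widetilde a_j$ vanish.

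\emph{Upper bound.} Since $0\le a_j\le 1/k$, we have $a_j^2\le a_j/k$, and hence $\sum_j a_j^2\le c/k$. This gives $\sum_j\widetilde a_j^2\le c/k-c^2/k=c(1-c)/k$. The last inequality $c(1-c)\le 1/4$ is elementary.

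\emph{Lower bound.} I would split the cells $I_j$ into three types: \emph{full} cells ($a_j=1/k$), \emph{empty} cells ($a_j=0$), and \emph{partial} cells ($0<a_j<1/k$). A partial cell must contain an endpoint of one of the at most $K$ intervals forming $\gC$, since otherwise $\gC\cap I_j$ would be all of $I_j$ or empty. There are at most $2K$ endpoints, so there are at most $2K$ partial cells. Let $F$ denote the number of full cells and $P\le 2K$ the number of partial cells. Then
\[
\frac{F}{k}\le c\le\frac{F+P}{k},
\]
and
\[
\sum_j a_j^2\ge\frac{F}{k^2}.
\]
Combining these,
\[
\sum_j\widetilde a_j^2\ge\frac{F}{k^2}-\frac{c^2}{k}\ge\frac{1}{k}\Big(c-\frac{P}{k}\Big)-\frac{c^2}{k}\ge\frac1k\Big(c(1-c)-\frac{2K}{k}\Big).
\]
Since $\sum_j\widetilde a_j^2\ge0$ trivially, we may take the positive part, which yields the stated lower bound.

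The only step that needs care is the claim that a partial cell contains an interval endpoint. I would check it using the convention that half-open cells and degenerate intervals count endpoints with multiplicity, and that an interval endpoint may coincide with a cell boundary. Coinciding endpoints do not create partial cells, so the bound $P\le 2K$ still holds.
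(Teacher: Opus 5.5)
Your proposal is correct and follows the paper's proof essentially step for step. The upper bound comes from $a_j^2\le a_j/k$. The lower bound comes from counting fully covered cells, using that at most $2K$ cells are partially covered to get $F\ge ck-2K$, and then $\sum_j a_j^2\ge F/k^2$. Your argument that every partial cell contains an interval endpoint in its interior is more explicit than the paper, which simply attributes the $2K$ bound to the structure of $\gC$.
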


We now embark on the proof, which contains a few steps below.

\paragraph{Step 1: an expression for the training-conditional coverage gap.}
Under the distribution $\gD$ with parameters $V_{1:k}$ (see \eqref{eq:construct-fx-V}), we can derive
%
\begin{align*}
    \PB\big(Y_{n+1}\in \gC\,|\, Y_{1:n},U\big) &= \ssum{j}{1}{k}\int \frac{1+\epsilon V_j}{1 + \epsilon \overline{V}}\mathbbm{1}\{y \in \gC\cap I_j\} \rd y\\
    &= \ssum{j}{1}{k}\frac{1+\epsilon V_j}{1 + \epsilon \overline{V}}a_j
    = \ssum{j}{1}{k}a_j + \frac{\epsilon}{1 + \epsilon \overline{V}}\ssum{j}{1}{k}(V_j - \overline{V})a_j\\
    &\overset{\mathrm{(a)}}{=} \mathsf{Leb}(\gC) + \frac{\epsilon}{1 + \epsilon \overline{V}} \ssum{j}{1}{k} (V_j - \overline{V}) a_j
    = \mathsf{Leb}(\gC) + \frac{\epsilon}{1 + \epsilon \overline{V}} \ssum{j}{1}{k}V_j \widetilde{a}_j,
\end{align*}
where (a) follows since the sets $\{\gC \cap I_j\}_{j=1}^k$ are mutually disjoint and the last equality results from
\[
\ssum{j}{1}{k}\overline{V}a_j = k\overline{V}\frac{1}{k}\ssum{j}{1}{k}a_j = \ssum{j}{1}{k}V_j \overline{a}.
\]
Note that each distribution $\gD$ in $\gI$ (cf.~\eqref{eq:defn-I-dist-class-each-V}) is uniquely determined by the sequence $V_{1:k}\in \{\pm 1\}^k$. 
To make the subsequent analysis clearer and more concise, 
we define
$$l(V_{1:k}, \gC) \coloneqq \abs{\PB\big(Y_{n+1} \in \gC\,|\, Y_{1:n},U) - (1 - \alpha)},$$
which, according to the above calculation, can be written as
\begin{equation}\label{eq:def_l_VC}
    l(V_{1:k}, \gC) = \bigg|\mathsf{Leb}(\gC) - (1-\alpha)
     + \frac{\epsilon}{1 + \epsilon \overline{V}}\ssum{j}{1}{k}V_j \widetilde{a}_j
    \bigg|.
\end{equation}

As can be easily seen, averaging over $\gD\in\gI$ on the left-hand side of \eqref{eq:coverage_gap_lower_bd} is equivalent to taking expectation with respect to $V_{1:k}$ drawn from the uniform distribution $\gD_{V}^k \coloneqq \mathsf{Unif}\big(\{\pm 1\}^k\big)$, namely,
\begin{align}
\frac{1}{\abs{\gI}}\sum\limits_{\gD\in \gI} \mathop{\EB}\limits_{Y_{1:n}\sim \gD^n,\, U \sim p_U}
&\Big[\big|\,\PB\big(Y_{n+1} \in \gC(Y_{1:n}, U)\,|\, Y_{1:n},U\big) - (1-\alpha)\,\big|\Big]\notag\\
&=
\mathop{\EB}\limits_{V_{1:k}\sim \gD_{V}^k}\Big[\mathop{\EB}\limits_{\gD^n \times p_U}\big[\abs{\PB\big(Y_{n+1} \in \gC\,|\, Y_{1:n},U\big) - (1 - \alpha)}\big]\Big].
\label{eq:equiv-E-Rademaker-123}
\end{align}
By fully expanding the right-hand side of \eqref{eq:equiv-E-Rademaker-123} w.r.t.~the generative process of $U,~V_{1:k}$ and $Y_{1:n}$, we obtain
\begin{align}
    \mathop{\EB}\limits_{V_{1:k}\sim \gD_V^k}\Big[\mathop{\EB}\limits_{\gD^n \times p_U}\big[\big|\,\PB\big(Y_{n+1} \in \gC&\,|\, Y_{1:n},U\big) - (1 - \alpha)\,\big|\big]\Big]\notag\\
    &= \frac{1}{2^k}\sum\limits_{V_{1:k}}\Bigg\{\int_{[0,1]^n\times  \gU}l(V_{1:k}, \gC)\bigg(\prod\limits_{i=1}^n f(y_i\mid V_{1:k})\bigg)p_U(u)\rd y_{1:n}\rd u\Bigg\},\label{eq:all_prob_extention}
\end{align}
where $\mathcal{U}$ is the support of $U$, and the summation  ranges over all $2^k$ choices of  $V_{1:k}$ in $\{\pm 1\}^k$. 
Denote by 
\begin{align}
N_j(y_{1:n}) = \abs{\{i \mid i\in[n], y_i \in I_j\}}
\end{align}
the number of responses that fall in the interval $I_j$; when there is no ambiguity, we abbreviate $N_j(y_{1:n})$ as $N_j$. 
This allows us to express the joint density as
\[
\prod_{i=1}^n f(y_i \mid V_{1:k})
= \prod_{j=1}^k \left(\frac{1 + \epsilon V_j}{1 + \epsilon \overline{V}}\right)^{N_j}
= \frac{\prod_{j=1}^k (1 + \epsilon V_j)^{N_j}}{\big(1 + \epsilon \overline{V}\big)^n},
\]
which combined with~\eqref{eq:all_prob_extention}  yields an expression for the expected training-conditional coverage gap: 
\begin{align}
    &\mathop{\EB}\limits_{V_{1:k}\sim \gD_V^k}\Big[\mathop{\EB}\limits_{\gD^n \times p_U}\big[\big|\PB(Y_{n+1} \in \gC \mid Y_{1:n}, U) - (1 - \alpha)\big|\big]\Big] \notag\\
    &\qquad= \sum\limits_{V_{1:k}}
    \Bigg\{\int\limits_{ [0,1]^n\times \gU} \underset{\eqqcolon\, p(V_{1:k}, y_{1:n}, u)}{\underbrace{\frac{p_U(u)\prod_{j=1}^k(1+\epsilon V_j)^{N_j(y_{1:n})}}{2^k (1+ \epsilon \overline{V})^n} }}l(V_{1:k}, \gC)
    \rd y_{1:n} \rd u\Bigg\}
    \label{eq:cvg_err_to_lVC}.
\end{align}
Here, the term $p(V_{1:k}, y_{1:n}, u)$ represents the joint density of the random tuple $(V_{1:k}, Y_{1:n}, U)$.

\paragraph{Step 2: a lower bound on the coverage gap using auxiliary conditional distributions.}
Now, consider the conditional distribution of $V_{1:k}$ given $y_{1:n}$ and $u$, denoted by $p(V_{1:k} \mid y_{1:n}, u)$. 
%
As it turns out, one can construct a set of auxiliary conditional densities 
$q_j(\cdot \mid y_{1:n}, u)$ for $j = 1, \ldots, k$, 
whose product provides a good approximation to $p(V_{1:k} \mid y_{1:n}, u)$. This is formalized in the following lemma, whose proof is given in \Cref{sec:prf:lem:factor_approx_p}.
\begin{lemma}\label{lem:factor_approx_p}
For any $y_{1:n}$ and $u$, let $p(y_{1:n},u)\coloneqq \sum_{v_{1:k}} p(v_{1:k}, y_{1:n}, u)$.
There exists a collection of conditional distributions 
$\{q_j(V_j \mid y_{1:n}, u)\}_{j=1}^k$ such that
\begin{equation}\label{eq:factor_approx_p_lem}
\begin{aligned}
\mathop{\EB}\limits_{V_{1:k}\sim \gD_V^k}\Big[\mathop{\EB}\limits_{\gD^n \times p_U}\big[\big|\,\PB\big(Y_{n+1} \in \gC\,|\, Y_{1:n},U\big) &- (1 - \alpha)\,\big|\big]\Big] \\
&\ge \frac{1}{3} \int\limits_{[0,1]^n\times \gU} 
\gL_Q(y_{1:n},u)p(y_{1:n}, u)\rd y_{1:n} \rd u 
- \frac{\alpha^{6}}{4n^3 k^3}
\end{aligned}
\end{equation}
as long as $\epsilon \le  \sqrt{\frac{k}{12n\log(2nk/\alpha^2)}}$, where
\begin{align}
\gL_Q(y_{1:n}, u) \coloneqq 
\sum_{V_{1:k}} 
\bigg(\prod_{j=1}^k q_j(V_j \mid y_{1:n}, u)\bigg)\,
l(V_{1:k}, \gC).
\label{eq:defn-Lq-y-u}
\end{align}
\end{lemma}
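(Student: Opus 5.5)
The plan is to identify the posterior $p(V_{1:k}\mid y_{1:n},u)$ explicitly, split it into a product part and a coupling remainder, and show that the remainder only costs a constant factor except on a rare set of samples. From \eqref{eq:cvg_err_to_lVC},
\[
p(V_{1:k}\mid y_{1:n},u)\;\propto\;\prod_{j=1}^k(1+\epsilon V_j)^{N_j}\,(1+\epsilon\overline V)^{-n},
\]
and $p_U(u)$ cancels. The numerator already factorizes across $j$; only $(1+\epsilon\overline V)^{-n}$ couples the coordinates. I would absorb its first-order part into the factors. Concretely, define for $v\in\{\pm1\}$
\[
q_j(v\mid y_{1:n},u)\;\propto\;(1+\epsilon v)^{N_j}\,e^{-n\epsilon v/k},
\]
which uses $e^{-n\epsilon\overline V}=\prod_j e^{-n\epsilon V_j/k}$. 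Setting $R(V_{1:k}) \coloneqq \exp\{-n[\log(1+\epsilon\overline V)-\epsilon\overline V]\}$, we obtain the exact identity
\[
p(V_{1:k}\mid y_{1:n},u)=\frac{\prod_j q_j(V_j)\,R(V_{1:k})}{\EB_{Q}[R]}.
\]
Since $\epsilon\le 1/2$ and $|\overline V|\le 1$, the elementary inequalities $u-u^2\le\log(1+u)\le u$ on $[-1/2,1/2]$ give $1\le R\le \exp(n\epsilon^2\overline V^{\,2})$. Because $R\ge1$, this yields the pointwise bound $p(V\mid y,u)\ge \prod_j q_j(V_j)/\EB_Q[R]$. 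It therefore suffices to show $\EB_Q[R]\le 3$ for all $y_{1:n}$ outside a rare set.

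Next, I would define the good set $\gG\coloneqq\{y_{1:n}:\ |N_j-n/k|\le \sqrt{3(n/k)\log(2nk/\alpha^2)}\ \text{for all } j\}$. Bernstein's inequality for each binomial count $N_j$ (whose success probability is $(1+\epsilon V_j)/(k(1+\epsilon\overline V))$, so centring at $n/k$ costs at most an $O(\epsilon n/k)$ shift that is absorbed), followed by a union bound over $j\in[k]$, should give $\PB(\gG^{\mathrm c})\le \alpha^6/(4n^3k^3)$ under every $V_{1:k}$. This explains the $\log(2nk/\alpha^2)$ in the hypothesis; constants may need adjusting. On $\gG$, the $Q$-mean of $V_j$ is $\tanh\big(\tfrac12 N_j\log\tfrac{1+\epsilon}{1-\epsilon}-n\epsilon/k\big)=O\big(\epsilon|N_j-n/k|+\epsilon^2 N_j\big)$. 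Hence $|\EB_Q\overline V|\lesssim \epsilon\sqrt{(n/k)\log}+\epsilon^2 n/k$. Moreover, $\overline V-\EB_Q\overline V$ is an average of $k$ independent bounded variables and is sub-Gaussian with variance proxy $1/k$. Combining these with $n\epsilon^2\le k/(12\log(2nk/\alpha^2))$, the standard moment generating function bound for squared sub-Gaussians gives
\[
\EB_Q[e^{n\epsilon^2\overline V^{\,2}}]\le \exp\big(2n\epsilon^2(\EB_Q\overline V)^2\big)\,\EB_Q\big[e^{2n\epsilon^2(\overline V-\EB_Q\overline V)^2}\big]\le 3,
\]
because $2n\epsilon^2/k\le 1/6$ and $n\epsilon^2(\EB_Q\overline V)^2=O(1/\log)$. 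This step is the main obstacle: one has to track the bias term $\epsilon^2N_j$ and all constants carefully so that the final factor is exactly $3$.

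To conclude, write the left-hand side of \eqref{eq:factor_approx_p_lem} as $\int p(y_{1:n},u)\sum_V p(V\mid y_{1:n},u)\,l(V,\gC)\,\rd y_{1:n}\,\rd u$. On $\gG$ the previous two steps give an inner sum of at least $\tfrac13\gL_Q(y_{1:n},u)$. On $\gG^{\mathrm c}$ we drop the nonnegative integrand and use $\gL_Q\le 1$, since $l\le1$. This gives a lower bound of $\tfrac13\int\gL_Q\,p - \tfrac13\PB(\gG^{\mathrm c})$, and the tail estimate above then yields \eqref{eq:factor_approx_p_lem}.
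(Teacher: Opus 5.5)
Your first step is the paper's: the same factors $q_j(v)\propto(1+\epsilon v)^{N_j}e^{-n\epsilon v/k}$, and $R\ge 1$ gives $p(V_{1:k}\mid y_{1:n},u)\ge\prod_j q_j(V_j\mid y_{1:n},u)/\EB_Q[R]$. The ratio the paper calls $\mathcal{G}(y_{1:n})$ is exactly $1/\EB_Q[R]$. The gap is in your bound $\EB_Q[R]\le 3$.

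Your bias estimate $|\EB_Q\overline V|\lesssim\epsilon\sqrt{(n/k)\log(2nk/\alpha^2)}$ is coordinatewise. Combined with $n\epsilon^2\le k/(12\log(2nk/\alpha^2))$ it only gives $n\epsilon^2(\EB_Q\overline V)^2\lesssim n^2\epsilon^4\log(2nk/\alpha^2)/k\le k/\bigl(144\log(2nk/\alpha^2)\bigr)$. When $\epsilon$ is near its allowed maximum this is of order $k/\log$, not $1/\log$. Since $k$ is arbitrary relative to the logarithm (in the application $k=256K/\alpha$ with $K$ free), $\exp\{2n\epsilon^2(\EB_Q\overline V)^2\}$ is not bounded by a constant. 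The actual bias is much smaller. Showing that, however, requires using that $\sum_j(N_j-n/k)=0$, which cancels the first-order part of $\EB_Q\overline V=\frac1k\sum_j\tanh(\cdot)$ exactly, and then controlling the higher-order remainders. Your proposal does neither.

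Independently, your good set fails its tail requirement when $n/k\lesssim\log(2nk/\alpha^2)$. In that regime the counts $N_j$ are essentially Poisson. At radius $\sqrt{3(n/k)\log(2nk/\alpha^2)}$, Bernstein's bound decays only like $\exp\{-c\sqrt{(n/k)\log(2nk/\alpha^2)}\}$, nowhere near $\alpha^6/(4n^3k^3)$. The lemma is invoked for every $n\ge1$ with $k$ fixed, so this regime matters.

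The paper avoids both problems by never analysing $Q$. It uses the identity $1/\EB_Q[R]=\EB_{p(\cdot\mid y_{1:n},u)}[1/R]$. On the event $\{|\overline V|\le\sqrt{6\log(2nk/\alpha^2)/k}\}$, the hypothesis on $\epsilon$ gives $\epsilon|\overline V|\le 1/\sqrt{2n}$, hence $1/R\ge 1/e>1/3$. Therefore $1/\EB_Q[R]\ge\frac13\bigl(1-\PB(|\overline V|>\sqrt{6\log(2nk/\alpha^2)/k}\mid y_{1:n},u)\bigr)$. Since $\gL_Q\le 1$, the deficit integrated against $p(y_{1:n},u)$ equals the \emph{prior} probability of the same event for i.i.d.\ Rademacher $V_j$. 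Hoeffding bounds that prior probability by exactly $\alpha^6/(4n^3k^3)$.

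Replacing your bound on $\EB_Q[R]$ (the good set and the sub-Gaussian estimate) with this argument repairs the proof, and no concentration of the counts $N_j$ is needed. Your final assembly step can stay as is.
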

As can be seen, each summand in $\gL_Q(y_{1:n}, u)$ (cf.~\eqref{eq:defn-Lq-y-u}) involves the product distribution based on $\{q_j(\cdot\mid y_{1:n},u)\}$. 
 
\paragraph{Step 3: a decomposition of $\gL_Q(y_{1:n}, u)$.} 
With Lemma~\ref{lem:factor_approx_p}, we now turn to the analysis of the quantity $\gL_Q(y_{1:n}, u)$ (cf.~\eqref{eq:defn-Lq-y-u}).
Given $(y_{1:n}, u)$, define $\gD_Q$ as the product distribution
\begin{align}
\gD_Q(V_{1:k}) \coloneqq \prod_{j=1}^k q_j(V_j \mid y_{1:n}, u),
\label{eq:defn-DQ-product}
\end{align}
and draw an independent copy 
$V_{1:k}^\prime$ of $V_{1:k}$  from the same distribution 
$\gD_Q$. 
Then, invoking \eqref{eq:def_l_VC} and the triangle inequality, we can derive
\begin{align*}
    \gL_Q(y_{1:n}, u) &= \sum_{V_{1:k}} 
\Big(\prod_{j=1}^k q_j(V_j \mid y_{1:n}, u)\Big)\,
l(V_{1:k}, \gC) = \EB_{\gD_Q}\big[l(V_{1:k}, \gC)\big]\\
&= \frac{1}{2}\EB_{\gD_Q}\big[l(V_{1:k}, \gC)\big] + \frac{1}{2}\EB_{\gD_Q}\big[l(V_{1:k}^\prime, \gC)\big]\\
&\overset{\eqref{eq:def_l_VC}}{=}\frac{1}{2}\EB_{\gD_Q^2}\left[
\abs{\mathsf{Leb}(\gC) - (1-\alpha)
     + \frac{\epsilon}{1 + \epsilon \overline{V}}\inner{V_{1:k}}{\widetilde{a}_{1:k}}
    } + \abs{\mathsf{Leb}(\gC) - (1-\alpha)
     + \frac{\epsilon}{1 + \epsilon \overline{V}^\prime}\inner{V_{1:k}^\prime}{\widetilde{a}_{1:k}}
    }
\right]\\
&\ge \frac{1}{2}\EB_{\gD_Q^2}\left[
\abs{\left(\frac{\epsilon}{1+\epsilon \overline{V}}\inner{V_{1:k}}{\widetilde{a}_{1:k}} - \frac{\epsilon}{1 + \epsilon \overline{V}}\inner{V_{1:k}^\prime}{\widetilde{a}_{1:k}}\right)
+ \left(\frac{\epsilon}{1+\epsilon \overline{V}} - \frac{\epsilon}{1+\epsilon \overline{V}^\prime}\right)\inner{V_{1:k}^\prime}{\widetilde{a}_{1:k}}
}
\right]\\
&\ge \frac{1}{2}\EB_{\gD_Q^2}\left[\frac{\epsilon}{1+\epsilon \overline{V}}\big|\inner{V_{1:k} - V_{1:k}^\prime}{\widetilde{a}_{1:k}}\big|\right] 
- \frac{1}{2}\EB_{\gD_Q^2}\left[\abs{
\left(\frac{\epsilon}{1+\epsilon \overline{V}} - \frac{\epsilon}{1+\epsilon \overline{V}^\prime}\right)\inner{V_{1:k}^\prime}{\widetilde{a}_{1:k}}
}\right],
\end{align*}
from which it follows that
\begin{align}
\EB_{Y_{1:n}, U} \big[\gL_Q(Y_{1:n}, U) \big]  &\ge \underbrace{\frac{1}{2}\EB_{Y_{1:n}, U}\left[\EB_{\gD_Q^2}\left[\frac{\epsilon}{1+ \epsilon \overline{V}}\big|\inner{V_{1:k} - V_{1:k}^\prime}{\widetilde{a}_{1:k}}\big|\right]\right]}_{\eqqcolon\, \gL_1} \notag\\
&\quad - \frac{1}{2}\underbrace{\EB_{Y_{1:n},U}\left[
\EB_{\gD_Q^2}\left[\abs{
\left(\frac{\epsilon}{1+\epsilon\overline{V}} - \frac{\epsilon}{1+\epsilon\overline{V}^\prime}\right)\inner{V_{1:k}^\prime}{\widetilde{a}_{1:k}}
}\right]
\right]}_{\eqqcolon\, \gL_2}.
\label{eq:E-LQ-lb-L1-L2}
\end{align}
This leaves us two terms to control. 

\paragraph{Step 4: lower bounds on $\mathcal{L}_1$ and $\mathcal{L}_2$.}
Next, we would like to control the two terms on the right-hand side of \eqref{eq:E-LQ-lb-L1-L2} separately. 
Before continuing, 
we introduce some additional notation: for any $j = 1, 2, \ldots, k$, define
\begin{align}
\widetilde{V}_j \coloneqq \frac{1}{2}(V_j - V_j^\prime), 
\qquad 
\check{V} \coloneqq \frac{1}{2}(\overline{V} - \overline{V}^\prime), 
\qquad 
\widetilde{V}_{1:k} \coloneqq \frac{1}{2}(V_{1:k} - V_{1:k}^\prime).
\label{eq:defn-Vtilde-check-j}
\end{align}
It is straightforward to see that since $V_j, V_j^\prime \in \{\pm 1\}$, 
we have $\widetilde{V}_j \in \{-1, 0, 1\}$ for all $j = 1, 2, \ldots, k$.

\begin{itemize}

\item 
With regards to $\gL_1$, recognizing that $\abs{\epsilon \overline{V}} \le \abs{\epsilon} \le 1/4$, one has
\begin{align}
\gL_1 \ge \frac{4\epsilon}{3}\EB_{Y_{1:n},U} \left[\EB_{\gD_Q^2}\left[\big|\big\langle \widetilde{V}_{1:k},\widetilde{a}_{1:k} \big\rangle \big| \right]\right],
\label{eq:gL1-lower-bound-123}
\end{align}
the right-hand side of which is further controlled by the following lemma. The proof can be found in \Cref{sec:prf:lem:info_lower_bound}. 
%
\begin{lemma}\label{lem:info_lower_bound}
Let $k \ge 64/\alpha$ and $\epsilon \le \min\left\{\frac{\alpha^{5/2}}{200}, \frac{1}{64}\sqrt{\frac{\alpha k}{ n \log(2nk/\alpha^2)}}\right\}$. For any admissible algorithm whose resulting prediction set $\gC$ is the union of at most $k$ intervals, we have
\[
\EB_{Y_{1:n},U} \left[\EB_{\gD_Q^2}\left[\big|\big\langle\widetilde{V}_{1:k},\widetilde{a}_{1:k}\big\rangle\big|\right]\right] \ge \left(\frac{\sigma_\pi^2}{2} - \frac{{\alpha}}{16}\right)^{\frac{5}{2}}\frac{2}{\sqrt{k}},
\]
where we define
\begin{align}
\sigma_\pi^2 \coloneqq  {\EB_{Y_{1:n},U}\Bigg[\bigg(\mathsf{Leb}(\gC)\big(1 - \mathsf{Leb}(\gC)\big) - \frac{2K}{k}\bigg)_+\Bigg]}.
\label{eq:defn-sigma-pi-square}
\end{align}
\end{lemma}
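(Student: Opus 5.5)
Under $\gD_Q^2$, $V_{1:k}$ and $V'_{1:k}$ are independent products of the two-point laws $q_j(\cdot\mid y_{1:n},u)$. Hence the coordinates $\widetilde V_j=\tfrac12(V_j-V_j')\in\{-1,0,1\}$ are independent and symmetric. Each satisfies $\PB(\widetilde V_j=\pm1)=p_j(1-p_j)$ with $p_j=q_j(1\mid y_{1:n},u)$, so $\EB[\widetilde V_j^2]=2p_j(1-p_j)$. I would lower bound $\EB|\langle\widetilde V,\widetilde a\rangle|$ by a moment comparison (Paley--Zygmund / Hölder type): for a symmetric independent sum $S=\sum_j\widetilde V_j\widetilde a_j$,
\[
\EB|S|\ \ge\ \frac{(\EB S^2)^{3/2}}{(\EB S^4)^{1/2}},\qquad \EB S^4\le 3(\EB S^2)^2+\sum_j \EB\widetilde V_j^4\widetilde a_j^4\le 3\Big(\sum_j\widetilde a_j^2\Big)^2 .
\]
It follows that $\EB_{\gD_Q^2}|S|\gtrsim(\EB S^2)^{3/2}/\sum_j\widetilde a_j^2$, where $\EB S^2=\sum_j 2p_j(1-p_j)\widetilde a_j^2$. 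The whole argument therefore reduces to showing that the posterior-type weights $p_j(1-p_j)$ are typically bounded away from $0$.

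For that step I would use the explicit form of $q_j$ from the proof of Lemma~\ref{lem:factor_approx_p}. It is natural to take $q_j(V_j\mid y_{1:n},u)\propto(1+\epsilon V_j)^{N_j}$, i.e.\ a coordinatewise posterior under the uniform prior after the normalizer $(1+\epsilon\overline V)^{-n}$ is frozen. With this choice $p_j(1-p_j)$ is a function of $\epsilon N_j$ only. Since $N_j\approx n/k$ with fluctuations $\sqrt{n/k}$ and $\epsilon\le\frac1{64}\sqrt{\alpha k/(n\log)}$, we get $\epsilon\,|N_j-N_{j'}|\ll1$ on a high-probability event. On that event, by a Chernoff bound on the multinomial counts plus a union bound over $j$ (this is where the $\log(2nk/\alpha^2)$ enters), $p_j\in[\tfrac14,\tfrac34]$, so $\EB S^2\ge\tfrac38\sum_j\widetilde a_j^2$. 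Then $\EB_{\gD_Q^2}|S|\gtrsim(\sum_j\widetilde a_j^2)^{1/2}$.

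By Lemma~\ref{lem:range_of_sum_tilde_a_j2}, $\sum_j\widetilde a_j^2\ge\frac1k(\mathsf{Leb}(\gC)(1-\mathsf{Leb}(\gC))-2K/k)_+$. So pointwise $\EB_{\gD_Q^2}|S|\gtrsim k^{-1/2}X^{1/2}$ with $X:=(\mathsf{Leb}(\gC)(1-\mathsf{Leb}(\gC))-2K/k)_+\in[0,1/4]$. Taking $\EB_{Y_{1:n},U}$, I would pass from $\EB X^{1/2}$ to the stated power of $\sigma_\pi^2=\EB X$. The form $(\sigma_\pi^2/2-\alpha/16)^{5/2}$ suggests splitting on $\{X\ge\sigma_\pi^2/2\}$. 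Since $X\le1/4$, $\PB(X\ge\sigma_\pi^2/2)\ge 2\sigma_\pi^2$, and on this event $X^{1/2}\ge(\sigma_\pi^2/2)^{1/2}$; the product gives a power-$3/2$ bound, which dominates the claimed power $5/2$ since the base is below $1$. The $-\alpha/16$ slack absorbs the probability of the bad event where $p_j$ fails to be near $\tfrac12$ (of order $\alpha^6/(n^3k^3)\ll\alpha/16$) together with the numerical constants. The conditions $k\ge64/\alpha$ and $\epsilon\le\alpha^{5/2}/200$ keep $2K/k$ and the $\epsilon$-corrections (e.g.\ $1+\epsilon\overline V$ factors) below $\alpha/16$.

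The main obstacle is the second step: controlling $p_j(1-p_j)$ uniformly in $j$ without knowing the precise $q_j$. If the $q_j$ of Lemma~\ref{lem:factor_approx_p} are defined differently, I would fall back on an averaged bound. The idea would be to bound $\EB_{Y,U}\sum_j p_j(1-p_j)\widetilde a_j^2$ from below by writing $p_j(1-p_j)\ge\tfrac14-(\text{deviation of }\epsilon N_j)^2$. Because $\widetilde a_j$ depends on $Y_{1:n}$, Cauchy--Schwarz with $\widetilde a_j^2\le1/k^2$ would then give an additive error of order $\epsilon^2 n/k$, again below $\alpha/16$ under the stated condition on $\epsilon$.
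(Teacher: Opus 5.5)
There is a concrete error in step 2: the $q_j$ you guess is not the paper's, and with your choice the key claim is false. In the proof of Lemma~\ref{lem:factor_approx_p} the normalizer $(1+\epsilon\overline V)^{-n}$ is not frozen. It is linearized, $(1+\epsilon\overline V)^{-n}\ge e^{-\epsilon n\overline V}=\prod_j e^{-\epsilon V_j n/k}$, so that $q_j(V_j\mid y_{1:n})\propto(1+\epsilon V_j)^{N_j}e^{-\epsilon V_j n/k}$.

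With your $q_j\propto(1+\epsilon V_j)^{N_j}$, the log-odds is $N_j\log\frac{1+\epsilon}{1-\epsilon}\approx 2\epsilon N_j\approx 2\epsilon n/k$. Under the hypotheses this can be as large as $\frac{1}{32}\sqrt{\alpha n/(k\log(2nk/\alpha^2))}$, which is unbounded when $n\gg k$. Then $p_j\to 1$, $\widetilde V_j=0$ with high probability, and $\EB_{\gD_Q^2}S^2\to 0$, so no bound of the claimed type could hold. Your justification via $\epsilon|N_j-N_{j'}|\ll 1$ does not apply, because $p_j$ depends on $N_j$ itself, not on differences.

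The centering factor $e^{-\epsilon V_j n/k}$ is exactly what you need. With it the log-odds is $\ell_j=N_j\log\frac{1+\epsilon}{1-\epsilon}-2\epsilon n/k$, so $|2q_j-1|=|\tanh(\ell_j/2)|\le|\ell_j|/2$ and $|\ell_j|\le 2\epsilon|N_j-n/k|+\epsilon^3 N_j$.

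With the correct $q_j$, your fallback gives a complete proof that is simpler than the paper's. Use the averaged version, and drop the uniform-in-$j$ event. The failure probability $\alpha^6/(n^3k^3)$ you quote is unjustified; it is the Hoeffding tail of $\overline V$ from Lemma~\ref{lem:factor_approx_p}. Also, a Bernstein union bound leaves a term $\epsilon\log(k/\delta)$ that the hypotheses do not make small when $n\lesssim k$.

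The averaged argument runs as follows.
\begin{itemize}
\item Conditionally on $V_{1:k}$, $N_j\sim\mathrm{Bin}(n,\pi_j)$ with $|\pi_j-1/k|\le 3\epsilon/k$. Hence $\EB\ell_j^2\lesssim\epsilon^2 n/k$ and $\EB\sum_j(2q_j-1)^2\lesssim\epsilon^2 n$, with no logarithm.
\item Together with $\widetilde a_j^2\le 1/k^2$ this gives $\EB_{Y_{1:n},U}[W]\ge\frac{\sigma_\pi^2}{2k}-\frac{\alpha}{16k}$, where $W\coloneqq\EB_{\gD_Q^2}S^2=\sum_j 2q_j(1-q_j)\widetilde a_j^2$.
\item You still need a step you did not state, to combine this averaged bound with your pointwise H\"older bound. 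Since $\sum_j\widetilde a_j^2\le 1/(4k)$ by Lemma~\ref{lem:range_of_sum_tilde_a_j2}, you have $\EB_{\gD_Q^2}|S|\ge W^{3/2}/(\sqrt3\sum_j\widetilde a_j^2)\ge\frac{4k}{\sqrt3}W^{3/2}$.
\item Jensen gives $\EB[W^{3/2}]\ge(\EB W)^{3/2}$. Hence $\EB_{Y_{1:n},U}\EB_{\gD_Q^2}|S|\ge\frac{4}{\sqrt{3k}}(\sigma_\pi^2/2-\alpha/16)^{3/2}$, which implies the claim because the base is at most $1/8$.
\end{itemize}

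By comparison, the paper writes $\widetilde V_j=\xi_j\delta_j$ and applies Khintchine conditionally on $\delta$. It then bounds $\EB\sum_j(2q_j-1)^2\lesssim n\epsilon^2\log(2nk/\alpha^2)$ through the KL chain rule, the ratio $q/p$, and the mutual information $I(V_{1:k};Y_{1:n})\le 36n\epsilon^2$ (Lemma~\ref{lem:bd_2q-1^2}). Finally it applies Paley--Zygmund, which is where the exponent $5/2$ comes from. Your route instead exploits the explicit binomial form of $q_j$. It avoids the information-theoretic lemma and Paley--Zygmund, loses no logarithm, and yields exponent $3/2$.
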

Consequently, taking \eqref{eq:gL1-lower-bound-123} and Lemma~\ref{lem:info_lower_bound}  collectively reveals that
\begin{equation}\label{eq:gL_1_lower_bd}
\gL_1 \ge \frac{8\epsilon}{3\sqrt{k}}\left(\frac{\sigma_\pi^2}{2} - \frac{{\alpha}}{16}\right)^{\frac{5}{2}}.
\end{equation}

\item 
When it comes to $\gL_2$, we make the following observation:
\begin{equation}\label{eq:gL_2_upper_bd}
\begin{aligned}
    \gL_2 &= \EB_{Y_{1:n},U}\left[
    \EB_{\gD_Q^2}\left[
    \abs{
    \left(\frac{\epsilon}{1+\epsilon \overline{V}} - \frac{\epsilon}{1+\epsilon \overline{V}^\prime}\right)\inner{V_{1:k}^\prime}{\widetilde{a}_{1:k}}
    }\right]\right]\\
    &\overset{\mathrm{(a)}}{\le} \EB_{Y_{1:n}, U}\left[\EB_{\gD_Q^2}\left[
    \abs{\frac{\epsilon}{1+\epsilon \overline{V}} - \frac{\epsilon}{1+\epsilon \overline{V}^\prime}}
    \right]\right]
    \overset{\mathrm{(b)}}{\le} 4\epsilon^2\EB_{Y_{1:n},U}\left[\EB_{\gD_Q^2}\left[
    \big|\overline{V}^\prime - \overline{V}\big|
    \right]\right]\\
    &= 8\epsilon^2 \EB_{Y_{1:n},U}\left[\EB_{\gD_Q^2}\left[
    \big|\check{V}\big|
    \right]\right]
    \overset{\mathrm{(c)}}{\le} 8\epsilon^2 \EB_{Y_{1:n},U}\Bigg[
    \bigg(\EB_{\gD_Q^2}\bigg[
    \frac{1}{k^2}\sum_{j=1}^k \widetilde{V}_j^2
    \bigg]\bigg)^{1/2}\Bigg] 
    \overset{\mathrm{(d)}}{\le} \frac{8\epsilon^2}{\sqrt{k}}.
\end{aligned}
\end{equation}
Here, (a) is valid since 
$\abs{\inner{V_{1:k}^\prime}{\widetilde{a}_{1:k}}} 
\le \sum_{j=1}^k \abs{\widetilde{a}_j} \le \mathsf{Leb}([0,1])= 1$ (see \eqref{eq:defn-aj-Y-U}); 
(b) follows from the fact that 
$\min\{1 + \epsilon \overline{V},\, 1 + \epsilon \overline{V}^\prime\} 
\ge 1 - \epsilon \ge 1/2$;  (c) applies Jensen’s inequality and uses the properties that 
$\{\widetilde{V}_j\}$ are independent zero-mean random variables; 
and (d) arises from the inequality  $\widetilde{V}_j^2 \le 1$. 
\end{itemize}
Combine Eqns.~\eqref{eq:gL_1_lower_bd} and \eqref{eq:gL_2_upper_bd} with \eqref{eq:E-LQ-lb-L1-L2} to yield
\begin{equation}\label{eq:E-LQ-lb-L1-L2_fin}
\begin{aligned}
    \EB_{Y_{1:n},U}\left[\gL_Q(Y_{1:n},U)\right] &\ge \frac{1}{2}\gL_1 - \frac{1}{2}\gL_2\\
    &\ge \frac{4\epsilon}{3\sqrt{k}} \left(\frac{\sigma_\pi^2}{2} - \frac{{\alpha}}{16}\right)^{\frac{5}{2}} - \frac{4\epsilon^2}{\sqrt{k}} = \frac{4\epsilon}{3\sqrt{k}}\left(\left(\frac{\sigma_\pi^2}{2} - \frac{{\alpha}}{16}\right)^{\frac{5}{2}} - 3\epsilon\right).
\end{aligned}
\end{equation}

\paragraph{Step 5: putting all pieces together.} 
%
To finish up, we divide into two cases and analyze them separately. 
\begin{itemize}
\item 
If $\PB\left(\abs{\mathsf{Leb} (\gC) - (1-\alpha)} > \alpha/8\right)\ge 1/4$, then one has
\begin{align*}
    &\EB_{V_{1:n}}\bigg[\mathop{\EB}\limits_{\gD^n\times p_U}\left[\abs{\PB\big(Y_{n+1}\in \gC \mid Y_{1:n},U\big) - (1-\alpha)}\right]\bigg] 
    \ge \EB\left[\big|\mathsf{Leb}(C) - (1-\alpha)\big| - \abs{
    \frac{\epsilon}{1+\epsilon\overline{V}}\inner{V_{1:k}}{\widetilde{a}_{1:k}}
    }\right]\\
    &\qquad \ge \frac{\alpha}{32} - 2\epsilon\EB\bigg[\bigg|\,\ssum{j}{1}{k}V_j\widetilde{a}_j\,\bigg|\bigg]
    \ge \frac{\alpha}{32} - 2\epsilon \EB\Bigg[\sqrt{\Big(\sum_{j=1}^k V_j^2\Big)\Big(\sum_{j=1}^k \widetilde{a}_j^2\Big)}\Bigg]\\
    &\qquad\overset{\mathrm{(e)}}{\ge} \frac{\alpha}{32}
    - \epsilon
    \overset{\mathrm{(f)}}{\ge} \frac{\alpha}{32} - \frac{\alpha}{64} = \frac{\alpha}{64},
\end{align*}
where (e) invokes \Cref{lem:range_of_sum_tilde_a_j2}, and (f) is due to our choice of $\epsilon$. 

\item If instead $\PB\left(\abs{\mathsf{Leb} (\gC) - (1-\alpha)} > \alpha/8\right)< 1/4$, then it holds that
\begin{equation}\label{eq:typical_gC_prob}
\PB\left(
1 - \frac{9\alpha}{8}\le \mathsf{Leb}(\gC) \le 1 - \frac{7\alpha}{8}
\right) \ge \frac{3}{4}.
\end{equation}
We can then lower bound $\sigma_{\pi}^2$ (cf.~\eqref{eq:defn-sigma-pi-square}) by 
\begin{align*}
    \sigma_\pi^2 &= \EB\left[
    \bigg(\mathsf{Leb}(\gC)\big(1 - \mathsf{Leb}(\gC)\big) - \frac{2K}{k}\bigg)_+
    \right]\\
    &\ge \EB\left[
    \bigg(\mathsf{Leb}(\gC)\big(1 - \mathsf{Leb}(\gC)\big) - \frac{2K}{k}\bigg)_+\mathbbm{1}\left\{
    1 - \frac{9\alpha}{8} \le \mathsf{Leb}(\gC) \le 1- \frac{7\alpha}{8}
    \right\}\right]\\
    &\ge \EB\bigg[\bigg(\Big(1 - \frac{9\alpha}{8}\Big)\frac{7\alpha}{8} - \frac{2K}{k}\bigg)_+ \mathbbm{1}\left\{
    1 - \frac{9\alpha}{8} \le \mathsf{Leb}(\gC) \le 1- \frac{7\alpha}{8}
    \right\}\bigg]\\
    &\overset{\mathrm{(g)}}{\ge} \Big(\frac{49\alpha}{128} - \frac{2K}{k}\Big)_+ \PB\left(1 - \frac{9\alpha}{8} \le \mathsf{Leb}(\gC) \le 1- \frac{7\alpha}{8}\right)  \ge \frac{3\alpha}{8}\times \frac{3}{4} \ge \frac{9\alpha}{32}.
\end{align*}
Here, (g) holds when $\alpha \le 1/2$, and the last inequality follows by combining $k \ge \frac{256 K}{\alpha}$ and \eqref{eq:typical_gC_prob}.
%
%
%
%
Taking this together with \eqref{eq:E-LQ-lb-L1-L2_fin} and $\epsilon \le \frac{\alpha^{5/2}}{200}$, we arrive at 
\begin{equation}\label{eq:EB_gL_Q_lower_bd}
    \EB_{Y_{1:n},U}\big[\gL_Q(Y_{1:n},U)\big]
    \ge \frac{4\epsilon}{3\sqrt{k}}\left(\left(\frac{{9\alpha}}{32} - \frac{{\alpha}}{32}\right)^{\frac{5}{2}} - \frac{\alpha^{\frac{5}{2}}}{64}\right) \ge \frac{\alpha^{\frac{5}{2}}\epsilon}{48\sqrt{k}}.
\end{equation}

Substituting Eqn.~\eqref{eq:EB_gL_Q_lower_bd} into \eqref{eq:factor_approx_p_lem}, we obtain
\[
\mathop{\EB}\limits_{V_{1:n}}\left[\mathop{\EB}\limits_{\gD^n\times p_U}\Big[
\big|\PB(Y_{n+1}\in \gC \mid Y_{1:n},U) - (1-\alpha)\big|
\right]\Big] \ge \frac{1}{3}\mathop{\EB}\limits_{Y_{1:n},U}[\gL_Q(Y_{1:n},U)] - \frac{\alpha^6}{4n^3k^3} \ge 
\frac{\alpha^{\frac{5}{2}}\epsilon}{144\sqrt{k}} - \frac{\alpha^{6}}{4n^3k^3}.
\]
\end{itemize}

%

\noindent 
The above two cases taken collectively conclude the proof of Lemma~\ref{lem:coverage_gap_lower_bd}.

\subsubsection{Proof of \Cref{lem:range_of_sum_tilde_a_j2}}\label{sec:prf:lem:range_of_sum_tilde_a_j2}
\paragraph{Upper bound.}
According to the definition of $a_j$ and $\overline{a}$ (see \eqref{eq:defn-aj-Y-U}), it is readily seen that
\begin{align*}
    \ssum{j}{1}{k}\widetilde{a}_j^2 &= \ssum{j}{1}{k}\big(a_j - \overline{a}\big)^2 = \ssum{j}{1}{k} a_j^2 - k\overline{a}^2 \le \frac{1}{k}\ssum{j}{1}{k}a_j - \frac{\mathsf{Leb}(\gC)^2}{k} = \frac{\mathsf{Leb}(\gC)\big(1-\mathsf{Leb}(\gC)\big)}{k} \leq \frac{1}{4k},
\end{align*}
where the first inequality follows since $a_j\leq \mathsf{Leb}(I_j)=1/k$ and $\overline{a}=\mathsf{Leb}(\mathcal{C})/k$, and the last inequality comes from the AM-GM inequality.   

\paragraph{Lower bound.}
To establish the lower bound, we exploit the structural property of the prediction set $\gC$ (i.e., $\gC$ is the union of at most $K$ intervals).  
Consider the following conditions:  
\begin{itemize}
\item suppose  $l$  intervals from $\{I_j\}_{j=1}^k$ (without loss of generality, $\{I_j\}_{j=1}^l$) are completely contained in $\gC$; 
\item at most $2K$ of the $\{I_{j}\}_{j=1}^k$ (without loss of generality, $\{I_{j_i}\}_{i=1}^{2K}$) are only partially covered by $\gC$, which always holds due to the structural property of $\gC$.
\end{itemize}
Since the intervals $\{I_j\}_{j=1}^k$ are mutually disjoint, the total Lebesgue measure of these at most $l+2K$ intervals must be at least $\mathsf{Leb}(\gC)$. This leads to
\[
\frac{l + 2K}{k} = \ssum{j}{1}{l}\big| I_j\big| + \ssum{i}{1}{2K}\big| I_{j_i}\big| \ge \mathsf{Leb}(\gC),
\]
thus indicating that
\begin{equation}\label{eq:l_ge_muC-2K}
l \ge \big(\mathsf{Leb}(\gC)k - 2K\big)_+.
\end{equation}

Now, let us look at $\{\widetilde{a}_j\}_{j=1}^k$.  
For each $1 \le j \le l$, we have $a_j = \mathsf{Leb}(\gC \cap I_j) =1/k$ and $k\overline{a} = \sum_{j=1}^k a_j = \mathsf{Leb}(\gC)$, allowing one to derive
\begin{align*}
\sum_{j=1}^k \widetilde{a}_j^2 
&= \sum_{j=1}^k \big(a_j - \overline{a}\big)^2 = \sum_{j=1}^k a_j^2 - k\overline{a}^2= \sum_{j=1}^k a_j^2 - \frac{\mathsf{Leb}(\gC)^2}{k}\\
&\ge \sum_{j=1}^l a_j^2 - \frac{\mathsf{Leb}(\gC)^2}{k}
= \frac{l}{k^2} - \frac{\mathsf{Leb}(\gC)^2}{k}\\
&\overset{\eqref{eq:l_ge_muC-2K}}{\ge} \frac{1}{k}\bigg\{\Big(\mathsf{Leb}(\gC) - \frac{2K}{k}\Big)_+ -\mathsf{Leb}(\gC)^2\bigg\}
\ge \frac{1}{k}\bigg\{\mathsf{Leb}(\gC)\big(1-\mathsf{Leb}(\gC)\big) - \frac{2K}{k}\bigg\}.
\end{align*}
This combined with the trivial fact $\sum_{j=1}^k \widetilde{a}_j^2 \ge 0$ establishes the advertised lower bound on $\sum_{j=1}^k \widetilde{a}_j^2$.  
%

\subsubsection{Proof of Lemma~\ref{lem:factor_approx_p}}\label{sec:prf:lem:factor_approx_p}
By virtue of Bayes's rule, the conditional density $p(V_{1:k}\mid y_{1:n}, u)$ admits the following expression:
\begin{equation}\label{eq:factor_approx_p1}
    \begin{aligned}
        p(V_{1:k}\mid y_{1:n}, u) &= \frac{p(V_{1:k}, y_{1:n}, u)}{\sum_{v_{1:k}}p(v_{1:k},y_{1:n}, u)} = \frac{\left(p_U(u)\prod_{j=1}^k(1+\epsilon V_j)^{N_j}\right)\big/2^k(1+\epsilon \overline{V})^n
        }{
        \sum_{v_{1:k}}
        \left(p_U(u)\prod_{j=1}^k(1+\epsilon v_j)^{N_j}\right)\big/2^k(1+\epsilon \overline{v})^n
        }\\
        &= \frac{\left(
    \prod_{j=1}^k(1+\epsilon V_j)^{N_j}
    \right)\big/ (1 + \epsilon \overline{V})^n
    }{\sum_{v_{1:k}}\left[\left(
    \prod_{j=1}^k(1+\epsilon v_j)^{N_j}
    \right)\big/ (1 + \epsilon \overline{v})^n\right]}.
    \end{aligned}
\end{equation}
For any given $V_{1:k}$, we observe that
\begin{equation}\label{eq:factor_approx_p2}
\begin{aligned}
    \bigg(\prod\limits_{j=1}^k (1+ \epsilon V_j)^{N_j}\bigg)\Big/\left(1 + \epsilon \overline{V}\right)^n &
    = \bigg(\prod\limits_{j=1}^k (1 + \epsilon V_j)^{N_j}\bigg) e^{-\epsilon \overline{V}n}\cdot \frac{e^{\epsilon \overline{V}n}}{\big(1 + \epsilon \overline{V}\big)^n}\\
    &\ge \prod\limits_{j=1}^k (1 + \epsilon V_j)^{N_j} e^{-\epsilon V_j\frac{n}{k}} \eqqcolon \prod\limits_{j=1}^k q_j(V_j, y_{1:n}),
\end{aligned}
\end{equation}
where the last line follows from the elementary inequality $(e^x)^n \ge (1+ x)^n$ for all $x\in  [-1,1]$, and we define $$q_j(V_j, y_{1:n}) \coloneqq (1 + \epsilon V_j)^{N_j} e^{-\epsilon V_j\frac{n}{k}}.$$
Further,  define
\begin{subequations}\label{eq:def_q_margin_cond}
\begin{align}
    q_j(y_{1:n}) &\coloneqq \sum_{V_j} (1 + \epsilon V_j)^{N_j} e^{-\epsilon V_j \frac{n}{k}},\quad  &&j = 1, \ldots, k;\label{eq:def_q_j_margin}\\
    q(y_{1:n}) &\coloneqq \sum_{V_{1:k}} \prod\limits_{j=1}^k\left[
    (1+\epsilon V_j)^{N_j}e^{-\epsilon V_j \frac{n}{k}}
    \right] = \prod\limits_{j=1}^k q_j(y_{1:n});\label{eq:def_q_margin}\\
    q_j(V_j\mid y_{1:n}) &\coloneqq \frac{q_j(V_j, y_{1:n})}{q_j(y_{1:n})},\qquad &&j = 1,\ldots, k \label{eq:def_q_j_cond}\\
    q(V_{1:k}\mid y_{1:n}) &\coloneqq \prod_{j=1}^k q_j(V_j\,|\, y_{1:n})\label{eq:def_q1k_cond}.
\end{align}
\end{subequations}
Combining the above analysis and notation with a little algebra, we reach
\begin{align}
    \EB_{V_{1:k}}&\bigg[
    \EB_{\gD^n\times p_U}\Big[\big|\,
    \PB(Y_{n+1} \in \gC\,|\, Y_{1:n},U) - (1 - \alpha)
    \,\big|\Big]
    \bigg]\notag\\
    &\overset{\eqref{eq:cvg_err_to_lVC}}{=} \int\limits_{[0,1]^n\times \gU}\bigg\{\sum\limits_{V_{1:k}}p(V_{1:k}\mid y_{1:n}, u) l(V_{1:k}, \gC)\bigg\}p(y_{1:n}, u)\rd y_{1:n}\rd u\notag\\
    &\overset{\eqref{eq:factor_approx_p1}}{=} \int\limits_{[0,1]^n\times \gU} \bigg\{\sum\limits_{V_{1:k}}\frac{\Big(\prod_{j=1}^k(1+\epsilon V_j)^{N_j}\Big)\big(1+\epsilon \overline{V}\big)^{-n}}{\sum_{v_{1:k}}\Big(\prod_{j=1}^k 
    (1+\epsilon v_j)^{N_j}\Big) (1 + \overline{v})^{-n}
    } l(V_{1:k}, \gC)\bigg\}p(y_{1:n},u)\rd y_{1:n}\rd u\notag\\
    &\overset{\eqref{eq:factor_approx_p2}}{\ge} 
    \int\limits_{[0,1]^n\times \gU} \left\{
    \sum\limits_{V_{1:k}}\frac{
    \prod_{j=1}^k q_j(V_j, y_{1:n})
    }{\sum_{v_{1:k}}\Big(\prod_{j=1}^k (1 + \epsilon v_j)^{N_j}\Big)(1+\epsilon \overline{v})^{-n}} l(V_{1:k}, \gC)
    \right\}p(y_{1:n}, u ) \rd y_{1:n} \rd u \label{eq:factor_approx_p3}\\
    &\overset{\eqref{eq:def_q_margin_cond}}{=}
    \int\limits_{[0,1]^n\times \gU} \underbrace{\left(\frac{
    q(y_{1:n})
    }{
    \sum_{v_{1:k}}\Big(\prod_{j=1}^k (1 {+} \epsilon v_j)^{N_j}\Big)(1{+}\epsilon \overline{v})^{-n}
    }\right)}_{\eqqcolon\, \gG(y_{1:n})}\left\{
    \sum\limits_{V_{1:k}}\prod\limits_{j=1}^k q_j(V_j\mid y_{1:n}) l(V_{1:k}, \gC)
    \right\} p(y_{1:n}, u ) \rd y_{1:n} \rd u.\notag
\end{align}


The next step is to control $\gG(y_{1:n})$. Specifically, by expanding $q(y_{1:n})$ and applying Eqn.~\eqref{eq:factor_approx_p1}, we can establish the following inequality:
\begin{equation}\label{eq:factor_approx_p4}
\begin{aligned}
    \gG(y_{1:n}) &\overset{\eqref{eq:def_q_margin}}{=} \frac{
    \sum_{V_{1:k}} \Big(\prod_{j=1}^k (1+\epsilon V_j)^{N_j}\Big)\big(1+\epsilon \overline{V}\big)^{-n}\frac{(1+\epsilon \overline{V})^n}{e^{\epsilon \overline{V}n}}
    }{
    \sum_{v_{1:k}}\Big(\prod_{j=1}^k (1+\epsilon v_j)^{N_j}\Big)\big(1+\epsilon \overline{v}\big)^{-n}
    }
    \overset{\eqref{eq:factor_approx_p1}}{=} \sum\limits_{V_{1:k}} p(V_{1:k}\mid y_{1:n})\frac{\big(1+\epsilon \overline{V}\big)^n}{e^{\epsilon \overline{V}n}}\\
    &\ge \sum\limits_{V_{1:k}} p(V_{1:k}\mid y_{1:n})\frac{\big(1+\epsilon \overline{V}\big)^n}{e^{\epsilon \overline{V}n}}\mathbbm{1}\left\{\abs{\overline{V}} \le \sqrt{6k^{-1}\log (2nk/\alpha^2)}\right\}\\
    &\overset{\mathrm{(a)}}{\ge}
    \frac{1}{3}\sum\limits_{V_{1:k}}p(V_{1:k}\mid y_{1:n})\mathbbm{1}\left\{
    \abs{\overline{V}} \le \sqrt{6k^{-1}\log (2nk/\alpha^2)}\right\}\\
    &= \frac{1}{3}\left(1 - \PB\left(\overline{V} > \sqrt{\frac{6\log (2nk/\alpha^2)}{k}}~\Bigg|~ Y_{1:n} = y_{1:n}\right)\right).
\end{aligned}
\end{equation}
To justify why (a) holds, note that  
\[\epsilon\abs{\overline{V}} \le \sqrt{\frac{k}{12n\log (2nk/\alpha^2)}} \sqrt{\frac{6\log (2nk/\alpha^2)}{k}} = \sqrt{\frac{1}{2n}}\]
holds under the condition that  $\abs{\overline{V}} \le \sqrt{2k^{-1}\log(nk/\alpha)}$, 
which combined with the elementary fact $1+x+x^2 \ge e^x$ $(x\in[-1,1])$ gives
\[
\frac{e^{\epsilon \overline{V} n }}{\big(1 + \epsilon \overline{V}\big)^n}
\le \left(\frac{1 + \epsilon \overline{V} + (\epsilon \overline{V})^2}{1 + \epsilon \overline{V}}\right)^n \le \left(1 + 2(\epsilon\overline{V})^2\right)^n \le \left(1 + \frac{1}{n}\right)^n \le e < 3.
\]

As a consequence, substituting Eqn.~\eqref{eq:factor_approx_p4} into Eqn.~\eqref{eq:factor_approx_p3} yields
\begin{align}
    &\EB_{V_{1:k}}\bigg[\EB_{\gD^n \times p_U}\Big[\abs{
    \PB\big(Y_{n+1}\in \gC\,|\, Y_{1:n},U\big) - (1 - \alpha)
    }\Big]\bigg] \notag\\
    &\quad \ge \frac{1}{3}\int\limits_{[0,1]^n\times \gU}\left(1 - \PB\left(\overline{V} > \sqrt{\frac{6\log (2nk/\alpha^2)}{k}}~\Bigg|~  y_{1:n}\right)\right)\left\{
    \sum\limits_{V_{1:k}}\prod\limits_{j=1}^k q_j(V_j\mid y_{1:n}) l(V_{1:k}, \gC)
    \right\}p(y_{1:n}, u )\rd y_{1:n}\rd u \notag\\
    &\quad \overset{\mathrm{(b)}}{\ge} \frac{1}{3}\int\limits_{[0,1]^n\times \gU}  \left\{\sum\limits_{V_{1:k}}\prod\limits_{j=1}^k q_j(V_j\mid y_{1:n}) l (V_{1:k}, \gC)\right\}p(y_{1:n}, u)\rd y_{1:n}\rd u \notag\\
    &\quad \hspace{16em}
    - \frac{1}{3}\int\limits_{[0,1]^n\times \gU}\PB\left(\abs{\overline{V}} > \sqrt{\frac{6\log(2nk/\alpha^2)}{k}}\Big|~ y_{1:n}\right)p(y_{1:n},u) \rd y_{1:n}\rd u \notag\\
    &\quad = \frac{1}{3}\int\limits_{[0,1]^n\times \gU} \gL_Q(y_{1:n})p(y_{1:n}, u)\rd y_{1:n} \rd u
    - \frac{1}{3}\PB\left(\abs{\overline{V}} > \sqrt{\frac{6\log (2nk/\alpha^2)}{k}}\right),
    \label{eq:E-gap-lb-12345}
\end{align}
where (b) relies on the fact that
\[
\sum\limits_{V_{1:k}}\prod\limits_{j=1}^k q_j(V_j\mid y_{1:n}) l(V_{1:k}, \gC) \overset{\eqref{eq:def_l_VC}}{=} \mathop{\EB}\limits_{V_{1:k}\sim q(V_{1:k}\,|\,y_{1:n})}\Big[\Big|\,\PB\big(Y_{n+1}\in \gC\,|\, Y_{1:n},U\big) - (1-\alpha)\,\Big|\Big] \le 1.
\]

 Finally, recalling that $V_{1:k}$ are $k$ independent Rademacher random variables, we can apply Hoeffding's inequality to yield
\[
\PB\left(\abs{\overline{V}} > \sqrt{\frac{6\log (2nk/\alpha^2)}{k}}\right) \le 2\exp\left\{-\frac{6k\log \frac{2nk}{\alpha^2}}{2k}\right\} \le \frac{\alpha^6}{4n^3k^3}.
\]
Substitution into the above bound \eqref{eq:E-gap-lb-12345} concludes the proof of Lemma~\ref{lem:factor_approx_p}.

\subsubsection{Proof of Lemma~\ref{lem:info_lower_bound}}\label{sec:prf:lem:info_lower_bound}
As discussed earlier, we have $\widetilde{V}_j \in \{-1, 0, 1\}$ (see \eqref{eq:defn-Vtilde-check-j}).  
Further,  define  
\[
\delta_j \coloneqq \mathbbm{1}\{\widetilde{V}_j \neq 0\},
\qquad \text{and}\qquad
\xi_j \coloneqq 
\begin{cases}
\mathrm{sgn}\big(\widetilde{V}_j\big), & \text{if } \delta_j = 1,\\[4pt]
\zeta_j, & \text{if } \delta_j = 0,
\end{cases}
\]
where $\{\zeta_j\}_{j=1}^k$ are $k$ independent Rademacher random variables, generated 
independently of $(\delta_j)_{j=1}^k$, $Y_{1:n}$ and $U$.  
It is then straightforward to verify that
\(
\widetilde{V}_j = \xi_j \delta_j.
\)
Additionally, it is easy to see that for any given $y_{1:n}$ and $u$, one has
\[
\PB(\xi_j = 1, \delta_j = 1 \mid y_{1:n}, u) 
= \PB(V_j > V_j^\prime \mid y_{1:n}, u) 
= \PB(V_j < V_j^\prime \mid y_{1:n}, u) 
= \PB(\xi_j = -1, \delta_j = 1 \mid y_{1:n}, u),
\]
where the second identity follows from the fact that $V_j$ and $V_j^\prime$ are independently and identically distributed.  
This implies that, conditional on $\delta_j = 1$ and $(y_{1:n}, u)$, the variable $\xi_j$ is a Rademacher random variable.  
Similarly, when $\delta_j = 0$ and when $(y_{1:n}, u)$ are given, we have $\xi_j = \zeta_j$, which is also Rademacher.  

Moreover, since $\{V_j\}_{j=1}^k$ and $\{V_j^\prime\}_{j=1}^k$ are independent within each sequence 
(with $V_{1:k} \sim \prod_{j=1}^k q_j(V_j \mid y_{1:n}, u)$), 
it follows that $\{\xi_j\}_{j=1}^k$ are independent Rademacher random variables, which are also 
independent of $\big(\{\delta_j\}_{j=1}^k, Y_{1:n}, U\big)$. Therefore, for any fixed $\big(\{\delta_j\}_{j=1}^k, y_{1:n}, u\big)$, applying Lemma~\ref{lem:khintchine_p1} to $\{\xi_j\}_{j=1}^k$ yields
\begin{equation}\label{eq:info_lower_bound_1}
\begin{aligned}
    \EB\Big[ \big| \big\langle\widetilde{V}_{1:k}, \widetilde{a}_{1:k}\big\rangle \big|\Big] &= \EB\Bigg[\bigg|\ssum{j}{1}{k}\xi_j \delta_j \widetilde{a}_j\bigg|\Bigg]
    = \EB\Bigg[\EB\Bigg[\bigg|
    \ssum{j}{1}{k}\xi_j \delta_j \widetilde{a}_j
    \bigg|~\Big|~ \{\delta_j\}_{j=1}^k,~\{\widetilde{a}_j\}_{j=1}^k\Bigg]\Bigg]\\
    &\ge \frac{1}{\sqrt{2}}\EB \Bigg[\bigg(\ssum{j}{1}{k}\widetilde{a}_j^2 \delta_j^2\bigg)^{\frac{1}{2}} \Bigg]= \frac{1}{\sqrt{2}}\EB\Bigg[\bigg(\ssum{j}{1}{k}\widetilde{a}_j^2 \delta_j\bigg)^{\frac{1}{2}}\Bigg],
\end{aligned}
\end{equation}
where the last equality holds since $\delta_j^2 = \delta_j$.

To continue, let us first examine 
\(
\EB\left[\sum_{j=1}^k \widetilde{a}_j^2 \delta_j\right].
\)
%
Note that, given $(y_{1:n}, u)$, under the distribution 
$\prod_{j=1}^k q_j(V_j \mid y_{1:n}, u)$ one has
\begin{align*}
\EB[\delta_j\mid y_{1:n}, u] &= \PB(V_j \neq V_j^\prime \mid y_{1:n}, u) = 2q_j(1\mid y_{1:n}, u)q_j(0\mid y_{1:n}, u)\\
&= \frac{1}{2}\left(1 - (2q_j(1\mid y_{1:n}, u) - 1)^2\right) \eqqcolon \frac{\left(1 - (2q_j - 1)^2\right)}{2},
\end{align*}
where we define 
\begin{equation}
q_j \coloneqq q_j(1\mid y_{1:n},u) = \frac{(1+\epsilon)^{N_j}e^{-\epsilon\frac{n}{k}}}{\sum_{V = \pm 1}(1+ \epsilon V)^{N_j}e^{-\epsilon\frac{n}{k}V}}.
\label{eq:shorthand-qj-1357}
\end{equation}
From this, we can derive that
\begin{equation}\label{eq:info_lower_bound_1-5}
\EB\Bigg[\ssum{j}{1}{k}\widetilde{a}_j^2 \delta_j\Bigg] = \frac{1}{2}\EB\Bigg[\ssum{j}{1}{k}\widetilde{a}_j^2\Bigg] - \frac{1}{2}\EB\Bigg[\ssum{j}{1}{k}\widetilde{a}_j^2 (2q_j - 1)^2\Bigg].
\end{equation}
As for the first term on the right-hand side of \eqref{eq:info_lower_bound_1-5}, applying \Cref{lem:range_of_sum_tilde_a_j2} and then taking expectation over $Y_{1:n}$ and $U$ yield
\begin{equation}\label{eq:lower_bd_EB_sum_a2_delta5}
    \frac{1}{2}\EB\bigg[\sum_{j=1}^k \widetilde{a}_j^2\bigg] \ge \frac{1}{2k}\EB\bigg[\Big(\mathsf{Leb}(\gC)\big(1-\mathsf{Leb}(\gC)\big) - \frac{2K}{k}\Big)_+\bigg]
\end{equation}
Also, the second term on the right-hand side of \eqref{eq:info_lower_bound_1-5} satisfies
\begin{align}
    \frac{1}{2}\EB\Bigg[\ssum{j}{1}{k}\widetilde{a}_j^2(2q_j - 1)^2\Bigg] \le \frac{1}{2k^2}\EB\left[\ssum{j}{1}{k} (2q_j - 1)^2\right].
    \label{eq:info_lower_bound_1-5-ab}
\end{align}
To control $\EB\big[\sum_{j=1}^{k} (2q_j - 1)^2\big]$, we resort to the following lemma, with the proof given in \Cref{sec:proof:lem:bd_2q-1^2}.
\begin{lemma}\label{lem:bd_2q-1^2}
Recall that $q_j$ is defined in \eqref{eq:shorthand-qj-1357}. For any admissible algorithm whose resulting prediction set $\gC$ is the union of at most $k$ intervals, we have 
\[
    \EB\Bigg[\ssum{j}{1}{k}(2q_j - 1)^2\Bigg] \le 321n\epsilon^2 \log \Big(\frac{2nk}{\alpha^2}\Big) + \frac{\alpha^6}{32n^3k^3}.
\]
\end{lemma}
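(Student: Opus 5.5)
The plan is to bound each $(2q_j-1)^2$ pointwise in terms of the deviation of the bin count $N_j$ from its mean, and then average. The expectation is over $Y_{1:n}$ drawn from the mixture $p(y_{1:n},u)$, that is, $V_{1:k}$ uniform and then $Y_{1:n}$ i.i.d.\ from $f(\cdot\mid V_{1:k})$. Note that $q_j$ in \eqref{eq:shorthand-qj-1357} depends on the data only through $N_j$, and the bound involves neither $\gC$ nor the interval count.

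\textbf{Step 1 (closed form).} Write the log-likelihood ratio
\[
\Lambda_j\coloneqq \log\frac{q_j}{1-q_j}=N_j\log\frac{1+\epsilon}{1-\epsilon}-\frac{2\epsilon n}{k}.
\]
Then $2q_j-1=\tanh(\Lambda_j/2)$, so $(2q_j-1)^2\le \Lambda_j^2/4$. For $\epsilon\le 1/4$ we have $\log\frac{1+\epsilon}{1-\epsilon}=2\epsilon+O(\epsilon^3)$. This gives
\[
|\Lambda_j|\le 2\epsilon\,\bigl|N_j-n/k\bigr|+c\,\epsilon^3N_j,
\]
and hence $(2q_j-1)^2\lesssim \epsilon^2(N_j-n/k)^2+\epsilon^6N_j^2$.

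\textbf{Step 2 (concentration of $N_j$).} Conditional on $V_{1:k}$, $N_j\sim\mathrm{Bin}\bigl(n,p_j\bigr)$ with $p_j=\frac{1+\epsilon V_j}{k(1+\epsilon\overline V)}$. Consequently $|np_j-n/k|\le 4\epsilon n/k$. I will define a good event $\gE$ on which, for every $j$,
\[
|N_j-np_j|\le \sqrt{3np_j\log(2nk/\alpha^2)}+\log(2nk/\alpha^2),
\]
using Bernstein's inequality together with a union bound over $j\in[k]$ at level $\alpha^6/(64n^3k^4)$. On $\gE$, Step 1 gives
\[
(2q_j-1)^2\lesssim \epsilon^2\Bigl(\tfrac{n}{k}\log(2nk/\alpha^2)+\tfrac{\epsilon^2n^2}{k^2}+\log^2(2nk/\alpha^2)\Bigr).
\]
Summing over $j$ yields $\lesssim n\epsilon^2\log(2nk/\alpha^2)+\epsilon^4n^2/k+k\epsilon^2\log^2(\cdot)$. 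On $\gE^{\mathrm c}$ I use the trivial bound $(2q_j-1)^2\le1$, which contributes at most $k\,\PB(\gE^{\mathrm c})\le \alpha^6/(32n^3k^3)$.

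\textbf{Step 3 (absorbing the extra terms).} The standing constraint from Lemma~\ref{lem:info_lower_bound} is $\epsilon\le \frac1{64}\sqrt{\alpha k/(n\log(2nk/\alpha^2))}$. Under it, $\epsilon^2 n/k\le 1/\log(\cdot)$, so $\epsilon^4n^2/k\le n\epsilon^2\cdot(\epsilon^2 n/k)\lesssim n\epsilon^2$ is absorbed into the main term.

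The term $k\epsilon^2\log^2$ is the main obstacle. It is dominated by $n\epsilon^2\log$ only when $n\gtrsim k\log$, so in the regime of small bins I need a sharper argument. There I will avoid the crude additive Bernstein term. Instead I will use the exact second moment,
\[
\EB\bigl[(N_j-n/k)^2\bigr]\le np_j+(np_j-n/k)^2\le \frac{2n}{k}+\frac{16\epsilon^2n^2}{k^2},
\]
applied directly to $\EB[\Lambda_j^2]$, which requires no high-probability event. This gives
\[
\EB\Bigl[\sum_j(2q_j-1)^2\Bigr]\le \frac14\sum_j\EB[\Lambda_j^2]\lesssim n\epsilon^2+\epsilon^4n^2/k+\epsilon^6\,\EB\Bigl[\sum_jN_j^2\Bigr].
\]
The last piece is at most $\epsilon^6(n+n^2/k)$, which is again absorbed. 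Tracking constants, I expect this to reproduce the stated $321n\epsilon^2\log(2nk/\alpha^2)$ with room to spare; the $\log$ factor and the $\alpha^6/(32n^3k^3)$ tail come from the truncation used if one insists on the event-based route. I would present the second-moment route as the main argument and keep the tail term only to match the stated form.
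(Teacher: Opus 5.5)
Your second-moment argument is correct, and it reaches the lemma by a different and much shorter route than the paper's. The paper never uses the fact that $q_j$ depends on the data only through $N_j$. Instead it applies Pinsker to each $(2q_j-1)^2$ and chains the KL terms into an expected KL between the product surrogate $q(\cdot\mid Y_{1:n})$ and the uniform prior. It then splits that into the mutual information $I(V_{1:k};Y_{1:n})\le 36n\epsilon^2$ plus correction terms $\gS_1,\gS_2$, which measure the gap between $q$ and the true posterior $p(V_{1:k}\mid Y_{1:n},U)$. Bounding these needs Hoeffding on $\overline V$, and that is where the $\log(2nk/\alpha^2)$ factor and the additive $\alpha^6/(32n^3k^3)$ come from.

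Your identity $2q_j-1=\tanh(\Lambda_j/2)$ reduces everything to a binomial second moment under the mixture. It gives $\EB[\sum_j(2q_j-1)^2]\le 4n\epsilon^2+32\epsilon^4n^2/k+O(\epsilon^6(n+n^2/k))=O(n\epsilon^2)$, with no log and no tail. It also needs no $n$-dependent constraint on $\epsilon$ beyond $\epsilon\le 1/4$: when $\epsilon^2n/k\ge 1/32$, the trivial bound $\sum_j(2q_j-1)^2\le k\le 32n\epsilon^2$ already suffices. Your route also avoids the paper's Pinsker step, which takes $\mathsf{TV}(\mathrm{Ber}(q_j),\mathrm{Ber}(1/2))=|2q_j-1|$ rather than $|q_j-1/2|$ and so loses a factor of $4$ in the constant. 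The paper's approach is more structural, since it would survive a surrogate posterior with no closed form, but here yours is strictly sharper.

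Drop Step~2 rather than keeping it ``to match the stated form''. The stated tail is pure slack for your bound, and the event-based bookkeeping would not close as written anyway:
\begin{itemize}
\item a per-$j$ level $\alpha^6/(64n^3k^4)$ gives $k\,\PB(\gE^{\mathrm c})\le\alpha^6/(64n^3k^2)$, not $\alpha^6/(32n^3k^3)$;
\item your Bernstein deviation only reaches failure probability of order $(\alpha^2/(nk))^{3/2}$.
\end{itemize}
Simply note that $4n\epsilon^2+o(n\epsilon^2)\le 321n\epsilon^2\log(2nk/\alpha^2)$, because $\log(2nk/\alpha^2)\ge\log 8$.
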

Taking Eqns.~\eqref{eq:info_lower_bound_1-5}, \eqref{eq:lower_bd_EB_sum_a2_delta5}, \eqref{eq:info_lower_bound_1-5-ab}  and Lemma~\ref{lem:bd_2q-1^2} together yields
\begin{align}
    \EB\Bigg[\ssum{j}{1}{k}\widetilde{a}_j^2 \delta_j\Bigg] &\ge
    \frac{1}{2k}\EB\bigg[\Big(\mathsf{Leb}(\gC)\big(1-\mathsf{Leb}(\gC)\big) - \frac{2K}{k}\Big)_+\bigg]
    - \frac{1}{k^2}\left(161n\epsilon^2\log \Big(\frac{2nk}{\alpha^2}\Big) + \frac{\alpha^6}{64n^3k^3}\right)\notag\\
    &\ge \frac{\sigma_\pi^2}{2k} - \frac{\alpha}{16 k},\label{eq:lower_bd_EB_sum_a2_delta}
\end{align}
where the last line holds as long as $k \ge \frac{256K}{\alpha}$ and $\epsilon \le \min\left\{\frac{\alpha^{5/2}}{200}, \frac{1}{64}\sqrt{\frac{\alpha k}{ n \log(2nk/\alpha^2)}}\right\}$.


To finish up, denoting  $\EB \left[\sum_{j=1}^{k}\widetilde{a}_j^2\delta_j\right]$ as $\lambda$, and applying Lemma~\ref{lem:paley-zygmund} with $\theta = 1/2$, we arrive at
\begin{align*}
    \EB\left[\sqrt{\ssum{j}{1}{k}\widetilde{a}_j^2 \delta_j}\,\right] &\ge \frac{1}{\sqrt{2}}\sqrt{\lambda}\PB\Bigg(\ssum{j}{1}{k}\widetilde{a}_j^2 \delta \ge \frac{\lambda}{2}\Bigg)
    \ge \frac{1}{4\sqrt{2}}\sqrt{\lambda}\frac{\lambda^2}{\EB\left[\big(\sum_{j=1}^k \widetilde{a}_j^2 \delta_j\big)^2\right]}\\
    &\ge 
    \frac{1}{4\sqrt{2}}\frac{\lambda^{5/2}}{\EB\Big[\big(\sum_{j=1}^k \widetilde{a}_j^2\big)^2\Big]} \overset{\text{Lemma }\ref{lem:range_of_sum_tilde_a_j2}}{\ge} \frac{k^2}{4\sqrt{2}}\frac{\lambda^{5/2}}{\EB\Big[\mathsf{Leb}(\gC)^2\big(1 - \mathsf{Leb}(\gC)\big)^2\Big]}
    \\
    &\ge 2\sqrt{2}k^2\lambda^{5/2} \overset{\eqref{eq:lower_bd_EB_sum_a2_delta}}{\ge} \frac{2\sqrt{2}}{ \sqrt{k}}\left(\frac{\sigma_\pi^2}{2} - \frac{\alpha}{16}\right)^{\frac{5}{2}}.
\end{align*}
Combining this with Eqn.~\eqref{eq:info_lower_bound_1} establishes the advertised result of the lemma.
\subsubsection{Proof of Lemma~\ref{lem:bd_2q-1^2}}
\label{sec:proof:lem:bd_2q-1^2}
To begin with, the TV distance between two Bernoulli distributions $\mathrm{Ber}(q_j)$ and $ \mathrm{Ber}(1/2)$ obeys
\[
\mathsf{TV}\big(\mathrm{Ber}(q_j),~ \mathrm{Ber}(1/2)\big) = \abs{q_j - \frac{1}{2}} + \abs{(1 - q_j) - \frac{1}{2}} 
= \abs{2q_j - 1},
\]
which combined with Pinsker's inequality \citep[Lemm~2.5]{tsybakov2009introduction} yields
\begin{equation}\label{eq:lower_bd_EB_sum_a2_delta_1}
    (2q_j - 1)^2 = \big(\mathsf{TV}\big(\mathrm{Ber}(q_j),~ \mathrm{Ber}(1/2)\big)\big)^2 \le \frac{1}{2}\mathsf{KL}\left(\mathrm{Ber}(q_j) \,\big\|\, \mathrm{Ber}(0.5)\right).
\end{equation}
Note that the distribution 
$q(V_{1:k} \mid y_{1:n}) = \prod_{j=1}^k q_j(V_j \mid y_{1:n})$ 
factorizes. 
Therefore, recalling the definition of $q_j$ in \eqref{eq:shorthand-qj-1357}, one can apply the chain rule of the KL divergence to derive 
\begin{align}
    2\EB\Bigg[\ssum{j}{1}{k}(2q_j - 1)^2\Bigg] &\le 
    \EB_{Y_{1:n}} \Bigg[\ssum{j}{1}{k}\mathsf{KL}\left(\mathrm{Ber}(q_j) \,\big\|\, \mathrm{Ber}(0.5)\right)\Bigg]\notag\\
    &= \EB_{Y_{1:n}}\Bigg[
    \ssum{j}{1}{k}\bigg(\mathop{\EB}_{V_j\sim q_j(\cdot|Y_{1:n})}\Big[ \log \big(2q_j(V_j\,|\, Y_{1:n})\big)\Big]\bigg)\Bigg] 
   \notag \\
    &= \EB_{Y_{1:n}}\Bigg[\mathop{\EB}_{V_{1:k}\sim q(\cdot|Y_{1:n})}\bigg[\log\bigg(2^k\prod\limits_{j=1}^k q_j(V_j\,|\, Y_{1:n})\bigg)\bigg]\Bigg]\notag\\ &=\EB_{Y_{1:n}}\Bigg[\sum\limits_{V_{1:k}}q(V_{1:k}\,|\, Y_{1:n})\log\big(2^k q(V_{1:k}\,|\, Y_{1:n})\big)\Bigg]\notag.
    \end{align}
    Recognizing that $q(V_{1:k}\,|\, Y_{1:n})$ is an approximation of $p(V_{1:k}\,|\, Y_{1:n},U)$ (recall the arguments in \eqref{eq:factor_approx_p1}-\eqref{eq:def_q_margin_cond}), we consider the following decomposition by incorporating the term $p(V_{1:k}\,|\, Y_{1:n},U)$ into the preceding identity: 
    \begin{align}
    2\EB\Bigg[\ssum{j}{1}{k}(2q_j - 1)^2\Bigg]&= \EB_{Y_{1:n},U}\Bigg[\sum\limits_{V_{1:k}}p(V_{1:k}\mid Y_{1:n}, U)\frac{q(V_{1:k}\,|\, Y_{1:n})}{p(V_{1:k}\mid Y_{1:n},U)}\log \bigg(2^k q(V_{1:k}\,|\, Y_{1:n})\bigg)\Bigg]\notag\\
    &= \EB_{Y_{1:n},U}\left[\sum\limits_{V_{1:k}}\frac{q(V_{1:k}\mid Y_{1:n})}{p(V_{1:k}\mid Y_{1:n}, U)}\log \big(2^k p(V_{1:k}\mid Y_{1:n},U)\big)p(V_{1:k}\mid Y_{1:n}, U)\right]\notag\\
    &\quad   + \EB_{Y_{1:n},U}\left[\sum\limits_{V_{1:k}}\frac{q(V_{1:k}\mid Y_{1:n})}{p(V_{1:k}\mid Y_{1:n}, U)}\log \bigg(\frac{q(V_{1:k}\mid Y_{1:n})}{p(V_{1:k}\mid Y_{1:n}, U)}\bigg)p(V_{1:k}\mid Y_{1:n}, U)\right].\label{eq:tv_pinsker_to_kl_1}
\end{align}

Next, we examine the ratio term $\frac{q(V_{1:k}\mid Y_{1:n})}{p(V_{1:k}\mid Y_{1:n}, U)}$, which appears multiple times in \eqref{eq:tv_pinsker_to_kl_1}.  From the elementary fact $1 + x \le e^x$ as well as the definitions \eqref{eq:factor_approx_p1} and \eqref{eq:def_q_margin_cond},  we can demonstrate that
\begin{align}
    &\frac{q(V_{1:k}\mid y_{1:n})}{p(V_{1:k}\mid y_{1:n}, u)} 
    = \frac{\prod_{j=1}^k\Big( (1+\epsilon V_j)^{N_j}e^{-\epsilon V_j \frac{n}{k}}\Big)}{\sum_{v_{1:k}}\prod_{j=1}^k\Big((1+\epsilon v_j)^{N_j}e^{-\epsilon v_j\frac{n}{k}}\Big)}
    \cdot \frac{\sum_{v_{1:k}}\Big(\prod_{j=1}^k(1+\epsilon v_j)^{N_j}\Big)(1+\epsilon \overline{v})^{-n}}
    {\Big(\prod_{j=1}^k (1+\epsilon V_j)^{N_j}\Big)(1+\epsilon \overline{V})^{-n}} \notag\\
    &\hspace{2em}= \frac{(1+\epsilon\overline{V})^n}{e^{\epsilon\overline{V}n}}\cdot\frac{\sum_{v_{1:k}}\Big(\prod_{j=1}^k(1+\epsilon v_j)^{N_j}\Big)(1+\epsilon \overline{v})^{-n}}{\sum_{v_{1:k}}\prod_{j=1}^k\Big((1+\epsilon v_j)^{N_j}e^{-\epsilon v_j\frac{n}{k}}\Big)} 
    = \frac{(1+\epsilon\overline{V})^n}{e^{\epsilon\overline{V}n}}\bigg/\left(\sum\limits_{v_{1:k}}p(v_{1:k}\mid y_{1:n},u) \frac{(1+\epsilon \overline{v})^n}{e^{\epsilon\overline{v}n}}\right) \notag\\
    &\hspace{2em}= \frac{F_n(\epsilon\overline{V})}{\EB_{V_{1:k}\sim p(\cdot| y_{1:n},u)}\big[ F_n(\epsilon\overline{V})\big]}
    = \frac{F_n(\epsilon \overline{V})}{F(y_{1:n},u)}
    \leq \frac{1}{F(y_{1:n},u)},
    \label{eq:q-p-ratio-equiv-expression}
\end{align}
where we define
\begin{equation}
F_n(\epsilon \overline{V}) \coloneqq \frac{(1+\epsilon \overline{V})^n}{e^{\epsilon\overline{V}n}} \qquad \text{and}\qquad  F(y_{1:n},u)\coloneqq \EB_{V_{1:k}\sim p(\cdot| y_{1:n},u)}\big[F_n(\epsilon \overline{V})\big].
\label{eq:defn-Fn-F-lower-bound-full-conformal}
\end{equation}
Further, given that the choice of $\epsilon$ guarantees that $\abs{\epsilon\overline{V}} < 1/2$, we can invoke the elementary inequality $1+x\leq e^x\leq 1+x+x^2$ ($|x|< 1/2$) to derive
\begin{equation}\label{eq:F_n_inverse_upper_bd}
1 \le \frac{1}{F_n(\epsilon \overline{V})} = \frac{e^{\epsilon \overline{V}n}}{\big(1 + \epsilon \overline{V}\big)^n} \le \left(\frac{1 + \epsilon\overline{V} + \epsilon^2 \overline{V}^2}{1+\epsilon \overline{V}}\right)^n \le \left(1 + 2\epsilon^2 \overline{V}^2\right)^n \le \exp\left\{2\big(\epsilon \overline{V}\big)^2 n\right\}.
\end{equation}
Substituting \eqref{eq:q-p-ratio-equiv-expression} and \eqref{eq:F_n_inverse_upper_bd} into \eqref{eq:tv_pinsker_to_kl_1} leads to
\begin{align}
    2\EB\Bigg[\ssum{j}{1}{k}(2q_j - 1)^2\Bigg] &= \EB_{Y_{1:n},U}\left[\sum\limits_{V_{1:k}}\frac{F_n(\epsilon\overline{V})}{F(Y_{1:n},U)}\log \big(2^k p(V_{1:k}\mid Y_{1:n},U)\big)p(V_{1:k}\mid Y_{1:n}, U)\right]\notag\\
    &\quad + \EB_{Y_{1:n},U}\left[\sum\limits_{V_{1:k}}\frac{F_n(\epsilon\overline{V})}{F(Y_{1:n},U)}\log \bigg(\frac{F_n(\epsilon\overline{V})}{F(Y_{1:n},U)}\bigg)p(V_{1:k}\mid Y_{1:n}, U)\right]\notag\\ &= \underset{\eqqcolon\, \mathcal{S}_0}{\underbrace{ \EB_{Y_{1:n},U}\bigg[\mathop{\EB}_{V_{1:k}\sim p(\cdot| Y_{1:n},U)}
    \Big[\log \big(2^k p(V_{1:k}\,|\, Y_{1:n},U)\big)\Big]
    \bigg] }}
    \notag\\
    &\quad +\underbrace{\EB_{Y_{1:n}, U}\left[\mathop{\EB}_{V_{1:k}\sim p(\cdot| Y_{1:n},U)}\left[\bigg(\frac{F_n(\epsilon\overline{V})}{F(Y_{1:n},U)}-1\bigg)\log \big(2^k p(V_{1:k}\mid Y_{1:n},U)\big)\right]\right]}_{\eqqcolon \,\gS_1}\notag\\
    &\quad + \underbrace{\EB_{Y_{1:n},U}\Bigg[\mathop{\EB}_{V_{1:k}\sim p(\cdot| Y_{1:n},U)}\bigg[\frac{F_n(\epsilon\overline{V})}{F(Y_{1:n},U)}\log \bigg(\frac{F_n(\epsilon\overline{V})}{F(Y_{1:n},U)}\bigg)\bigg]\Bigg]}_{\eqqcolon \,\gS_2},
    \label{eq:E-2q-1-UB-S12}
\end{align}
leaving us with three terms to cope with. 

\paragraph{Bounding the term $\mathcal{S}_0$.}
Towards this end, we first define 
$I(V_{1:k};Y_{1:n})$ to be the mutual information between $V_{1:k}$ and $Y_{1:n}$, namely,
\begin{equation}
I(V_{1:k};Y_{1:n})
\coloneqq \sum_{V_{1:k}}\int_{[0,1]^n}
\log\!\left(\frac{p(V_{1:k},y_{1:n})}{2^{-k}p(y_{1:n})}\right)\,
p(V_{1:k},y_{1:n})\,\rd y_{1:n},
\label{eq:defn-mutual-info-I-VY}
\end{equation}
where 
\begin{equation}
p(V_{1:k},y_{1:n}) \coloneqq \int_{\gU} p(V_{1:k},y_{1:n},u)\,\rd u
\qquad \text{and}\qquad
p(y_{1:n}) \coloneqq \sum_{V_{1:k}} p(V_{1:k},y_{1:n}).
\end{equation}
As it turns out, the term $\mathcal{S}_0$ (cf.~\eqref{eq:E-2q-1-UB-S12}) is equivalent to this mutual information quantity since
\begin{align*}
    \mathcal{S}_0&=\EB_{Y_{1:n},U}\bigg[\mathop{\EB}_{V_{1:k}\sim p(\cdot| Y_{1:n},U)}
    \Big[\log \big(2^k p(V_{1:k}\,|\, Y_{1:n},U)\big)\Big]
    \bigg] 
    \\
    &=
    \sum_{V_{1:k}}\int_{[0,1]^n\times \gU}p(V_{1:k},y_{1:n})p_U(u)\log \Big(\frac{p(V_{1:k},y_{1:n})p_U(u)}{2^{-k}p(y_{1:n})p_U(u)}\Big)\rd y_{1:n}\rd u\\
    &= \sum_{V_{1:k}}\int_{[0,1]^n}p(V_{1:k},y_{1:n})\log \Big(\frac{p(V_{1:k},y_{1:n})}{2^{-k}p(y_{1:n})}\Big)\rd y_{1:n} = I(V_{1:k}; Y_{1:n}),
\end{align*}
where the second line holds since $U$ is independent of $(V_{1:k},Y_{1:n})$ (as it only affects the construction of the prediction set $\gC$).

According to the data-generating mechanism and the chain rule of the KL divergence, one has
\begin{align}
I(V_{1:k}, Y_{1:n}) &= \sum\limits_{V_{1:k}} \int_{[0,1]^n}  p(V_{1:k})p(y_{1:n}\mid V_{1:k})\log \frac{p(V_{1:k})p(y_{1:n}\mid V_{1:k})}{p(V_{1:k})p(y_{1:n})} \rd y_{1:n}\notag\\
&= \ssum{i}{1}{n}\sum\limits_{V_{1:k}} \int_{[0,1]}\frac{p(y_i\mid V_{1:k})}{2^k}\log \frac{p(y_i\mid V_{1:k})}{p(y_i)} \rd y_i\notag\\
&\overset{\mathrm{(a)}}{\le} \ssum{i}{1}{n}\sum_{V_{1:k}}\frac{1}{2^k}\int_{[0,1]}\left(p(y_i\mid V_{1:k}) - p(y_i) + p(y_i\mid V_{1:k})\left(\frac{p(y_i)}{p(y_{i}\mid V_{1:k})} - 1\right)^2\right) \rd y_i \notag\\
& \overset{\mathrm{(b)}}{=} \ssum{i}{1}{n}\sum\limits_{V_{1:k}}\int_{[0,1]}\frac{p(y_i\mid V_{1:k})}{2^k}\left(\frac{p(y_i)}{p(y_{i}\mid V_{1:k})} - 1\right)^2 \rd y_i 
\overset{\mathrm{(c)}}{\le} 36n\epsilon^2,\label{eq:bd_mutual_info}
\end{align}
where (b) follows because both $p(y_i)$ and $p(y_i \mid V_{1:k})$ are density functions;
(a) and (c) are proven below.
\begin{itemize}
\item 
To justify (c), suppose that $y_i \in I_j$ (defined in \eqref{eq:def_I_j}). Denoting by $V_{1:k}'$ an independent copy of $V_{1:k}$, one can derive
\begin{align*}
\left(\frac{p(y_i)}{p(y_{i}\mid V_{1:k})} - 1\right)^2 &=
    \left(\frac{\EB_{V_{1:k}^\prime}[p(y_i\mid V_{1:k}^\prime)]}{p(y_i\mid V_{1:k})}-1\right)^2\\ &\le \EB_{V_{1:k}^\prime}\left[\left(
    \frac{p(y_i\mid V_{1:k}^\prime)}{p(y_i\mid V_{1:k})} - 1
    \right)^2\right]
    = \EB_{V_{1:k}^\prime}\left[\left(
    \frac{(1+\epsilon V_j^\prime)(1+\epsilon \overline{V})}{(1+\epsilon V_j)(1+\epsilon\overline{V}^\prime)}-1
    \right)^2\right]\\
    &= \EB_{V_{1:k}^\prime}\left[\left(
    \frac{
    \epsilon(V_j^\prime + \overline{V} - V_j - \overline{V}^\prime) + \epsilon^2(V_j^\prime \overline{V} - V_j \overline{V}^\prime)
    }{(1+\epsilon V_j)(1+\epsilon \overline{V}^\prime)}
    \right)^2\right] \le 36\epsilon^2,
\end{align*}
with the proviso that $\epsilon \le 1/6$. This also implies that
\begin{align}
\abs{\frac{p(y_i)}{p(y_i\,|\, V_{1:k})}-1} \le 1
\qquad \text{as long as }\epsilon \leq \frac{1}{6}.
\label{eq:ratio-magnitude-1-eps}
\end{align}

\item Regarding inequality (a), we invoke the elementary fact $\log(1 + x) \ge x - x^2$ for $x \in [-1, 1]$ along with \eqref{eq:ratio-magnitude-1-eps} to reach
\begin{align*}
p(y_{i}\mid V_{1:k}) \log \frac{p(y_i\mid V_{1:k})}{p(y_i)} &= -
p(y_{i}\mid V_{1:k}) \log \frac{p(y_i)} {p(y_i\mid V_{1:k})}\\
&\le -p(y_i\mid V_{1:k}) \left(\frac{p(y_i)}{p(y_i\mid V_{1:k})} - 1 - \left(\frac{p(y_i)}{p(y_i\mid V_{1:k})} - 1\right)^2\right)\\
&=   p(y_i\mid V_{1:k}) - p(y_i) + p(y_i \mid V_{1:k})\left(\frac{p(y_i)}{p(y_i\mid V_{1:k})} - 1\right)^2.
\end{align*}

\end{itemize}

\paragraph{Bounding the term $\gS_1$.}
Based on the definition (cf.~\eqref{eq:E-2q-1-UB-S12}), we can write
\begin{align}
    \gS_1 = \EB_{Y_{1:n},U}\bigg[\underbrace{\mathop{\EB}_{V_{1:k}\sim p(\cdot| Y_{1:n},U)}\bigg[\Big(
    F_n(\epsilon\overline{V}) - F(Y_{1:n},U)
    \Big) \log \big(2^k p(V_{1:k}\,|\, Y_{1:n},U)\big)
    \bigg]}_{\eqqcolon \, \gS_1^{\mathsf{num}}(Y_{1:n}, U)}
    \Big/F(Y_{1:n},U)\bigg].
    \label{eq:defn-S1-decompose-num}
\end{align}

We begin by examining $\gS_1^{\mathsf{num}}(Y_{1:n}, U)$.
For notational simplicity, set
\begin{equation}\label{eq:def_sigma_tail_thresh}
\sigma \coloneqq \sqrt{16k^{-1}\log(2nk/\alpha^2)}.
\end{equation}
Consider any given $y_{1:n},u$, and any two realizations $v_{1:k},~ v_{1:k}'$. If $|\overline{v}|\vee |\overline{v}'| \le \sigma$, then by the mean value theorem, there exists a real number $\zeta$ between $\epsilon\overline{v}$ and $\epsilon \overline{v}'$ (which means $|\zeta| \le \epsilon \big(|\overline{v}|\vee |\overline{v}'|\big) \le \epsilon \sigma$) satisfying
\begin{align}
    \Big|\,F_n(\epsilon \overline{v}) - F_n(\epsilon \overline{v}')\,\Big| \le \big|\, F_n'(\zeta) \,\big|\cdot \big|\,\epsilon \overline{v} - \epsilon \overline{v}'\,\big| \le 2\epsilon n\sigma |\zeta| e^{-\zeta} \Big(\frac{1+\zeta}{e^\zeta}\Big)^{n-1} \le 6\epsilon^2 \sigma^2 n \label{eq:lagrangian_mean_of_F_n}, 
\end{align}
where the choice of $\epsilon$ ensures $\epsilon \sigma <1$
and we have used the definition \eqref{eq:defn-Fn-F-lower-bound-full-conformal} of $F_n(\cdot)$.
Splitting the expectation in the definition of $\gS_1^{\mathsf{num}}(Y_{1:n}, U)$ into two parts 
based on whether or not $|\overline{V}|\vee |\overline{V}'| \le \sigma$, we can derive
\begin{align}
    &\gS_1^{\mathsf{num}}(Y_{1:n}, U)=\mathop{\EB}_{V_{1:k}\sim p(\cdot| Y_{1:n},U)} \bigg[
    \Big(F_n(\epsilon\overline{V}) - \mathop{\EB}_{V'_{1:k}\sim p(\cdot| Y_{1:n},U)}\big[F_n\big(\epsilon\overline{V}'\big) \big] \Big) \log \big(2^k p(V_{1:k}\mid Y_{1:n}, U)\big)
    \bigg]\notag\\
    &\qquad 
    \le \mathop{\EB}_{V_{1:k},V'_{1:k}\sim p(\cdot| Y_{1:n},U)}\bigg[
    \Big|F_n(\epsilon\overline{V}) - F_n\big(\epsilon\overline{V}'\big)\Big| \log \Big(\frac{1}{p(V_{1:k}\mid Y_{1:n}, U)}\Big)
    \bigg]\notag\\
    &\qquad \overset{\eqref{eq:lagrangian_mean_of_F_n}}{\le}
    6\epsilon^2\sigma^2 n \mathop{\EB}_{V_{1:k}\sim p(\cdot| Y_{1:n},U)} \bigg[\log \frac{1}{p(V_{1:k}\mid Y_{1:n},U)}\bigg]\notag\\
    &\qquad \hspace{2em} + \mathop{\EB}_{V_{1:k},V'_{1:k}\sim p(\cdot| Y_{1:n},U)} \bigg[
    \Big|F_n(\epsilon\overline{V}) - F_n\big(\epsilon\overline{V}'\big)\Big|\mathbbm{1}\{|\overline{V}|\vee |\overline{V}'| > \sigma\} \log \Big(\frac{1}{p(V_{1:k}\mid Y_{1:n}, U)}\Big)
    \bigg], \label{eq:gS_1_numerator}
\end{align}
where the second line follows from Jensen's inequality and the following:
\[
\mathop{\EB}_{V_{1:k},V'_{1:k}\sim p(\cdot| Y_{1:n},U)}\bigg[
\Big(F_n(\epsilon\overline{V}) - F_n\big(\epsilon\overline{V}'\big)\Big) \log \big(2^k\big)
\bigg]=0.
\]
\begin{itemize}
\item Regarding the first term on the right-hand side of \eqref{eq:gS_1_numerator}, it is observed that for any given $y_{1:n}$ and $u$, 
\begin{align*}
    \mathop{\EB}_{V_{1:k}\sim p(\cdot| y_{1:n},u)} \bigg[\log \frac{1}{p(V_{1:k}\mid y_{1:n},u)}\bigg] &= \log 2^k - \mathop{\EB}_{V_{1:k}\sim p(\cdot| y_{1:n},u)} \big[ \log\big( 2^k p(V_{1:k}\mid y_{1:n},u) \big)\big]\\
    &= k\log 2 - \mathsf{KL}\big(p(V_{1:k}\mid y_{1:n},u) \,\|\, p(V_{1:k})\big) \le k\log 2.
\end{align*}
Plug this into the first term on the right-hand side of \eqref{eq:gS_1_numerator} and use \eqref{eq:def_sigma_tail_thresh}  to yield
\begin{align}\label{eq:1st_term_bd_of_gS_1}
    6\epsilon^2\sigma^2 n \mathop{\EB}_{V_{1:k}\sim p(\cdot| Y_{1:n},U)} \bigg[\log \frac{1}{p(V_{1:k}\mid Y_{1:n},U)}\bigg] \le 96 \epsilon^2 n\log \frac{2nk}{\alpha^2}.
\end{align}

\item 
As for the second term on the right-hand side of \eqref{eq:gS_1_numerator}, it can be derived that
\begin{align}
    &\mathop{\EB}_{V_{1:k},V'_{1:k}\sim p(\cdot| Y_{1:n},U)}\bigg[
    \Big|F_n(\epsilon\overline{V}) - F_n\big(\epsilon\overline{V}'\big)\Big|\mathbbm{1}\{|\overline{V}|\vee |\overline{V}'| > \sigma\} \log \Big(\frac{1}{p(V_{1:k}\mid Y_{1:n}, U)}\Big)
    \bigg]\notag\\
    &\overset{\eqref{eq:F_n_inverse_upper_bd}}{\le} \mathop{\EB}_{V_{1:k},V'_{1:k}\sim p(\cdot| Y_{1:n},U)}\bigg[\mathbbm{1}\Big\{\big|\overline{V}\big|\vee \big|\overline{V}'\big| > \sigma\Big\}\log \Big(\frac{1}{p(V_{1:k}\mid Y_{1:n}, U)}\Big)\bigg]\notag\\ 
    &\overset{\mathrm{(a)}}{\le} 4\epsilon n \PB\Big(
    \big|\overline{V}\big|\vee \big|\overline{V}'\big| > \sigma \,\big|\, Y_{1:n},U
    \Big).\label{eq:2nd_term_bd_of_gS_1}
\end{align}
To justify step (a) of \eqref{eq:2nd_term_bd_of_gS_1}, consider any two different realization $v_{1:k}$ and $v_{1:k}'$, which obey
\[
\frac{p(v_{1:k}'\mid y_{1:n}, u)}{p(v_{1:k}\mid y_{1:n}, u)} = \frac{p(v_{1:k}')p(y_{1:n}\mid v_{1:k}')p_U(u)}{p(v_{1:k})p(y_{1:n}\mid v_{1:k})p_U(u)} = \frac{\prod_{j=1}^k (1+\epsilon v_j')^{N_j}}{\prod_{j=1}^k (1+\epsilon v_j)^{N_j}} \le e^{4\epsilon n},
\]
\[
\Longrightarrow \qquad 
\log \Big(\frac{1}{p(V_{1:k}\mid y_{1:n}, u)}\Big) 
= \log \bigg(
\mathop{\EB}_{V_{1:k}'\mid y_{1:n},u}\Big[\frac{p(V_{1:k}'\mid y_{1:n}, u)}{p(V_{1:k}\mid y_{1:n}, u)}\Big]
\bigg) \le 4\epsilon n.
\]
\end{itemize}
Consequently, by substituting  \eqref{eq:2nd_term_bd_of_gS_1} and \eqref{eq:1st_term_bd_of_gS_1} into \eqref{eq:gS_1_numerator}, we arrive at
\begin{align}
    \gS_1^{\mathsf{num}}(Y_{1:n}, U) &\le 96\epsilon^2 n \log \frac{2nk}{\alpha^2} + 8\epsilon n \PB\Big(|\overline{V}| > \sigma\,\Big|\, Y_{1:n},U\Big). \label{eq:gS_1_numerator_fin}
\end{align}

Now, we switch attention to the term $\frac{1}{F(Y_{1:n},U)}$.
Applying Jensen’s inequality  yields
\begin{align}
   & \frac{1}{\mathop{\EB}_{V_{1:k}\sim p(\cdot| Y_{1:n},U)}\big[F_n(\epsilon \overline{V})\big]} 
    \le 
    \mathop{\EB}_{V_{1:k}\sim p(\cdot| Y_{1:n},U)}\left[\frac{e^{\epsilon \overline{V}n}}{\big(1+\epsilon \overline{V}\big)^n}\right]\notag\\
    &\qquad = \mathop{\EB}_{V_{1:k}\sim p(\cdot| Y_{1:n},U)}\left[\frac{e^{\epsilon \overline{V}n}}{\big(1+\epsilon \overline{V}\big)^n}\mathbbm{1}\left\{\abs{\overline{V}}\le \sigma \right\}\right]
    + 
    \mathop{\EB}_{V_{1:k}\sim p(\cdot| Y_{1:n},U)}\left[\frac{e^{\epsilon \overline{V}n}}{\big(1+\epsilon \overline{V}\big)^n}\mathbbm{1}\left\{\abs{\overline{V}}> \sigma\right\}\right]\notag\\
    &\qquad \overset{\eqref{eq:F_n_inverse_upper_bd}}{\le} 
    \mathop{\EB}_{V_{1:k}\sim p(\cdot| Y_{1:n},U)}\left[e^{2\big(\epsilon \overline{V}\big)^2 n}\mathbbm{1}\left\{\abs{\overline{V}}\le \sigma\right\}\right]+ 
     \mathop{\EB}_{V_{1:k}\sim p(\cdot| Y_{1:n},U)}\left[e^{2\big(\epsilon \overline{V}\big)^2 n}\mathbbm{1}\left\{\abs{\overline{V}} > \sigma\right\}\right]
    \notag\\
    &\qquad \le 3 + \mathop{\EB}_{V_{1:k}\sim p(\cdot| Y_{1:n},U)}\left[\exp\left\{2\big(\epsilon \overline{V}\big)^2 n\right\}\mathbbm{1}\left\{\abs{\overline{V}}> \sigma\right\}\right],\label{eq:gS_1_denominator_bd}
\end{align}
where the last line follows since
\(
2\big(\epsilon \overline{V}\big)^2n \le \frac{2k}{32n}\cdot \frac{16n}{k} =1
\)
provided that $\epsilon \le \min\left\{\frac{\alpha^{5/2}}{128}, \frac{1}{64}\sqrt{\frac{\alpha k}{n\log(2nk/\alpha^2)}}\right\}$.

We are now ready to bound $\gS_1$. It is readily seen from \eqref{eq:defn-S1-decompose-num} that
\begin{align}
    \gS_1 &\le \EB_{Y_{1:n},U}\left[\frac{\gS_1^{\mathsf{num}}(Y_{1:n},U)}{F(Y_{1:n},U)}\right] \overset{\eqref{eq:gS_1_numerator_fin}}{\le} \EB_{Y_{1:n},U}\bigg[
    \frac{96 \epsilon^2 n \log \Big(\frac{2nk}{\alpha^2}\Big) + 8\epsilon n \PB \Big(|\overline{V}|> \sigma \mid Y_{1:n},U\Big)
    }
    {F(Y_{1:n},U)}
    \bigg]\notag\\
    &\overset{\eqref{eq:gS_1_denominator_bd}}{\le}
    \mathop{\EB}_{Y_{1:n},U}\Bigg[
    \bigg(96 \epsilon^2 n \log \Big(\frac{2nk}{\alpha^2}\Big) + 8\epsilon n \PB \Big(|\overline{V}|> \sigma \mid Y_{1:n},U\Big)\bigg)
    \bigg(3 + \mathop{\EB}_{V_{1:k}\sim p(\cdot| Y_{1:n},U)}\Big[e^{2(\epsilon\overline{V})^2n}\mathbbm{1}\{|\overline{V}| > \sigma\}\Big]\bigg)
    \Bigg]\notag\\
    &\le 100 \epsilon^2 n \log \Big(\frac{2nk}{\alpha^2}\Big)\bigg(
    3 + \EB_{V_{1:k}}\Big[e^{2(\epsilon\overline{V})^2n}\mathbbm{1}\{|\overline{V}| > \sigma\}\Big]
    \bigg) + 24\epsilon n \PB\big(|\overline{V}|> \sigma\big)\notag\\
    &\hspace{2em}+ 8\epsilon n \mathop{\EB}_{Y_{1:n},U}\Bigg[
    \PB \Big(|\overline{V}|> \sigma \mid Y_{1:n},U\Big)
    \mathop{\EB}_{V_{1:k}\sim p(\cdot| Y_{1:n},U)}\Big[e^{2(\epsilon\overline{V})^2n}\mathbbm{1}\{|\overline{V}| > \sigma\}\Big]
    \Bigg]. \label{eq:gS_1_bound_first}
\end{align}
Note that Hoeffding's inequality tells us that
%
\begin{align}
    \PB_{V_{1:k}}\big(|\overline{V}|> \sigma\big) &= \PB_{V_{1:k}}\bigg(|\overline{V}|> \sqrt{\frac{16\log(2nk/\alpha^2)}{k}}\bigg) \le \frac{\alpha^{16}}{128 n^8k^8}, \label{eq:V_larger_sqrt_k_1}
\end{align}
we also have for any $\Delta \ge 0$ that 
\begin{align}
    \EB_{V_{1:k}}\left[e^{\frac{k}{4} \overline{V}^2}\mathbbm{1}\left\{e^{\frac{k}{4} \overline{V}^2} > e^{\frac{k}{4}\Delta^2}\right\}\right] &\overset{\mathrm{(a)}}{=} e^{\frac{k}{4}\Delta^2}\PB_{V_{1:k}}\left(e^{\frac{k}{4} \overline{V}^2} > e^{\frac{k}{4}\Delta^2}\right) + \int_{e^{\frac{k}{4}\Delta^2}}^\infty\PB_{V_{1:k}}\left(e^{\frac{k}{4} \overline{V}^2} > y\right)\rd y\notag\\
    &= e^{\frac{k\Delta^2}{4}}\PB_{V_{1:k}}\Big(|\overline{V}|> \Delta\Big)
    + \int_{e^{\frac{k\Delta^2}{4}}}^\infty \PB\bigg(
    |\overline{V}| > \sqrt{\frac{4\log y}{k}}
    \bigg)\rd y\notag\\
    &\le e^{-\frac{k\Delta^2}{4}} + \int_{e^{\frac{k\Delta^2}{4}}}^\infty 2e^{-2\log y}\rd y
    \le e^{-\frac{k\Delta^2}{4}} - 
    2y^{-1}\Big|_{e^{\frac{k\Delta^2}{4}}}^\infty = 3e^{-\frac{k\Delta^2}{4}}, \label{eq:V_larger_sqrt_k_2}
\end{align}
where (a) follows from Fubini's formula, namely, for any  non-negative random variable $X$ with CDF $F_X$ and any $x \ge 0$,
\begin{align*}
\EB [X\mathbbm{1}\{X > x_0\}]  &= \int_{x_0}^\infty xF_X(\rd x) = \int_{x_0}^\infty \bigg(\int_0^x \rd t\bigg) F_X(\rd x) 
= \int_0^\infty \bigg(\int_{x_0\vee t}^\infty F_X(\rd x)\bigg)\rd t\\
&= \int_0^{x_0}\PB(X > x_0)\rd t + \int_{x_0}^\infty \PB(X>t)\rd t = x_0\PB(X> x_0) + \int_{x_0}^\infty \PB(X>t)\rd t.
\end{align*}
Further, when $\epsilon \le \frac{1}{64}\sqrt{\frac{\alpha k}{n\log(2nk/\alpha^2)}}$, one has
\[
2\epsilon^2 \overline{V}^2 n \le \frac{k}{128n}\overline{V}^2 n \le \frac{k}{4}\overline{V}^2.
\]
Combining this with \eqref{eq:V_larger_sqrt_k_1} and \eqref{eq:V_larger_sqrt_k_2} (with $\Delta$ set to $\sigma$), and substituting the resulting bounds into \eqref{eq:gS_1_bound_first}, yields
\begin{equation}\label{eq:lower_bd_EB_sum_a2_delta3}
\begin{aligned}
\gS_1 &\le 100\epsilon^2n \log \Big(\frac{2nk}{\alpha^2}\Big)\Big(3 + 3e^{-\frac{k\sigma^2}{4}}\Big)
+ \frac{\alpha^{16}}{4n^7k^8} + 8\epsilon n \EB_{V_{1:k}}\Big[e^{\frac{k}{4}\overline{V}^2}\mathbbm{1}\{|\overline{V}|> \sigma \}\Big]\\
&\overset{\eqref{eq:V_larger_sqrt_k_2}}{\le} 600\epsilon^2n\log \Big(\frac{2nk}{\alpha^2}\Big) + \frac{\alpha^{16}}{4n^7k^8} + 24\epsilon n e^{-\frac{k\sigma^2}{4}}\\
&\overset{\eqref{eq:def_sigma_tail_thresh}}{\le} 600\epsilon^2n\log \Big(\frac{2nk}{\alpha^2}\Big) + \frac{\alpha^{16}}{4n^7k^8} + \frac{\alpha^8}{4n^3k^4}.
\end{aligned}
\end{equation}

\paragraph{Bounding the term $\gS_2$.}
We first rewrite $\gS_2$ (cf.~\eqref{eq:E-2q-1-UB-S12}) slightly by using \eqref{eq:F_n_inverse_upper_bd} as follows:
\begin{align*}
    \gS_2 = \EB_{Y_{1:n},U}\Bigg[\mathop{\EB}_{V_{1:k}\sim p(\cdot| Y_{1:n},U)}&\bigg[\frac{F_n(\epsilon\overline{V})}{F(Y_{1:n},U)}\log \bigg(\frac{F_n(\epsilon\overline{V})}{F(Y_{1:n},U)}\bigg)\bigg]\Bigg]\\
    &\overset{\eqref{eq:F_n_inverse_upper_bd}}{\le} 
    \EB_{Y_{1:n},U}\Bigg[\mathop{\EB}_{V_{1:k}\sim p(\cdot| Y_{1:n},U)}\bigg[\frac{F_n(\epsilon\overline{V})}{F(Y_{1:n},U)}\log \bigg(\frac{1}{F(Y_{1:n},U)}\bigg)\bigg]\Bigg]\\
    &\le
    \EB_{Y_{1:n},U}\Bigg[\mathop{\EB}_{V_{1:k}\sim p(\cdot| Y_{1:n},U)}\bigg[\frac{1}{F(Y_{1:n},U)}\log \bigg(\frac{1}{F(Y_{1:n},U)}\bigg)\bigg]\Bigg],
\end{align*}
where the last line holds since $\log \Big(\frac{1}{F(Y_{1:n},U)}\Big) \ge 0$.
It can be easily verified that the function $\frac{1}{x}\log \frac{1}{x}$ is convex when $0< x \le 1$. Further, since $F_n(\epsilon \overline{v}) = \frac{(1+\epsilon\overline{v})^n}{e^{\epsilon\overline{v}n}} \le 1$, one can invoke Jensen's inequality to obtain
\begin{align*}
    \gS_2 &\le  \EB_{Y_{1:n},U}\left[\mathop{\EB}_{V_{1:k}\sim p(\cdot| Y_{1:n},U)}\bigg[\frac{1}{F(Y_{1:n},U)}\log \bigg(\frac{1}{F(Y_{1:n},U)}\bigg)\bigg]\right]\\
    &= \EB_{Y_{1:n},U}\left[\frac{1}{F(Y_{1:n},U)}\log \bigg(\frac{1}{F(Y_{1:n},U)}\bigg)\right] \le \EB_{Y_{1:n},U}\left[\mathop{\EB}_{V_{1:k}\sim p(\cdot| Y_{1:n},U)} \left[\frac{1}{F_n(\epsilon\overline{V})}\log \frac{1}{F_n(\epsilon\overline{V})} \right]\right]\\
    &\le \EB_{V_{1:k}}\left[ \left(2\big(\epsilon \overline{V}\big)^2 n\right)\exp\left\{2\big(\epsilon \overline{V}\big)^2 n \right\}\right],
\end{align*}
where the last inequality follows from \eqref{eq:F_n_inverse_upper_bd}, together with the fact that the function
$x\mapsto x\log x$ is monotonically increasing on $[1,\infty)$.
By letting $\Delta$ in \eqref{eq:V_larger_sqrt_k_2} be $0$, we arrive at
\begin{equation}\label{eq:lower_bd_EB_sum_a2_delta4}
    \gS_2 \le 2\epsilon^2 n\EB_{V_{1:k}}\Big[e^{\frac{k}{4}\overline{V}^2}\Big] \le 6\epsilon^2 n.
\end{equation}

\paragraph{Putting all this together.}
Finally, combining~\eqref{eq:bd_mutual_info},~\eqref{eq:lower_bd_EB_sum_a2_delta3} and~\eqref{eq:lower_bd_EB_sum_a2_delta4} with \eqref{eq:E-2q-1-UB-S12} yields
\[
2\EB\bigg[\ssum{j}{1}{k}(2q_j - 1)^2 \bigg] \le \gS_0 + \gS_1 + \gS_2 \le 642 n\epsilon^2\log \Big(\frac{2nk}{\alpha^2}\Big) + \frac{\alpha^6}{16n^3k^3},
\]
thereby concluding the proof of Lemma~\ref{lem:bd_2q-1^2}.

\subsection{Proof of \Cref{prop:pathological_cover} and \Cref{cor:offline_tc_lb}}
\subsubsection{Proof of \Cref{prop:pathological_cover}}
Denote $p_\gD(\cdot)$ as the density function of the distribution $\gD$. According to the definition of Riemann-integrability, for every $\varepsilon > 0$, the following holds for large enough $n$:
\begin{align*}
\bigg|\, \PB(Y\in \gC_n) - (1-\alpha)\,\bigg|&\le
\bigg|\,\PB(Y\in \gC_n) - (1-\alpha)\sum_{i=0}^{n-1}p_{\gD}(i/n)\,\bigg| + (1-\alpha)\bigg|\,\sum_{i=0}^{n-1} \int_{i/n}^{(i + 1)/n} \big(p_{\gD}(i/n) - p_{\gD}(y)\big) \rd y\,\bigg|\\
&\le \bigg|\, \sum_{i=0}^{n-1} \PB\Big(Y\in \big[i/n, \big(i + (1-\alpha)\big)\big/n]\Big)-(1-\alpha)\sum_{i=0}^{n-1}p_{\gD}(i/n) \,\bigg| + \varepsilon  \\
&= \bigg|\, \sum_{i=0}^{n-1}\int_{i/n}^{(i+(1-\alpha))/ n} \big(p_\gD(y) - p_{\gD}(i/n)\big) \rd y \,\bigg| + \varepsilon \le (2-\alpha)\varepsilon.
\end{align*}
This immediately establishes the result of \Cref{prop:pathological_cover}.
\subsubsection{Proof of \Cref{cor:offline_tc_lb}}
The claim follows immediately by invoking \Cref{lem:coverage_gap_lower_bd} with
\[
k \coloneqq \frac{256K}{\alpha}
\quad\text{and}\quad
\epsilon \coloneqq \min\Biggl\{\frac{\alpha^{5/2}}{200},\ \frac{1}{64}\sqrt{\frac{\alpha k}{n \log\!\bigl(2nk/\alpha^2\bigr)}}\Biggr\}.
\]

\section{Examples of stable learning algorithms}\label{sec:exp_for_full_conf}
To illustrate the applicability of Assumption~\ref{ass:fair_alg}, this section verifies it for several learning algorithms commonly used in statistical applications.
In particular, Section~\ref{sec:stochastic_scvx_opt} describes how to incorporate stochastic optimization methods into our online conformal framework: the fitted model can be updated incrementally using the newly arrived data at each time step, without retraining from scratch.
\subsection{Constrained M-estimation}
\label{sec:constrained-M-estimation}
We begin with the classical constrained M-estimator \citep{van2000asymptotic} that minimizes the empirical loss:
\[
\widehat{\vartheta}_n 
= \argmin_{\vartheta \in \gC} \widehat L_n(\vartheta)
\coloneqq \argmin_{\vartheta \in \gC} \frac{1}{n}\sum_{i=1}^n \ell(\vartheta; Z_i),
\]
where $\{Z_i\}\subset \mathbb{R}^{d_Z}$ denote $n$ independent data samples with $Z_i$ drawn from the distribution $\mathcal{D}_i$,  
\(\gC \subset \R^d\) represents a closed convex constraint set, and \(\ell(\cdot; z)\) is a loss metric assumed to be differentiable in \(\vartheta\) for every \(z\).
We also introduce the population
risk
\[
L(\vartheta) \coloneqq \frac{1}{n}\ssum{i}{1}{n} \mathop{\E}\limits_{Z_i\sim \gD_i}\big[\ell(\vartheta;Z_i)\big].
\]
Note that we allow for a non-identically distributed sequence \(\{Z_i\}_{i=1}^n\), which is compatible
with the drifting environments considered in the present paper.

We impose the following standard assumptions ensuring well-posedness and curvature of the loss functions.

\begin{assumption}\label{ass:M-estimator_reg_ass}
Suppose that the loss functions satisfy the following properties:
\begin{enumerate}
\item \(L\) is \(\mu\)-strongly convex on \(\gC\) for some \(\mu>0\), in the sense that
\[
\nabla^2 L(\vartheta) \succeq \mu I_d \qquad \text{for all } \vartheta \in \gC.
\]
\item For any \(\vartheta\in \gC\), \(\norm{\vartheta}_2 \le D_\gC\).
\item For any \(\vartheta, \vartheta'\in \gC\) and  \(Z\), \(\norm{\nabla_{\vartheta} \ell(\vartheta; Z)} \le \beta_L\) and \(\norm{\nabla_{\vartheta}^2 \ell(\vartheta; Z) - \nabla_{\vartheta}^2 \ell(\vartheta'; Z)}_{\infty,\infty} \le \beta_s\norm{\vartheta - \vartheta'}_2\). Here $\norm{A}_{\infty,\infty} \coloneqq \max\limits_{i,j\in [d]}\abs{A_{ij}}$.
\end{enumerate}
\end{assumption}
With Assumption~\ref{ass:M-estimator_reg_ass} in place, we obtain the following stability guarantees for the
constrained M-estimator.
\begin{proposition}\label{prop:m_esti_bd_diff}
Suppose that Assumption~\ref{ass:M-estimator_reg_ass} holds. For any $n \ge \frac{32d\beta_s^2D_\gC^2}{\mu^2}\log\big(\frac{12\beta_s D_\gC}{\mu}\big)$, We can find a typical set \(\gE\) in the \(n\)-sample
space \(\RB^{d_Z\times n}\) obeying \(\PB(\gE^c) \le d^2 \exp\big\{-\frac{\mu^2 n}{32\beta_s^2 D_\gC^2} \big\}\) such that the following holds: 
consider two adjacent datasets in \(\gE\), \(\{Z_i\}_{i=1}^n\) and \(\{Z_i'\}_{i=1}^n\), that differ only in the last
coordinate, i.e., \(Z_i'=Z_i\) for \(i=1,\dots,n-1\) and \(Z_n'\neq Z_n\), 
and denote by  \(\widehat{\vartheta}_n\) and \(\widehat{\vartheta}_n'\) the corresponding constrained M-estimates
\[
\widehat{\vartheta}_n \coloneqq \argmin_{\vartheta \in \gC}\left\{\sum_{i=1}^n \ell(\vartheta;Z_i)\right\},
\quad
\widehat{\vartheta}_n' 
= \argmin_{\vartheta \in \gC} \left\{\sum_{i=1}^{n-1} \ell(\vartheta;Z_i) + \ell(\vartheta;Z_n')\right\},
\]
then one has
\[
\big\|\widehat{\vartheta}_n - \widehat{\vartheta}_n^\prime\big\|_2 \le \frac{4\beta_L}{\mu n}.
\]
\end{proposition}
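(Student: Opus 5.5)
The proof is a standard perturbation argument for strongly convex constrained minimizers. The twist is that strong convexity is assumed only for the population risk $L$, not for each $\ell(\cdot;Z)$. So the typical set $\gE$ will be the event on which the empirical Hessian $\nabla^2\widehat L_n$ inherits curvature of order $\mu/2$ uniformly over $\gC$. Once that event is in hand, the bound $\|\widehat\vartheta_n-\widehat\vartheta_n'\|_2\le 4\beta_L/(\mu n)$ follows from the first-order optimality conditions.

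\textbf{Step 1: curvature on a net.} I would first show that
\[
\nabla^2 \widehat L_n(\vartheta)\succeq \tfrac{\mu}{2} I_d \quad \text{for all } \vartheta\in\gC
\]
holds with the stated probability, and with the same bound for the neighbouring empirical risk. The plan is:
\begin{itemize}
\item Fix a finite $\eta$-net $\{\vartheta^{(m)}\}$ of $\gC\subset B(0,D_\gC)$, with $\eta\asymp \mu/(\beta_s d)$.
\item At each net point, control every entry of $\nabla^2\widehat L_n(\vartheta^{(m)})-\nabla^2L(\vartheta^{(m)})$ by Hoeffding's inequality. The entries are averages of independent bounded terms, since the Hessian entries are bounded on the bounded set $\gC$ through the Lipschitz condition.
\item Union bound over the $d^2$ entries. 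This is where the $d^2$ factor in $\PB(\gE^c)$ comes from.
\item Convert entrywise deviation at most $\mu/(8d)$ into operator-norm deviation via $\|A\|\le d\|A\|_{\infty,\infty}$.
\item Extend from the net to all of $\gC$ using the Hessian-Lipschitz condition, i.e.\ $\|\nabla^2\ell(\vartheta)-\nabla^2\ell(\vartheta')\|\le d\beta_s\|\vartheta-\vartheta'\|_2$.
\end{itemize}
The sample-size condition $n\gtrsim (d\beta_s^2D_\gC^2/\mu^2)\log(\beta_sD_\gC/\mu)$ is exactly what absorbs the net cardinality into the exponent, leaving the tail $d^2\exp\{-\mu^2n/(32\beta_s^2D_\gC^2)\}$.

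This step is the main obstacle. Getting the constants to match the stated tail requires care in choosing the net scale and in how the union over the net is absorbed. If that bookkeeping does not close, a fallback is to avoid the net altogether: use the Lipschitz-Hessian bound directly around a single anchor point, using $D_\gC$-boundedness of $\gC$. A further subtlety is that adjacency only replaces $Z_n$, which changes the empirical Hessian by at most $O(1/n)$ in operator norm. That change is negligible for $n$ large, so both datasets lying in $\gE$ (as the statement assumes) gives $\mu/2$-strong convexity of both empirical objectives.

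\textbf{Step 2: stability from optimality conditions.} Write $F=\sum_{i\le n}\ell(\cdot;Z_i)$ and $F'=\sum_{i<n}\ell(\cdot;Z_i)+\ell(\cdot;Z_n')$. On $\gE$ both are $(\mu n/2)$-strongly convex on $\gC$. The variational inequalities read
\[
\langle\nabla F(\widehat\vartheta_n),\widehat\vartheta_n'-\widehat\vartheta_n\rangle\ge0, \qquad \langle\nabla F'(\widehat\vartheta_n'),\widehat\vartheta_n-\widehat\vartheta_n'\rangle\ge0.
\]
Adding them, and using strong monotonicity of $\nabla F$ on the convex set $\gC$, gives
\[
\tfrac{\mu n}{2}\|\widehat\vartheta_n-\widehat\vartheta_n'\|_2^2\le\langle\nabla F'(\widehat\vartheta_n')-\nabla F(\widehat\vartheta_n'),\widehat\vartheta_n-\widehat\vartheta_n'\rangle.
\]
The gradient difference equals $\nabla\ell(\widehat\vartheta_n';Z_n')-\nabla\ell(\widehat\vartheta_n';Z_n)$, whose norm is at most $2\beta_L$. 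Applying Cauchy–Schwarz then yields $\|\widehat\vartheta_n-\widehat\vartheta_n'\|_2\le 4\beta_L/(\mu n)$, as claimed.
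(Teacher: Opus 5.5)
Your Step~2 is the paper's argument: add the two variational inequalities, lower-bound the left-hand side by the empirical curvature along the segment joining $\widehat\vartheta_n$ and $\widehat\vartheta_n'$ (which lies in $\gC$), and bound the right-hand side by $2\beta_L\|\widehat\vartheta_n-\widehat\vartheta_n'\|_2/n$. Your Step~1 has the same architecture as the paper's: a net of $\gC$ at scale $\asymp\mu/\beta_s$, entrywise concentration of $\nabla^2\widehat L_n-\nabla^2 L$ with a union bound over the $d^2$ entries, and extension off the net via the Hessian-Lipschitz condition.

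Where you deviate, by passing from $\|\cdot\|_{\infty,\infty}$ to the spectral norm at a cost of a factor $d$, you are right and the paper is not. The paper proves only $\sup_{\vartheta\in\gC}\|\nabla^2\widehat L_n(\vartheta)-\nabla^2 L(\vartheta)\|_{\infty,\infty}\le\mu/2$. It then uses this as a spectral-norm bound in $u^\top\nabla^2\widehat L_n(\vartheta)u\ge\mu-\mu/2$, although the spectral norm can be $d$ times larger (take the all-ones matrix).

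However, with your correction the bookkeeping does \emph{not} close, contrary to your claim that the stated sample-size condition absorbs the net and leaves the stated tail.
\begin{itemize}
\item Entrywise accuracy $\mu/(8d)$ makes each Hoeffding tail of order $\exp\{-c\,n\mu^2/(d^2\beta_s^2D_\gC^2)\}$.
\item Absorbing a net of cardinality $(Cd\beta_sD_\gC/\mu)^d$ then requires $n\gtrsim d^3\beta_s^2D_\gC^2\mu^{-2}\log(d\beta_sD_\gC/\mu)$.
\end{itemize}
So your route yields $\|\widehat\vartheta_n-\widehat\vartheta_n'\|_2\le 4\beta_L/(\mu n)$ only on an event with $\PB(\gE^{c})\lesssim d^2\exp\{-c\,\mu^2 n/(d^2\beta_s^2D_\gC^2)\}$, under a $d^3$-type sample-size condition. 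The stated form is what one gets if the Hessian-Lipschitz and range conditions are imposed in spectral norm, and one concentrates the quadratic forms $u^\top\nabla^2\widehat L_n(\vartheta)u$ over a joint net in $(\vartheta,u)$.

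Your fallback does not rescue the constants. Extending from a single anchor point via the Lipschitz condition incurs an error of order $\beta_sD_\gC$, which is small relative to $\mu$ only when $\beta_sD_\gC\lesssim\mu$. The net at scale $\asymp\mu/\beta_s$ cannot be avoided.

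Separately, your justification for applying Hoeffding is not valid, and the paper's Bernstein step has the same hole. The Lipschitz condition bounds the oscillation of $\vartheta\mapsto[\nabla^2\ell(\vartheta;Z)]_{jk}$ over $\gC$ for each fixed $Z$. It does not bound the spread of $[\nabla^2\ell(\vartheta;Z)]_{jk}$ across $Z$ at a fixed $\vartheta$, which is what concentration needs.

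For a counterexample, take $\ell(\vartheta;Z)=a(Z)\|\vartheta\|_2^2/2$ on the ball of radius $D_\gC$, with $a(Z)\in\{-\mu,3\mu\}$ equiprobable. This satisfies Assumption~\ref{ass:M-estimator_reg_ass} with $\beta_L=3\mu D_\gC$ and every $\beta_s>0$. Yet $\nabla^2\widehat L_n=-\mu I_d$ with probability $2^{-n}$. Once $\beta_s<\mu/(12D_\gC)$ the sample-size condition is vacuous, and this probability exceeds the claimed tail $d^2\exp\{-\mu^2n/(32\beta_s^2D_\gC^2)\}$.

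Both arguments therefore need an explicit extra assumption, such as $\sup_{\vartheta\in\gC,\,Z}|[\nabla^2\ell(\vartheta;Z)]_{jk}|\le\beta_sD_\gC$. This is the range $2\beta_sD_\gC$ that the paper's tail implicitly uses.
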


The proof of Proposition~\ref{prop:m_esti_bd_diff} is deferred to \Cref{sec:prf:prop:m_esti_bd_diff}. With the above proposition in hand, consider a prediction model $\mu(\cdot\mid \vartheta)$. Assume that, for every input $x$, the mapping $\vartheta \rightarrow \mu(x\mid \vartheta)$ is $L_0$-Lipschitz. Then, by \Cref{prop:m_esti_bd_diff}, for any two neighboring data sequences $\{Z_n\}$ and $\{Z_n'\}$ that both lie in $\gE$ (which happens with high probability), the corresponding fitted models $\mu(\cdot\mid \widehat{\vartheta})$ and $\mu(\cdot\mid \widehat{\vartheta}')$ satisfy:
\[
\big|\mu(x\mid \widehat{\vartheta}) - \mu(x\mid \widehat{\vartheta}')\big| \le L_0 \big\|\widehat{\vartheta} - \widehat{\vartheta}'\big\|_2 \le \frac{4\beta_L L_0}{\mu n},\quad \forall x\in \gX,
\]
thereby validating Assumption~\ref{ass:fair_alg} for this setting with $L_2=\frac{4\beta_LL_0}{\mu}$.

\subsection{Linear stochastic approximation}
\label{sec:linear-stoc-approx}

Next, we verify stability for an important class of \emph{online} learning methods based on stochastic approximation.
Unlike the previous example in \Cref{sec:constrained-M-estimation}, where the estimator is obtained via empirical risk minimization,
stochastic approximation updates the parameter incrementally as new data arrive \citep{bottou2018optimization}.  This makes it particularly
well-suited to our online conformal framework, as it avoids retraining from scratch while still enabling control over the sensitivity of the fitted model to individual observations.

To be concrete, consider a linear prediction model
\[
\widehat\mu(x \mid Z_{1:n}) = x^\top \vartheta_n ,
\]
where the parameter \(\vartheta_n \in \R^d\) is updated online using the data \( Z_{1:n}=\{(X_i,Y_i)\}_{i=1}^n\). At iteration
\(n\), the squared loss function is defined as
\[
\ell_n(\vartheta)
= (Y_n - X_n^\top \vartheta)^2
= \vartheta^\top X_n X_n^\top \vartheta
    - 2 Y_n X_n^\top \vartheta + Y_n^2 .
\]
Its gradient at \(\vartheta_n\) is given by
\[
\nabla \ell_n(\vartheta_n)
= \underset{\eqqcolon\, \widehat{A}_n}{\underbrace{2X_n X_n^\top}} \vartheta_n - \underset{\eqqcolon\, \widehat b_n}{\underbrace{2Y_n X_n}}
= \widehat A_n \vartheta_n - \widehat b_n .
\]
%
The linear stochastic approximation (LSA) recursion with a decaying stepsize \(\eta_n\) for iteration $n$ is
\begin{align}
\vartheta_{n+1}
&= \vartheta_n - \eta_n
   \bigl(\widehat A_n \vartheta_n - \widehat b_n\bigr)
 = \bigl(I - \eta_n\widehat A_n\bigr)\vartheta_n
    + \eta_n\widehat b_n = \bigl(I - \eta_nA_n - \eta_n\widetilde A_n\bigr)\vartheta_n
    + \eta_n\widehat b_n ,
\label{eq:lsa-recursion}
\end{align}
where we denote \(A_n \coloneqq \E[\widehat A_n]\) and \(\widetilde A_n \coloneqq \widehat A_n - A_n\).

To validate stability, we compare the LSA iterates generated from two adjacent data streams.  Consider two sequences \(\{(\widehat A_i,\widehat b_i)\}_{i=1}^n\) and \(\{(\widehat A_i',\widehat b_i')\}_{i=1}^n\) that differ
in exactly one index \(l\), i.e.,
\((\widehat A_i,\widehat b_i)=(\widehat A_i',\widehat b_i')\) for all \(i\neq l\), but
\((\widehat A_l,\widehat b_l)\neq(\widehat A_l',\widehat b_l')\).
We impose the following assumptions. 


\begin{assumption}\label{ass:lsa_stable_ass}
Suppose that there exist constants \(L,\mu >0\) and \(\widehat{\sigma},\sigma\ge 1\) satisfying the following properties:
\begin{enumerate}
\item \(A_i \succeq \mu I\) for all \(i\ge 1\);
\item \(\|\widehat{A}_i\|\le \widehat{\sigma},~ \|\widetilde{A}_i\| \le \sigma,~ \|\widehat{b}_i\|_2 \le L\) for all \(i\ge 1\).
\end{enumerate}
\end{assumption}

With Assumption~\ref{ass:lsa_stable_ass} in place, we establish stability of the terminal LSA
iterate.  In particular, changing a single observation in the data stream alters \(\vartheta_{n+1}\) by at most
\(O((\log^3 n)/n)\) with high probability.

\begin{proposition}[Bounded differences for LSA]\label{prop:lsa-bd}
Consider any fixed $\zeta \ge 1$, and suppose that
Assumption~\ref{ass:lsa_stable_ass} holds. 
Assume that the LSA recursion~\eqref{eq:lsa-recursion} adopts the stepsize \(\eta_n = \min\{1/\widehat{\sigma},\gamma_n/n\}\) at iteration $n$, where \(\gamma_n = C\log n\) for some constant \(C\ge\frac{2(\zeta+1)}{\mu}>0\). 
Then there exists a constant \(K>0\) (independent of \(n\)) such that, for any \(n\ge d^{\frac{1}{\zeta}}\)and any two adjacent datasets differing in a
single time index \(l\), the corresponding LSA iterates $\{\vartheta_n\}_{n=1}^\infty$ and $\{\vartheta_n^\prime\}_{n=1}^\infty$ satisfy
\[
\big\|\vartheta_{n+1} - \vartheta_{n+1}'\big\|_2
\le K \frac{\log^3 n}{n}
\]
with probability at least \(1 - n^{-\zeta}\).
\end{proposition}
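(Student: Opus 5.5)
We compare the two coupled trajectories directly. Set $\Delta_i \coloneqq \vartheta_i-\vartheta_i'$. Since the two data streams agree at every index $i\neq l$, we have $\Delta_i=0$ for $i\le l$. At the perturbed step,
\[
\Delta_{l+1}=-\eta_l\bigl(\widehat A_l-\widehat A_l'\bigr)\vartheta_l'+\eta_l\bigl(\widehat b_l-\widehat b_l'\bigr).
\]
For $i>l$ the difference evolves linearly,
\[
\Delta_{i+1}=\bigl(I-\eta_i\widehat A_i\bigr)\Delta_i ,
\]
so $\Delta_{n+1}=\Phi_{n,l+1}\Delta_{l+1}$ with $\Phi_{n,j}\coloneqq\prod_{i=j}^{n}(I-\eta_i\widehat A_i)$. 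The argument therefore splits into three steps. First, bound $\|\Delta_{l+1}\|_2\lesssim\eta_l(\widehat\sigma\|\vartheta_l'\|_2+L)$. Second, establish a uniform high-probability bound $\max_{i\le n}\|\vartheta_i\|_2\lesssim\log n$. Third, show that the random product $\Phi_{n,l+1}$ contracts, with operator norm at most roughly $\exp\{-\tfrac{\mu}{2}\sum_{i=l+1}^n\eta_i\}$ up to polylogarithmic factors.

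\textbf{Step 1 and the iterate bound.} Deterministically, $\eta_i\le 1/\widehat\sigma$ gives $\|I-\eta_i\widehat A_i\|\le 1$, since $\widehat A_i=2X_iX_i^\top\succeq0$ with norm at most $\widehat\sigma$. Hence $\|\vartheta_{n+1}\|_2\le\|\vartheta_1\|_2+L\sum_i\eta_i\lesssim\log^2 n$. This crude bound is already sufficient up to logarithmic factors, and we may refine it to $O(\log n)$ by using the contraction of Step 3 on the mean-drift part. The same bound applies to $\vartheta'$.

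\textbf{Step 3 (main obstacle).} We need that the product of the random matrices $I-\eta_i A_i-\eta_i\widetilde A_i$ decays like its deterministic counterpart $\prod_i(I-\eta_iA_i)$, whose norm is at most $\exp\{-\mu\sum_{i=j}^{n}\eta_i\}$. My plan is to write the product as the deterministic part plus fluctuations and to control those fluctuations by a matrix martingale concentration inequality (matrix Freedman/Azuma), using independence across $i$ and $\|\widetilde A_i\|\le\sigma$.

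Concretely, I would partition $[l+1,n]$ into blocks over which $\sum\eta_i\asymp 1/\widehat\sigma$ or a constant. On each block, I would show that with probability at least $1-n^{-\zeta-2}$,
\[
\Bigl\|\textstyle\prod_{\text{block}}(I-\eta_i\widehat A_i)\Bigr\|\le 1-\tfrac{\mu}{2}\sum_{\text{block}}\eta_i .
\]
This follows by expanding the product to first order, $I-\sum\eta_i\widehat A_i+O\bigl((\sum\eta_i)^2\widehat\sigma^2\bigr)$, and noting that $\sum\eta_i\widetilde A_i$ concentrates at scale $\sigma\sqrt{\sum\eta_i^2\log(dn)}$. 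The dimension factor in the union bound is absorbed by the hypothesis $n\ge d^{1/\zeta}$. Multiplying across blocks yields $\|\Phi_{n,j}\|\lesssim\exp\{-\tfrac{\mu}{2}\sum_{i=j}^n\eta_i\}$ simultaneously for all $j$ after a union bound, which costs probability $n^{-\zeta}$.

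\textbf{Conclusion.} With $\eta_i=C\log i/i$ and $C\ge 2(\zeta+1)/\mu$, we have $\sum_{i=l+1}^n\eta_i\ge C\log l\,\log(n/l)$, so the contraction gives a factor $(l/n)^{\mu C\log l/2}$. We then bound
\[
\|\Delta_{n+1}\|\lesssim \frac{\log l}{l}\,\log^2 n\,\Bigl(\frac{l}{n}\Bigr)^{\mu C/2}\le\frac{\log^3 n}{n}
\]
for $l$ beyond a constant, and handle small $l$, where $\eta_l=1/\widehat\sigma$, through the same exponential decay. The delicate point is getting the block contraction to hold with a polynomially small failure probability while the stepsizes are still of order one in the early iterations. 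There I would fall back on the deterministic nonexpansiveness, accepting a constant loss, and apply the contraction only once $\gamma_i/i<1/\widehat\sigma$.
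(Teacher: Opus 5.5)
Your proof is correct in outline and shares the paper's skeleton, but it controls the random matrix product by a genuinely different mechanism. Like the paper, you write $\Delta_{n+1}=\Phi_{n,l+1}\Delta_{l+1}$. You use $\eta_i\le 1/\widehat{\sigma}$ to get $\|I-\eta_i\widehat A_i\|\le 1$, and hence the deterministic bound $\|\vartheta_i\|_2\lesssim \log^2 i$. That crude bound is exactly what the paper uses and is the source of the $\log^3 n$, so your $O(\log n)$ refinement is unnecessary. You then obtain $\|\Delta_{l+1}\|_2\lesssim \log^3 l/l$.

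The paper bounds the product via the Schatten $p$-moment inequality for products of independent random matrices (Lemmas~\ref{lem:prod_rand_matrix_mmt_bd} and~\ref{lem:prod_rand_matrix_conc_bd}). After a constant burn-in this gives $\|\Gamma_{j,n}\|\le \frac{dn}{\delta}(j/n)^{C\mu\log n/2}$. The polynomial prefactor $dn^{\zeta+1}$ is then killed by the $\log n$ in the exponent; this is where $C\ge 2(\zeta+1)/\mu$, $n\ge d^{1/\zeta}$, and the interpolation $\min\{1,a\}\le a^{\lambda}$ enter.

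Your block argument needs only matrix Hoeffding and submultiplicativity. It yields a contraction with no polynomial prefactor, $\|\Phi_{n,j}\|\le e^{\mu s}\exp\{-\tfrac{\mu}{2}\sum_{i=j}^{n}\eta_i\}$, so the endgame reduces to the one-line computation you give. The price is that the $\log(dn)$ from the union bound is paid through a longer burn-in rather than a prefactor.

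Two parameter choices must be adjusted for your per-block bound to hold.

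\begin{itemize}
\item \emph{Block mass.} A block mass $s=\sum_{\mathrm{block}}\eta_i$ of order $1/\widehat{\sigma}$ is too large. The second-order remainder of your expansion can be as large as $(\widehat\sigma s)^2$, and it must be at most $\mu s/4$, so you need $s\asymp \mu/\widehat\sigma^{2}$.
\item \emph{Burn-in threshold.} Starting the contraction once $\gamma_i/i<1/\widehat\sigma$ does not suffice. At failure probability $n^{-\zeta-2}$, matrix Hoeffding gives $\|\sum_{\mathrm{block}}\eta_i\widetilde A_i\|\lesssim \sigma\sqrt{\eta_{\max} s\log(dn)}$. This is below $\mu s/4$ only when $\eta_{\max}\lesssim \mu^2 s/(\sigma^2\zeta\log n)$, i.e.\ only from an index $i_0$ of order $\log n\log\log n$ onward (with constants depending on $C,\mu,\sigma,\widehat\sigma,\zeta$).
\end{itemize}

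The second fix is harmless. For $l<i_0$, use nonexpansiveness up to $i_0$ and then the contraction $\exp\{-\tfrac{\mu}{2}\sum_{i=i_0}^n\eta_i\}\le \exp\{-\tfrac{\mu C}{4}(\log^2 n-\log^2 i_0)\}$ (up to a constant). This is super-polynomially small, while $\|\Delta_{l+1}\|_2=O(\log^2 i_0)$. The finitely many small $n$ are absorbed into $K$. With these two changes your argument goes through.
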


The proof of Proposition~\ref{prop:lsa-bd} is postponed to \Cref{sec:prf:prop:lsa-bd}. 
We now return to verify Assumption~\ref{ass:fair_alg}. In this example, our prediction model takes the form
\(
\widehat{\mu}(x\mid Z_{1:n}) = x^\top \vartheta_n .
\)
Assume that the covariate $X$ is essentially bounded, i.e., $\norm{X}\le B_x$ almost surely. Then, for any two neighboring samples $Z_{1:n}$ and $Z_{1:n}'$ in a typical set $\gE$ with $\PB(\gE) \ge 1 - n^{-\zeta}$, the corresponding parameter estimates---denoted by $\vartheta_n$ and $\vartheta_n'$, respectively---obtained by LSA satisfy
\[
\big|~x^\top \vartheta_n - x^\top \vartheta_n'\big| \le \norm{x}_2 \norm{\vartheta_n - \vartheta_n'}_2 \le B_x K\frac{\log^3 n}{n}.
\]
Hence, this justifies Assumption~\ref{ass:fair_alg} with $L_2=B_xK\log^3 (m+1)$ (with $m$ the size of the training set used in this assumption).

\subsection{Stochastic strongly convex optimization}\label{sec:stochastic_scvx_opt}

%
%
%

We now turn to another case where the predictive model \(\widehat{\mu}(\cdot \mid \{Z_i\}_{i=1}^n)\) is trained in an adaptive manner. 
For parametric statistical models, a natural approach is to maintain a parameter vector and update it using an incremental
optimization rule.  Suppose that the model used at iteration \(\tau\) is
\(\widehat{\mu}_{\tau}(\cdot)=\widehat{\mu}(\cdot\mid \vartheta_\tau)\), where \(\vartheta_\tau\) is learned from the
data \(\{(X_i,Y_i)\}_{i=1}^{\tau-1}\) via stochastic optimization (or, more generally, an iterative online training
procedure).  Starting from \(\vartheta_{\tau-1}\), after observing the new data point \((X_{\tau-1},Y_{\tau-1})\) we
update
\begin{equation}\label{eq:vartheta_update}
\vartheta_\tau
= \vartheta_{\tau-1} - \eta_{\tau-1} f\big(\vartheta_{\tau-1};(X_{\tau-1},Y_{\tau-1})\big),
\end{equation}
where \(f(\vartheta;(X,Y))\) denotes the update direction and \(\eta_{\tau-1}\)  the stepsize. Let $\Theta$ denote the parameter domain and $\mathcal{Z}$ the domain of data points. 

To study stability for such adaptive methods, we impose several standard conditions on the update map $f$.

\begin{assumption}\label{ass:ssco_ass}
There exist constants \(0<\mu\leq L\) and \(B>0\) such that, for any \(\vartheta,\vartheta'\in\Theta\) and
\((x,y)\in\gZ\),
\begin{itemize}
\item Strong convexity: \(\inner{f(\vartheta;(x,y)) - f(\vartheta';(x,y))}{\vartheta-\vartheta'} \ge \mu\|\vartheta-\vartheta'\|_2^2\);
\item Smoothness: \(\|f(\vartheta;(x,y)) - f(\vartheta';(x,y))\|_2 \le L\|\vartheta-\vartheta'\|_2\);
\item Boundedness: \(\norm{f(\vartheta;(x,y))}_2 \le B\).
\end{itemize}
\end{assumption}

With Assumption~\ref{ass:ssco_ass} in place and suitably decaying stepsizes, a single-sample
perturbation has an \(O(1/n)\) effect on the parameter iterate, as asserted in the following lemma.
The proof is provided in \Cref{sec:prf:prop:ssco_bd}. 
\begin{proposition}\label{prop:ssco_bd}
Suppose that Assumption~\ref{ass:ssco_ass} holds. Consider the parameter sequence
\(\{\vartheta_n\}_{n=1}^\infty\) updated according to \eqref{eq:vartheta_update} with stepsize
\(\eta_n=\min\{\gamma/n,\,1/L\}\), where \(\gamma>3/\mu\). Then there exists a constant \(K>0\) (independent of \(n\))
such that, for any two adjacent datasets differing in a single time index \(l\), the corresponding iterates
\(\{\vartheta_n\}_{n=1}^\infty\) and \(\{\vartheta_n'\}_{n=1}^\infty\) satisfy
\[
\big\|\vartheta_n-\vartheta_n' \big\|_2 \le \frac{K}{n}.
\]
\end{proposition}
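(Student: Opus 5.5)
We couple the two data streams on the same index set. Write $\Delta_n \coloneqq \vartheta_n-\vartheta_n'$, and note that the two recursions \eqref{eq:vartheta_update} share their initialization and every data point except the one at time $l$. Consequently $\Delta_n=0$ for all $n\le l$, so nothing needs to be proved there. The argument then has three pieces: the jump at time $l$, the contraction after $l$, and the solution of the resulting scalar recursion.

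\emph{The jump at time $l$.} Here we simply use the triangle inequality and the boundedness part of Assumption~\ref{ass:ssco_ass}:
\[
\|\Delta_{l+1}\|_2 \le \eta_l\bigl(\|f(\vartheta_l;Z_l)\|_2+\|f(\vartheta_l;Z_l')\|_2\bigr)\le 2B\eta_l \le \frac{2B\gamma}{l}.
\]

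\emph{Contraction for $n>l$.} Both recursions now use the same sample $Z_n$. Expanding the square gives
\[
\|\Delta_{n+1}\|_2^2=\|\Delta_n\|_2^2-2\eta_n\inner{f(\vartheta_n;Z_n)-f(\vartheta_n';Z_n)}{\Delta_n}+\eta_n^2\|f(\vartheta_n;Z_n)-f(\vartheta_n';Z_n)\|_2^2 .
\]
Strong monotonicity bounds the inner product below by $\mu\|\Delta_n\|_2^2$, and smoothness bounds the last term by $L^2\|\Delta_n\|_2^2$. Hence
\[
\|\Delta_{n+1}\|_2^2\le(1-2\mu\eta_n+L^2\eta_n^2)\|\Delta_n\|_2^2 .
\]
Since $\eta_n\le 1/L$ and $\mu\le L$, we have $L^2\eta_n^2\le L\eta_n\cdot L\eta_n \le \eta_n L$. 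That bound is not sharp enough, so the right comparison is $L^2\eta_n^2\le \mu\eta_n$, which holds once $\eta_n\le \mu/L^2$. I therefore split into two regimes.

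\emph{Regime (i), $n\ge n_0\coloneqq\lceil \gamma L^2/\mu\rceil$.} Here $\eta_n=\gamma/n\le \mu/L^2$, so the factor is at most $1-\mu\eta_n$. Taking square roots,
\[
\|\Delta_{n+1}\|_2\le (1-\mu\gamma/(2n))\|\Delta_n\|_2\le (1-\tfrac{3}{2n})\|\Delta_n\|_2 ,
\]
using $\gamma>3/\mu$.

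\emph{Regime (ii), $n<n_0$.} This is a finite, $n$-independent range. There the factor $1-2\mu\eta_n+L^2\eta_n^2$ is at most $1$, because $\eta_n\le 1/L$ gives $L^2\eta_n^2\le L\eta_n$; to get this below $2\mu\eta_n$ I instead use the cruder bound that the factor is at most $1+L^2\eta_n^2\le 2$. Any such constant per step compounds to a constant $C_0=2^{n_0/2}$ over this range, which is independent of $n$ and $l$.

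\emph{Solving the recursion.} For $n>\max(l,n_0)$, iterating gives
\[
\|\Delta_n\|_2\le C_0\|\Delta_{l+1}\|_2\prod_{j=\max(l+1,n_0)}^{n-1}\Bigl(1-\frac{3}{2j}\Bigr).
\]
The product is $\lesssim (\max(l,n_0)/n)^{3/2}$. Combined with $\|\Delta_{l+1}\|_2\le 2B\gamma/l$ (or $\le 2B/L$ when $l<n_0$), this gives
\[
\|\Delta_n\|_2\lesssim \frac{1}{l}\Bigl(\frac{l}{n}\Bigr)^{3/2}\le \frac{1}{n}.
\]
The last inequality uses $l\le n$, so that $l^{1/2}\le n^{1/2}$. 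For $n\le n_0$, the bound $\|\Delta_n\|_2\le 2BC_0\gamma$ is trivially of the form $K/n$ with $K\ge 2BC_0\gamma n_0$. Collecting all the constants into $K$ finishes the argument.

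\emph{Main obstacle.} The delicate point is making sure the decay exponent exceeds $1$, so that the $1/l$ jump is not amplified when $l$ is small. This is exactly where the condition $\gamma>3/\mu$ enters: it yields the exponent $\mu\gamma/2>3/2>1$. The early-iterate regime where $\eta_n$ is capped at $1/L$ is handled by absorbing its effect into the constant $C_0$.
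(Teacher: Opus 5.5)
Your proof is correct, but it is organized differently from the paper's.

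\textbf{The paper's route.} It argues by induction on $n$ for the single bound $\|\vartheta_n-\vartheta_n'\|_2\le\min\{K/n,\,2B/L\}$, with the explicit constant $K=2BL\gamma/\mu$.
\begin{itemize}
\item The fresh-perturbation case is your jump bound.
\item In the propagation case, it keeps the uniform bound $2B/L$ by using that the step is nonexpansive for $\eta_n\le 1/L$ (a cited lemma). This disposes of the early range $n+1<L^2\gamma/\mu$.
\item For larger $n$, it checks the one-step ratio $(1-2\mu\eta_n+L^2\eta_n^2)(n+1)^2/n^2\le(1-\mu\eta_n)(1+3/n)\le 1$. This is where $\gamma\ge 3/\mu$ is used.
\end{itemize}

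\textbf{Your route.} You unroll the recursion explicitly: a jump of size $O(1/l)$, crude growth over finitely many early steps, then $\prod_j(1-\tfrac{3}{2j})\le(a/n)^{3/2}$, whose exponent exceeding $1$ absorbs the $1/l$.

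\textbf{What each buys.} Your argument never needs nonexpansiveness. At $\eta=1/L$, nonexpansiveness follows from co-coercivity when $f(\cdot;z)$ is the gradient of a convex smooth function. It does not follow from the two-sided conditions of Assumption~\ref{ass:ssco_ass} alone: they only give the factor $1-2\mu\eta+L^2\eta^2$, which exceeds $1$ at $\eta=1/L$ once $L>2\mu$. Your argument also works already under $\mu\gamma\ge 2$. The price is the constant $C_0=2^{n_0/2}$, which is exponential in $\gamma L^2/\mu$, versus the paper's polynomial $K$.

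\textbf{Two fixes.} You can remove the exponential constant by using boundedness of $f$ rather than Lipschitzness in the early range. Since $\|\Delta_{j+1}\|_2\le\|\Delta_j\|_2+2B\eta_j\le\|\Delta_j\|_2+2B/L$, you get $\|\Delta_{n_0}\|_2\le 2Bn_0/L$, so $K$ becomes polynomial in $n_0$. Also delete the sentence in Regime~(ii) claiming the factor is at most $1$. It is not justified there, and only your bound $\le 2$ is actually used.
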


We now discuss how to incorporate the above adaptive updates into the online full conformal construction.  In particular, when forming the
augmented dataset \((X_{n,r,l},y)\) in the proposed full conformal procedure, we update the model parameter using the same one-step
rule.  For any pair \((x,y)\in \mathcal{X}\times \mathbb{R}\), define
\[
\vartheta_\tau^{(x,y)}
\coloneqq
\vartheta_{\tau-1}-\eta_{\tau-1} f\big(\vartheta_{\tau-1};(x,y)\big).
\]
Accordingly, in round $r$ of stage $n$, we redefine the residual score \(s_{i}(X_{n,r,l},y)\) from Eqn.~\eqref{eq:res_score-online} as follows:
\begin{subequations}
\label{eq:res_score-online-param}
\begin{align}
s_{i}^{(X,y)} & \coloneqq\Big|Y_{n,r-1,i}-\widehat{\mu}\big(X_{n,r-1,i}^{\mathsf{cal}}\,|\, \vartheta_{\tau_{n,r}}^{(X,y)}\big)\Big|,\qquad i=1,\ldots,T_{r-1},\label{eq:res_score-online-param-cal}\\
s_{\mathsf{test}}^{(X,y)} & \coloneqq\Big|y-\widehat{\mu}\big(X\,|\, \vartheta_{\tau_{n,r}}^{(X,y)}\big)\Big|.
\label{eq:res_score-online-param-test}
\end{align}
\end{subequations}
Equipped with~\eqref{eq:res_score-online-param}, we can then construct the prediction set according to~\eqref{eq:full_conformal_inf}. This leads to a modification of
Algorithm~\ref{alg:FOCID} 
that incorporates adaptive updates of the fitted model.

\subsection{Detailed proofs}

\subsubsection{Proof of Propostion~\ref{prop:m_esti_bd_diff}}\label{sec:prf:prop:m_esti_bd_diff}
Since $\gC$ is convex and $\ell(\cdot;z)$ is differentiable in the first argument, the standard optimality condition \citep{boyd2004convex} yields 
\begin{align}
\label{eq:foc-original}
\big\langle \nabla \widehat L_n(\widehat{\vartheta}_n), \vartheta - \widehat{\vartheta}_n \big\rangle &\ge 0
\quad \text{for all } \vartheta \in \gC, \\
\label{eq:foc-perturbed}
\big\langle \nabla \widehat L_n'(\widehat{\vartheta}_n'), \vartheta - \widehat{\vartheta}_n'\big\rangle &\ge 0
\quad \text{for all } \vartheta \in \gC.
\end{align}
In particular, taking $\vartheta = \widehat{\vartheta}_n'$ in \eqref{eq:foc-original} and $\vartheta = \widehat{\vartheta}_n$ in \eqref{eq:foc-perturbed}, adding these two inequalities, and setting $\Delta_n \coloneqq \widehat{\vartheta}_n' - \widehat{\vartheta}_n$, we obtain
\begin{equation}
\label{eq:basic-ineq-before-substitution}
\big\langle \nabla \widehat L_n(\widehat{\vartheta}_n) - \nabla \widehat L_n'(\widehat{\vartheta}_n'), \Delta_n \big\rangle \ge 0.
\end{equation}

By construction, the gradients of the two empirical loss functions satisfy
\[
\nabla \widehat L_n'(\vartheta)
= \nabla \widehat L_n(\vartheta) 
+ \frac{1}{n}\big(\nabla \ell(\vartheta;Z_n') - \nabla \ell(\vartheta;Z_n)\big), 
\]
which, when evaluated at $\vartheta=\widehat{\vartheta}_n'$, yields
 \[
\nabla \widehat L_n'(\widehat{\vartheta}_n')
= \nabla \widehat L_n(\widehat{\vartheta}_n')
+ \frac{1}{n}\big(\nabla \ell(\widehat{\vartheta}_n',Z_n') - \nabla \ell(\widehat{\vartheta}_n',Z_n)\big).
\]
Substituting this identity into \eqref{eq:basic-ineq-before-substitution} yields
\begin{equation*}
\big\langle \nabla \widehat L_n(\widehat{\vartheta}_n) - \nabla \widehat L_n(\widehat{\vartheta}_n'), \Delta_n \big\rangle 
- \frac{1}{n}\big\langle \nabla \ell(\widehat{\vartheta}_n';Z_n') - \nabla \ell(\widehat{\vartheta}_n';Z_n), \Delta_n\big\rangle 
\ge 0.
\end{equation*}
Rearranging terms, we are left with
\begin{equation}
\label{eq:grad-diff-vs-data-perturbation}
\big\langle \nabla \widehat L_n(\widehat{\vartheta}_n') - \nabla \widehat L_n(\widehat{\vartheta}_n), \Delta_n\big\rangle 
\le \frac{1}{n}\,
\big\langle \nabla \ell(\widehat{\vartheta}_n';Z_n) - \nabla \ell(\widehat{\vartheta}_n';Z_n'), \Delta_n\big\rangle .
\end{equation}

According to Assumption~\ref{ass:M-estimator_reg_ass}
there exists a quantity $\beta_L>0$ such that
\begin{equation}
\label{eq:gradient-bounded-diff}
\big\|\nabla \ell(\vartheta;z) - \nabla \ell(\vartheta;z')\big\|_2
\le 2\beta_L
\qquad\text{for all } \vartheta \in \gC, \; z,z'.
\end{equation}
With \eqref{eq:gradient-bounded-diff} in mind, we can bound the right-hand side of
\eqref{eq:grad-diff-vs-data-perturbation} using the Cauchy–Schwarz inequality:
\begin{align*}
\frac{1}{n}\,
\big|\big\langle\nabla \ell(\widehat{\vartheta}_n';Z_n') - \nabla \ell(\widehat{\vartheta}_n';Z_n),\Delta_n\big\rangle\big|
&\le \frac{1}{n}\,
\big\|\nabla \ell(\widehat{\vartheta}_n';Z_n') - \nabla \ell(\widehat{\vartheta}_n';Z_n)\big\|_2 \,\|\Delta_n\|_2 \\
&\le \frac{2\beta_L}{n}\,\|\Delta_n\|_2.
\end{align*}
Combine this with \eqref{eq:grad-diff-vs-data-perturbation} to arrive at the upper bound
\begin{equation}
\label{eq:grad-diff-upper-bound}
\big\langle\nabla \widehat L_n(\widehat{\vartheta}_n') - \nabla \widehat L_n(\widehat{\vartheta}_n), \Delta_n\big\rangle
\le \frac{2\beta_L}{n}\,\|\Delta_n\|_2.
\end{equation}

On the other hand, the left-hand side of \eqref{eq:grad-diff-upper-bound} can be expressed in terms of the empirical Hessian. Specifically, by the fundamental theorem of calculus for vector-valued functions,
\[
\nabla \widehat L_n(\widehat{\vartheta}_n') - \nabla \widehat L_n(\widehat{\vartheta}_n)
= \int_0^1 \nabla^2 \widehat L_n\big(\widehat{\vartheta}_n + t\Delta_n\big)\,\Delta_n \, \mathrm{d}t,
\]
and as a consequence, 
\[
\big\langle\nabla \widehat L_n(\widehat{\vartheta}_n') - \nabla \widehat L_n(\widehat{\vartheta}_n),\Delta_n\big\rangle
= \int_0^1 \Delta_n^\top \nabla^2 \widehat L_n\big(\widehat{\vartheta}_n + t\Delta_n\big)\Delta_n \, \mathrm{d}t.
\]
In particular, there exists some $\widetilde{\vartheta}$ lying within the line segment between $\widehat{\vartheta}_n$ and $\widehat{\vartheta}_n'$ such that
\begin{equation*}
\big\langle\nabla \widehat L_n(\widehat{\vartheta}_n') - \nabla \widehat L_n(\widehat{\vartheta}_n), \Delta_n\big\rangle
= \Delta_n^\top \nabla^2 \widehat L_n(\widetilde{\vartheta}) \Delta_n.
\end{equation*}

We now control the empirical Hessian $\nabla^2 \widehat L_n(\vartheta)$ uniformly over all $\vartheta \in \gC$. Denote $\gN\big(\frac{\mu}{4\beta_s}, \gC\big)$ as the $\frac{\mu}{4}$-cover of $\gC$. Then we have $|\gN\big(\frac{\mu}{4\beta_s}, \gC\big)|\le \Big(\frac{12\beta_sD_\gC}{\mu}\Big)^d$. According to Bernstein's inequality, we have
\begin{equation}
\label{eq:hessian-concentration}
\sup_{\vartheta \in \gN\big(\frac{\mu}{4\beta_s}, \gC\big)} 
\Big\{\big\|\nabla^2 \widehat L_n(\vartheta) - \nabla^2 L(\vartheta)\big\|_{\infty,\infty}\Big\}
\le \frac{\mu}{4}
\end{equation}
with probability at least $1-d^2 \exp\big\{-\frac{\mu^2 n}{32\beta_s^2 D_\gC^2} \big\}$, provided that $n \ge \frac{32d\beta_s^2D_\gC^2}{\mu^2}\log\big(\frac{12\beta_sD_\gC}{\mu}\big)$. Combining this with Assumption~\ref{ass:M-estimator_reg_ass} leads to
\begin{align*}
    \sup_{\vartheta \in \gC}\Big\{\big\|\nabla^2 \widehat L_n(\vartheta) - \nabla^2 L(\vartheta)\big\|_{\infty,\infty}\Big\} &\le \beta_s\sup_{\vartheta \in \gC}\inf_{\vartheta^* \in \gN\big(\frac{\mu}{4\beta_s}, \gC\big)}\Big\{\|\, \vartheta - \vartheta^*\,\|_2\Big\} \\
    &\quad + \sup_{\vartheta^* \in \gN\big(\frac{\mu}{4\beta_s}, \gC\big)}\Big\{\big\|\nabla^2 \widehat L_n(\vartheta) - \nabla^2 L(\vartheta)\big\|_{\infty,\infty}\Big\} \overset{\eqref{eq:hessian-concentration}}{\le} \beta_s \frac{\mu}{4\beta_s} + \frac{\mu}{4} \le \frac{\mu}{2}.
\end{align*}
This combined with the strong convexity assumption and \eqref{eq:hessian-concentration} tells us that: for all $\vartheta \in \gC$ and every unit vector $u\in\R^d$,
\[
u^\top \nabla^2 \widehat L_n(\vartheta) u
\ge u^\top \nabla^2 L(\vartheta) u
- \big\|\nabla^2 \widehat L_n(\vartheta) - \nabla^2 L(\vartheta)\big\|
\ge \mu - \frac{\mu}{2} = \frac{\mu}{2}.
\]
Putting all these pieces together, we arrive at
\[
\frac{\mu}{2} \|\Delta_n\|_2^2
\le \Delta_n^\top \nabla^2 \widehat L_n(\widetilde{\vartheta}) \Delta_n
= \inner{\nabla \widehat L_n(\widehat{\vartheta}_n') - \nabla \widehat L_n(\widehat{\vartheta}_n)}{\Delta_n}
\le \frac{2\beta_L}{n}\,\|\Delta_n\|_2.
\]
If $\Delta_n = 0$, the bound is trivial. Otherwise, cancelling $\|\Delta_n\|_2$ from both sides yields
\(
\|\Delta_n\| \le \frac{4\beta_L}{\mu n}.
\)

\subsubsection{Proof of Proposition~\ref{prop:lsa-bd}}\label{sec:prf:prop:lsa-bd}
Without loss of generality, we assume that $\vartheta_1 = 0$. Note that under the stepsize choice $\eta_n=\min\{1/\widehat{\sigma},\gamma_n/n\}$, it can be easily seen that: for any unit vector $u$, 
\[
1 = \norm{u}_2^2 \ge u^\top (I - \eta_n \widehat{A}_n)u \ge \norm{u}_2^2 - \eta_n \widehat{\sigma} = 1- \eta_n \widehat{\sigma}\ge 0.
\]
This implies that for any $n$,  $\|I - \eta_n \widehat{A}_n\|\leq 1$.
According to the LSA update rule~\eqref{eq:lsa-recursion}, we have
\begin{equation}\label{eq:conserv_bd_vartheta}
\begin{aligned}
\|\vartheta_n\|_2 &\le \big\|\big(I - \eta_{n-1}\widehat{A}_{n-1}\big)\vartheta_{n-1}\big\|_2 + \eta_{n-1} \big\|\widehat{b}_n\big\|_2\\
&\overset{\mathrm{(a)}}{\le} \norm{\vartheta_{n-1}}_2 + \frac{CL\log (n-1)}{n-1}
\le \ldots \le \ssum{i}{1}{n} \frac{CL \log i}{i} \le CL \log^2 n,
\end{aligned}
\end{equation}
where we have made use of Assumption~\ref{ass:lsa_stable_ass} and the choice that $\eta_n\leq \gamma_n/n$.

\paragraph{Stability at iteration $l$.}
 By construction,
\(
\vartheta_i = \vartheta_i' \) holds for all \( i = 1,\dots,l .
\)
At the perturbed iteration $l+1$, it holds that
\begin{align*}
\vartheta_{l+1} - \vartheta_{l+1}'
&= \Bigl[\vartheta_l - \eta_{l}
         \bigl(\widehat A_l \vartheta_l - \widehat b_l\bigr)\Bigr]
   - \Bigl[\vartheta_l' - \eta_{l}
         \bigl(\widehat A_l' \vartheta_l' - \widehat b_l'\bigr)\Bigr] = -\eta_{l}
   \Bigl[(\widehat A_l - \widehat A_l')\vartheta_l
         - (\widehat b_l - \widehat b_l')\Bigr] .
\end{align*}
Taking this together with  $\eta_l \le {\gamma_l}/{l}$, $\sigma \geq 1$, and Assumption~\ref{ass:lsa_stable_ass} leads to
\begin{equation}
\label{eq:Delta-l+1}
\begin{aligned}
\bigl\|\vartheta_{l+1} - \vartheta_{l+1}'\bigr\|_2
&\le \frac{\gamma_l }{l}\left(\big\|\widetilde{A}_l\big\| + \big\|\widetilde{A}_l'\big\|\right)\norm{\vartheta_l}_2 + \frac{2\gamma_l L}{l}\\
&\overset{\eqref{eq:conserv_bd_vartheta}}{\le} \frac{2\gamma_l}{l}\left(\sigma CL \log^2 l + L\right) \le \frac{4C\sigma L\log^3 l}{l}.
\end{aligned}
\end{equation}

\paragraph{Error propagation after iteration $l$.}
Let us define, for all $i\geq 1$, 
\(
\Delta_i \coloneqq \vartheta_i - \vartheta_i' .
\)
For every $i>l$, the two recursions share the same
$(\widehat A_i,\widehat b_i)$, so subtracting their updates yields
\[
\Delta_{i+1}
= \bigl(I - \eta_{i}\widehat A_i\bigr)\Delta_i .
\]
Iterating over $i=l,\dots,n$ gives
\begin{equation}
\label{eq:Delta-n+1-product}
\Delta_{n+1}
= \Gamma_{l,n}^\gamma \Delta_{l+1},
\qquad
\Gamma_{l,n}^\gamma
\coloneqq \prod_{i=l+1}^{n}
\bigl(I - \eta_{i}\widehat A_i\bigr),
\end{equation}
where the matrix product is ordered from $i=l+1$ (the rightmost) up to $n$ (the leftmost).
Combining~\eqref{eq:Delta-l+1} and~\eqref{eq:Delta-n+1-product} yields
\begin{equation}
\label{eq:Delta-n+1-preliminary}
\|\Delta_{n+1}\|_2
= \|\Gamma_{l,n}^\gamma \Delta_{l+1}\|_2
\le \|\Gamma_{l,n}^\gamma\| \,\|\Delta_{l+1}\|_2
\le \frac{4C\sigma L\log^3 l}{l}\,\|\Gamma_{l,n}^\gamma\|.
\end{equation}
Hence, it boils down to
controlling the operator norm of the random matrix product
$\Gamma_{l,n}^\gamma$.
\paragraph{Tools for analyzing products of random matrices.}
To bound $\|\Gamma_{l,n}^\gamma\|$, we follow the framework of
\citet{durmus2021tight,huang2022matrix}. For any matrix
$B\in\R^{d\times d}$, let $(\sigma_\ell(B))_{\ell=1}^d$ denote its
singular values, and for $p\ge1$ define the Schatten $p$-norm
$
\|B\|_p \coloneqq \bigl(\sum_{\ell=1}^d \sigma_\ell^p(B)\bigr)^{1/p}.
$
For $p,q\ge1$ and a random matrix $X$, define
$
\|X\|_{p,q} \coloneqq \bigl( \E [\|X\|_p^q] \bigr)^{1/q}.
$
We record the following two useful lemmas from \citet{durmus2021tight}.

\begin{lemma}[Proposition~2 in \citet{durmus2021tight}]
\label{lem:prod_rand_matrix_mmt_bd}
Let $\{Y_\ell: \ell\in\NB\}$ be an independent sequence. Assume that for each $\ell\in\NB$, there exist
$m_\ell\in(0,1)$ and $\sigma_\ell>0$ such that
\[
\|\E[Y_\ell]\|^2 \le 1 - m_\ell
\quad\text{and}\quad
\|Y_\ell - \E[Y_\ell]\| \le \sigma_\ell
\quad\text{almost surely}.
\]
Define $Z_n = \prod_{\ell=0}^n Y_\ell = Y_n Z_{n-1}$ for $n\ge1$, with an
arbitrary starting point $Z_0$. Then, for any $2\le q\le p$ and $n\ge1$, one has
\begin{equation*}
\|Z_n\|_{p,q}^2
\le  \prod_{\ell=1}^n
\bigl(1 - m_\ell + (p-1)\sigma_\ell^2\bigr)
\bigl\| Z_0  \bigr\|_{p,q}^2.
\end{equation*}
\end{lemma}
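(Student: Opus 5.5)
The plan is to proceed by induction on $n$, peeling off one factor at a time via the recursion $Z_n = Y_n Z_{n-1}$. The key input is a \emph{uniform smoothness} inequality for the mixed norm $\|\cdot\|_{p,q}$: for $2\le q\le p$ and random matrices $X,W$ with $\E[W\mid X]=0$,
\begin{equation*}
\|X+W\|_{p,q}^2 \le \|X\|_{p,q}^2 + (p-1)\,\|W\|_{p,q}^2 .
\end{equation*}
This is the matrix analogue of the martingale-difference Pythagoras inequality used by \citet{huang2022matrix} and \citet{durmus2021tight}. Granting it, the induction step is short. I would make the independence convention explicit first: $Z_0$ is either deterministic or independent of $\{Y_\ell\}$.

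For the inductive step, decompose $Y_n = \E[Y_n] + (Y_n - \E[Y_n])$ and set $X \coloneqq \E[Y_n]Z_{n-1}$ and $W \coloneqq (Y_n-\E[Y_n])Z_{n-1}$. Since $Y_n$ is independent of $Z_{n-1}$, which is a function of $Z_0,Y_1,\dots,Y_{n-1}$, we get $\E[W\mid Z_{n-1}]=0$. Hence $\E[W\mid X]=0$ by the tower property.
\begin{itemize}
\item For the mean part, the bound $\|AB\|_p\le\|A\|\,\|B\|_p$ with $A$ deterministic gives $\|X\|_{p,q}^2\le \|\E[Y_n]\|^2\|Z_{n-1}\|_{p,q}^2\le (1-m_n)\|Z_{n-1}\|_{p,q}^2$.
\item For the fluctuation part, the same bound applied pointwise, together with the almost-sure bound $\|Y_n-\E[Y_n]\|\le\sigma_n$, gives $\|W\|_p\le\sigma_n\|Z_{n-1}\|_p$ almost surely. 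Taking $L_q$ norms yields $\|W\|_{p,q}^2\le \sigma_n^2\|Z_{n-1}\|_{p,q}^2$.
\end{itemize}
Plugging both into the smoothness inequality gives $\|Z_n\|_{p,q}^2\le(1-m_n+(p-1)\sigma_n^2)\|Z_{n-1}\|_{p,q}^2$. Iterating down to $Z_0$ gives the product bound in the statement.

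The main obstacle is establishing the smoothness inequality itself. I would first prove the deterministic Schatten-class version. By the $2$-uniform smoothness of $S_p$ (Ball--Carlen--Lieb), for $p\ge 2$ one has
\begin{equation*}
\tfrac12\big(\|A+B\|_p^2+\|A-B\|_p^2\big)\le\|A\|_p^2+(p-1)\|B\|_p^2 .
\end{equation*}
Next comes the conditional-mean-zero case with $q=p$. Here I would symmetrize: replace $W$ by $\varepsilon W$ with $\varepsilon$ a Rademacher sign. This is legitimate via a conditional Jensen argument, since $X=\E[X+W\mid X]$ implies that $\|X\|_{p}^2$ is dominated by conditional averages of $\|X\pm W\|_p^2$. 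I would then pass to the $L_{q}$ mixed norm for $q\le p$ following \citet[Proposition~4.3]{huang2022matrix}. Concretely, this amounts to applying the scalar inequality $(a+b)^{q/2}$-type convexity to the function $t\mapsto \E[\|X+tW\|_p^{q}]^{2/q}$ and using the restriction $q\le p$ to control its second derivative by $(p-1)\|W\|_{p,q}^2$. If that direct derivative computation becomes messy, the fallback is simply to cite the inequality from \citet{huang2022matrix}, since the remaining steps are elementary.
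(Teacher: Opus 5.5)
Your induction is correct and is the same argument as the cited result. The paper gives no proof of its own; \citet{durmus2021tight} split $Y_n=\E[Y_n]+(Y_n-\E[Y_n])$, bound the two pieces exactly as you do, and invoke the conditional-mean-zero smoothness inequality of \citet{huang2022matrix}. With that inequality cited, your proof is complete.

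The problem is your sketched self-contained derivation of the smoothness inequality, where the Rademacher step fails. Conditional Jensen gives only the \emph{lower} bound $\|X\|_p^q\le\E[\|X\pm W\|_p^q\mid X]$. In general $\|X+W\|_{p,q}\not\le\|X+\varepsilon W\|_{p,q}$. For example, take $1\times1$ matrices, $p=q=4$, $X=1$, and $W\in\{2,-1\}$ with probabilities $1/3,2/3$, so $\E W=0$. Then $\E(X+W)^4=27$, while $\tfrac12\bigl(\E(X+W)^4+\E(X-W)^4\bigr)=19$. The legitimate ghost-copy symmetrization $W\mapsto W-W'$ loses a factor $4$, so it would only give $1-m_\ell+4(p-1)\sigma_\ell^2$. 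Separately, the $2$-average Ball--Carlen--Lieb inequality you wrote cannot be lifted to $L_q$ for $q>2$. You need their $p$-average form, $\bigl(\tfrac12(\|A+B\|_p^p+\|A-B\|_p^p)\bigr)^{2/p}\le\|A\|_p^2+(p-1)\|B\|_p^2$.

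Your derivative idea does work once the first-order term is removed, and your Jensen observation is exactly what removes it.
\begin{itemize}
\item First, take \emph{arbitrary} random $X,W$. The $p$-average inequality, the power-mean inequality (using $q\le p$), and Minkowski in $L_{q/2}$ (using $q\ge2$) give $\|X+W\|_{p,q}^2+\|X-W\|_{p,q}^2\le2\|X\|_{p,q}^2+2(p-1)\|W\|_{p,q}^2$.
\item Set $g(t)\coloneqq\|X+tW\|_{p,q}^2$ and $C\coloneqq(p-1)\|W\|_{p,q}^2$. Applying the previous inequality at base point $X+tW$ with increment $sW$ shows that $h(t)\coloneqq g(t)-Ct^2$ is midpoint concave. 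Since $h$ is continuous, it is concave.
\item Next, conditional Jensen with $\E[W\mid X]=0$ gives $g(t)\ge g(0)$ for all $t$, so $h(-s)\ge h(0)-Cs^2$.
\item Finally, use concavity at $0=\tfrac{1}{1+s}(-s)+\tfrac{s}{1+s}\cdot1$. This yields $h(1)\le h(0)+Cs$ for every $s>0$. Letting $s\downarrow0$ gives $\|X+W\|_{p,q}^2\le\|X\|_{p,q}^2+(p-1)\|W\|_{p,q}^2$ with the sharp constant.
\end{itemize}
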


\begin{lemma}[Lemma~1 in \citet{durmus2021tight}]
\label{lem:prod_rand_matrix_conc_bd}
Let $A\in\R$, $B>0$, $C\ge1$, and $p_0,p_1\in\R$ with
$1\le p_0\le p_1<\infty$. Let $X$ be a real random variable satisfying,
for any $p\in[p_0,p_1]$,
\begin{equation}
\label{eq:X_p_mmt_bd}
\E[|X|^p] \le C \exp(-Ap + Bp^2).
\end{equation}
Then, for all $\delta\in(0,1]$, with probability at least $1-\delta$ one has
\begin{equation*}
|X| \le \exp\Bigl(
        -A + Bp_0
        + 2\sqrt{B \log(C/\delta)}
        + \frac{\log(C/\delta)}{p_1}
    \Bigr).
\end{equation*}
\end{lemma}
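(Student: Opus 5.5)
The plan is a Chernoff-type argument: apply Markov's inequality to $|X|^p$ and choose $p\in[p_0,p_1]$ to make the resulting exponent at most $\log\delta$. Write $\mathsf{L}\coloneqq\log(C/\delta)$, which is nonnegative since $C\ge 1$ and $\delta\le 1$. Set
\[
t\coloneqq\exp\Bigl(-A+Bp_0+2\sqrt{B\mathsf{L}}+\frac{\mathsf{L}}{p_1}\Bigr).
\]
For any $p\in[p_0,p_1]$, Markov's inequality combined with \eqref{eq:X_p_mmt_bd} gives
\[
\PB(|X|>t)\le \frac{\E[|X|^p]}{t^p}\le \exp\Bigl(\log C+Bp^2-Bpp_0-2p\sqrt{B\mathsf{L}}-\frac{p\mathsf{L}}{p_1}\Bigr),
\]
because the $-Ap$ term cancels against $-p\log t$. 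It therefore suffices to exhibit one admissible $p$ with
\[
\Phi(p)\coloneqq Bp^2-Bpp_0-2p\sqrt{B\mathsf{L}}-\frac{p\mathsf{L}}{p_1}\le -\mathsf{L},
\]
since then $\PB(|X|>t)\le \exp(\log C-\mathsf{L})=\delta$.

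The unconstrained minimiser of the dominant part $Bp^2-2p\sqrt{B\mathsf{L}}$ is $p^\star\coloneqq\sqrt{\mathsf{L}/B}$. I would take $p$ to be $p^\star$ clipped to $[p_0,p_1]$ and split into three cases.

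\emph{Case $p^\star\in[p_0,p_1]$.} Take $p=p^\star$ and drop the nonpositive terms $-Bpp_0$ and $-p\mathsf{L}/p_1$. This gives $\Phi\le \mathsf{L}-2\mathsf{L}=-\mathsf{L}$.

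\emph{Case $p^\star<p_0$.} Take $p=p_0$. The quadratic terms cancel exactly, leaving $\Phi(p_0)\le -2p_0\sqrt{B\mathsf{L}}$. Since $p_0>\sqrt{\mathsf{L}/B}$, this is at most $-2\mathsf{L}\le-\mathsf{L}$.

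\emph{Case $p^\star>p_1$.} Take $p=p_1$. Now the last term equals exactly $-\mathsf{L}$. Because $p_1<\sqrt{\mathsf{L}/B}$, we have $Bp_1^2<p_1\sqrt{B\mathsf{L}}$. Hence $\Phi(p_1)\le -p_1\sqrt{B\mathsf{L}}-\mathsf{L}\le -\mathsf{L}$.

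The only step needing care is this case analysis. The boundary cases are exactly where the extra terms $Bp_0$ (which cancels $Bp_0^2$) and $\mathsf{L}/p_1$ (which supplies the missing $-\mathsf{L}$) in the definition of $t$ are used. With all three cases covered, $|X|\le t$ holds with probability at least $1-\delta$, which is the claim.
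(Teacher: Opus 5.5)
Your proof is correct and is essentially the standard argument: apply Markov's inequality to $|X|^p$, take $p=\sqrt{\log(C/\delta)/B}$ clipped to $[p_0,p_1]$, and check that in each of the three cases the exponent is at most $-\log(C/\delta)$. The paper does not reprove this lemma; it imports it from \citet{durmus2021tight}, whose proof is the same Markov-plus-choice-of-$p$ argument, with the terms $Bp_0$ and $\log(C/\delta)/p_1$ absorbing the two clipped cases exactly as you describe.
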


\paragraph{High-probability bound on $\|\Gamma_{l,n}^\gamma\|$.}
We intend to apply Lemma~\ref{lem:prod_rand_matrix_mmt_bd} to analyze the matrix product
\[
\Gamma_{j,n}^\gamma
\coloneqq \prod_{i=j}^n
\bigl(I - \eta_{i}A_i + \eta_{i}\widetilde A_i\bigr),
\qquad j\le n.
\]
Take $Y_i = I - \eta_{i}A_i + \eta_{i}\widetilde A_i$, and $Z_0=I_d$. Then $\E[Y_i] = I - \eta_{i}A_i$ and
$Y_i - \E[Y_i] = \eta_{i}\widetilde A_i$.  
With Assumption~\ref{ass:lsa_stable_ass} in place, by setting
\[
m_i = \mu \eta_i, \qquad
\sigma_i = \sigma \eta_i,
\]
we see that $\|\E[Y_i]\|^2 \le 1 - m_i$ and
$\|Y_i - \E[Y_i]\| \le \sigma_i$.
Lemma~\ref{lem:prod_rand_matrix_mmt_bd} then tells us that, for any $2\le q\le p$,
\begin{align}
\E\bigl[\|\Gamma_{j,n}^\gamma\|_p^q\bigr]^{1/q}
&= \|\Gamma_{j,n}^\gamma\|_{p,q} \le \prod_{i=j}^n
    \Bigl(1 - \mu \eta_i
            + (p-1)\sigma^2 \eta_i^2\Bigr)
    \|I_d\|_p \nonumber\\
&\le d^{1/p}
   \exp\left\{
       -\mu \sum_{i=j}^n \eta_i
       + (p-1)\sigma^2\sum_{i=j}^n \eta_i^2
   \right\},
\label{eq:Gamma-moment}
\end{align}
where the last step makes use of the elementary inequality $\log(1+x)\le x$.

Since 
$\|I - \eta_i\widehat A_i\|\leq 1$ for each $i$, we have
$\|\Gamma_{l,n}^\gamma\| \le 1$ for all $l\le n$.  
By introducing an index
\[
j \coloneqq \max\Bigl\{l,
\bigl\lceil 12C\sigma^2/\mu \bigr\rceil, \bigl\lceil 2C\widehat{\sigma}\log(C\widehat{\sigma})\bigr\rceil\Bigr\},
\]
we can easily check that $\eta_i = {\gamma_i}/{i}$ for every $i \ge j$.
Further, we can derive the following inequality:
\begin{equation}\label{eq:lsa_absorb_high_order}
\begin{aligned}
    \sigma^2 \ssum{i}{j}{n}\frac{\gamma_i^2}{i^2} &\le C^2\sigma^2\log^2 n\left(\frac{1}{j-1} - \frac{1}{n}\right) \le \frac{C\mu}{12}\log^2 n\\
    &\overset{\mathrm{(a)}}{\le} \frac{\mu}{6}\int_j^n \frac{C\log x}{x}\rd x \le \frac{\mu}{6} \ssum{i}{j}{n}\frac{C\log i}{i} = \frac{\mu}{6}\ssum{i}{j}{n}\frac{\gamma_i}{i}.
\end{aligned}
\end{equation}
Here, (a) holds whenever $n \ge 3j$. In the regime $n \le j$, the desired claim follows directly by combining \eqref{eq:Delta-n+1-preliminary} with the bound $\norm{\Gamma_{l,n}^\gamma}\le 1$.
Taking $p=q$ in~\eqref{eq:Gamma-moment}, we obtain
\begin{equation}
\label{eq:Gamma-moment-p}
\E\bigl[\|\Gamma_{j,n}^\gamma\|_p^p\bigr]
\le d \exp\left\{
   -p\mu \sum_{i=j}^n \frac{\gamma_i}{i}
   + p^2\sigma^2\sum_{i=j}^n \frac{\gamma_i^2}{i^2}
\right\}.
\end{equation}
Applying Lemma~\ref{lem:prod_rand_matrix_conc_bd} with
$p_0=2$, $p_1=\infty$, $C=d$,
\(
A = \mu\sum_{i=j}^n \frac{\gamma_i}{i},~
B = \sigma^2\sum_{i=j}^n \frac{\gamma_i^2}{i^2},
\)
and using the fact that~\eqref{eq:Gamma-moment-p} implies
\eqref{eq:X_p_mmt_bd} for all $p\ge2$, we can deduce that, with probability at
least $1-\delta/n$,
\begin{align*}
\|\Gamma_{j,n}^\gamma\|
&\le \exp\left\{
   -\mu\sum_{i=j}^n \frac{\gamma_i}{i}
   + 3\sigma^2\sum_{i=j}^n \frac{\gamma_i^2}{i^2}
   + \log\frac{dn}{\delta}
\right\}
\overset{\eqref{eq:lsa_absorb_high_order}}{\le} \exp\left\{
   -\frac{\mu}{2}\sum_{i=j}^n \frac{\gamma_i}{i}
   + \log\frac{dn}{\delta}
\right\}.
\end{align*}
Using the elementary fact $\sum_{i=j}^n \frac{\log i}{i} \ge \log^2 n - \log^2 j = \log(n/j)\log (nj)\ge \log(n/j)\log n$, we further obtain
\begin{equation}
\label{eq:Gamma-high-prob-final}
\|\Gamma_{j,n}^\gamma\|
\le \frac{dn}{\delta}\Bigl(\frac{j}{n}\Bigr)^{\frac{C\mu \log n}{2}}.
\end{equation}
Since $\|\Gamma_{l,n}^\gamma\| \le \|\Gamma_{j,n}^\gamma\|$ holds by
construction and $\|\Gamma_{l,n}^\gamma\|\le1$, combining
\eqref{eq:Gamma-high-prob-final} with the trivial upper bound 1 yields, for all
$l\le n$ and with probability at least $1-\delta$,
\begin{equation}
\label{eq:Gamma-min-bound}
\|\Gamma_{l,n}^\gamma\|
\le \min\left\{
    1,\,
    \frac{dn}{\delta}
    \left(\frac{\max\{l,\lceil 12C\sigma^2/\mu\rceil,\bigl\lceil 2C\widehat{\sigma}\log(C\widehat{\sigma})\bigr\rceil\}}{n}\right)^{\frac{C\mu \log n}{2}}
\right\},\quad l = 1, \ldots, n.
\end{equation}

\paragraph{Bounded-difference property of $\vartheta_{n+1}$.}
For ease of exposition, we introduce the following notation:
\[
C_0 \coloneqq \max\left\{\lceil 12C\sigma^2/\mu\rceil,\bigl\lceil 2C\widehat{\sigma}\log(C\widehat{\sigma})\bigr\rceil\right\}.
\]
Substituting~\eqref{eq:Gamma-min-bound} into
\eqref{eq:Delta-n+1-preliminary}, we arrive at
\begin{equation}\label{eq:lsa_Delta_bd_1}
\|\Delta_{n+1}\|_2
\le \frac{4C\sigma L\log^3 l}{l}
\min\left\{
    1,\,
    \frac{dn}{\delta}
    \Bigl(\frac{\max\{l,C_0\}}{n}\Bigr)^{\frac{C\mu \log n}{2}}
\right\}
\end{equation}
holds for any $l = 1,\ldots, n$.
Let $\kappa \coloneqq \sigma/\mu$ and choose
\(
\delta = n^{-\zeta},~
C = \frac{2(\zeta+1)}{\mu}
\)
.  For $l \le C_0$, use \eqref{eq:lsa_Delta_bd_1} to obtain
\begin{align*}
\norm{\Delta_{n+1}}_2 &\le 4C\sigma L\log^3 C_0 \min\left\{1, dn^{\zeta+1}\left(\frac{C_0}{n}\right)^{(\zeta + 1)\log n}\right\}\\
&\overset{\mathrm{(a)}}{\le}4C\sigma L\log^3 C_0 \left(\frac{d C_0^{(\zeta + 1)\log n}}{n^{(\zeta + 1)(\log n-1)}}\right)^{\frac{1}{(\zeta + 1)(\log n-1)}}
\overset{\mathrm{(b)}}{\le} 4C\sigma L\log^3 C_0 \frac{e C_0^2}{n}.
\end{align*}
Here,  (a) results from the fact that $a \le a^{\lambda}$ for any $a,\lambda \in [0, 1]$, and (b) holds since $d^{\frac{1}{(\zeta + 1)(\log n -1)}} \le e$ provided that $n^\zeta \ge d$. Thus, in this setting, \Cref{prop:lsa-bd} holds by taking $K = {8e(\zeta+1)\kappa LC_0^2\log^3 C_0}$.

For $l > C_0$, similarly we can derive
\begin{align*}
    \norm{\Delta_{n+1}}_2 &\le \frac{4C\sigma L \log^3 n}{l}\min\left\{1, dn^{\zeta+1}\left(\frac{l}{n}\right)^{(\zeta + 1)\log n}\right\}\\
    &\le \frac{4C\sigma L \log^3 n}{l}\left(\frac{d l^{(\zeta + 1)\log n}}{n^{(\zeta + 1)(\log n-1)}}\right)^{\frac{1}{(\zeta + 1)(\log n-1)}}\\
    &\le \frac{4eC\sigma L\log^3 n}{n}\cdot l^{\frac{(\zeta+1)\log n}{(\zeta+1)(\log n - 1)}-1}\le \frac{4eC\sigma L\log^3 n}{n}\cdot n^{\frac{1}{\log n -1}} \le \frac{16eC\sigma L\log^3 n}{n}.
\end{align*}
In this setting, \Cref{prop:lsa-bd} follows by taking $K = 16eC\sigma L$.

\subsubsection{Proof of Proposition~\ref{prop:ssco_bd}}\label{sec:prf:prop:ssco_bd}
We let $K\coloneqq \frac{2BL\gamma}{\mu}$ and prove this result by induction. Fix two adjacent data streams, and let
\(\{\vartheta_k\}_{k\ge 1}\) and \(\{\vartheta_k'\}_{k\ge 1}\) denote the iterates generated by
\eqref{eq:vartheta_update} from the same initialization. Assume for the moment that
\begin{align}\label{eq:induction-adaptive-eg}
\|\vartheta_n-\vartheta_n'\|_2 \le \min\left\{\frac{K}{n}, \frac{2B}{L}\right\}
\end{align}
holds for some
\(n\ge 1\), and 
we would like to bound \(\|\vartheta_{n+1}-\vartheta_{n+1}'\|_2\).

We divide into several cases according to the index at which the two data streams differ. First, consider the case where the two streams differ at the most recent observation, i.e.,
\((X_n,Y_n)\neq(X_n',Y_n')\) while \((X_i,Y_i)=(X_i',Y_i')\) for all \(i\le n-1\).
In this case, the iterates coincide up to time \(n\), hence \(\vartheta_n=\vartheta_n'\).
In view of the update rule \eqref{eq:vartheta_update},
\begin{align*}
\|\vartheta_{n+1}-\vartheta_{n+1}'\|_2
&=
\eta_n \Bigl\| f\bigl(\vartheta_n;(X_n,Y_n)\bigr) - f\bigl(\vartheta_n;(X_n',Y_n')\bigr)\Bigr\|_2\\
&{\le} \min\left\{\frac{2\gamma B}{n},~ \frac{2B}{L}\right\} \le \min\left\{\frac{K}{n+1}, \frac{2B}{L}\right\},
\end{align*}
where penultimate inequality arises from Assumption~\ref{ass:ssco_ass} and our stepsize choice, and the last inequality holds since
\[
\frac{2\gamma B}{n}\le \frac{4\gamma B}{n+1} \le \frac{2BL\gamma}{\mu (n+1)} = \frac{K}{n+1}.
\]

Next, consider the case where the two data streams differ at some index \(l\le n-1\).  Then the current update at
time \(n\) is computed from the same observation \(Z_n=(X_n,Y_n)\) in both streams.
Since \(\eta_n \le 1/L\),   \citet[Lemma~2]{leeleave} implies that the update map is nonexpansive, which, taken together with the induction hypothesis (\ref{eq:induction-adaptive-eg}), yields
\[
\|\vartheta_{n+1}-\vartheta_{n+1}'\|_2 \le \|\vartheta_n-\vartheta_n'\|_2
\le \min\Bigl\{\frac{K}{n},\,\frac{2B}{L}\Bigr\}.
\]
In addition, a direct expansion of the recursion yields
\begin{equation}\label{eq:sco_stability_eq1}
\begin{aligned}
\|\vartheta_{n+1}-\vartheta_{n+1}'\|_2^2
&=
\bigl\|\vartheta_n-\vartheta_n' - \eta_n\bigl(f(\vartheta_n;Z_n)-f(\vartheta_n';Z_n)\bigr)\bigr\|_2^2 \\
&
\le
\|\vartheta_n-\vartheta_n'\|_2^2
- 2\eta_n \inner{f(\vartheta_n;Z_n)-f(\vartheta_n';Z_n)}{\vartheta_n-\vartheta_n'}
+ L^2\eta_n^2 \|\vartheta_n-\vartheta_n'\|_2^2\\
&
\le
\left(1 - \mu\eta_n + L^2 \eta_n^2\right)\|\vartheta_n-\vartheta_n'\|_2^2 \le \frac{K^2\left(1 - 2\mu\eta_n + L^2 \eta_n^2\right)}{n^2},
\end{aligned}
\end{equation}
which follows from Assumption~\ref{ass:ssco_ass}. Further, it can be derived that
\begin{align*}
  \left(1 - 2\mu\eta_n + L^2 \eta_n^2\right)\frac{(n+1)^2}{n^2}
  \overset{\mathrm{(a)}}{\le} (1 - \mu\eta_n)(1 + 3/n) \overset{\mathrm{(b)}}{\le} 1,
\end{align*}
where (a) is valid provided that $n+1 \ge \frac{L^2 \gamma}{\mu}$ and (b) holds as long as $\gamma \ge {3}/{\mu}$. Substitution  into~\eqref{eq:sco_stability_eq1} yields
\[
\norm{\vartheta_{n+1} - \vartheta_{n+1}^\prime}_2^2 \le \frac{K^2}{(n+1)^2}.
\]
Moreover, if $n+1 < \frac{L^2 \gamma}{\mu}$, then it still holds that
\[
\norm{\vartheta_{n+1} - \vartheta   _{n+1}^\prime}_2 \le \frac{2B}{L} \le \frac{2BL^2 \gamma}{\mu L (n+1) } = \frac{K}{n+1}.
\]
We have thus concluded the proof of Proposition~\ref{prop:ssco_bd}.

\section{Auxiliary concentration inequalities}
This appendix collects several classical concentration inequalities that will be used repeatedly in our analysis.
Their proofs can be found in, e.g., \citet{boucheron2013concentration,vershynin2018high}.


\begin{lemma}[McDiarmid inequality]
\label{lem:mcdiarmid}
Let $X_1,\ldots,X_n$ be independent random variables taking values in measurable spaces $\mathcal{X}_1,\ldots,\mathcal{X}_n$, and let $f:\mathcal{X}_1\times\cdots\times\mathcal{X}_n\to\mathbb{R}$ be a measurable function satisfying the bounded differences condition: there exist constants $c_1,\ldots,c_n \ge 0$ such that for all $x_1,\ldots,x_n,x_i'\in\mathcal{X}_i$,
\[
\bigl|f(x_1,\ldots,x_i,\ldots,x_n) - f(x_1,\ldots,x_i',\ldots,x_n)\bigr| \le c_i.
\]
Then, for all $t>0$,
\[
\PB\!\left(\bigl|f(X_1,\ldots,X_n) - \EB \big[ f(X_1,\ldots,X_n) \big]\bigr| \ge t\right)
\;\le\;
2\exp\!\left(-\frac{2t^2}{\sum_{i=1}^n c_i^2}\right).
\]
\end{lemma}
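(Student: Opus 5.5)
The plan is to obtain McDiarmid's inequality from a Doob martingale decomposition combined with the Azuma--Hoeffding argument. Write $f = f(X_1,\ldots,X_n)$ and let $\mathcal{F}_i = \sigma(X_1,\ldots,X_i)$, with $\mathcal{F}_0$ trivial. Set $D_i \coloneqq \EB[f\mid \mathcal{F}_i] - \EB[f\mid\mathcal{F}_{i-1}]$ for $i=1,\ldots,n$. Then
\[
f - \EB[f] = \sum_{i=1}^n D_i ,
\]
and $\{D_i\}$ is a martingale difference sequence. We may assume $f$ is integrable; this follows because bounded differences imply that $f$ differs from its value at a fixed point by at most $\sum_i c_i$.

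The key step is to show that, conditionally on $\mathcal{F}_{i-1}$, each $D_i$ lies in an interval of length at most $c_i$. By independence, $\EB[f\mid\mathcal{F}_i] = g_i(X_1,\ldots,X_i)$, where $g_i(x_{1:i}) = \EB[f(x_{1:i},X_{i+1:n})]$. Define
\[
L_i \coloneqq \inf_{x}\, g_i(X_{1:i-1},x) - g_{i-1}(X_{1:i-1}), \qquad U_i \coloneqq \sup_{x}\, g_i(X_{1:i-1},x) - g_{i-1}(X_{1:i-1}).
\]
Both are $\mathcal{F}_{i-1}$-measurable and satisfy $L_i \le D_i \le U_i$. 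The bounded differences condition, applied inside the expectation over $X_{i+1:n}$, gives $U_i - L_i \le c_i$. This is the step where independence is essential: it lets us integrate out the future coordinates with the same law regardless of the value of $x$. The main technical care needed here concerns measurability of the sup and inf. To avoid it, I would instead use pointwise bounds of the form $g_i(X_{1:i-1},X_i) - g_i(X_{1:i-1},x')\le c_i$ for all $x'$, which suffice for Hoeffding's lemma applied conditionally.

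Next, apply Hoeffding's lemma conditionally. Since $\EB[D_i\mid\mathcal{F}_{i-1}]=0$ and $D_i$ is confined to an interval of length $c_i$,
\[
\EB\big[e^{\lambda D_i}\mid\mathcal{F}_{i-1}\big]\le e^{\lambda^2 c_i^2/8}\qquad\text{for all }\lambda\in\RB.
\]
Iterating the tower property from $i=n$ down to $1$ yields
\[
\EB\big[e^{\lambda(f-\EB f)}\big]\le \exp\Big(\lambda^2\sum_i c_i^2/8\Big).
\]
By Markov's inequality, $\PB(f-\EB f\ge t)\le \exp\big(-\lambda t+\lambda^2\sum_i c_i^2/8\big)$. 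Optimizing at $\lambda = 4t/\sum_i c_i^2$ gives the bound $\exp\big(-2t^2/\sum_i c_i^2\big)$. Applying the same argument to $-f$, which also satisfies the bounded differences condition, and taking a union bound produces the factor $2$ in the stated inequality.
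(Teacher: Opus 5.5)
Your argument is correct and is the standard Doob-martingale/Azuma--Hoeffding proof of the bounded differences inequality. The paper does not prove this lemma itself; it quotes it as a classical fact and refers to the textbook references it cites, where essentially this argument appears.

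On the measurability point you flag, the cleanest route is as follows. Fix the prefix $x_{1:i-1}$. Then $x\mapsto g_i(x_{1:i-1},x)-g_{i-1}(x_{1:i-1})$ is a mean-zero function of $X_i$ whose range has length at most $c_i$, and its $\sup$ and $\inf$ are just finite numbers. Apply Hoeffding's lemma to it under the law of $X_i$. By independence, $\EB[e^{\lambda D_i}\mid\mathcal{F}_{i-1}]$ equals this moment generating function evaluated at $x_{1:i-1}=X_{1:i-1}$, which gives the bound $e^{\lambda^2c_i^2/8}$.
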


\begin{lemma}[Khintchine inequality for $p=1$]
\label{lem:khintchine_p1}
Let $\{\varepsilon_i\}_{i=1}^n$ be independent Rademacher random variables, 
that is, $\PB(\varepsilon_i = 1) = \PB(\varepsilon_i = -1) = 1/2$. 
Then for any real coefficients $a_1, \ldots, a_n$, one has
\[
\EB\bigg[\bigg|\sum_{i=1}^n a_i \varepsilon_i\bigg|\bigg] \ge \frac{1}{\sqrt{2}} \Big(\sum_{i=1}^n a_i^2\Big)^{1/2}.
\]
\end{lemma}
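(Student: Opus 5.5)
We plan to split according to the size of the largest coefficient. By homogeneity we may assume $\sum_{i} a_i^2=1$ and, after relabelling and flipping signs (which does not change the law of the $\varepsilon_i$), that $a_1=\max_i |a_i|\ge 0$. We then treat the cases $a_1\ge 1/\sqrt2$ and $a_1<1/\sqrt2$ separately.

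\emph{Case $a_1\ge 1/\sqrt2$.} Write $S=a_1\varepsilon_1+R$ with $R=\sum_{i\ge2}a_i\varepsilon_i$ independent of $\varepsilon_1$. For a fixed real $r$ we have $\tfrac12(|a_1+r|+|{-a_1}+r|)=\max\{|a_1|,|r|\}\ge a_1$. Conditioning on $R$ therefore gives $\EB|S|\ge a_1\ge 1/\sqrt2$, which settles this case.

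\emph{Case $a_1<1/\sqrt2$, so that all $|a_i|<1/\sqrt2$.} Here we plan to follow Haagerup's Fourier route. Start from the identity $|x|=\frac{2}{\pi}\int_0^\infty \frac{1-\cos(xt)}{t^2}\,\rd t$. Averaging over the signs turns $\EB\cos(St)$ into $\prod_i\cos(a_i t)$, so
\[
\EB|S|=\frac{2}{\pi}\int_0^\infty \frac{1-\prod_i\cos(a_i t)}{t^2}\,\rd t .
\]
Next bound $\prod_i\cos(a_it)\le\prod_i|\cos(a_it)|=\prod_i\big(|\cos(a_it)|^{1/a_i^2}\big)^{a_i^2}$. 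Since the weights $a_i^2$ sum to one, the weighted AM--GM inequality bounds this by $\sum_i a_i^2|\cos(a_it)|^{1/a_i^2}$. Hence
\[
1-\prod_i\cos(a_it)\ \ge\ \sum_i a_i^2\big(1-|\cos(a_it)|^{1/a_i^2}\big).
\]
Integrating term by term and substituting $u=a_it$, we obtain $\EB|S|\ge\sum_i a_i^2F(a_i)$, where
\[
F(s)\coloneqq\frac{2s}{\pi}\int_0^\infty\frac{1-|\cos u|^{1/s^2}}{u^2}\,\rd u .
\]
At the endpoint we can compute directly: $F(1/\sqrt2)=\frac{\sqrt2}{\pi}\int_0^\infty\frac{\sin^2u}{u^2}\,\rd u=\frac{\sqrt2}{\pi}\cdot\frac{\pi}{2}=\frac1{\sqrt2}$. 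So it suffices to show that $F(s)\ge F(1/\sqrt2)$ for all $0<s\le1/\sqrt2$; given this, $\EB|S|\ge\frac1{\sqrt2}\sum_ia_i^2=\frac1{\sqrt2}$.

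The main obstacle is this monotonicity step. The two factors of $F$ pull in opposite directions as $s$ decreases: the prefactor $s$ shrinks, while $1-|\cos u|^{1/s^2}$ grows toward $1$. We would first reparametrize by $p=1/s^2\ge2$ and write $F=\frac{2}{\pi}G(p)$ with $G(p)=p^{-1/2}\int_0^\infty(1-|\cos u|^{p})u^{-2}\,\rd u$. We then aim to show $G(p)\ge G(2)$ for $p\ge2$. Our first attempt would compare $G(p)$ with its Gaussian limit $\lim_{p\to\infty}G(p)=\int_0^\infty(1-e^{-v^2/2})v^{-2}\,\rd v=\sqrt{\pi/2}$, which gives $F\to\sqrt{2/\pi}>1/\sqrt2$. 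For this we would use $|\cos u|\le e^{-u^2/2}$ on $[0,\pi/2]$ together with periodicity on the remaining intervals, and treat small $p$ by an explicit estimate. If that comparison turns out to be too lossy near $p=2$, the fallback is Haagerup's original approach: show that $G$ is increasing in $p$ by differentiating under the integral and controlling the sign through an explicit one-dimensional estimate.
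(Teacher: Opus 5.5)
Everything up to $\mathbb{E}|S|\ge\sum_i a_i^2F(|a_i|)$ is correct: Case~1, the Fourier formula, the weighted AM--GM step, and $F(1/\sqrt2)=1/\sqrt2$. The proof is nevertheless incomplete, because the entire sharp constant sits in the inequality $G(p)\ge G(2)$ for $p\ge 2$, and you leave that open. The paper does not prove this lemma either; it cites it as a classical result, so your argument has to close on its own.

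Your primary plan also cannot close it. Since $|\cos u|<e^{-u^2/2}$ strictly on $(0,\pi/2]$, the lower bound $L(p)$ from the Gaussian comparison (with the periodic tails handled the same way) satisfies $L(2)<G(2)$. By continuity, $L(p)<G(2)$ on a whole neighbourhood of $p=2$. Because $G(p)\ge G(2)$ is an equality at $p=2$, whatever ``explicit estimate'' covers small $p$ must be exact to first order there. In other words, it must show $G'\ge 0$ near $2$, which is exactly the monotonicity you have not supplied. The ``differentiate under the integral'' fallback is named but not carried out.

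The missing idea is that $G$ is explicit. The function $u\mapsto 1-|\cos u|^p$ is even, nonnegative and $\pi$-periodic. Fold the integral onto $[-\pi/2,\pi/2]$, use $\sum_{k\in\mathbb{Z}}(w+k\pi)^{-2}=\csc^2 w$, and integrate by parts (both boundary terms vanish) to get
\[
\int_0^\infty\frac{1-|\cos u|^p}{u^2}\,\mathrm{d}u=\int_0^{\pi/2}\frac{1-\cos^p w}{\sin^2 w}\,\mathrm{d}w=p\int_0^{\pi/2}\cos^p w\,\mathrm{d}w .
\]
Hence $G(p)=\sqrt{p}\,W_p$, where $W_p=\int_0^{\pi/2}\cos^p w\,\mathrm{d}w=\frac{\sqrt{\pi}\,\Gamma(\frac{p+1}{2})}{2\,\Gamma(\frac p2+1)}$. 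As a check, this gives $G(2)=\pi/(2\sqrt2)$ and $G(4)=3\pi/8$.

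Now write $x=p/2$ and let $\psi=\Gamma'/\Gamma$. Then $\frac{\mathrm{d}}{\mathrm{d}p}\log G(p)=\frac{1}{4x}-\frac12\bigl(\psi(x+1)-\psi(x+\tfrac12)\bigr)$. Moreover,
\[
\psi(x+1)-\psi(x+\tfrac12)=\tfrac12\sum_{k\ge0}\frac{1}{(x+k+\frac12)(x+k+1)}<\tfrac12\sum_{k\ge0}\frac{1}{(x+k)(x+k+1)}=\frac{1}{2x}.
\]
So $G$ is strictly increasing on $(0,\infty)$, and in particular $G(p)\ge G(2)$ for $p\ge2$. With this lemma inserted, your two cases give a complete proof of the constant $1/\sqrt2$.

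An alternative that avoids the one-dimensional analysis altogether is the Walsh--Fourier argument of Lata{\l}a--Oleszkiewicz. Let $f=|S|$ on the cube and $D_if=\tfrac12\bigl(f(\varepsilon)-f(\varepsilon^{(i)})\bigr)$, where $\varepsilon^{(i)}$ flips the $i$-th sign, so $|D_if|\le|a_i|$. Since $f$ is even, $\hat f(A)=0$ for odd $|A|$, and
\[
\mathbb{E}f^2-(\mathbb{E}f)^2=\sum_{|A|\ge2}\hat f(A)^2\le\tfrac12\sum_i\mathbb{E}[(D_if)^2]\le\tfrac12\sum_ia_i^2=\tfrac12\mathbb{E}f^2 ,
\]
which rearranges to $(\mathbb{E}f)^2\ge\tfrac12\sum_i a_i^2$.
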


\begin{lemma}[Paley–Zygmund inequality]
\label{lem:paley-zygmund}
Let $Z$ be a nonnegative random variable with $\EB[Z^2] < \infty$. 
Then, for any $\theta \in [0,1]$, the following inequality holds:
\[
\PB\big(Z \ge \theta\,\EB[Z]\big) 
\;\ge\; (1 - \theta)^2 \frac{(\EB[Z])^2}{\EB[Z^2]}.
\]
\end{lemma}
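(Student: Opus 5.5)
The statement is the Paley--Zygmund inequality (Lemma~\ref{lem:paley-zygmund}). I plan to prove it by a truncation argument combined with Cauchy--Schwarz.

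First I would dispose of the degenerate cases. If $\EB[Z]=0$, then $Z=0$ almost surely. In that case the event $\{Z\ge 0\}$ has probability one, while the right-hand side is to be read as $0$, so the claim is trivial. Henceforth I assume $\EB[Z]>0$. Since $\EB[Z^2]<\infty$ implies $\EB[Z]<\infty$ via Cauchy--Schwarz, the ratio on the right-hand side is a well-defined finite positive number.

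The main step is to split the mean according to whether $Z$ exceeds the threshold $\theta\EB[Z]$:
\[
\EB[Z]=\EB\big[Z\mathbbm{1}\{Z<\theta\EB[Z]\}\big]+\EB\big[Z\mathbbm{1}\{Z\ge\theta\EB[Z]\}\big].
\]
\begin{itemize}
\item The first term is at most $\theta\EB[Z]$, since $Z<\theta\EB[Z]$ on that event.
\item The second term is controlled by Cauchy--Schwarz applied to $Z$ and the indicator:
\[
\EB\big[Z\mathbbm{1}\{Z\ge\theta\EB[Z]\}\big]\le\sqrt{\EB[Z^2]}\,\sqrt{\PB\big(Z\ge\theta\EB[Z]\big)}.
\]
\end{itemize}
Combining the two bounds gives
\[
(1-\theta)\EB[Z]\le\sqrt{\EB[Z^2]\,\PB\big(Z\ge\theta\EB[Z]\big)}.
\]

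To finish, I would square this inequality, which is legitimate because $1-\theta\ge0$ makes both sides nonnegative. Dividing by $\EB[Z^2]>0$ then yields
\[
\PB\big(Z\ge\theta\EB[Z]\big)\ge(1-\theta)^2\frac{(\EB[Z])^2}{\EB[Z^2]}.
\]

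There is no genuine obstacle here. The only points needing care are the degenerate case $\EB[Z]=0$ and the sign condition $1-\theta\ge0$ that justifies squaring.
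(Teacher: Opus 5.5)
Your proof is correct and is the standard argument: split $\EB[Z]$ at the threshold $\theta\EB[Z]$, bound the lower part by $\theta\EB[Z]$, and apply Cauchy--Schwarz to the upper part. The paper does not prove this lemma itself; it defers to Boucheron et al.\ and Vershynin, where essentially this same argument appears, and your reading of the degenerate case $\EB[Z]=0$ (where the right-hand side is $0/0$) as trivially true is sensible.
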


\begin{lemma}[Generalized DKW inequality]\label{lem:DKW_ineq}
Let $X_1,\ldots,X_n$ be independent random elements taking values in $\gX$.
Let $F:\gX\times\R\to[0,1]$ satisfy that, for every $u\in\gX$, the map $x\mapsto F(u,x)$ is
nondecreasing and right-continuous.
Define
\[
\widehat F_n(x)\coloneqq \frac1n\sum_{i=1}^n F(X_i,x),
\qquad
\overline F(x)\coloneqq \frac1n\sum_{i=1}^n \E\big[F(X_i,x)\big].
\]
Then for any $\delta\in(0,1)$, with probability at least $1-\delta$,
\[
\sup_{x\in\R}\bigl|\widehat F_n(x)-\overline F(x)\bigr|
\le \frac{4}{\sqrt n}+\sqrt{\frac{\log(1/\delta)}{2n}}.
\]
Specifically, when $\gX_i =\RB,~ i = 1,\ldots, n$ and $F(X_i, x) = \mathbbm{1}\{X_i \le x\}$ we have:
\[
\PB\left(\sup\limits_{x\in \RB}\abs{\frac{1}{n}\ssum{i}{1}{n}\bigl(
\mathbbm{1}\{X_i \le x\} - \PB(X_i \le x)
\bigr)} \le \frac{4}{\sqrt{n}} + \sqrt{\frac{\log(1/\delta)}{2n}}\right) \ge 1- \delta.
\]
\end{lemma}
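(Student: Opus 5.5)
The plan is to split the deviation $\sup_x|\widehat F_n(x)-\overline F(x)|$ into its mean plus a fluctuation. The fluctuation is handled by McDiarmid's inequality (Lemma~\ref{lem:mcdiarmid}), and the mean by symmetrization followed by a chaining bound that exploits the monotone, chain-like structure of the index set $\{F(\cdot,x)\}_{x\in\R}$.

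First I would dispose of measurability. Each $x\mapsto F(u,x)$ is right-continuous and nondecreasing, so the supremum over $\R$ equals the supremum over $\mathbb{Q}$ (approximating from the right). Hence $G(X_{1:n})\coloneqq\sup_x|\widehat F_n(x)-\overline F(x)|$ is measurable. Replacing one $X_i$ by $X_i'$ changes $\widehat F_n(x)$ by at most $1/n$ uniformly in $x$, because $F\in[0,1]$. Therefore $G$ has bounded differences with $c_i=1/n$. The one-sided McDiarmid bound then gives, with probability at least $1-\delta$,
\[
G\le \EB[G]+\sqrt{\log(1/\delta)/(2n)},
\]
which reduces the lemma to showing $\EB[G]\le 4/\sqrt n$.

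For the mean I would symmetrize with an independent ghost sample and Rademacher signs. This gives $\EB[G]\le 2\,\EB\sup_x\bigl|\frac1n\sum_i\varepsilon_i F(X_i,x)\bigr|$. Conditionally on $X_{1:n}$, the vectors $v(x)=(F(X_i,x))_{i\le n}\in[0,1]^n$ are coordinatewise nondecreasing in $x$, so they form a chain. The key geometric fact is that for $x\le y$,
\[
\tfrac1n\|v(y)-v(x)\|_2^2\le \tfrac1n\textstyle\sum_i\bigl(v_i(y)-v_i(x)\bigr),
\]
which holds because each increment lies in $[0,1]$. The right-hand side is additive along the chain and bounded by $1$. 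Consequently, the chain can be cut at points where the cumulative mean increment crosses multiples of $\eta^2$, and the $L_2(P_n)$ covering number at scale $\eta$ is at most $1+\eta^{-2}$. Dudley's entropy integral (or, more elementarily, a dyadic chaining along these cut points with Hoeffding and a union bound at each level) then bounds the conditional Rademacher supremum by
\[
\frac{c}{\sqrt n}\int_0^1\sqrt{\log(1+\eta^{-2})}\,d\eta,
\]
a universal constant over $\sqrt n$. The constant is independent of $F$ and of the distributions of the $X_i$, and independence of the $X_i$ is the only probabilistic input needed. Integrating out $X_{1:n}$ gives $\EB[G]\le C/\sqrt n$.

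The main obstacle I expect is getting the explicit constant $4$ rather than an unspecified $C$. For this I would avoid the generic Dudley constant and run an explicit dyadic chaining. At level $k$ I would place $2^k$ cut points splitting the total increment into equal parts, so that links have $L_2(P_n)$ length at most $2^{-k/2}$. Each link increment is sub-Gaussian with parameter $2^{-k/2}/\sqrt n$, and there are $2^k$ of them, so a maximal inequality controls each level. Summing over $k$ gives a constant of the form $2\sum_k 2^{-k/2}\sqrt{2\log 2^{k+1}}$, which I would then need to check is at most $4$ after also using $|\frac1n\sum\varepsilon_iF|\le$ the value at the top of the chain plus the chained increments. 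If this constant comes out slightly too large, the fallback is a reduction to the classical scalar DKW bound. Writing $F(u,x)=\int\mathbbm{1}\{t\le F(u,x)\}\,dt$ would turn the problem into indicator processes of the monotone sets $\{(u,t):F(u,x)\ge t\}$, to which the standard independent-not-identically-distributed DKW/VC argument applies.
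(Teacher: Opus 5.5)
Your McDiarmid step (bounded differences $1/n$, one-sided tail) and the symmetrization step coincide with the paper's proof, and your measurability remark is a harmless addition. The gap is the explicit constant. The lemma asserts $\E[G]\le 4/\sqrt n$, and your primary route cannot deliver it. In the dyadic scheme you describe, the $k=0$ and $k=1$ terms of $2\sum_{k}2^{-k/2}\sqrt{2\log 2^{k+1}}$ already sum to $4\sqrt{2\log 2}\approx 4.71>4$. The full series is about $13.8$, and that is before adding the top-of-chain term, so union-bound chaining loses far more than ``slightly'' here. Your fallback names the right reduction, the layer-cake formula. It then defers to ``the standard DKW/VC argument,'' which does not give $4$ either. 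VC/Dudley bounds carry large constants, and Massart's sharp DKW requires i.i.d.\ samples, whereas the thresholds that arise below are only independent.

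The missing idea, which is how the paper obtains $4$, is to apply the layer-cake formula \emph{inside} the symmetrized supremum and replace chaining by a martingale maximal inequality.

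Writing $F(X_i,x)=\int_0^1\mathbbm{1}\{F(X_i,x)\ge t\}\,\mathrm{d}t$ gives
$\sup_x\bigl|\sum_i\varepsilon_iF(X_i,x)\bigr|\le\int_0^1\sup_x\bigl|\sum_i\varepsilon_i\mathbbm{1}\{F(X_i,x)\ge t\}\bigr|\,\mathrm{d}t$.

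Fix $t$ and set $a(u,t)\coloneqq\inf\{x:F(u,x)\ge t\}$. Monotonicity and right-continuity give $\mathbbm{1}\{F(X_i,x)\ge t\}=\mathbbm{1}\{x\ge a(X_i,t)\}$. Now sort the thresholds $a(X_i,t)$ and let $S_k$ be the $k$-th partial sum of the correspondingly reordered signs. As $x$ increases, the inner sum only takes values among these partial sums, so the inner supremum is at most $\max_{0\le k\le n}|S_k|$. Conditionally on $X_{1:n}$, the reordered signs are still i.i.d.\ Rademacher.

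Doob's $L^2$ maximal inequality and Cauchy--Schwarz then give $\E\max_{k\le n}|S_k|\le(4\E S_n^2)^{1/2}=2\sqrt n$ for every $t$. Hence $\E[G]\le\frac{2}{n}\int_0^1 2\sqrt n\,\mathrm{d}t=4/\sqrt n$. No covering numbers are needed, and the indicator special case in the statement follows immediately.
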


\begin{proof}[Proof of \Cref{lem:DKW_ineq}]
The proof comprises the following steps.

\paragraph{Step 1: applying McDiarmid around the mean.}
Let
\[
g \coloneqq \sup_{x\in\R}\bigl|\widehat F_n(x)-\overline F(x)\bigr|.
\]
Fix any $i\in[n]$. Replace $X_i$ by an arbitrary $\widetilde X_i$, and define
\[
\widetilde F_n(x)\coloneqq \frac1n\Big(F(\widetilde X_i,x)+\sum_{j\ne i}F(X_j,x)\Big),
\qquad
\widetilde g\coloneqq \sup_{x\in\R}\bigl|\widetilde F_n(x)-\overline F(x)\bigr|.
\]
Since $0\le F\le 1$, for every $x\in\R$,
\[
\bigl|\widehat F_n(x)-\widetilde F_n(x)\bigr|
=\frac1n\bigl|F(X_i,x)-F(\widetilde X_i,x)\bigr|
\le \frac1n.
\]
Using \(\abs{\abs{u}-\abs{v}}\le \abs{u-v}\) and
\(\abs{\sup_x a(x)-\sup_x b(x)}\le \sup_x\abs{a(x)-b(x)}\), we obtain
\[
|g-\widetilde g|
\le \sup_{x\in\R}\bigl|\widehat F_n(x)-\widetilde F_n(x)\bigr|
\le \frac1n,
\]
so $g$ satisfies bounded differences with constants $c_i=1/n$.
By McDiarmid's inequality, for any $\delta\in(0,1)$, with probability at least $1-\delta$,
\begin{equation}\label{eq:gen-dkw-mcdiarmid}
g \le \E[g] + \sqrt{\frac{\log(1/\delta)}{2n}}.
\end{equation}

\paragraph{Step 2: controlling $\E[g]$ via symmetrization and a reduction to indicators.}
Introduce a ghost sample \(X_1',\ldots,X_n'\), independent of
\((X_1,\ldots,X_n)\) satisfying \(X_i'\stackrel{\mathrm{d}}{=}X_i\), and let \(\epsilon_1,\ldots,\epsilon_n\) be i.i.d.~Rademacher
signs, independent of everything else. Recognizing that \(\E [F(X_i,x)]=\E [F(X_i',x)]\), we can express
\[
\widehat{F}_n(x)-\overline F(x)
=
\E_{X'}\!\left[\frac1n\sum_{i=1}^n\bigl(F(X_i,x)-F(X_i',x)\bigr)\,\middle|\,X\right].
\]
By the Jensen inequality and the convexity of \(\sup\), we have
\[
\E\!\left[\sup_{x\in\R}\big|\widehat{F}_n(x)-\overline F(x)\big|\right]
\le
\E_{X,X'}\!\left[\sup_{x\in\R}\left|\frac1n\sum_{i=1}^n\bigl(F(X_i,x)-F(X_i',x)\bigr)\right|\right].
\]
Using exchangeability of \((X_i,X_i')\) to insert Rademacher signs and then invoking the triangle inequality gives
\[
\begin{aligned}
&\E_{X,X'}\!\left[\sup_{x}\left|\sum_{i=1}^n\bigl(F(X_i,x)-F(X_i',x)\bigr)\right|\right]
=\E_{X,X',\epsilon}\!\left[\sup_{x}\left|\sum_{i=1}^n\epsilon_i\bigl(F(X_i,x)-F(X_i',x)\bigr)\right|\right]\\
&\qquad\le
\E_{X,X',\epsilon}\!\left[\sup_{x}\left|\sum_{i=1}^n\epsilon_i F(X_i,x)\right|\right]
+
\E_{X,X',\epsilon}\!\left[\sup_{x}\left|\sum_{i=1}^n\epsilon_i F(X_i',x)\right|\right]
=2\,\E_{X,\epsilon}\!\left[\sup_{x}\left|\sum_{i=1}^n\epsilon_i F(X_i,x)\right|\right].
\end{aligned}
\]
Taking the above inequalities together yields

\begin{equation}\label{eq:gen-dkw-sym}
\E[g] = \E\!\left[\sup_{x\in\R}\abs{\widehat{F}_n(x)-\overline F(x)}\right]
\le \frac{2}{n}\E_{X,\epsilon}\Big[\sup_{x\in\R}\Big|\sum_{i=1}^n \epsilon_i F(X_i,x)\Big|\Big].
\end{equation}

We next bound the Rademacher term.
Since $0\le F(X_i,x)\le 1$, for each $i$ and $x$,
\[
F(X_i,x)=\int_0^1 \mathbbm{1}\{F(X_i,x)\ge t\}\,\rd t.
\]
Therefore, for any fixed $x\in\R$,
\[
\Big|\sum_{i=1}^n \epsilon_i F(X_i,x)\Big|
=\Big|\int_0^1 \sum_{i=1}^n \epsilon_i \mathbbm{1}\{F(X_i,x)\ge t\}\,\rd t\Big|
\le \int_0^1 \Big|\sum_{i=1}^n \epsilon_i \mathbbm{1}\{F(X_i,x)\ge t\}\Big|\,\rd t,
\]
and hence
\begin{equation}\label{eq:gen-dkw-layercake}
\sup_{x\in\R}\Big|\sum_{i=1}^n \epsilon_i F(X_i,x)\Big|
\le \int_0^1 \sup_{x\in\R}\Big|\sum_{i=1}^n \epsilon_i \mathbbm{1}\{F(X_i,x)\ge t\}\Big|\,\rd t.
\end{equation}

Fix $t\in[0,1]$. For any $u\in\gX$, define the threshold
\[
a(u,t)\coloneqq \inf\{x\in\R:\ F(u,x)\ge t\}\in[-\infty,+\infty].
\]
Since $x\mapsto F(u,x)$ is nondecreasing and right-continuous, we have
\[
\{x\in\R:\ F(u,x)\ge t\}=[a(u,t),\infty),
\]
and thus, for every $x\in\R$,
\begin{equation}\label{eq:gen-dkw-threshold}
\mathbbm{1}\{F(X_i,x)\ge t\}=\mathbbm{1}\{x\ge a(X_i,t)\}.
\end{equation}
Consequently,
\[
\sup_{x\in\R}\Big|\sum_{i=1}^n \epsilon_i \mathbbm{1}\{F(X_i,x)\ge t\}\Big|
=
\sup_{x\in\R}\Big|\sum_{i=1}^n \epsilon_i \mathbbm{1}\{x\ge a(X_i,t)\}\Big|.
\]
Let $a_{(1)}(t)\le\cdots\le a_{(n)}(t)$ be the order statistics of $\{a(X_i,t)\}_{i=1}^n$,
and let $\epsilon_{(1)}(t),\ldots,\epsilon_{(n)}(t)$ be the corresponding reordered signs.
Then the mapping $x\mapsto \sum_{i=1}^n \epsilon_i \mathbbm{1}\{x\ge a(X_i,t)\}$ is piecewise constant and,
as $x$ increases, it takes values $\sum_{j=1}^k \epsilon_{(j)}(t)$ for some $k\in\{0,1,\ldots,n\}$. Hence,
\[
\sup_{x\in\R}\Big|\sum_{i=1}^n \epsilon_i \mathbbm{1}\{x\ge a(X_i,t)\}\Big|
=
\max_{0\le k\le n}\Big|\sum_{j=1}^k \epsilon_{(j)}(t)\Big|.
\]
Since $(\epsilon_1,\ldots,\epsilon_n)$ are i.i.d. and independent of the \(X_i\)'s, conditionally on $X_{1:n}$ the reordered sequence
$(\epsilon_{(1)}(t),\ldots,\epsilon_{(n)}(t))$ has the same joint distribution as $(\epsilon_1,\ldots,\epsilon_n)$.
Therefore,
\[
\E_{\epsilon}\Big[\sup_{x\in\R}\Big|\sum_{i=1}^n \epsilon_i \mathbbm{1}\{F(X_i,x)\ge t\}\Big|\ \Big|\ X_{1:n}\Big]
=
\E_{\epsilon}\Big[\max_{0\le k\le n}\Big|\sum_{j=1}^k \epsilon_j\Big|\Big].
\]
Let $S_k\coloneqq \sum_{j=1}^k \epsilon_j$, $k=0,1,\ldots,n$. Then $(S_k)_{k=0}^n$ is a martingale, and
Doob's $L^2$ maximal inequality yields
\[
\E\Big[\max_{0\le k\le n} |S_k|^2\Big] \le 4\E\big[|S_n|^2\big] = 4n.
\]
By Cauchy--Schwarz,
\[
\E\Big[\max_{0\le k\le n} |S_k|\Big] \le 2\sqrt n.
\]
Combining this with \eqref{eq:gen-dkw-layercake} and applying Tonelli's theorem gives
\[
\E_{X,\epsilon}\Big[\sup_{x\in\R}\Big|\sum_{i=1}^n \epsilon_i F(X_i,x)\Big|\Big]
\le \int_0^1 2\sqrt n\,\rd t
=2\sqrt n.
\]
Substituting into \eqref{eq:gen-dkw-sym} yields
\begin{equation}\label{eq:gen-dkw-Eg}
\E[g]\le \frac{2}{n}\cdot 2\sqrt n=\frac{4}{\sqrt n}.
\end{equation}

\paragraph{Step 3: completing the proof.}
Substituting \eqref{eq:gen-dkw-Eg} into \eqref{eq:gen-dkw-mcdiarmid} completes the proof.
\end{proof}

\bibliographystyle{apalike}
\bibliography{reference}

\end{document}